\newcommand{\Z}{\mathbb{Z}}
\newcommand{\Zd}{\mathbb{Z}^d}
\newcommand{\cG}{\mathcal{G}}
\newcommand{\cR}{\mathcal{R}}
\newcommand{\cW}{\mathcal{W}}
\newcommand{\E}{\mathbb{E}}
\newcommand{\locball}{\mathfrak{B}}
\newcommand{\looper}{\mathcal{L}}
\newcommand{\diam}{\mathrm{diam}}
\newcommand{\wave}{\mathcal{W}}
\newcommand{\unitv}{\mathrm{e}}
\newcommand{\prob}{\mathbb{P}}
\newcommand{\av}{\mathrm{Av}}
\newcommand{\tree}{\mathfrak{T}}
\newcommand{\ST}{\mathcal{T}}
\newcommand{\T}{\mathfrak{T}}
\newcommand{\wsf}{\mathbf{WSF}}
\newcommand{\wsfo}{\mathbf{WSF}_o}
\newcommand{\recur}{\mathcal{R}}
\newcommand{\R}{\mathcal{R}}
\newcommand{\sentm}{\mathbf{sent}}
\newcommand{\getm}{\mathbf{get}}
\newcommand{\ang}{\mathrm{ang}}
\newcommand{\es}{\emptyset}
\newcommand{\eps}{\varepsilon}
\newcommand{\rin}{\mathrm{in}}
\newcommand{\rout}{\mathrm{out}}
\newcommand{\newp}{{\,\prime}}
\newcommand{\hS}{\widehat{S}}
\newcommand{\heta}{\widehat{\eta}}
\newcommand{\hxi}{\widehat{\xi}}
\newcommand{\hsigma}{\widehat{\sigma}}
\newcommand{\osigma}{\overline{\sigma}}
\newcommand{\oxi}{\overline{\xi}}
\newcommand{\Burnt}{\mathrm{Bt}}
\newcommand{\Unburnt}{\mathrm{Ut}}
\newcommand{\tBurnt}{\widetilde{\Burnt}}
\newcommand{\tUnburnt}{\widetilde{\Unburnt}}
\newcommand{\cK}{\mathcal{K}}
\newcommand{\cT}{\mathcal{T}}
\newcommand{\frC}{\mathfrak{C}}
\newcommand{\frF}{\mathfrak{F}}
\newcommand{\frT}{\mathfrak{T}}
\newcommand{\tfrT}{\widetilde{\mathfrak{T}}}
\newcommand{\past}{\mathrm{past}}
\newcommand{\future}{\mathrm{future}}
\newcommand{\dist}{\mathrm{dist}}
\newcommand{\SLE}{\mathrm{SLE}}
\newcommand{\Es}{\mathrm{Es}}
\newcommand\tst{\rule{0pt}{4.5ex}}
\newcommand\bst{\rule[-3.4ex]{0pt}{0pt}}
\theoremstyle{plain}
\newtheorem{thm}{Theorem}[section]
\newtheorem{lem}[thm]{Lemma}
\newtheorem{prop}[thm]{Proposition}
\newtheorem{defin}{Definition}
\newtheorem{cor}[thm]{Corollary}
\numberwithin{equation}{section}
\theoremstyle{remark}
\newtheorem{rem}{Remark}
\newcommand{\eqn}[2]{\begin{equation}\label{#1}#2\end{equation}}
\newcommand{\eqnst}[1]{\begin{equation*}#1\end{equation*}}
\newcommand{\eqnspl}[2]{\begin{equation}\begin{split}\label{#1}%
   #2\end{split}\end{equation}}
\newcommand{\eqnsplst}[1]{\begin{equation*}\begin{split}%
   #1\end{split}\end{equation*}}
\title{Inequalities for critical exponents in $d$-dimensional sandpiles}
\author{Sandeep Bhupatiraju \thanks{The research of S.B. was supported by NSF grants DMS-1007244 and DMS-1612363.}\\ \small Indiana University \and 
Jack Hanson\thanks{The research of J.H. was supported by an AMS-Simons 
travel grant and NSF grant DMS-1612921.} \\ \small City College of NY \and 
Antal A. J\'arai \\ \small University of Bath}
\begin{document}

\maketitle
\begin{abstract}
Consider the Abelian sandpile measure on $\Zd$, $d \ge 2$, obtained as the
$L \to \infty$ limit of the stationary distribution of the sandpile on 
$[-L,L]^d \cap \Zd$. When adding a grain of sand at the origin, some 
region, called the avalanche cluster, topples during stabilization. We prove bounds on 
the behaviour of various avalanche characteristics: 
the probability that a given vertex topples, the radius of the toppled region, 
and the number of vertices toppled. Our results yield rigorous 
inequalities for the relevant critical exponents. In $d = 2,$ we show 
that for any $1 \le k < \infty$, the last $k$ waves of the avalanche
have an infinite volume limit, satisfying a power law upper bound
on the tail of the radius distribution. 
\end{abstract}

\tableofcontents


\section{Introduction}
\label{sec:intro}

The Abelian sandpile model is a particle system defined in terms of simple local redistribution events, called topplings, which give rise to non-local dynamical events called avalanches. The model has received a lot of attention in the theoretical physics literature (see \cite{D06,BTW88}) due to its remarkable self-organized critical state, conjectured to be characterized by power-law behavior of various quantities related to avalanches. Starting with the seminal work of Dhar, much mathematical progress has been made toward understanding this self-organized critical state. The surveys \cite{Redig} and \cite{J14} collect some of this. However, establishing power law behavior for many fundamental avalanche characteristics on $\mathbb{Z}^d$ appears difficult in general. The purpose of this paper is to establish new rigorous inequalities, which in high dimensions come close to identifying the correct tail behavior, for these quantities.

Given a finite set $V \subset \Z^d$, a \textbf{sandpile} on $V$ is a collection of indistinguishable particles, given by a map $\eta : V \to \{ 0, 1, \dots \}$. We say that 
$\eta$ is \textbf{stable}, if $\eta(x) < 2d$ for all $x \in V$. If $\eta$ is unstable at $x$,
that is $\eta(x) \ge 2d$, we say that $x$ is \textbf{allowed to topple}. On toppling, $x$ sends
one particle along each edge incident with it, resulting in the new sandpile
\eqn{e:toppling-def}
{ \eta'(y) 
  = \begin{cases} 
    \eta(y) - 2d  &\text{if  $y = x$;}\\
    \eta(y) + 1   &\text{if $y \sim x$, $y \in V$;} \\
    \eta(y)       &\text{otherwise.}
    \end{cases} }
Particles sent to vertices in $\Zd \setminus V$ are lost. It is well-known 
\cite{D90} that given any sandpile $\eta$, carrying out all possible topplings in any sequence, 
results in a uniquely defined stable sandpile $\eta^\circ$. The \textbf{sandpile Markov chain} on $V$
is the Markov chain with state space equal to the set of stable sandpiles on $V$, where at each time
step a particle is added at a uniformly chosen vertex of $V$, and the sandpile is stabilized,
if necessary. 
The unique stationary distribution \cite{D90} is denoted $\nu_V$.

We will be interested in sandpiles on $\Zd$, where ``stable'' and ``toppling'' are defined 
the same way as for finite $V$. Athreya and J\'arai \cite{AJ04} proved that 
if $V(L) = [-L,L]^d \cap \Z^d$, $d \ge 2$, then $\nu_L := \nu_{V(L)}$ 
converges weakly, as $L \to \infty$, to a limit measure $\nu$, called 
the \textbf{sandpile measure}. Let $\eta : \Zd \to \{ 0, \dots, 2d-1 \}$ 
be a sample configuration from the measure $\nu$. Let us add a particle to $\eta$ at the origin $o$, and
let $\av = \av(\eta)$ denote the set of vertices that topple, called the \textbf{avalanche cluster}. 
The set of all topplings, with multiplicity, is called the \textbf{avalanche}. 
In this paper we study various characteristics of avalanches.

The concept of waves, introduced by Ivashkevich, Ktitarev and Priezzhev \cite{IKP94} 
in the context of finite graphs, will play an important role. Waves provide a decomposition of an avalanche into smaller sets of topplings: 
$\wave_{L,1}, \dots, \wave_{L,N} \subset V(L)$;
see Section \ref{sec:def} for precise definitions. 
In each wave, every vertex topples at most once, 
and the union of the waves includes all topplings 
of the avalanche with the correct multiplicity. 
The paper \cite{IKP94} analyzed the last wave 
$\wave_{N,L}$ in particular when $d = 2$. 

Our first set of results concern the probability that a given vertex topples.
Based on an analysis of the last wave, we prove the following 
rigorous lower bounds on the \textbf{toppling probability}.

\begin{thm}
\label{thm:topple}
\ \\
(i) Let $d = 2$. Then
\[ \nu(z \in \av) \geq |z|^{-3/4+o(1)}, \quad \text{as $| z | \to \infty$.} \]
(ii) Let $d = 3$. There exist constants $\zeta < 1/2$ and $c > 0$ such that
\[ \nu(z \in \av) \geq c \, |z|^{-2\zeta -1}, \quad \forall z \in \Z^3. \]
(iii) Let $d = 4$. There exists a constant $c>0$ such that
\[ \nu(z \in \av) \geq c \, |z|^{-2} \left(\log |z| \right)^{-1/3}, \quad \forall z \in \Z^4. \]
\end{thm}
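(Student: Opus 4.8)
The plan is to bound $\nu(z \in \av)$ from below by the probability that $z$ lies in a single, well-chosen wave, and then to convert that event into a hitting estimate for loop-erased random walk; the three parts of the theorem will differ only in which loop-erased walk estimate is invoked.

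\emph{Reduction to the last wave.} An avalanche occurs exactly when $\eta(o) = 2d-1$, an event of positive $\nu$-probability, and then the toppled vertices, with the multiplicity of topplings, are organised into nested waves $\wave_{L,1} \supseteq \cdots \supseteq \wave_{L,N}$ whose first member is the whole avalanche cluster; in particular $z \in \wave_{L,N}$ forces $z$ to topple, so $\nu(z \in \av) \ge \limsup_{L \to \infty} \nu_L\big(z \in \wave_{L,N}\big)$. Thus it suffices to bound below the limiting probability that $z$ lies in the \emph{last} wave (together with $\eta(o) = 2d-1$). I would work with the last wave rather than with the first --- even though the first is the entire avalanche --- because only the last wave admits a clean spanning-tree description; that description, as recorded in Section~\ref{sec:def}, is the structural heart of the argument.

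\emph{Passage to the wired spanning forest.} Combining the Majumdar--Dhar burning bijection between recurrent configurations on $V(L)$ and wired spanning trees of $V(L)$ with the description of the last wave, one obtains (up to finitely many local conditions near $o$ and $z$, each of positive probability and hence absorbable into the constant)
\[
  \nu_L\big(\eta(o) = 2d-1,\, z \in \wave_{L,N}\big) \ \ge\ c\, \prob_{\UST(V(L))}\big(\text{the path from $z$ to $\partial V(L)$ in the tree passes through $o$}\big).
\]
Since $\UST(V(L)) \Rightarrow \wsf$ as $L \to \infty$, and since for $2 \le d \le 4$ the forest $\wsf = \wsf(\Z^d)$ is a single one-ended tree, the right-hand side tends to $c\, \prob_{\wsf}(o \in \gamma_z)$, where $\gamma_z$ is the unique infinite path from $z$ to the end of the tree. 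Generating $\wsf$ by Wilson's algorithm rooted at infinity identifies $\gamma_z$ with the infinite loop-erased random walk from $z$, so the whole problem reduces to a lower bound on the probability that the loop-erased walk from $z$ visits the fixed vertex $o$.

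\emph{Loop-erased walk estimates, and the main obstacle.} For $d = 2$ this hitting probability is $|z|^{-3/4 + o(1)}$, by the sharp-up-to-$|z|^{o(1)}$ estimates for planar loop-erased walk, which themselves rest on the convergence of its scaling limit to $\SLE_2$ and on the growth exponent $5/4$ (Lawler--Schramm--Werner, Kenyon, Barlow--Masson, and Lawler's natural-parametrization work). For $d = 3$, the loop-erased walk in $\Z^3$ has a growth exponent $\beta$ with $1 < \beta < 5/3$; in particular $\beta > 1$, and the corresponding one-point lower bound gives $\prob_{\wsf}(o \in \gamma_z) \ge c\,|z|^{-(3-\beta)}$, so that $\zeta := 1 - \beta/2 < 1/2$ yields the bound $c\,|z|^{-2\zeta - 1}$. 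For $d = 4$, the expected length of the loop-erased walk before exiting the ball of radius $2|z|$ about $z$ is of order $|z|^2 (\log |z|)^{-1/3}$, and the matching one-point lower bound gives $\prob_{\wsf}(o \in \gamma_z) \ge c\,|z|^{-2} (\log |z|)^{-1/3}$. The difficulty is split between the two sides. On the sandpile side one must make the last-wave/spanning-tree correspondence precise, verify that the local constraints at $o$ and $z$ cost only constant factors, and control the $L \to \infty$ limit uniformly --- in particular confirming that passing to the last wave does not worsen the decay beyond the stated rate. On the probabilistic side one needs genuine \emph{lower} bounds on the loop-erased hitting probabilities, not just estimates on the expected number of visited sites, combined in $d = 3$ with the strict inequality $\beta > 1$ and in $d = 4$ with the precise logarithmic power $1/3$; these last inputs --- sharp planar LERW estimates, Shiraishi-type estimates in $\Z^3$, and Lawler's four-dimensional logarithmic correction --- are, I expect, where the real work lies.
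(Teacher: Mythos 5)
Your structural outline is recognizably the same as the paper's: bound the toppling probability below by the last-wave event, pass to a spanning-tree/forest description via the wave bijection, and reduce to a loop-erased-walk estimate. Your preferred spanning-tree picture (the probability that $o$ lies on the path from $z$ to the sink in $\UST(V(L))$, equivalently that $z \in \past_o$) differs slightly from the paper's: the paper works directly with the two-rooted forest measure $\mu_{L,o}$ and the event $\{z\in\frT_{L,o},\, e\notin\frT_{L,o}\}$, whereas your reduction corresponds to the map $\Psi:\ST_L\to\ST_{L,o}$ that deletes the edge from $o$ towards the sink (this is exactly what the paper uses in Section~\ref{sec:lowerbd} for the radius lower bound). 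Both routes are valid. The important divergence is in the walk-level target. You want a lower bound on the \emph{one-point function} of the infinite LERW, $\prob(z\in\hS_o)$; the paper instead factorizes (via Wilson's algorithm starting at a neighbour $e$ of $o$ and then at $z$, plus time-reversal) into a product $\mathrm{Es}(|z|)\cdot\prob(z\in S_o[0,\infty))$ of a non-intersection probability between a LERW and an independent SRW near the origin and a random-walk hitting probability. The quantities are of the same order, but the literature results are stated for the factored form, not for the one-point function. This matters: in $d=3$ the statement that the growth exponent $\beta>1$ of 3D LERW yields a one-point lower bound $\prob(z\in\hS_o)\ge c|z|^{\beta-3}$ is not an available result — knowing $\E|\hS\cap B(n)|\asymp n^\beta$ does not, by itself, control $\prob(z\in\hS_o)$ from below at a fixed point. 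The paper instead feeds in Lawler's cut-time estimate, which gives $\mathrm{Es}(n)\ge c\,n^{-2\zeta}$ directly, and its $\zeta<1/2$ is defined via the SRW/LERW non-intersection exponent, not via the growth exponent. Similarly, in $d=4$ the ``matching one-point lower bound'' from the expected length is not automatic; what is available is Lawler's $\mathrm{Es}(n)\asymp(\log n)^{-1/3}$. In short: proving the one-point lower bound you need is, essentially, proving the factorization that the paper establishes through Lemmas~\ref{lem:rwprob}--\ref{lem:Gamma_z,L} (time-reversal, Green-function ratio, the separation lemma to steer the walks apart), and you cannot shortcut that step by citing the growth exponent.

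The second genuine gap is the passage from $\nu_L$ to $\nu$. You write $\nu(z\in\av)\ge\limsup_L\nu_L(z\in\wave_{L,N})$, but $\{z\in\av\}$ is not a cylinder event on $\Z^2$ (it depends on whether the avalanche is infinite), so weak convergence alone does not give you this inequality, and there is no monotonicity between the measures $\nu_L$ and $\nu$ that would substitute for it. For $d\ge3$ the paper handles this through a.s.\ finiteness of avalanches, but in $d=2$ finiteness of avalanches under $\nu$ is open, and the paper spends all of Section~\ref{sec:last-k-waves} constructing the last-$k$-waves limit (Theorem~\ref{thm:last-k-limit} and Lemma~\ref{lem:approx}) precisely to justify $\nu(z\in\av)=\lim_L\nu_L(z\in\av)$. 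You flag this as a difficulty, but the proposal offers no mechanism for it, and it is not absorbable into constants; it needs the machinery the paper develops.
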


In Theorem \ref{thm:topple} (ii), $\zeta$ can be taken to be any value such that a random walk in $\Z^3$ 
of length $n$ does not hit the loop-erasure of an independent random walk of 
length $n$ with probability $\ge c' n^{-\zeta}$ for some $c' > 0$. We prefer to write the bound (ii) in this form to emphasize the dependence on this exponent, whose value is of interest in the theory of loop-erased walks. The exponent $\zeta$ is known to satisfy the bound $\zeta < 1/2$; see \cite[Sections 10.3 and 11.5]{Lawlim}.

The rigorous upper bound $\nu(z \in \av) \le C |z|^{2-d}$, for some $C = C(d)$, 
follows from Dhar's formula (see \cite[Eqn.~(3.5)]{JR08}). In dimensions $d \ge 5$, J\'arai, Redig and Saada \cite[Section 6.2]{JRS15} 
proved that $\nu ( z \in \av ) \ge c |z|^{2-d}$, for some $c = c(d)$, also based on an analysis
of the last wave. \cite{JRS15} introduced the critical exponent $\theta$ to quantify the departure from Dhar's formula, assuming 
that $\nu ( z \in \av ) \approx |z|^{2-d-\theta}$, as $|z| \to \infty$. 
This means that $\theta = 0$ when $d \ge 5$. Our Theorem \ref{thm:topple} 
shows that if $\theta$ exists in the sense that $\log(\nu(z \in \av)) / \log(|z|)$ exists as $|z| \rightarrow \infty$ (``logarithmic equivalence''), then
\eqnst
{ 0 
  \le \theta 
  \ \ \begin{cases}
  \le 3/4                    & \text{when $d = 2$,} \\
  < 1                      & \text{when $d = 3$,} \\
  = 0                      & \text{when $d = 4$.}
  \end{cases} }
In particular, Theorem \ref{thm:topple}(iii) establishes that $\theta = 0$ when
$d = 4$, with at most a logarithmic correction.

The reason behind the fact that $\theta = 0$ for $d \geq 5$ is that in these dimensions loop-erased walk and independent simple random walk do not intersect with positive probability. The difference in behavior when $d \geq 5$ also shows up in our other results, and $d = 4$ is expected to be the upper critical dimension of the model, in the sense that critical exponents are no longer expected to depend on dimension when $d \geq 5$ \cite{P00}.  We expect that $\theta$ is positive in dimensions two and three, in analogy with other statistical physics models below the upper critical dimension. However, it seems difficult to get rigorous upper bounds improving on Dhar's formula, since any such bound would have to control all waves of the avalanche. For the last wave, we have a precise characterization in terms of loop-erased walk; however, we lack a convenient description of the joint distribution of all waves of the avalanche. For similar reasons, we do not expect the bounds coming from last waves in low dimensions to be tight.

Our next set of results concern the radius of the toppled region. Let
$R = R(\eta) =  \sup \{ |z| : z \in \av(\eta) \}$ be the \textbf{radius} of the avalanche.
As we explain below, some of the following inequalities are easy 
consequences of Theorem \ref{thm:topple}, while some others follow from known
results on uniform spanning forests of $\Z^d$. 

\begin{thm}
\label{thm:radius}
\ \\
(i) Let $d = 2$. Then,
\eqnsplst
{ r^{-3/4 + o(1)}
  &\le \nu ( R \ge r ), \quad \text{as $r \to \infty$.} }
(ii) Let $d = 3$. There are constants $c > 0$ and $C$ such that with $\zeta$ as in
Theorem \ref{thm:topple} we have
\eqnst
{ c r^{-(2\zeta+1)} 
  \le \nu ( R \ge r )
  \le C r^{-1/6}, \quad \forall r \ge 1. }
(iii) Let $d = 4$.  Then there exist constants $c > 0$ and $C$ such that
\eqnst
{c r^{-2} \left(\log r \right)^{-1/3}  
  \le \nu ( R \ge r )
  \le C r^{-1/4}, \quad \forall r \ge 1. }
(iv) Let $d \ge 5$. There is a constant $c = c(d) > 0$ such that 
\eqnst
{ c r^{-2}
  \le \nu ( R \ge r )
  \le r^{-2} ( \log r )^{3 + o(1)}, \quad \forall r \ge 1. }
\end{thm}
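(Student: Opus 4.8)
I would begin by reading off the lower bounds in (i)--(iii) --- which are the whole of (i) --- directly from Theorem~\ref{thm:topple}. Since $z\in\av$ forces $R\ge|z|$, choosing for each $r$ a vertex $z=z(r)$ with $r\le|z|\le 2r$ gives $\nu(R\ge r)\ge\nu(z\in\av)$, and the three estimates of Theorem~\ref{thm:topple}, evaluated at $|z|\asymp r$, become $r^{-3/4+o(1)}$ when $d=2$, $c\,r^{-(2\zeta+1)}$ when $d=3$, and $c\,r^{-2}(\log r)^{-1/3}$ when $d=4$.

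For the remaining four inequalities --- the upper bounds in (ii)--(iv) and the lower bound in (iv) --- the plan is to pass to the wired spanning forest. The waves of an avalanche are nested, $\wave_{L,1}\supseteq\cdots\supseteq\wave_{L,N}$, so the avalanche cluster is exactly the first wave and $R$ is its radius. By the Ivashkevich--Ktitarev--Priezzhev correspondence between waves and spanning trees (Section~\ref{sec:def}), conditionally on the event $\{\eta(o)=2d-1\}=\{\av\neq\emptyset\}$ the first wave has the law of the past of $o$ in the uniform spanning tree of $V(L)\cup\{s\}$ (the set of vertices whose path to the sink $s$ passes through $o$), up to a modification of the tree inside a fixed ball around $o$. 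Letting $L\to\infty$, using $\nu_L\Rightarrow\nu$ and the convergence of the uniform spanning tree of $V(L)\cup\{s\}$ to the wired spanning forest $\wsf$ of $\Zd$, and absorbing the local modification into multiplicative constants by a finite-energy argument for $\wsf$, one obtains
\[
  c\,\prob_{\wsf}\bigl(\mathrm{rad}\,\past(o)\ge r\bigr)
  \ \le\ \nu(R\ge r)\ \le\
  C\,\prob_{\wsf}\bigl(\mathrm{rad}\,\past(o)\ge r\bigr),\qquad r\ge1 ,
\]
where $\past(o)$ denotes the past of $o$ in $\wsf$.

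What remains is to bound the radius tail of $\past(o)$, for which I would invoke known results on the wired spanning forest of $\Zd$. For $d\ge5$, $\past(o)$ is almost surely finite and, by the mean-field geometry of the high-dimensional spanning forest (the past of a vertex behaves like a critical Galton--Watson tree carrying an independent spatial random walk), $\prob_{\wsf}(\mathrm{rad}\,\past(o)\ge r)\asymp r^{-2}$; the lower estimate gives the lower bound in (iv), while carrying out the matching upper estimate over the $O(\log r)$ dyadic scales between $1$ and $r$ costs the stated polylogarithmic factor. For $d=3,4$ the forest is a single one-ended tree, $\past(o)$ is again a.s.\ finite, and I would estimate its radius through the loop-erased-walk picture of $\wsf$: writing $\gamma_o$ for the branch of $\wsf$ from $o$ to infinity (a loop-erased random walk), one has $z\in\past(o)$ precisely when simple random walk started at $z$, run until it first meets $\gamma_o$, is then located at $o$; combined with escape-probability (capacity and harmonic-measure) estimates for random walk relative to the path $\gamma_o$, this yields the --- far from sharp --- upper bounds $C\,r^{-1/6}$ for $d=3$ and $C\,r^{-1/4}$ for $d=4$.

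The main obstacle is twofold. On the technical side, one must transfer the finite-volume waves/spanning-tree description to the infinite-volume measure $\nu$ with care: controlling the conditioning $\{\eta(o)=2d-1\}$ and the ensuing bounded modification of the tree uniformly in $L$, and checking that $\{R\ge r\}$ and the relevant macroscopic spanning-forest event are comparable up to constants. More fundamentally, in $d=3,4$ the spanning-forest radius estimates used above are very far from the conjectured behaviour, of order $r^{-2}$ up to corrections, and sharpening them --- or even identifying the correct exponent --- would require understanding the \emph{joint} law of all the waves, equivalently the fine geometry of the uniform spanning tree of $\Zd$ below the critical dimension, which is out of reach. Finally, (i) carries no upper bound because in $d=2$ it is not even known whether avalanches are a.s.\ finite, so no power-law upper bound on $R$ can hold; this is also why the two-dimensional upper bounds referred to in the abstract concern only the last few waves, which are a.s.\ finite.
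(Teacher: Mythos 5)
Your handling of the lower bounds in (i)--(iii) is correct and is exactly the argument in the paper: take $z = r\,\unitv_1$ and read off $\nu(R\ge r)\ge\nu(r\,\unitv_1\in\av)$ from Theorem~\ref{thm:topple}.

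For the remaining four inequalities, however, there are genuine gaps, and the route you propose diverges from the paper's in ways that matter.

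\emph{The reduction to a spanning-forest event.} Your claim that, conditionally on $\{\eta(o)=2d-1\}$, the first wave has the law of $\past(o)$ in a uniform spanning tree ``up to a modification inside a fixed ball'' is not something the paper uses, and it is not clear how to make it rigorous: under the intermediate-configuration bijection, the first waves map to a proper subset of $\ST_{L,o}$, not to the whole set, and they are \emph{not} uniformly distributed on $\ST_{L,o}$. The paper sidesteps conditioning entirely with a counting bound: since every wave of radius $>r$ is a distinct intermediate configuration, $\nu_L(R>r)\le |\recur'_L\setminus\recur_L|\,|\recur_L|^{-1}\,\mu_{L,o}(R(\frT_{L,o})>r) = g_L(o,o)\,\mu_{L,o}(R(\frT_{L,o})>r)$, using Lemma~\ref{lem:wave-to-recur}. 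Passing to the limit gives a comparison with $\wsfo$, not $\wsf$; if you want to work with $\past(o)$ under $\wsf$ you additionally need the stochastic comparison of \cite[Lemma~3.2]{LMS}, which you do not invoke. (The paper does use $\past_o(\frT)$ under $\wsf$ via the map $\Psi:\ST_L\to\ST_{L,o}$, but only for the lower bound in (iv), where the inequality $\nu(R>r)\ge (2d)^{-1}\wsf(R(\past_o(\frT))>r)$ goes the right way.)

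\emph{The $d=3,4$ upper-bound exponents.} You propose to get $r^{-1/6}$ and $r^{-1/4}$ from capacity/harmonic-measure estimates for random walk against the branch of $\wsf$ to infinity. That is not where these exponents come from: they are $\beta_d = \tfrac12 - \tfrac1d$, and the bound $\wsfo(\diam\frT_o>r)\le C r^{-\beta_d}$ is proved by Lyons--Morris--Schramm using Morris's conductance martingale, a quite different mechanism. It is not evident that a direct LERW hitting argument recovers the same exponent, and the proposal offers no indication of how it would.

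\emph{The $d\ge5$ bounds.} You treat $\prob_{\wsf}(\diam\past(o)>r)\asymp r^{-2}$ as a known mean-field fact. It is not in the literature in this form (the published LMS bound gives only $r^{-(1/2-1/d)}$, which for $d\ge5$ is far weaker than $r^{-2}$), and establishing it is one of the technical contributions of this paper. The lower bound comes from the mass-transport inequality of Theorem~\ref{thm:rad-gen}(i), combined with Green-function moment computations; the upper bound needs Theorem~\ref{thm:rad-gen}(iii) (adapted to $\wsfo$, which is not automorphism-invariant, using unimodularity of the rooted tree $(\frT_o,o)$) together with the lower-tail volume estimate of \cite{BJ15} (Theorem~\ref{thm:BJ12}) to control the inverse-volume conditional expectation. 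Your one-sentence appeal to ``dyadic scales costing a polylogarithmic factor'' does not engage with any of this, and the Galton--Watson heuristic, while a reasonable source of intuition, is not a proof. In short: (i)--(iii) lower bounds stand; the rest of the proposal identifies the right kind of object to study but leaves the actual estimates --- which are the substance of this theorem --- unproved, and in the $d=3,4$ case points to a technique that would not deliver the stated exponents.
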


The lower bounds of Theorem \ref{thm:radius} in dimensions $2,\,3,\,$ and $4$ follow from taking $z = r e_1$ in Theorem \ref{thm:topple}, where $e_1 = (1,0,\dots,0) \in \Z^d$. In dimensions $d \ge 3$, upper bounds can be
derived from results of Lyons, Morris and Schramm \cite{LMS}. They analyzed,
using the ``conductance martingale'' of Morris \cite{Morris},
the wired uniform spanning forest measure $\wsf$ on transient graphs,
including $\Z^d$ for $d \ge 3$, as well as a related measure $\wsfo$, obtained
by ``wiring $o$ to infinity''. See the book \cite{LPbook} for detailed background
on wired spanning forests. 
Let $\tree_o$ denote the component of $o$ under $\wsfo$. The proof of
\cite[Theorem 4.1]{LMS} shows that for $d\ge 3,$
\eqn{e:wsfo-rad-bnd}
{ \wsfo ( \diam(\tree_o) > r ) 
  \le C(d) r^{-\frac{1}{2}+\frac{1}{d}}. }
The measure $\wsfo$ can be related to
waves in sandpiles; in particular, this was used by 
J\'arai and Redig \cite{JR08} to show that 
when $d \ge 3$, avalanches are finite $\nu$-a.s.
We derive the upper bounds in Theorem \ref{thm:radius}(ii)--(iii) 
from \eqref{e:wsfo-rad-bnd}.

Above the critical dimension, $d \ge 5$, Priezzhev \cite{P00} gave heuristic
arguments for the mean-field behaviour $\nu ( R \ge r ) \approx r^{-2}$.
Both the lower bound $\nu(R \geq r) \geq \nu(r e_1 \in \av)$ and  the upper bound of \eqref{e:wsfo-rad-bnd} can be sharpened to 
establish this rigorously, in the sense of logarithmic equivalence. 
On the other hand, Theorem \ref{thm:radius}(ii)--(iii) establishes that, if a critical 
exponent $\alpha$ satisfying $\nu(R > r) \approx r^{-\alpha}$ governs 
the tail of $R$ in low dimensions, then this $\alpha$ is different 
from the mean-field value $2$.

We deduce the lower and upper bounds in Theorem \ref{thm:radius}(iv)
from very general mass transport arguments, stated in Theorem \ref{thm:rad-gen} 
below; see \cite[Chapter 8]{LPbook} for background on mass transport. While the main focus of this paper is sandpile models on $\Z^d$, we believe this result may be useful on other graphs and for other models. The proof is in Section \ref{ssec:rad-past-lb}, and is independent of 
the rest of the paper.
Let $G=(V,E)$ be a graph and let $\Gamma\subset Aut(G)$ be a transitive subgroup of the group of automorphisms of $G$, under the topology of pointwise convergence. It is well known that every closed subgroup of $Aut(G)$ has a Borel measure which is invariant under the left multiplication by $\gamma \in \Gamma.$ The group $\Gamma$ is called unimodular if this measure is also invariant under right multiplication. In addition, we call the graph G unimodular if $Aut(G)$ has some unimodular transitive closed subgroup. In this setting the mass transport principle states that for $o\in V(G)$ and a non-negative function $f:V\times V\rightarrow [0,\infty]$, which is invariant under the diagonal action of $\Gamma$, we have  $\sum_{x \in V} f(o, x) = \sum_{x\in V} f(x,o)$.    Let $d$ be a $\Gamma$-invariant metric 
on $V$, and write $\diam(A;x) = \sup \{ d(v,x) : v \in A \}$, and let 
$\diam(A) = \diam(A;o)$. Write $D_x(r) = \{ y \in V : d(y,x) \le r \}$.
We say that an infinite tree $T$ has \textbf{one end}, if any two infinite self-avoiding paths in $T$ have a finite symmetric difference. Given $x \in T$, we denote by $\past_x$ the set of vertices $y \in T$ such that the unique infinite self-avoiding path in $T$ starting at $y$ contains $x$. 
By a \textbf{percolation} on $(V,E)$, we mean a probability 
measure on subgraphs of $(V,E)$. Given a percolation, we write $\frC_x$ 
for the connected component of $x$. When the percolation is supported
on spanning forests, we write $\frT_x$ for $\frC_x$.

\begin{thm}
\label{thm:rad-gen} 
Let $(V,E)$ be a graph with a transitive unimodular group of automorphisms $\Gamma$, and let $o \in V$ be a fixed vertex. Let $\mu$ be a $\Gamma$-invariant percolation on $(V,E)$. \\
(i) If $\mu$ is supported on spanning forests with one-ended components, then
\eqnst
{ \mu \big( \diam(\past_o) > r \big)
  \ge \sum_{x \in V : r < d(x,o) \le 2r} \frac{\mu(o \in \past_x)^2}{\E_\mu 
     \Big[ \big| \frT_o \cap D_o(4r) \big| \, \mathbf{1}_{o \in \past_x} \Big]}. }
(ii) We have
\eqnst
{ \mu \big( \diam(\frC_o) > 4r \big)
  = \sum_{x \in V : r < d(x,o) \le 4r} \mu ( o \in \frC_x ) \,
     \E_\mu \bigg[ \frac{\mathbf{1}_{\diam(\frC_x;x) > 4r}}{| \frC_x \cap D_x(4r) 
     \setminus D_x(r) |} \,\bigg| o \in \frC_x 
     \bigg]. }
(iii) Suppose that $\wsfo ( |\frT_o| < \infty ) = 1$. Then
\eqnsplst
{ &\wsfo \big( \diam(\frT_o) > 4r \big) \\
  &\qquad = \sum_{x \in V : r < d(x,o) \le 4r} \wsf_x ( o \in \frT_x ) \,
     \E_{\wsf_x} \bigg[ \frac{\mathbf{1}_{\diam(\frT_x;x) > 4r}}{| \frT_x \cap D_x(4r) 
     \setminus D_x(r) |} \,\bigg| o \in \frT_x 
     \bigg]. }
\end{thm}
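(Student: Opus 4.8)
The plan is to prove all three statements by the mass transport principle, applying it to transport rules that, on the event $\{\diam > 4r\}$ (or $\{\diam > r\}$), spread a unit of mass from $o$ evenly over the vertices of a dyadic annulus around $o$ lying in the relevant cluster. For part (ii) I would apply the mass transport principle to the $\mu$-expectation of
\[
  f(x,y) = \mathbf{1}_{y \in \frC_x}\, \mathbf{1}_{r < d(x,y) \le 4r}\,
  \frac{\mathbf{1}_{\diam(\frC_x;x) > 4r}}{\big|\, \frC_x \cap D_x(4r) \setminus D_x(r)\,\big|}
\]
(with the convention $0/0 = 0$); since $d$ is $\Gamma$-invariant and $x \mapsto \frC_x$ is $\Gamma$-equivariant, $\E_\mu f$ is invariant under the diagonal action of $\Gamma$. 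Summing the out-mass over $y$, the count $\sum_y \mathbf{1}_{y \in \frC_o}\mathbf{1}_{r < d(o,y) \le 4r}$ is exactly the denominator $|\frC_o \cap D_o(4r)\setminus D_o(r)|$, so $\sum_y f(o,y)$ reduces to $\mathbf{1}_{\diam(\frC_o;o) > 4r}$ --- the one thing to check being that this denominator is nonzero whenever $\diam(\frC_o;o) > 4r$, which holds because the connected set $\frC_o$ contains $o$ together with a vertex at distance $> 4r$ and therefore meets the annulus. Taking the $\mu$-expectation of the in-mass $\sum_x f(x,o)$ then reproduces the right-hand side of (ii) (using $\diam(\frC_o) = \diam(\frC_o;o)$), and no finiteness hypothesis is required since the out-mass is $\le 1$ and the in-mass is at most $|D_o(4r)| < \infty$.

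For part (i) I would run the same computation with $\past_x$ in place of $\frC_x$ and the inner annulus $\{r < d(\cdot,o) \le 2r\}$; here $\past_x$ is a connected subtree containing $x$ --- and finite, as one-endedness of the components forces --- so the out-mass again reduces, now to $\mathbf{1}_{\diam(\past_o;o) > r}$. For the in-mass, $o \in \past_x$ and $d(x,o) > r$ force $\diam(\past_x;x) \ge d(x,o) > r$, so that indicator is automatic and the mass transport principle yields the identity
\[
  \mu\big(\diam(\past_o) > r\big)
  = \sum_{x : r < d(x,o) \le 2r} \E_\mu\!\left[ \frac{\mathbf{1}_{o \in \past_x}}{\big|\, \past_x \cap D_x(2r)\setminus D_x(r)\,\big|}\right].
\]
Part (i) then follows by two elementary observations applied term by term: the Cauchy--Schwarz bound $\E_\mu[\mathbf{1}_A / W] \ge \mu(A)^2 / \E_\mu[\mathbf{1}_A W]$ (with $A = \{o \in \past_x\}$ and $W = |\past_x \cap D_x(2r)\setminus D_x(r)|$), and the inclusion $\past_x \cap D_x(2r)\setminus D_x(r) \subseteq \frT_o \cap D_o(4r)$ valid on $A$ when $d(x,o) \le 2r$ (then $\frT_x = \frT_o$ and $D_x(2r) \subseteq D_o(4r)$), which bounds $\E_\mu[\mathbf{1}_A W]$ above by $\E_\mu\big[|\frT_o \cap D_o(4r)|\,\mathbf{1}_{o \in \past_x}\big]$.

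Part (iii) needs one extra idea, which I regard as the main point: $\wsfo$ distinguishes the vertex $o$, hence is invariant only under the stabiliser of $o$ and not under all of $\Gamma$, so (ii) does not apply to it. Instead I would use the mass transport principle in its form for a $\Gamma$-equivariant family of measures: if $\{\mu_x\}_{x \in V}$ satisfies $\gamma_\ast \mu_x = \mu_{\gamma x}$ for all $\gamma \in \Gamma$ and $g(x,y,\cdot) \ge 0$ is diagonally equivariant, then $\E_{\mu_o}\big[\sum_y g(o,y,\cdot)\big] = \sum_y \E_{\mu_y}\big[g(y,o,\cdot)\big]$; this is simply the stated mass transport principle applied to the deterministic function $(x,y) \mapsto \E_{\mu_x}[g(x,y,\cdot)]$, which one checks is diagonally $\Gamma$-invariant using $\gamma_\ast\mu_x = \mu_{\gamma x}$ and a change of variables. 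The family $\{\wsf_x\}_{x \in V}$ (so that $\mu_o = \wsfo$ and $\frT_x$ is the component of $x$) has this equivariance by construction, and applying the family principle with the same rule $g$ as in (ii) gives exactly (iii): the out-mass at $o$ reduces to $\mathbf{1}_{\diam(\frT_o;o) > 4r}$ with $\wsfo$-expectation $\wsfo(\diam(\frT_o) > 4r)$, while the expected in-mass at $o$ is $\sum_{x : r < d(x,o) \le 4r} \wsf_x(o \in \frT_x)\,\E_{\wsf_x}[\cdots \mid o \in \frT_x]$; the hypothesis $\wsfo(|\frT_o| < \infty) = 1$ is not actually needed for the identity, only to ensure finiteness and to make it informative. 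The single genuine technical point, recurring in all three parts, is the geometric fact that a connected set reaching distance $> 4r$ from $o$ must meet the annulus $\{r < d(\cdot,o) \le 4r\}$; this is where I would invoke $r \ge 1$ together with the metric being graph-like (each edge changing $d(\cdot,o)$ by at most $1$).
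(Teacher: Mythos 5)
Your proposal is correct and follows essentially the same route as the paper: the identical mass-transport rules (unit mass spread over the cluster's intersection with a dyadic annulus), the same Jensen/Cauchy--Schwarz step in (i), the same containment $\past_x \cap A_x(r,2r) \subset \frT_o \cap D_o(4r)$, and in (iii) the same device of averaging the transport function over the equivariant family $\{\wsf_x\}$ to obtain a deterministic diagonally $\Gamma$-invariant function. You are in fact a bit more careful than the paper in a couple of places (flagging that the annulus intersection is nonempty only because the metric is graph-like, and writing $\diam(\frC_x;x)$ rather than $\diam(\frC_x)$ in the transport rule for (ii) to preserve diagonal invariance), but the argument is the same one.
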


Regarding the upper bound in Theorem \ref{thm:radius} (iv), Lyons, Morris and Schramm state the result
\eqn{e:wsf-rad-bnd}
{ \wsf ( \diam(\past_o) > r ) 
  \le r^{-2} (\log r)^{O(1)}; } 
see \cite[page 1710]{LMS}. However, since a proof of \eqref{e:wsf-rad-bnd} 
is not included in \cite{LMS}, and we need a sharpening of \eqref{e:wsfo-rad-bnd} 
for our results, we deduce a diameter estimate for $\frT_o$ under
$\wsfo$ from Theorem \ref{thm:rad-gen}(iii). (This implies \eqref{e:wsf-rad-bnd} 
due to a stochastic comparison; see \cite[Lemma 3.2]{LMS}). 
In order to deal with the fact that $\wsfo$ is not translation invariant, 
we restrict attention to $\frT_o$, which is unimodular; see 
Section \ref{ssec:rad-past-lb}.

We do not have an upper bound on $\nu ( R \ge r )$ in $d = 2$, and it is 
an open problem whether $\nu ( R < \infty ) = 1$. It follows from 
Theorem \ref{thm:radius}(i) that $\E_\nu R = \infty$, when $d = 2$. 
It may be of independent interest that a short proof of the weaker
statement, that $\E_{\nu_L} R$ diverges, can be 
given without reference to spanning trees or the burning bijection 
described in Section \ref{ssec:trees}. We state this as a 
separate result.

\begin{prop}
\label{thm:simple_rad}
If $d = 2$, then $\lim_{L \to \infty} \mathbb{E}_{\nu_L} R = \infty$.
\end{prop}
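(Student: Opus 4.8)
The plan is to combine Dhar's formula for the number of topplings at the origin with the elementary fact that many topplings at $o$ force the avalanche to be spatially extended; no spanning trees or burning bijection enter. For a stable configuration $\eta$ and a vertex $z$, let $u_\eta(z)$ denote the number of times $z$ topples during the stabilisation of $\eta+\delta_o$, and let $\eta^\circ$ be the resulting stable configuration. Dhar's formula for the stationary measure $\nu_L$ reads
\[
 \E_{\nu_L}\big[u_\eta(o)\big]=\tfrac{1}{2d}\,G_{V(L)}(o,o),
\]
where $G_{V(L)}(\cdot,\cdot)$ is the Green's function of simple random walk on $\Zd$ killed on exiting $V(L)$. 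In $d=2$ the walk is recurrent, so $G_{V(L)}(o,o)=\E_o\!\big[\#\{0\le n<\tau_{V(L)}:S_n=o\}\big]\to\infty$ (in fact of order $\log L$), and hence $\E_{\nu_L}[u_\eta(o)]\to\infty$ as $L\to\infty$.

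The deterministic bridge is the pointwise inequality $u_\eta(o)\le (R(\eta)+1)^2$. To see it, note that $u_\eta$ solves $\sum_{y\sim z}u_\eta(y)-2d\,u_\eta(z)=\eta^\circ(z)-\eta(z)-\delta_o(z)$ for every $z$, with $u_\eta\equiv 0$ off $\av(\eta)$; the right-hand side is bounded below by $-2d$, while $\sum_{y\sim z}|y|^2-2d|z|^2\equiv 2d$ on $\Z^2$, so $z\mapsto u_\eta(z)+|z|^2$ is subharmonic on $\Z^2$. The maximum principle on the ball $\{z:|z|\le R(\eta)\}$, together with the fact that simple random walk leaves this ball within distance $1$ of its boundary and that $u_\eta$ vanishes off the ball, yields $u_\eta(o)\le(R(\eta)+1)^2$ (equivalently, $u_\eta(o)$ is at most the expected time for simple random walk started at $o$ to leave $\av(\eta)\subseteq\{|z|\le R(\eta)\}$, which is $\le(R(\eta)+1)^2$). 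Via the wave decomposition one also sees that all neighbours of $o$ topple in each of the first $u_\eta(o)-1$ waves, so $u_\eta(e)\ge u_\eta(o)-1$ for any neighbour $e$ of $o$, and the same torsion estimate centred at $e$ gives a comparable bound. Consequently $R(\eta)\ge\sqrt{u_\eta(o)}-1$ pointwise, and therefore
\[
 \E_{\nu_L}[R]\ \ge\ \E_{\nu_L}\!\big[\sqrt{u_\eta(o)}\big]-1 .
\]

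The remaining task -- and the step I expect to be the real obstacle -- is to upgrade $\E_{\nu_L}[u_\eta(o)]\to\infty$ to $\E_{\nu_L}[\sqrt{u_\eta(o)}]\to\infty$; this does not follow formally, since a priori $u_\eta(o)$ could carry its mass on an event of vanishing probability, the square-root relation above cannot be improved deterministically (the ``full disc'' configuration realises $u_\eta(o)\asymp R^2$), and no a priori upper bound on $\nu_L(R\ge r)$ is available when $d=2$. I would attempt to close the gap by separating ``small'' and ``large'' avalanches: for each fixed $r$, on $\{R\le r\}$ one has $u_\eta(o)\le(r+1)^2$, and $u_\eta(o)\,\mathbf 1_{\{R\le r\}}$ is a bounded function of $\eta$ restricted to a bounded neighbourhood of $o$, so by the weak convergence $\nu_L\Rightarrow\nu$ one gets $\E_{\nu_L}\!\big[u_\eta(o)\,\mathbf 1_{\{R\le r\}}\big]\to\E_\nu\!\big[u_\eta(o)\,\mathbf 1_{\{R\le r\}}\big]\le (r+1)^2$; hence the diverging part of $\E_{\nu_L}[u_\eta(o)]$ is carried by $\{R>r\}$, for every fixed $r$. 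Converting this -- using the same locality at every dyadic scale of $R$, the bound $u_\eta(o)\le(R+1)^2$ on $\{R>r\}$, and $\liminf_L\nu_L(R>r)\ge\nu(R>r)>0$ -- into a genuine lower bound on $\E_{\nu_L}[R]$ is the crux; morally it amounts to quantifying how heavy the lower tail of $u_\eta(o)$ is, equivalently how slowly $\nu(R>r)$ decays, which is what has to be done without the loop-erased-walk description of the last wave.
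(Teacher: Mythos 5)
Your overall strategy --- Dhar's formula for $\E_{\nu_L}[n(o,o)]$ combined with a deterministic pointwise comparison between $n(o,o;\eta)$ (your $u_\eta(o)$) and the avalanche radius $R(\eta)$ --- is exactly the paper's. The difference, and the source of the gap you yourself flag, is that your deterministic bound is too weak. The subharmonicity/torsion argument only yields $n(o,o;\eta) \le (R(\eta)+1)^2$, which would force you to upgrade $\E_{\nu_L}[n(o,o)] \to \infty$ to $\E_{\nu_L}\big[\sqrt{n(o,o)}\big] \to \infty$. As you note, with no a priori control of $\nu_L(R>r)$ when $d=2$ that implication is not available, and the small/large-scale decomposition you sketch does not close it. As written, the proposal is therefore not a complete proof.

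The missing ingredient is that the quadratic bound can in fact be improved to a linear one, and your parenthetical assertion that it ``cannot be improved deterministically (the `full disc' configuration realises $u_\eta(o)\asymp R^2$)'' is incorrect. The paper's Lemma~\ref{lem:ptwise} proves $n(o,o;\eta)\le R(\eta)$ pointwise: since $\eta|_{V(R)}$ is dominated by the maximal configuration $\phi_R$ from \cite{FR08} (height $2d-1$ on $V(R)$ and $2d-2$ outside), one has $n(o,o;\eta)\le n(o,o;\phi_R)$, and a direct wave-by-wave computation --- successive shells collapse inward --- gives $n(o,o;\phi_R)=R$, which is \emph{linear}, not quadratic, in $R$. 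Your torsion bound is loose because it uses only the pointwise inequality $\Delta u_\eta\ge -2d$, a bound saturated at essentially no site of the avalanche; the exact sandpile structure is much more restrictive. With the linear comparison in hand the proposition is immediate: $\E_{\nu_L}[R]\ \ge\ \E_{\nu_L}[n(o,o)]\ =\ g_{V(L)}(o,o)\ \to\ \infty$, with no passage to a square root and no interchange-of-limits or tail estimate required.
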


Our last set of results concern the number of topplings in the avalanche. 
Let $S$ denote the total number of topplings in the avalanche (that is, 
elements of $\av$ are counted with multiplicity). Recall that 
$\wave_{N,L}$ denotes the last wave in the finite graph $V_L$.
Based on the fractal dimension of loop-erased walk and scaling assumptions, 
Ivashkevich, Ktitarev and Priezzhev \cite{IKP94} derived the exponent
$\nu_{L} ( | \wave_{N,L} | \ge t ) \approx t^{-3/8}$, in the 
limit $L \to \infty$. We prove a rigorous lower bound with the
same exponent, which we also extend to higher dimensions. 
Above the critical dimension, $d \ge 5$, we also have an upper bound on the 
total number of topplings with an exponent which is independent of $d$. 
The upper bounds of Theorem \ref{thm:radius} on the radius in 
$d = 3, 4$ provide upper bounds on the size of the avalanche cluster.

Since the size of the avalanche cluster could be measured in two different ways -- namely, via $|\av|$ and via $S$ -- there are in principle two different possible critical exponents $\tau_{S'}$ and $\tau_{S}$ given (if they exist) by $\nu(|\av| > t) \approx t^{-\tau_{S'}}$, $\nu(S > t) \approx t^{-\tau_{S}}$. As in the preceding cases, our theorems give corresponding bounds on the possible values of $\tau_{S}, \, \tau_{S'}$. These bounds, as well as the best current bounds on the exponents $\theta$ and $\alpha$ described above, are summarized in Table \ref{tab:exp}.

\begin{thm}
\label{thm:size}
\ \\
(i) Let $d = 2$. Then 
\[ t^{-3/8 + o(1)}
   \le \nu ( |\av| \ge t ), \quad \text{as $t \to \infty$.} \]
(ii) Let $d = 3$. With $\zeta$ as in Theorem \ref{thm:topple}, and for some constants 
$C$ and $c > 0$, we have
\[ c t^{-(2\zeta+1)/3}
   \le \nu ( |\av| \ge t )
   \le C t^{-1/18}, \quad \forall t \ge 1. \]
Moreover, $\nu ( S \ge t ) \le C t^{-1/19}$, $\forall t \ge 1$.\\
(iii) Let $d = 4$. There exists $C$ and $c > 0$ such that 
\[ c t^{-1/2} (\log t)^{-5/6}
   \le \nu ( |\av| \ge t )
   \le C t^{-1/16}, \quad \forall t\geq 1.\]
Moreover, $\nu (S \ge t ) \le C t^{-1/17}$, $\forall t \ge 1$.\\
(iv) Let $d \ge 5$. There exist $c = c(d) > 0$ such that 
\eqnsplst
{ c t^{-1/2}
  \le \nu ( |\av| \ge t )
  \le \nu ( S \ge t )
  \le t^{-2/5 + o(1)}, \quad \forall t \ge 1. }
\end{thm}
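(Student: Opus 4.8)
The theorem falls into two essentially independent halves: the lower bounds, which all come from the analysis of the waves of an avalanche together with the wave--spanning-tree correspondence, and the upper bounds, which come from the radius estimates of Theorem~\ref{thm:radius} together with Dhar's formula for the odometer. I sketch each.

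\medskip

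\noindent\emph{Lower bounds.} The waves of an avalanche are nested, $\av = \wave_1 \supseteq \wave_2 \supseteq \dots \supseteq \wave_N$, so $\av = \wave_1$, and (via the burning bijection of Section~\ref{ssec:trees} and the wave analyses of \cite{IKP94,JR08,JRS15}, passing $L\to\infty$ exactly as is done for Theorem~\ref{thm:topple}) the law of $|\av|$ is controlled below by the volume of the past of $o$ in the relevant wired spanning forest of $\Zd$ --- for $d\ge3$ essentially the one-ended component $\frT_o$ of $o$ under $\wsfo$, whose diameter is $R$. I would bound this volume from below by multiplying two inputs: the \emph{radius} lower bound of Theorem~\ref{thm:radius} --- $\nu(R\ge r)$ is at least $r^{-3/4+o(1)}$ $(d=2)$, $c\,r^{-(2\zeta+1)}$ $(d=3)$, $c\,r^{-2}(\log r)^{-1/3}$ $(d=4)$, $c\,r^{-2}$ $(d\ge5)$ --- and a \emph{fattening} estimate: conditionally on the past reaching Euclidean distance $r$, it has $\gtrsim r^d$ vertices when $d\le4$, and $\gtrsim r^4$ vertices (the critical-branching volume) when $d\ge5$, with only a sub-polynomial loss in probability when $d=2$, a bounded loss when $d\ge3$, and an additional $(\log r)^{-1/2}$ factor (absorbed into the constant) when $d=4$. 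Inserting $t\asymp r^d$ for $d\le4$ and $t\asymp r^4$ for $d\ge5$ into (radius bound)$\,\times\,$(fattening) reproduces exactly the four lower bounds of the theorem.

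\medskip

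\noindent\emph{The main obstacle} is the fattening estimate. In the scaling picture a branch of the past that reaches distance $r$ is a loop-erased walk, and one must show that the bushes hanging off it fill a macroscopic fraction of $B_r$ (resp.\ a $\Theta(r^4)$-sized subset when $d\ge5$) with conditional probability bounded below. When $d=2$ I would condition on the future path $\gamma$ of $o$ --- a loop-erased walk from $o$ to $\infty$ --- and carry out a second-moment estimate for the size of the past given $\gamma$, using the description of the past via Wilson's algorithm run along $\gamma$, so that membership of a vertex $v$ in the past is controlled by where simple random walk from $v$ first hits $\gamma$; the same conditioning should work in $d=3,4$ and produce the stated logarithmic losses. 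For $d\ge5$ one may instead invoke the known sharp description of high-dimensional wired-spanning-forest components (each branch a two-dimensional loop-erased walk, the whole component behaving like a critical branching random walk), whose volume is known to have a $t^{-1/2}$ tail.

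\medskip

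\noindent\emph{Upper bounds.} The estimate on $|\av|$ for $d=3,4$ is immediate: a subset of $\Zd$ with $t$ vertices has Euclidean diameter at least $c_d\,t^{1/d}$, so $\{|\av|\ge t\}\subseteq\{R\ge c_d\,t^{1/d}\}$, and Theorem~\ref{thm:radius}(ii)--(iii) gives $\nu(|\av|\ge t)\le\nu(R\ge c_3\,t^{1/3})\le C t^{-1/18}$ and $\nu(|\av|\ge t)\le\nu(R\ge c_4\,t^{1/4})\le C t^{-1/16}$. For $S=\sum_x u(x)$, with $u$ the odometer, I would use Dhar's formula: $\E_{\nu_L}[u(x)] = (2d)^{-1}G_{V_L}(o,x)\le (2d)^{-1}G_{\Zd}(o,x)\le C|x|^{2-d}$ in $d\ge3$, and since avalanches are $\nu$-a.s.\ finite for $d\ge3$ this passes to the limit, $\E_\nu[u(x)]\le C|x|^{2-d}$. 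On $\{R\le\rho\}$ the odometer is supported on $B_\rho$, so $\E_\nu[S\,\mathbf 1_{R\le\rho}]\le\sum_{x\in B_\rho}\E_\nu[u(x)]\le C\rho^2$, whence $\nu(S\ge t,\,R\le\rho)\le C\rho^2/t$ by Markov, and therefore
\[ \nu(S\ge t)\ \le\ \nu(R>\rho) + \frac{C\rho^2}{t}. \]
Optimizing over $\rho$ against the radius tails of Theorem~\ref{thm:radius} ($\nu(R>\rho)\le C\rho^{-1/6}$ for $d=3$, $\le C\rho^{-1/4}$ for $d=4$, $\le\rho^{-2}(\log\rho)^{3+o(1)}$ for $d\ge5$) gives upper bounds for $\nu(S\ge t)$ of the stated form, and $|\av|\le S$ then yields the chain of inequalities in part~(iv). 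The only routine steps here are these optimizations and the passage to the limit in Dhar's formula; the genuine difficulty, as noted, lies entirely in the fattening estimate.
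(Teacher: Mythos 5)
Your proposal splits the statement into lower and upper bounds, and on the lower bounds your high-level picture matches the paper: the bound is reduced to a last-wave / spanning-forest event, the radius lower bound localizes the past at distance $r$, and a second-moment ``fattening'' estimate shows that the past then contains of order $r^d$ (resp.\ $r^4$) vertices. What you leave as ``the main obstacle,'' however, is precisely the content of the paper's Proposition~\ref{prop:moments-Y} (and, for $d\ge5$, of Theorem~\ref{thm:size-gen} together with the volume lower bound $\E\,|\tfrT_o(r/2)|\ge c\,r^4$ and Lawler's loop-free-point density). Those moment estimates --- especially the $d=4$ second moment, which requires a multi-scale dyadic decomposition of double-intersection events and is several pages long --- are the entire technical substance of the proof. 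Appealing to a ``known'' $t^{-1/2}$ tail for $|\past_o|$ under $\wsf$ when $d\ge5$ is also a gap: the lower bound $\wsf(|\past_o|>t)\ge c\,t^{-1/2}$ is proved in this paper (via the mass-transport inequality, Theorem~\ref{thm:size-gen}), not cited. So the lower-bound part of your proposal is a correct road map but not a proof.

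On the upper bounds your treatment of $\nu(|\av|\ge t)$ for $d=3,4$ is the same as the paper's. Your treatment of $\nu(S\ge t)$, on the other hand, is genuinely different from the paper's and --- if one checks the optimization --- actually gives \emph{strictly stronger} exponents. The paper decomposes $\{S>t\}$ by the number of waves $N$, controls $\nu(N>k)$ by $\E_\nu N$ (or by $N\le R$ for $d\ge5$), and applies a union bound over waves combined with the radius tail; this yields $t^{-1/19},t^{-1/17},t^{-2/5+o(1)}$ for $d=3,4,\ge5$. You instead pass to the limit in Dhar's formula to get $\E_\nu\,n(o,x)\le(2d)^{-1}G(o,x)\le C|x|^{2-d}$ for $d\ge3$ (which is justified by $n_M\uparrow n$ monotonically, the cylinder nature of $n_M$, and $g_L(o,x)\uparrow (2d)^{-1}G(o,x)$, so $\E_\nu n(o,x)\le\liminf_L g_L(o,x)$), deduce $\E_\nu[S\,\mathbf 1_{R\le\rho}]\le C\rho^2$ uniformly in $d\ge3$, and optimize $\nu(S\ge t)\le\nu(R>\rho)+C\rho^2/t$ against the radius tails from Theorem~\ref{thm:radius}. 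Carrying this out gives $t^{-1/13}$ for $d=3$, $t^{-1/9}$ for $d=4$, and $t^{-1/2+o(1)}$ for $d\ge5$ --- all sharper than the theorem's stated bounds. In particular for $d\ge5$ this matches the conjectured mean-field exponent $\tau_S=1/2$ and, combined with the lower bound and $|\av|\le S$, would pin down $\tau_S=\tau_{S'}=1/2$. Your argument sidesteps the wave decomposition entirely: rather than controlling the number of waves and their individual sizes (where dependence between waves is the stated obstacle in the paper), you use only the first moment of the total odometer, which Dhar's formula hands you for free. This is a clean and apparently correct improvement; it would be worth writing out in full.
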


We establish the lower bounds in dimensions $d = 2, 3$ by showing that 
once a vertex at distance $t^{1/d}$ from $o$ is in the last wave,
at least $c t$ other vertices in its neighbourhood will also be
in the last wave. In $d = 4$, $c t$ is replaced by $c t / \log t$.
Given this, parts (i)--(iii) of Theorem \ref{thm:size} can be deduced from 
parts (i)--(iii) of Theorem \ref{thm:topple}. For the lower bound in $d \ge 5$, in Theorem \ref{thm:size}(iv), 
we use the following analogue of Theorem \ref{thm:rad-gen}(i). 
Write 
\eqnst
{ \frT_o(r) 
  = \frT_o \cap D_o(r) \quad \! \! \! \text{and}  \quad \! \! \!
  \tfrT_o(r) 
  = \{ x \in \frT_o(r) : \text{the path from $o$ to $x$ in $\frT_o$ stays 
    inside $D_o(r)$} \}. }

\begin{thm}
\label{thm:size-gen}
Let $(V,E)$ be a graph with a transitive unimodular group of automorphisms $\Gamma$.
Let $\mu$ be a $\Gamma$-invariant percolation on $(V,E)$ supported on 
spanning forests with one-ended components. For all $t, r \ge 1$ we have
\eqnst
{ \mu \big( | \past_o | > t \big)
  \ge \sum_{x : r < d(x,o) \le (3/2)r} \frac{\mu \Big( o \in \past_x,\, 
    |\tfrT_o(r/2)| > t \Big)^2}{\mathbb{E} \Big[ |\frT_o(4r)| \, 
    \mathbf{1}_{\{ o \in \past_x \}} \Big]}. }
\end{thm}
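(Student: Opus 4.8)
The plan is to adapt the second-moment / mass-transport argument behind Theorem~\ref{thm:rad-gen}(i) to the size functional $|\past_o|$ in place of $\diam(\past_o)$. Fix $t,r\ge 1$. For each $x$ with $r<d(x,o)\le (3/2)r$, introduce the event $A_x = \{\, o\in\past_x,\ |\tfrT_o(r/2)|>t\,\}$. On $A_x$ the vertex $x$ lies on the infinite ray from $o$, so every vertex of $\tfrT_o(r/2)$ (whose path to $o$ stays in $D_o(r/2)$) actually lies in $\past_x$, because the ray from such a vertex passes through $o$ and hence through $x$. Therefore $A_x\subset\{|\past_o|>t\}$ — wait, more precisely $A_x$ forces $\past_x$ to contain the $>t$ vertices of $\tfrT_o(r/2)$, but we want a lower bound on $\mu(|\past_o|>t)$, so I will instead transport mass \emph{into} $o$. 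The key observation is: on $A_x$, the set $\tfrT_o(r/2)$ is contained in $\past_o\cap\frC_x$, and more importantly these vertices are ``witnessed'' by $o$ from $x$'s side.

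Concretely, I would define a mass transport $f(y,z)$ that, on the event that $z\in\past_y$ and $|\tfrT_y(r/2)|>t$ and $r<d(y',z)\le(3/2)r$ for the appropriate shifted vertex, sends unit mass distributed over a suitable neighbourhood. The cleanest route: set, for $x$ in the annulus,
\[
f(u,v) = \mathbf{1}\big[\,v\in\past_u,\ |\tfrT_u(r/2)|>t,\ d(u,v)\text{ in range}\,\big]\cdot \frac{1}{|\frT_v(4r)|}\mathbf{1}[\text{something}],
\]
apply the mass transport principle $\sum_u f(o,u)=\sum_u f(u,o)$, and use Cauchy–Schwarz on the right-hand sum: for each $x$,
\[
\mu(A_x)^2 = \E_\mu\!\big[\mathbf{1}_{A_x}\big]^2 = \E_\mu\!\Big[\tfrac{\mathbf{1}_{A_x}}{|\frT_o(4r)|^{1/2}}\cdot |\frT_o(4r)|^{1/2}\mathbf{1}_{\{o\in\past_x\}}\Big]^2 \le \E_\mu\!\Big[\tfrac{\mathbf{1}_{A_x}}{|\frT_o(4r)|}\Big]\,\E_\mu\!\big[|\frT_o(4r)|\mathbf{1}_{\{o\in\past_x\}}\big].
\]
The first factor, summed over $x$ in the annulus $r<d(x,o)\le (3/2)r$, is bounded by $\mu(|\past_o|>t)$ provided the events $\{A_x\}$ have the property that at most one $x$ in the annulus can be the relevant ancestor — here one exploits one-endedness and the fact that $D_o(r/2)$ is deep inside $D_o(r)$, so the ray from $o$ exits $D_o(3r/2)$ through a \emph{unique} vertex of that annulus; hence $\sum_x \mathbf{1}_{A_x}\le \mathbf{1}_{\{|\tfrT_o(r/2)|>t\}}\le \mathbf{1}_{\{|\past_o|>t\}}$ pointwise, using $\tfrT_o(r/2)\subset\past_o$. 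Rearranging the Cauchy–Schwarz bound then gives exactly
\[
\mu(|\past_o|>t)\ \ge\ \sum_{x:\,r<d(x,o)\le(3/2)r}\frac{\mu(A_x)^2}{\E_\mu[\,|\frT_o(4r)|\,\mathbf{1}_{\{o\in\past_x\}}]},
\]
which is the claimed inequality (with $|\frT_o(4r)|$ in place of $|\frT_o\cap D_o(4r)|$ — the same thing).

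The step I expect to be the main obstacle is making the ``unique exit vertex'' claim fully rigorous and compatible with the diagonal $\Gamma$-invariance needed for the mass transport principle. In Theorem~\ref{thm:rad-gen}(i) the analogous bookkeeping is done with the diameter, where the relevant vertex is the first one on the ray at distance $>r$; here I need the annulus $(r,3r/2]$ to capture this exit vertex, so I must check that the ray from $o$, after leaving $D_o(r/2)$, indeed has a vertex in $(r,3r/2]$ — this is automatic since the ray is infinite and connected, stepping through every intermediate distance scale, but one should be careful with the metric $d$ (which need not be the graph metric) and replace $3r/2$ by whatever constant makes ``the ray hits the shell'' certain; the statement's choice of $3r/2$ presumes unit-ish steps, which holds for $\Z^d$. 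The rest — verifying $f$ is diagonally invariant, that both sides of the mass transport identity are what I claimed, and the Cauchy–Schwarz bound — is routine and parallel to the proof of Theorem~\ref{thm:rad-gen}(i), which I would cite for the shared structure. I would present the argument in Section~\ref{ssec:rad-past-lb} alongside that proof.
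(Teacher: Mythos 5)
Your Cauchy--Schwarz manipulation is algebraically equivalent to the Jensen step in the paper's argument, and the general shape of the argument (use a mass transport to relate $\past_o$ to events at an annulus vertex $x$, then a second-moment bound) matches. However, the concrete step you wrote to close the argument is wrong. You claim that $\sum_{x} \mathbf{1}_{A_x} \le \mathbf{1}_{\{|\tfrT_o(r/2)|>t\}} \le \mathbf{1}_{\{|\past_o|>t\}}$ pointwise, resting on two subclaims: that at most one $x$ in the annulus $A_o(r,(3/2)r)$ can satisfy $o\in\past_x$, and that $\tfrT_o(r/2)\subset\past_o$. Both are false. The set $\{x : o\in\past_x\}$ is the infinite future ray from $o$, which is a self-avoiding path out to infinity; for the $\ell^\infty$ or Euclidean metric on $\Zd$ it generically has $\Theta(r)$ vertices in the annulus, not one. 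And $\tfrT_o(r/2)$ is defined by a path-containment condition with no orientation: it contains vertices in the \emph{future} of $o$ (e.g.\ points on the ray from $o$ close to $o$), which are not in $\past_o$. So $|\tfrT_o(r/2)|>t$ does not imply $|\past_o|>t$, and the pointwise bound fails.

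The resolution is precisely the mass transport you sketched vaguely with a ``$\mathbf{1}[\text{something}]$'' placeholder and then did not carry through. Define the transport so that \emph{$x$ sends mass, not $o$}: whenever $|\past_x \cap D_x(2r)|>t$, $x$ sends unit mass spread uniformly over $\past_x \cap A_x(r,(3/2)r)$. Then $\E[\sentm(o)]\le\mu(|\past_o\cap D_o(2r)|>t)\le\mu(|\past_o|>t)$ directly, with no appeal to $\tfrT_o$ or uniqueness. The quantity $\sum_x \E\big[\mathbf{1}_{A_x}/|\frT_o(4r)|\big]$ you want to bound is a \emph{lower bound} for $\E[\getm(o)]$ (using, on $\{o\in\past_x\}$ with $x$ in the annulus, the containments $\tfrT_o(r/2)\subset\past_x\cap D_x(2r)$ and $\past_x\cap A_x(r,(3/2)r)\subset\frT_o(4r)$), and $\E[\getm(o)]=\E[\sentm(o)]$ by the mass transport principle. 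That chain, together with the Jensen or Cauchy--Schwarz step you already have, gives the stated inequality. Without the mass transport identity the step from $\sum_x\E[\mathbf{1}_{A_x}/|\frT_o(4r)|]$ to $\mu(|\past_o|>t)$ is not available, so what you wrote does not constitute a proof.
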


The proof of Theorem \ref{thm:size-gen} is in Section \ref{ssec:size-gen-lb}
and does not rely on the rest of the paper.



We believe, as it has been argued by Priezzhev \cite{P00}, that the 
exponent $1/2$ is sharp in Theorem \ref{thm:size}(iv). 
But it seems challenging to establish a matching upper bound for $S$ or $\av$. 
The main difficulty lies in extracting useful information on the 
dependence between the waves from the bijection with $\wsfo$. 
Instead of such an approach, we control the number of waves using 
our upper bound on the radius in Theorem \ref{thm:radius}(iv), which allows us to use a union bound 
instead of estimating the dependence. This leads to the upper bounds 
in Theorem \ref{thm:size}(ii)--(iv).

\bgroup
\def\arraystretch{2.2}
\begin{table}
\label{tab:exp}
\centering
\begin{tabular}{ c| | c |c  |c | c}
& \rule{0pt}{2.7ex}\rule[-1ex]{0pt}{0pt} \parbox{2cm}{\textbf{toppling probability}} & 
\parbox{2cm}{\textbf{radius \\ \ }} &
\parbox{2cm}{\textbf{avalanche cluster size}} & 
\parbox{2cm}{\textbf{avalanche size}} \\
\hline
& \rule{0pt}{2.7ex}\rule[-1ex]{0pt}{0pt} $\boldsymbol{\nu (x \in \mathrm{Av} ) \approx |x|^{2-d-\theta}}$ & 
$\boldsymbol{\nu ( R > r ) \approx r^{-\alpha}}$ &
$\boldsymbol{\nu ( |\mathrm{Av}| > t ) \approx t^{-\tau_{S'}}}$ & 
$\boldsymbol{\nu ( S > t ) \approx t^{-\tau_{S}}}$ \\
\hline
\hline
$\boldsymbol{d=2}$&	$[0,\  3/4]$	& $[0,\  3/4]$ & $[0,\ 3/8]$  &$[0,\ 3/8]$\\
\hline
$\boldsymbol{d=3}$& $[0,\  1)$ & $[1/6,\ 2)$  &$[1/18, \ 2/3)$  & $[1/19,\ 2/3)$\\
\hline
$\boldsymbol{d=4}$& $= 0$& $[1/4, \ 2]$ &$[1/16,\ 1/2]$ & $[1/17,\ 1/2]$\\
\hline
$\boldsymbol{d\geq 5}$& $= 0$ & $= 2$ &$[2/5, \ 1/2]$ & $[2/5,\ 1/2]$ \\
\hline
\end{tabular}
\caption{The best known bounds on the critical exponents introduced in this introduction. Bounds are expressed in interval form: e.g., $1/6 \leq \alpha < 2$ when $d = 3$.}
\end{table}
\egroup

All our results have analogues in large finite $V(L)$, or indeed are
derived therefrom. Passing to the limit of $\Zd$ is not too difficult 
when $d \ge 3$, due to the result of J\'arai and Redig \cite[Theorem 3.11]{JR08}
showing that $\nu (|S| < \infty) = 1$. When $d = 2$, this is not known.
We bypass this problem with a more technical argument, that we believe 
is of interest in its own right. We show that for any $1 \le k < \infty$,
the \textbf{last $k$-waves} (when they exist) have a finite limit
as $V(L) \uparrow \Zd$. Recall that for $\eta \in \recur_L$ the waves 
occurring during the stabilization of $\eta + \mathbf{1}_o$ are denoted
$\wave_{L,1}, \dots, \wave_{L,N}$. Waves can also be defined on $\Zd$, 
denoted $\wave_1, \wave_2, \dots$; see Section \ref{ssec:waves-def}. 
On $\Zd$, the number of waves $N$ may take the value infinity.

Let $\eta_{N-k+1}$ denote a random configuration on $V(L)$ with law $\nu_L ( \cdot \,|\, N \ge k )$. Given a configuration $\eta$ on $V(L)$ such that $\eta(o) =2d$, let $\wave(\eta)$ denote the set of sites that can be toppled with every site toppling at most once. We extend this also to configurations $\xi$ on $\Z^d$ such that $\xi(o) \geq 2d$.
\begin{thm}
\label{thm:last-k-limit}
Assume $d = 2$.\\
(i) For all $k \ge 1$, the law of $\eta_{N-k+1}$ converges weakly to the law of a random configuration $\xi_k$ in $\Z^2$.
Let $\rho_k$ denote the law of $\xi_k$. The law of $\wave_{N-k+1} = \wave(\eta_{N-k+1})$ converges to the law of $\wave^*_k := \wave(\xi_k)$, that is a.s.~finite under $\rho_k$.\\
(ii) For all $k \ge 0$ we have $\lim_{L \to \infty} \nu_L ( N = k ) = \nu (N = k)$.\\
(iii) For every $k \ge 1$, the joint law of $\wave_{L,1}, \dots, \wave_{L,k}$ under 
$\nu_L ( \cdot \,|\, N = k )$, converges weakly, as $L \to \infty$, to the law of $\wave_1, \dots, \wave_k$ under $\nu ( \cdot \,|\, N = k )$, and under this conditioning we have $\nu$-a.s.~$|\wave_\ell| < \infty$, $\ell = 1, \dots, k$.
\end{thm}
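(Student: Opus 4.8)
The plan is to transfer everything to uniform spanning trees via the burning bijection of Section~\ref{ssec:trees}: under it, $\nu_L$ on recurrent configurations corresponds to the uniform spanning tree $\ST_L$ of $V(L)^\ast = V(L)\cup\{s\}$, and, as recalled in Section~\ref{ssec:waves-def}, the entire wave structure of the avalanche at $o$ is an explicit measurable functional of $\ST_L$. Performing the first $N-k$ waves amounts to a sequence of surgeries at the vertex $o$ (each producing a new spanning tree), after which the number of remaining waves is $k$, the partially stabilized configuration $\eta_{N-k+1}$ is recorded, and $\wave_{N-k+1}=\wave(\eta_{N-k+1})$ results from one further such surgery. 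The structural point that separates the last $k$ waves from the avalanche cluster $\wave_1 = \av$ --- whose infinite-volume behaviour is precisely the open problem of whether $\nu(R<\infty)=1$ --- is that $\eta_{N-k+1}$ and $\wave_{N-k+1}$ depend on $\ST_L$ only through finitely many branches hanging off $o$, whose sizes stay tight as $L\to\infty$.

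The probabilistic input is the well-known fact that $\ST_L$ converges weakly, in the local topology, to the wired (= free) uniform spanning forest $\wsf$ of $\Z^2$, which is a single tree with one end a.s.\ (Pemantle; see \cite{LPbook}). One-endedness is precisely what yields finiteness: deleting $o$ from the $\wsf$-tree leaves one infinite component, the one containing the end, and finitely many finite ones, so the union of the branches at $o$ relevant to the last $k$ waves is a.s.\ finite. This gives the asserted a.s.\ finiteness of $\wave^\ast_k=\wave(\xi_k)$ in~(i), and, under the conditioning $\{N=k\}$ in~(iii), the a.s.\ finiteness of all of $\wave_1,\dots,\wave_k$, since in that case every wave coincides with one of the last $k$.

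To upgrade local convergence of trees to the stated convergence of configurations and waves I would proceed in three steps. First, show that the map sending a spanning tree of $\Z^2$ to the pair $(\eta_{N-k+1},\wave_{N-k+1})$ --- and, in~(iii), to the tuple $(\wave_1,\dots,\wave_k)$ --- is continuous at every tree for which the surgeries at $o$ are well-defined and involve only finite branches, a full-measure condition under $\wsf$. Second, establish the uniform tightness bound $\lim_{M\to\infty}\sup_L \nu_L\big(\wave_{N-k+1}\not\subset D_o(M)\mid N\ge k\big)=0$, and its joint analogue under $\nu_L(\cdot\mid N=k)$. Third, combine these with $\ST_L\to\wsf$ by a truncation argument to obtain weak convergence of $\eta_{N-k+1}$ to a limit $\xi_k$ with law $\rho_k$, and of $\wave_{N-k+1}$ to $\wave(\xi_k)$, which is~(i), and likewise~(iii) for tuples. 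For~(ii), one observes that up to the tightness error the event $\{N\ge j\}$ is determined by $\ST_L$ on a bounded neighbourhood of $o$, so $\nu_L(N\ge j)$ converges to the $\wsf$-probability of the corresponding local event, and then $\nu_L(N=j)=\nu_L(N\ge j)-\nu_L(N\ge j+1)\to\nu(N=j)$; alternatively~(ii) follows from~(i) and~(iii) by inclusion--exclusion in $k$.

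I expect the uniform tightness of the last $k$ waves to be the main obstacle. It cannot be obtained by bounding $\av=\wave_1$, whose finiteness under $\nu$ is unknown; instead one must use the specific form of the surgeries to argue that, after the first $N-k$ waves, only a region of tight diameter around $o$ influences $\eta_{N-k+1}$, and control the sizes of the successive wave-branches through the loop-erased-walk description of waves together with hitting estimates for simple and loop-erased random walk in two dimensions. A further technical point is that the surgeries defining $\eta_{N-k+1}$ move the sink-ward root of the tree, so the ``end'' of the limiting spanning forest must be tracked through them; one-endedness of $\wsf$ guarantees that each surgery removes only a finite piece, which is what keeps the infinite-volume limit well-defined.
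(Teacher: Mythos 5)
Your proposal captures the right high-level strategy: use the tree representation of waves, establish uniform tightness of the last $k$ waves' radii, and upgrade local weak convergence to convergence of configurations and wave sets. This matches the paper's route via the intermediate-configuration bijection (Section~\ref{ssec:newspan}), Propositions~\ref{prop:in-rad-tight}, \ref{prop:out-rad-tight} and~\ref{prop:cond-lim-exists}, and Theorem~\ref{thm:last-k-waves}. But two points need real work, one of which is a genuine gap.

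The more serious issue is~(ii). You assert that ``up to the tightness error the event $\{N\ge j\}$ is determined by $\ST_L$ on a bounded neighbourhood of $o$.'' This does not follow from tightness of the last $k$ waves: tightness controls the spatial extent of waves \emph{given} $N\ge k$, not the value of $N$ itself. The concrete obstruction is the event that the restricted toppling count $N_{B(\ell)}$ equals $j$ (so locally it looks as if $N=j$) while in fact $N>j$, because the avalanche escapes $B(\ell)$, returns, and forces another toppling of $o$. The paper's proof of Theorem~\ref{thm:last-k-waves}(v) is devoted mostly to showing this event has negligible probability, via nested scales $r(i)=L^{\delta/\rho^i}$ and Beurling-controlled events $H(i)$, and --- crucially --- the quantitative rate-of-convergence bound of~\cite{GJ14} (Theorem~\ref{thm:quantitative}), used to transfer a cylinder-event probability from $\nu_L$ to $\nu$. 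Without some such argument, the inequality $\nu(N=j)\le b_j-b_{j+1}$ remains unproven. Your fallback of deriving~(ii) from~(i) and~(iii) by inclusion--exclusion also does not work: (iii) is a \emph{conditional} statement under $\{N=k\}$ and so presupposes rather than delivers $\lim_L\nu_L(N=k)$.

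Second, the tightness --- which you correctly identify as the crux --- is asserted rather than argued. The paper proves it by induction on $k$: the base case $R^L_{\rin,1}=0$ holds because the last wave leaves a neighbour of $o$ untoppled; the in-radius of the $k$-th last wave is bounded by a path-wandering quantity $P^L_k$ controlled through $R^L_{\rin,k-1}$ together with Beurling estimates (Proposition~\ref{prop:in-rad-tight}); and the out-radius is bounded via a circuit argument (Lemma~\ref{lem:circuit}, Proposition~\ref{prop:out-rad-tight}). Your appeal to one-endedness of $\wsf$ and finiteness of $\past_o$ is the right intuition behind the $k=1$ case, but note that the bijection for intermediate configurations lands you in $\mu_{L,o}$ and $\wsf_o$ (two-component forests), not $\wsf$, and for $d=2$ one does not know a.s.\ finiteness of $\frT_o$ under $\wsf_o$ --- that would essentially resolve the open problem of finiteness of avalanches. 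The anchoring of the last wave at an untoppled neighbour of $o$, followed by the explicit induction on $k$, cannot be short-circuited by invoking one-endedness of the full tree.
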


See Section \ref{sec:last-k-waves} for more detailed statements. Our argument in fact gives a power law upper bound on the radii of the last $k$ waves, and leads to the following extension of Theorem \ref{thm:radius}(i). Let $R_k = \sup \{ |x| : x \in \wave^*_k \}$.

\begin{thm}
\label{thm:2d-upper}
Assume $d = 2$. \\
(i) There are constants $\alpha_1 > \alpha_2 > \dots > 0$
and $C_1, C_2, \dots$ such that 
\eqnst
{ \rho_k ( R_k > r )
  \le C_k r^{-\alpha_k}, \quad \forall r \ge 1,\, \forall k \ge 1. }
(ii) We have
\eqnst
{ \nu ( R \ge r,\, N \le k )
  \le C_k r^{-\alpha_k}, \quad \forall r \ge 1. }
\end{thm}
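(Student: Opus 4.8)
The plan is to reduce everything to a single uniform-in-$L$ finite-volume estimate,
\[
\nu_L\big(\diam(\wave_{N-k+1,L}) > r \mid N \ge k\big) \le C_k\, r^{-\alpha_k}, \qquad r \ge 1,
\]
valid for all $L$ sufficiently large, and then to pass to the infinite-volume limit via Theorem~\ref{thm:last-k-limit}. For part (i), the event $\{R_k > r\}$ is, after decreasing $r$ by a unit, a cylinder event in the configuration, so weak convergence of $\eta_{N-k+1,L}$ to $\xi_k$ gives $\rho_k(R_k > r) \le \liminf_{L} \nu_L(\diam \wave_{N-k+1,L} > r-1 \mid N \ge k)$. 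For part (ii), one uses that the waves are nested, $\wave_1 \supseteq \cdots \supseteq \wave_N$, so that $\av = \wave_1$ and on $\{N = j\}$ the first wave coincides with the $j$-th wave counted from the end; writing $\nu(R \ge r, N \le k) = \sum_{j \le k} \nu(N = j)\,\nu(\diam \wave_1 \ge r \mid N = j)$ and bounding each summand by $\tfrac{\nu(N \ge j)}{\nu(N=j)}\,\rho_j(R_j \ge r)$ (legitimate by Theorem~\ref{thm:last-k-limit}(ii)--(iii), and $\nu(N=j)>0$), the claim follows since $\alpha_j \ge \alpha_k$ for $j \le k$.

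The core is an induction on $k$ for the displayed finite-volume bound. In the base case $k=1$, the burning bijection identifies $\wave_{N,L}$ with a branch $\past_o$ of a wired spanning tree $T$ of $V(L)$ rooted at $\partial V(L)$; running Wilson's algorithm with the trunk $\gamma = \mathrm{LE}(\mathrm{SRW}(o \to \partial V(L)))$ generated first, $\past_o$ is produced by the loop-erased walks that reach $o$ before meeting $\gamma \setminus \{o\}$ or $\partial V(L)$. Thus $\{\diam \past_o > r\}$ forces a branch to cross the annulus between scales $1$ and $r$ around $o$ while the trunk $\gamma$ crosses it disjointly, a two-arm-type event, so a naive union bound over $\{|x| > r\}$ is too lossy (its conjectured order $|x|^{-3/4}$ is not summable in $\Z^2$). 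Instead one uses a Beurling-type separation estimate for the walks building $\past_o$ (the walk from scale $r$ must thread to $o$ while avoiding the curve $\gamma$ emanating from $o$), summed over dyadically many scales, to obtain $\nu_L(\diam \wave_{N,L} > r) \le C_1 r^{-\alpha_1}$ with $\alpha_1 > 0$ (conjecturally $\alpha_1 = 3/4$, but sharpness is not needed).

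For the inductive step, the key structural input (a consistency property of the limits in Theorem~\ref{thm:last-k-limit}) is that performing one wave relates $\xi_k$ to $\xi_{k-1}$ in distribution: the second wave of the avalanche from $\xi_k$ is distributed as $\wave^*_{k-1}$, lies inside $\wave^*_k$ by nesting, and $\wave^*_k$ exceeds it only by the single extra branch created at $o$ in the first wave. Therefore $\{R_k > r\}$ is contained in $\{\diam(\text{second wave}) > r/2\} \cup \{\diam(\text{second wave}) \le r/2,\ \text{the extra branch crosses from } D_o(r/2) \text{ to } \partial D_o(r)\}$: the first event is controlled by $C_{k-1}(r/2)^{-\alpha_{k-1}}$ via the inductive hypothesis, and the second is again a single-branch crossing event, bounded by a Beurling-type estimate carried out conditionally on the spanning-tree structure revealed by the earlier waves. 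That conditioning forces the extra branch to avoid the previously revealed waves, which deteriorates the exponent by a definite $\delta_k > 0$; taking $\alpha_k = \alpha_{k-1} - \delta_k$ with $\sum_k \delta_k < \alpha_1$ (choosing $\delta_k$ small) yields the strictly decreasing positive sequence.

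I expect the inductive step to be the main obstacle, specifically controlling the joint law of the last $k$ waves: unlike $\wave_N$, the earlier waves have no clean single loop-erased-walk description, and the burning bijection does not directly expose the dependence between waves. One must track, inside Wilson's algorithm on $V(L)$, the whole stack of branches at $o$ produced by the final $k$ topplings of $o$, show that this structure converges as $L \to \infty$ (essentially the content of Theorem~\ref{thm:last-k-limit}), and make the conditional Beurling estimates uniform in $L$; dealing with the loss of translation invariance caused by singling out $o$ is the remaining technical point.
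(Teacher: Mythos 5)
Your overall scaffolding (finite-volume estimates passed to the limit via Theorem~\ref{thm:last-k-limit}, and for part (ii) the observation that $R \le \max\{R_1,\dots,R_k\}$ on $\{N \le k\}$) matches the paper's, and the base case $k=1$ via a Beurling-type estimate for the last wave is the right idea. The gap is in the inductive step. You assert that $\wave^*_k$ exceeds $\wave^*_{k-1}$ ``only by the single extra branch created at $o$ in the first wave,'' but this is not a property of the model or of the bijection: $\varphi'$ applied to $\xi^L_k$ and to $\xi^L_{k-1} = a'_{o,L}\xi^L_k$ runs the burning procedure on two \emph{different} configurations, $T_o(\xi^L_{k-1})$ is in general not a subtree of $T_o(\xi^L_k)$, and the set difference $\wave^*_k \setminus \wave^*_{k-1}$ need not be a branch attached at $o$. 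Consequently the decomposition $\{R_k > r\} \subset \{R_{k-1} > r/2\} \cup \{\text{extra branch crosses}\}$ is not well-defined as stated, and the ``conditional Beurling estimate'' you invoke on that branch is not a well-posed step. You correctly flag the joint law of the last $k$ waves as the main obstacle, but the sketch does not resolve it.

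The paper sidesteps this by controlling a different quantity first: the \emph{in-radius} $R^L_{\rin,k} = \sup\{r : B(r) \subset \frT_{L,o,k}\}$, bounded inductively in Proposition~\ref{prop:in-rad-tight}, from which the out-radius bound follows (Proposition~\ref{prop:out-rad-tight}, which gives Theorem~\ref{thm:2d-upper} immediately). The deterministic key (Lemma~\ref{lem:inequalities}) is that $R^L_{\rin,1}=0$ (some neighbour of $o$ is always outside the last wave) and $R^L_{\rin,k} \le P^L_k$, where $P^L_k$ records how far the paths in $\frT_{L,o,k}$ from $o$ to $B(R^L_{\rin,(k-1)}+1)$ wander. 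The inequality follows from the path property~\eqref{e:path-property} of the bijection by contradiction: were the $k$-th last wave to have large in-radius, the configuration in a large ball would be unchanged after that wave, forcing the $(k-1)$-th last wave to have large in-radius as well. Once the in-radius is small, the out-radius $R_k$ is bounded via the Beurling circuit argument of Lemma~\ref{lem:circuit}. The crucial gain is that $\{P^L_k > r\}$ is a spanning-forest event under the \emph{uniform} measure $\mu_{L,o}$ on $\ST_{L,o}$, so Wilson's algorithm plus Beurling apply directly with no conditioning on the joint law of earlier waves --- bypassing exactly the obstacle you name.
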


We will also use Theorem \ref{thm:last-k-limit} to prove the following 
theorem. 

\begin{thm}
\label{thm:exp-wave-infinite}
Suppose $d = 2$. Then $\E_\nu N = \infty$.
\end{thm}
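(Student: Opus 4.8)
The plan is to exhibit, for each fixed $k$, a lower bound on $\nu(N \ge k)$ that does not decay too fast in $k$ — specifically, one should show that $\nu(N \ge k)$ decays at most polynomially in $k$, which immediately gives $\E_\nu N = \sum_{k \ge 1} \nu(N \ge k) = \infty$. By Theorem \ref{thm:last-k-limit}(ii), $\nu(N \ge k) = \lim_{L \to \infty} \nu_L(N \ge k)$, so it suffices to obtain a lower bound on $\nu_L(N \ge k)$ that is uniform in $L$ (for $L$ large relative to $k$). The natural route is to relate $\{N \ge k\}$ to the avalanche reaching a certain scale: if the avalanche topples a vertex at distance $r$ from $o$, then there must have been at least on the order of $r$ waves, since each wave in $d=2$ shrinks the ``toppled interface'' only by a bounded amount — more precisely, one uses the standard fact that the last wave $\wave_{N,L}$ is the component of $o$ in a spanning-tree-type object and, after removing it, the configuration has $\eta(o) = 2d-1$ again, with the remaining waves forming a nested decreasing family of regions all containing $o$. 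The key monotonicity is that $\wave_{L,j} \supseteq \wave_{L,j+1}$ (waves are nested and decreasing), so the number of waves is at least the number of ``layers'' one must peel to reach from the outermost toppled site down to $o$.

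Concretely, I would argue: on the event $\{R \ge r\}$ (the avalanche has radius at least $r$), I claim $N \gtrsim r$, or at least $N \to \infty$ as $r \to \infty$ along this event in a quantitative way. The cleanest implementation is via the wave–spanning-forest correspondence: wave $\wave_{L,j}$ corresponds to the past of $o$ in a wired spanning forest after $j-1$ waves have been removed, and each removal strictly decreases the structure; in two dimensions a site at distance $r$ can only be reached if there are $\Omega(r)$ successive waves, because a single wave extends the toppled region outward by a bounded number of new ``shells'' in a suitable sense. Granting $N \ge c\, r$ on $\{R \ge r\}$, Theorem \ref{thm:radius}(i) gives $\nu(R \ge r) \ge r^{-3/4 + o(1)}$, hence $\nu(N \ge k) \ge \nu(R \ge k/c) \ge k^{-3/4 + o(1)}$, and therefore $\E_\nu N = \sum_k \nu(N \ge k) = \sum_k k^{-3/4 + o(1)} = \infty$.

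The main obstacle is establishing the deterministic (or high-probability) inequality $N \gtrsim R$ — i.e., that reaching radius $r$ genuinely forces many waves rather than, say, one enormous wave. A single wave can itself have large radius, so a naive ``each wave adds one shell'' claim is false. The correct statement must instead exploit that in the Abelian sandpile on $\Z^2$ started from $\nu + \mathbf{1}_o$, the total toppling number $S$ and the radius $R$ are linked to $N$ through Dhar's formula and the structure of waves: the number of times $o$ topples equals $N$, and standard harmonic-function/Green's-function estimates bound how far the avalanche can spread per toppling of $o$. Thus I would instead prove the slightly different but sufficient bound: $\E_{\nu_L} N \ge c\, \E_{\nu_L} R$ up to logarithmic factors, or directly $\nu_L(N \ge k) \ge \nu_L(\text{site at distance } \sim k \text{ topples})$ minus a correction, by showing that if a far site topples then $o$ must topple many times. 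This last step — controlling the number of topplings at $o$ in terms of the reach of the avalanche — is the technical heart, and it is exactly the kind of harmonic-measure estimate that the rest of the paper develops for the last wave; I would lean on the two-dimensional last-wave/loop-erased-walk description from Section \ref{sec:last-k-waves} together with Theorem \ref{thm:last-k-limit} to make the limit $L \to \infty$ rigorous.
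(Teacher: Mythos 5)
The central claim your argument rests on --- that $N \gtrsim R$, or that $\E_{\nu_L} N \gtrsim \E_{\nu_L} R$ --- is false; the paper proves the \emph{opposite} inequality pointwise: Lemma~\ref{lem:ptwise} gives $N = n(o,o;\eta) \le R(\eta)$. Waves form a nested decreasing family, so $R$ equals the radius of the \emph{first} wave, and a single wave can reach a far site while $o$ topples only once; hence $\nu_L(N \ge k)$ is \emph{not} bounded below by the probability that a site at distance $\sim k$ topples, and no deterministic or average comparison of the form you need can hold. Indeed, $\E_{\nu_L} N = g_L(o,o) \asymp \log L$ by Dhar's formula, while $\E_{\nu_L} R$ grows much faster --- and it is exactly the one-sidedness $N \le R$ that makes Proposition~\ref{thm:simple_rad} cheap, namely $\E_{\nu_L} N \to \infty$ implies $\E_{\nu_L} R \to \infty$, not the other way around. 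Your sketch, as written, therefore cannot be repaired along the lines you indicate.

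The paper's actual proof takes a completely different route: an argument by contradiction through bounded harmonic functions, with no lower bound on the tail of $N$ at all. One may assume $\nu(N < \infty) = 1$ (else nothing to prove), and then suppose $\E_\nu N < \infty$. Theorem~\ref{thm:2d-upper} (the one place the last-$k$-waves analysis of Section~\ref{sec:last-k-waves} is used, and the one ingredient your sketch correctly anticipates) upgrades $\nu(N<\infty)=1$ to $\nu(R<\infty)=1$, so the infinite-volume operator $a_o$ is a.s.\ well-defined and $\nu$ is $a_o$-invariant as in \cite[Prop.~3.14]{JR08}. Taking expectations in $(\eta+\mathbf{1}_o)^\circ = \eta+\mathbf{1}_o - \sum_z n(o,z)\Delta(z,\cdot)$ and using invariance shows that $f(x):=\E_\nu n(o,x)$ is harmonic on $\Z^2\setminus\{o\}$ with $(\Delta f)(o)=1$; since $n(o,x)\le n(o,o)$, $f$ is bounded, and recurrence of planar random walk forces $f$ to be constant, whence $n(o,x)=n(o,o)$ for all $x$ a.s.\ and the avalanche is infinite --- contradicting a.s.\ finiteness of $R$.
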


This is a strengthening of the statement $\E_\nu R = \infty$, due to a 
simple comparison proved in Lemma \ref{lem:ptwise}, and therefore also
of Proposition \ref{thm:simple_rad}.

\medbreak

\textbf{Organization of the paper.} In Section \ref{sec:def}, we give definitions and background on sandpiles, spanning trees, and random walks; we also prove Proposition \ref{thm:simple_rad}. In Section \ref{ssec:waves-def}, we prove Theorem  \ref{thm:topple} modulo a technical argument required for the two-dimensional case, which we defer to Section \ref{sec:last-k-waves}. 

Sections \ref{sec:radius-low}, \ref{sec:last-k-waves}, and \ref{sec:highd_rad} are devoted to the various radius bounds above. In Section \ref{sec:radius-low}, we prove Theorem \ref{thm:radius} (i) -- (iii). Section \ref{sec:last-k-waves} contains additional arguments for the two-dimensional case; here we prove Theorems \ref{thm:last-k-limit}, \ref{thm:2d-upper}, and \ref{thm:exp-wave-infinite}. In Section \ref{sec:highd_rad}, we complete the proof of Theorem \ref{thm:radius} by proving the high-dimensional bounds (Theorem \ref{thm:radius} (iv) and Theorem \ref{thm:rad-gen} ).

Section \ref{sec:size} contains the proofs of the size bounds above: Theorems \ref{thm:size} and \ref{thm:size-gen}.

\medbreak

\textbf{A note on constants.} All our constants will be positive and finite,
and they may depend on the dimension $d$. Other dependence will always be
indicated. Constants denoted $C$ and $c$ may change from line to line; 
those with index (such as $c_1$) stay the same within the same proof.

\section{Definitions and background}
\label{sec:def}

In this section, we provide definitions and collect useful facts 
about the basic objects we use: toppling numbers, waves, spanning trees, bijections and Wilson's algorithm.

\subsection{Graphs and sandpiles}
\label{ssec:graphs}

We will work with finite connected graphs of the form $H = (U \cup \{ s \}, F)$, 
where $s$ is a distinguished vertex, called the \textbf{sink}. We allow
multiple edges, so in general $H$ is a multigraph, but we exclude loop-edges.
If $U = V$ is a finite subset of $\mathbb{Z}^d$, we let 
$G_V = (V \cup \{s\},E)$ denote the \textbf{wired subgraph} 
induced by $V$--i.e., where all vertices in $\Zd \setminus V$ 
are identified to the single vertex $s$, and loops at $s$ are removed.
Of prime importance will be the \textbf{standard exhaustion}  
$V(L) = [-L,L]^d \cap \Z^d$, $L \ge 1$; the wired subgraph induced 
by $V(L)$ will be denoted $G_L$. In general, a subscript $L$ will be shorthand for 
subscript $V(L)$. We write $x \sim y$ to denote that vertices $x$ and $y$ of a graph (understood from context) are neighbours.
Given a graph $H = (U \cup \{ s \}, F)$, we write $\deg_H(x)$ for the degree of 
the vertex $x \in U \cup \{ s \}$ in $H$. When $V \subset \Zd$, we write
$\deg_V(x)$ for the degree of vertex $x \in V$ in the subgraph of
$\Zd$ induced by $V$.

Given $H = (U \cup \{ s \}, F)$, the discrete Dirichlet Laplacian $\Delta_H$
is given by
\eqn{e:Delta_H}
{ \Delta_H(x,y)
  = \begin{cases}
    \deg_H(x) & \text{if $x = y$;} \\
    -a_{xy} & \text{if $x \not= y$;} 
    \end{cases} \qquad x, y \in U, }
where $a_{xy}$ equals the number of edges connecting $x$ and $y$.
In particular, when $V$ is a finite subset of $\Zd$, we have $\Delta_V$ given by
\eqn{e:Delta}
{ \Delta_V(x,y)
  = \begin{cases}
    2d & \text{if $x = y$;} \\
    -1 & \text{if $x \sim y$;} \\
     0 & \text{otherwise,}
    \end{cases} \qquad x, y \in V. }
We denote the inverse matrix by $g_H = (\Delta_H)^{-1}$,
$g_V = (\Delta_V)^{-1}$. 

We write $\Delta$ for the matrix defined as in \eqref{e:Delta}, but with 
$x, y \in \Zd$, and $g = \Delta^{-1}$ when $d \ge 3$. Up to a 
factor $(2d)^{-1}$, these matrices are the Green function of simple random 
walk. Namely, let $(S(n))_{n \ge 0}$ denote a simple random walk in $\Zd$, and let 
\eqnst
{ \sigma_A
  = \inf \{ n \ge 0 : S(n) \not\in A \}. }
Then $2d \, g_V(\cdot, \cdot) = G_V(\cdot, \cdot)$ and $ 2d \, g(\cdot, \cdot) = G (\cdot, \cdot)$, where we define
\eqnsplst
{ G_V(x,y)
  := \E_x \left[ \sum_{0 \le n < \sigma_V} \mathbf{1}_{S(n) = y} \right], 
     \quad x, y \in V; \\
 G(x,y)
  := \E_x \left[ \sum_{0 \le n < \infty} \mathbf{1}_{S(n) = y} \right],
     \quad x, y \in \Zd. }
Whenever $G$ appears with arguments, it refers to a Green function as defined above; when it appears without arguments, it refers to graphs as in the notation introduced previously.

Let us fix a finite connected graph $H = (U \cup \{ s \}, F)$. 
A \textbf{sandpile} on $H$ 
is a function $\eta : U \to \{ 0, 1, 2, \dots \}$. We say that $\eta$ is 
\textbf{unstable} at $x \in U$, if $\eta(x) \ge \deg_H(x)$. In this case 
$x$ is \textbf{allowed to topple}, which means that $x$ sends one particle 
along each edge incident with it. Particles arriving at $s$ are lost.
Toppling $x$ has the effect of subtracting row $\Delta_H(x,\cdot)$ from
$\eta(\cdot)$. It is a basic property of the model that if unstable 
vertices are toppled in any order until there are no such vertices, 
the stable sandpile obtained is independent of the order chosen (called 
the Abelian property) \cite{D90}. Hence for any sandpile $\eta$ there 
is a well-defined \textbf{stabilization of $\eta$}, denoted $\eta^\circ$. 
The sandpile Markov chain is defined as follows. The state space is 
$\Omega_H = \prod_{x \in U} \{ 0, \dots, \deg_H(x) - 1 \}$. Given that 
the current state is $\eta$, a single step is defined by choosing a 
vertex $X \in U$ uniformly at random, and moving to state 
$(\eta + \mathbf{1}_X)^\circ$. The maps $a_x : \Omega_H \to \Omega_H$
defined by $a_x : \eta \mapsto (\eta + \mathbf{1}_x)^\circ$, $x \in U$, 
are called the \textbf{addition operators}. It follows from the 
uniqueness of stabilization that $a_x a_y = a_y a_x$, $x, y \in U$.
When it is necessary to emphasize the graph $H$ on which the operator $a_x$ is applied, we write $a_{x,H}$ for $a_x$; we also use $a_{x,L}$ for $a_{x, G_L}$.
We denote the set of recurrent states of the sandpile Markov chain by 
$\cR_H$. It is known that the unique stationary distribution 
is given by the uniform distribution on $\cR_H$ \cite{D90}, and that 
each (restricted) map $a_x : \recur_H \to \recur_H$ preserves this measure.

In the special case when the graph arises from a finite $V \subset \Zd$, 
we denote the state space by $\Omega_V = \{ 0, \dots, 2d-1 \}^V$, 
and the set of recurrent states by $\cR_V$. 

Now, let $\eta : \Zd \to \{ 0, 1, 2, \dots \}$ be a sandpile on $\Zd$. If $x$ is unstable in $\eta$, we define the toppling of $x$ using the matrix $\Delta$, that is, $\eta \mapsto \eta(\cdot) - \Delta(x,\cdot)$. Let us call a finite or infinite sequence consisting of topplings of unstable vertices \textbf{exhaustive}, if any vertex that is unstable at some point, is toppled at a later time. It can be shown, similarly to the finite graph case, that for all $x \in \Zd$, $x$ topples the same number of times (possibly infinity) in any exhaustive sequence.

We write $\unitv_i$ for the unit vector in the $i$-th positive coordinate direction, $| \cdot |$ for the Euclidean norm, and $\| \cdot \|$ for the $\ell^\infty$ norm on $\Zd$.
We denote by $o$ the origin in $\Zd$, and we let
\[ V_x(n) = \{ y \in \Zd : \, \| y - x \| \le n \}; \qquad 
   B_x(n) = \{ y \in \Zd: \, |y - x| \le n \}\ ,  \]
and write $V(n) = V_o(n)$ and $B(n) = B_o(n)$.

Given $A,\,B \subset \Zd$, we let $\dist(A,B)$ denote their Euclidean distance, 
and write $\dist(x,B)$ when $A = \{ x \}$.
If $A \subset \mathbb{Z}^d$, we let 
$\partial A = \{x \in \Zd \setminus A : \dist(x,A) = 1\}$. 
When considering $A$ as a subset of $G_L$, 
we will often use $\partial A$ to denote the boundary restricted to $G_L$ --- that is, excluding from $\partial A$ 
any $x \notin G_L$ --- the meaning will be clear in context.

If $z_1, \, z_2$ are two elements of $\mathbb{R}^d$, let $\ang(z_1, z_2)$ denote the angle between $z_1$ and $z_2$. We will make use of the ``little $o$'' notation: 
$a_n = n^{a + o(1)}$ if $\lim_n \left[\,\log a_n / \log n\right] = a$.

\subsection{Toppling numbers and waves}
\label{ssec:waves-def}

Given a sandpile $\eta$ on a finite connected graph $H = (U \cup \{ s \}, F)$, we write
$n(x,y) = n(x,y;\eta)$ for the \textbf{toppling numbers}, i.e., number of times $y$ topples when stabilizing $\eta + \mathbf{1}_x$. 
A useful ordering of topplings, introduced by Ivashkevich, Ktitarev and Priezzhev \cite{IKP94}, for stabilizing $\eta + \mathbf{1}_x$ is in terms of \textbf{waves}, 
which we now define. If $\eta + \mathbf{1}_x$ is
stable, there are no waves. Otherwise, topple $x$ and carry out any
further topplings that are possible without toppling $x$ a second time. 
It is easy to verify that in doing so, every vertex in $U$ topples at most 
once. We write $\cW_{1,H} = \cW_{1,H}(x;\eta)$ for the set of sites toppled 
so far; this is the \textbf{first wave}. If $x$ is still unstable after 
the first wave, which happens if and only if all neighbours of $o$ are in 
$\cW_{1,H}$, topple $x$ a second time, and carry out any further topplings 
that are possible without toppling $x$ a third time. The set of vertices 
that topple in doing so is denoted $\cW_{2,H} = \cW_{2,H}(x;\eta)$; this is
the \textbf{second wave}. We define further waves analogously. 

When the graph arises from a finite $V \subset \Zd$, we write $\cW_{1,V}, \cW_{2,V}, \dots$ for the waves. We make similar definitions for sandpiles on $\Zd$. 
On $\Zd$, the toppling numbers $n(x,y;\eta)$ are possibly infinite. Waves $\cW_1, \cW_2, \dots$ are defined analogously to the finite case, and their number may be infinite. The following lemma is straightforward to verify.

\begin{lem}
\label{lem:num-waves} \ \\
(i) Let $H = (U \cup \{ s \}, F)$ be a connected finite graph. 
The number of waves containing $y$ equals $n(x,y;\eta)$. 
In particular, the number of waves is $N = N(x;\eta) := n(x,x;\eta)$.\\
(ii) In the case of $\Zd$, the number of waves containing $y$ equals 
$n(x,y;\eta)$ (possibly infinite). In particular, the number of waves is 
$N = N(x;\eta) := n(x,x;\eta)$.
\end{lem}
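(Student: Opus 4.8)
The plan is to recognise the wave decomposition as one particular exhaustive toppling sequence for $\eta + \mathbf{1}_x$, and then to read off the stated identities from the abelian property.

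For part (i), I would first record the structure of a single wave (the construction of Ivashkevich, Ktitarev and Priezzhev \cite{IKP94}). At the start of the $i$-th wave the current configuration $\zeta$ is stable on $U \setminus \{x\}$ and unstable at $x$: for $i = 1$ because $\eta + \mathbf{1}_x$ is stable off $x$, and for $i > 1$ because this is exactly the stopping condition that terminated wave $i-1$. Toppling $x$ once and then carrying out all legal topplings that do not touch $x$ is precisely the stabilisation, on the graph obtained from $H$ by identifying $x$ with the sink $s$, of the configuration that $\zeta - \Delta_H(x,\cdot)$ induces on $U \setminus \{x\}$. In this stabilisation every vertex topples at most once: if $y \neq x$ were the first vertex to be toppled twice, then at the instant just before its second toppling every neighbour of $y$ other than $x$ has toppled at most once while $x$ has toppled exactly once, so the value at $y$ is at most $(\deg_H(y) - 1) + a_{xy} + (\deg_H(y) - a_{xy}) - \deg_H(y) = \deg_H(y) - 1 < \deg_H(y)$, contradicting instability. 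Hence in wave $i$ the vertex $x$ topples exactly once, every other vertex topples at most once, and the wave ends either with $x$ stable --- in which case the whole configuration is stable, so by uniqueness of the stabilisation we have reached $(\eta + \mathbf{1}_x)^\circ$ and there are no further waves --- or with $x$ unstable, triggering wave $i+1$. Concatenating the topplings of the successive waves therefore produces an exhaustive toppling sequence taking $\eta + \mathbf{1}_x$ to $(\eta + \mathbf{1}_x)^\circ$, in which $y$ is toppled exactly once for each wave containing it. By the abelian property the total number of topplings of $y$ along this sequence equals $n(x,y;\eta)$, which is part (i); taking $y = x$, and noting that $x$ lies in every wave, gives $N = n(x,x;\eta)$.

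For part (ii) I would transfer the identity to $\Zd$ by exhaustion. Fixing the configuration $\eta$ on $\Zd$, restricting it to $V(L)$ and stabilising $\eta + \mathbf{1}_o$ on $G_L$, the $i$-th wave (as a set) and the toppling number $n_L(o,y;\eta)$ are both non-decreasing in $L$ and converge, as $L \to \infty$, to the $i$-th wave and to $n(o,y;\eta)$ on $\Zd$ (possibly taking the value $\infty$), by the standard monotonicity of topplings under enlarging the domain. Applying part (i) on each $G_L$ --- the number of waves on $G_L$ containing $y$ equals $n_L(o,y;\eta)$ --- and passing to the limit on both sides yields that the number of waves on $\Zd$ containing $y$ equals $n(o,y;\eta)$, and $y = o$ gives the statement about $N$. (Alternatively one can argue directly on $\Zd$ by interleaving the topplings of the successive waves into a single exhaustive sequence and using the fact, recorded just above the lemma, that toppling numbers along exhaustive sequences on $\Zd$ are well defined.)

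The only genuine content beyond bookkeeping is the ``at most once per wave'' claim above; for part (ii) the one extra point requiring care is the monotone convergence of waves and toppling numbers under the exhaustion $V(L) \uparrow \Zd$ --- equivalently, arranging the waves into a bona fide exhaustive sequence when a single wave, or the number of waves, is infinite --- and this is where I would expect to spend most of the effort.
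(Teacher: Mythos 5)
The paper gives no proof of this lemma, asserting only that it is "straightforward to verify," so there is no argument of the paper's to compare against. Your proposal is correct. Part (i) is the natural argument: the per-wave ``at most once'' bound (your height count at $y$ just before a hypothetical second toppling) makes the concatenated waves a legal toppling sequence terminating at the stabilization $(\eta + \mathbf{1}_x)^\circ$, and abelianness then identifies the number of topplings of $y$ along it with $n(x,y;\eta)$.

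For part (ii) you correctly identify the one genuine subtlety. Naive concatenation of the waves on $\Zd$ need not be exhaustive: if some wave $\cW_i$ is infinite and all $2d$ neighbours of $o$ lie in $\cW_i$, then $o$ becomes unstable at a finite stage within wave $i$ but is never toppled by the concatenation, so the interleaving you mention is genuinely required in that case. (When every wave is finite and $N < \infty$, or when the last wave does not contain all neighbours of $o$, concatenation alone works.) For the exhaustion route, the monotonicity of $n_L(o,y;\eta)$ in $L$ is the standard abelian fact, but to pass the identity of part (i) to the limit one also needs that wave membership stabilizes --- that the $i$-th wave in $V(L)$, as a set, increases to the $i$-th wave in $\Zd$ --- which is a separate assertion from the convergence of the toppling numbers and is where, as you say, the effort would be spent.
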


The next lemma gives the expected number of topplings in an avalanche.

\begin{lem}[Dhar's formula \cite{D90}]
\label{lem:Dhar}
Let $H = (U \cup \{ s \}, F)$ be finite. We have 
\eqnst
{ \mathbb{E}_{\nu_H} n(x,y)
  = g_H(x,y), \quad x, y \in U. }
\end{lem}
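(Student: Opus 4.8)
\section*{Proof proposal for Lemma \ref{lem:Dhar} (Dhar's formula)}

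The plan is to exploit the two structural facts already recorded in Section~\ref{ssec:graphs}: that on a finite connected graph $H = (U \cup \{s\}, F)$ every sandpile stabilizes after finitely many topplings (so all toppling numbers $n(x,z;\eta)$ are finite nonnegative integers), and that each addition operator $a_x$ preserves the uniform measure $\nu_H$ on the \emph{finite} set $\recur_H$. Because $\recur_H$ is finite, every expectation below is automatically finite, and no integrability or interchange-of-limits issue arises.

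First I would write down the bookkeeping identity for a single stabilization. Fix $\eta \in \recur_H$ and add a grain at $x$. By definition of the toppling numbers, toppling vertex $z$ a total of $n(x,z;\eta)$ times subtracts $n(x,z;\eta)\,\Delta_H(z,\cdot)$ from the configuration (``toppling $z$ subtracts row $\Delta_H(z,\cdot)$''), so that
\[
  a_x \eta \;=\; \eta + \mathbf{1}_x \;-\; \sum_{z \in U} n(x,z;\eta)\,\Delta_H(z,\cdot),
\]
an identity of functions on $U$.

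Next I would take the $\nu_H$-expectation of both sides. Since $a_x$ pushes $\nu_H$ forward to $\nu_H$, evaluating at the coordinate functional $\eta \mapsto \eta(y)$ gives $\E_{\nu_H}[(a_x\eta)(y)] = \E_{\nu_H}[\eta(y)]$ for every $y \in U$. Writing $u(z) := \E_{\nu_H}\, n(x,z)$, the displayed identity then collapses to
\[
  0 \;=\; \mathbf{1}_x(y) \;-\; \sum_{z \in U} u(z)\,\Delta_H(z,y), \qquad y \in U,
\]
i.e.\ $\sum_{z} u(z)\,\Delta_H(z,y) = \delta_{x,y}$ for all $y$.

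Finally I would solve this linear system. The matrix $\Delta_H$ is symmetric (since $a_{zy} = a_{yz}$) and invertible --- it is the Dirichlet Laplacian of a connected graph with sink $s$, diagonally dominant with strict dominance in every row of a vertex adjacent to $s$, which together with connectedness forces invertibility; this is exactly the fact underlying the existence of $g_H = (\Delta_H)^{-1}$ in Section~\ref{ssec:graphs}. Using symmetry to rewrite the system as $(\Delta_H u)(y) = \mathbf{1}_x(y)$, we get $u = g_H \mathbf{1}_x$, that is $u(y) = g_H(y,x)$; symmetry of $g_H$ then yields $\E_{\nu_H}\, n(x,y) = u(y) = g_H(x,y)$, as claimed. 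I do not anticipate a genuine obstacle: the whole argument rests on the invariance of $\nu_H$ under $a_x$ (quoted from \cite{D90}), and the only point requiring a word of care --- finiteness of all quantities and legitimacy of taking expectations termwise in the sum over $z$ --- is immediate from $|\recur_H| < \infty$.
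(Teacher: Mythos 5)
Your argument is correct and is the standard derivation of Dhar's formula: write the conservation-of-mass identity $a_x\eta = \eta + \mathbf{1}_x - \sum_{z} n(x,z;\eta)\,\Delta_H(z,\cdot)$, average against the $a_x$-invariant uniform measure on $\recur_H$, and invert $\Delta_H$. The paper states this lemma as a citation to \cite{D90} without reproducing a proof, and your proposal matches the classical argument from that reference.
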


For cubes in $\Zd$, we have the following deterministic comparison between $n(o,o;\eta)$ 
and the radius $R(\eta)$.


\begin{lem}
\label{lem:ptwise} \ \\
(i) Let $\eta$ be any sandpile configuration on $\Z^d$, $d \ge 1$. Then
\[ n(o,o;\eta) \leq R(\eta)\ . \]
(ii) The same statement holds when $\eta$ is a sandpile configuration in $G_L$ for any $L \geq 1$.
\end{lem}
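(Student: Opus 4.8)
The plan is to prove part (i) first via a direct argument relating the number of topplings of the origin to the spatial spread of the avalanche, and then observe that part (ii) follows by the identical reasoning. Suppose $n(o,o;\eta) = N \ge 1$ (the case $N = 0$ being trivial, since then $R(\eta) \ge 0$). The key geometric fact is that each successive wave, as it "peels off" another toppling of the origin, must reach further out than the previous one. More precisely, I would argue that the last wave $\wave_N$ contains a vertex at distance at least $N$ from $o$ (in the $\ell^\infty$ or $\ell^1$ sense, whichever makes the induction cleanest), which forces $R(\eta) \ge N$.

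The heart of the argument is the following claim: for each $1 \le j \le N$, the $j$-th wave $\wave_j$ contains the ball of vertices within $\ell^1$-distance roughly $j$ of $o$; equivalently, $\wave_j \supseteq \wave_{j+1} \supseteq \dots \supseteq \wave_N$ and each containment is strict by at least a "shell." To see why a wave must strictly contain the next one's complement behaviour — actually the cleanest route is: after wave $j$ completes, the origin is unstable again (since $N \ge j+1$), which by definition means all $2d$ neighbours of $o$ toppled in wave $j$; so $\wave_j \supseteq \{o\} \cup \partial\{o\}$. Then one iterates: inside wave $j$, consider the configuration just before wave $j$ starts; wave $j$ topples $o$ and propagates. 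I would set up an induction on $j$ showing that the set of sites toppled in wave $j$ (equivalently, sites $y$ with $n(o,y;\eta) \ge j$) contains $V(j-1) = \{y : \|y\| \le j-1\}$. Base case $j=1$: $\wave_1 \ni o$. Inductive step: if $n(o,y;\eta) \ge j$ for all $\|y\| \le j-1$, and $N \ge j+1$, then in particular $o$ topples $j+1$ times, so every neighbour of $o$ topples $\ge j+1$ times; more generally a standard "monotonicity of toppling numbers" argument (a vertex all of whose neighbours topple $\ge m+1$ times while it topples $\le m$ times cannot be stable) propagates the bound outward by one shell, giving $n(o,y;\eta) \ge j+1$ for all $\|y\| \le j$. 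Taking $j = N$ yields a toppled site at distance $\ge N-1$... and one more push using that $o$ toppled $N \ge N$ times, a vertex at $\ell^1$-distance $N$ from the origin must have toppled at least once, hence lies in $\av$, so $R(\eta) \ge N = n(o,o;\eta)$ (after checking Euclidean vs.\ $\ell^1$ normalization — since $R$ uses $|\cdot|$ and $|z| \ge \|z\|_\infty$ and one coordinate direction gives $|y| = \|y\|_1$ when $y$ is axis-aligned, the bound goes through; in fact it suffices to exhibit one toppled site $y$ with $|y| \ge N$, e.g.\ on a coordinate axis).

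I expect the main obstacle to be making the "propagation by one shell per wave" rigorous in a clean way: one needs the elementary but slightly fiddly lemma that toppling numbers cannot have a strict local minimum in the interior relative to unstable behaviour — concretely, that if $y \ne x$ and $n(x,y;\eta) < n(x,z;\eta)$ for every neighbour $z$ of $y$, then after all topplings the final configuration at $y$ would be $\ge 2d$, contradicting stability of $\eta^\circ$ (using that $\eta(y) \ge 0$ and $y$ received at least $\sum_z n(x,z;\eta) \cdot \mathbf{1}_{z \sim y}$ grains but gave away $2d \cdot n(x,y;\eta)$). Combined with the observation that $o$ topples $N$ times and a telescoping/discrete-harmonicity argument, this forces $n(o,\cdot;\eta) \ge N - \|\cdot\|_1 + \text{(boundary term)}$ on a suitable region, and in particular some site at distance comparable to $N$ is toppled. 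For part (ii), the graph $G_L$ differs only in that the sink absorbs topplings, but the same interior propagation argument applies verbatim for any site $y$ whose relevant neighbours lie in $V(L)$; and since $R(\eta) \le L$ trivially bounds everything when the argument would push outside the box, the inequality $n(o,o;\eta) \le R(\eta)$ still holds (if the avalanche reached $\partial V(L)$ then $R(\eta)$ is at least of order $L$, while $n(o,o;\eta)$ is controlled; one has to be slightly careful but the monotone-propagation argument never needs sites outside $V(L)$ once we stop the induction at the box boundary). I would phrase (ii) as: the proof of (i) used only local toppling rules, which are identical in $G_L$ for sites away from the sink, so it transfers.
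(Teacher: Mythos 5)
Your overall intuition — that the bound on $n(o,o)$ should come from waves having to spread outward — is sound, and part (ii) of your plan (that (ii) follows from (i) by the same local rules) is fine. But the propagation step at the heart of part (i) does not go through, and the paper takes a genuinely different and cleaner route. Concretely: your ``fiddly lemma'' (if every neighbour $z$ of $y \ne o$ had $n(o,z) > n(o,y)$ then $\eta^\circ(y) \ge 2d$, contradiction) is correct, and it says exactly that if \emph{all $2d$} neighbours of $y$ have $n(o,z) \ge m$ then $n(o,y) \ge m$. The trouble is that when you try to push a lower bound outward to a new shell $\|y\| = k+1$, the induction hypothesis only controls the neighbours of $y$ lying at distance $\le k$; some neighbours of $y$ lie at distance $k+2$, and you have no lower bound there. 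So the lemma simply does not apply to the shell vertices and the induction cannot close. (Your inductive step as literally written is also internally inconsistent — you claim that from ``$n(o,y) \ge j$ for $\|y\| \le j-1$'' one deduces ``$n(o,y) \ge j+1$ for $\|y\| \le j$'', i.e.\ a \emph{stronger} lower bound on a \emph{larger} ball; and the statement ``$o$ topples $j+1$ times, so every neighbour of $o$ topples $\ge j+1$ times'' should read ``$\ge j$ times''.) More tellingly, the qualitative input you are using — the two-sided bound $-(2d-1) \le \Delta_{\Z^d}\, n(o,\cdot)(y) \le 2d-1$ for $y \ne o$ coming from $0 \le \eta, \eta^\circ \le 2d-1$ — can by itself only produce a quadratic comparison, of the schematic form $n(o,o) = O(R^2)$, not the linear bound $n(o,o) \le R$. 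The linear bound needs a global input, not a shell-by-shell estimate.

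The paper's proof supplies that global input via a comparison argument, which is quite different from what you sketch. It first reduces to restricted topplings $n_R[\eta]$ in the finite box $V(R)$ (using $\av(\eta) \subset V(R)$ so that $n[\eta] = n_R[\eta]$), then uses the Abelian monotonicity of toppling numbers together with the pointwise domination $\eta|_{V(R)} \le \phi_R$ (where $\phi_R$ is the maximal configuration of \cite[Lemma 4.2]{FR08}, equal to $2d-1$ on $V(R)$ and $2d-2$ outside) to get $n_R[\eta] \le n_R[\phi_R] \le n[\phi_R]$, and finally invokes the explicit computation $n[\phi_R] = R$ from \cite{FR08}. This sidesteps any local propagation entirely: the only ``geometry'' used is the explicit odometer of the single extremal configuration $\phi_R$. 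If you want to salvage a wave-based argument, you would need something stronger than the no-strict-local-minimum lemma — for example a direct analysis of how the waves for $\phi_R$ nest, or an argument that the odometer of \emph{any} configuration is dominated by that of $\phi_R$ — but at that point you have essentially rediscovered the paper's comparison proof.
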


\begin{proof}
(i) In the proof below, it will be convenient to denote 
$n[\eta] := n(o,o;\eta)$. 
We will also use the notion of ``restricted topplings''. For $L > 0$, let $\eta|_L$ denote the restriction of $\eta$ to $V(L)$; the restricted toppling number $n_L[\eta]$ will denote the number of topplings occurring at $o$ in the stabilization of $\eta|_L + \mathbf{1}_o$ on $G_L$. Note that such a stabilization amounts to taking the configuration 
$\eta + \mathbf{1}_o$ on $\Zd$ and toppling only sites $x \in V(L)$ until all such $x$ are stabilized, neglecting any sites in $\Zd \setminus V(L)$ that may become unstable. In particular, for any stable sandpile configuration $\eta$ on $\Zd$, $n[\eta] \geq n_L[\eta]$ for any $L$.

Assume $\eta$ and $R < \infty$ are as in the statement of the lemma. We claim that the equality $n[\eta] = n_R[\eta]$ holds. Indeed, by the above observation, $n_R$ is exactly the number of topplings at $o$ required to stabilize only the sites of $V(R)$, but by assumption these are the only sites which need to be toppled to stabilize $\eta + \mathbf{1}_o$ in all of $\Zd$. Therefore, it suffices to show that $n_R[\eta] \leq R$.

For this, we will use a special ``maximal'' configuration $\phi_R$ from 
\cite[Lemma 4.2]{FR08}:
\[\phi_R(x) = \begin{cases}2d - 1,\, &x \in V(R) \\ 2d-2 \, &\text{otherwise.}  \end{cases} \]
Note that $n_R[\eta] \leq n_R[\phi_R]$, since $\eta|_R \leq \phi_R$ pointwise. Moreover, $n_R[\phi_R] \leq n[\phi_R]$. 
It is proven in \cite[Lemma 4.2]{FR08}, and not difficult to see by computing each wave, that $n[\phi_R] = R$. This completes the proof of (i).

(ii) The above proof applies here as well, with only minor changes.
\end{proof}

\begin{proof}[Proof of Proposition \ref{thm:simple_rad}]
By Dhar's formula \cite{D90}, we have 
\eqnst
{ \lim_{L \to \infty} \mathbb{E}_{\nu_L} n(o,o) 
  = \lim_{L \to \infty} g_{V_L}(o,o) 
  = \infty, \quad \text{when $d = 2$.} } 
By Lemma \ref{lem:ptwise}(ii), this implies $\lim_{L \to \infty} \mathbb{E}_{\nu_L} R = \infty$.
\end{proof}

\begin{rem}
We do not see a simple way to deduce the statement 
$\E_\nu n(o,o) = \infty$ from the simpler statement that $\lim_{L \to \infty} \E_{\nu_L} n(o,o) = \infty$, 
when $d = 2$. This requires ruling out the possibility that 
$\E_{\nu_L} n(o,o)$ is dominated by rare events with many waves.
Our proof of $\E_\nu n(o,o) = \infty$ in Theorem \ref{thm:exp-wave-infinite} 
will build on quite a few other results; in particular, results from
Section \ref{sec:last-k-waves}.
\end{rem}

\subsection{Spanning trees and the burning bijection}
\label{ssec:trees}

In 1990, Dhar \cite{D90} introduced a method for checking whether a particular stable configuration $\eta$ lies in $\recur_H$, called the ``burning algorithm". Application of the burning algorithm provides a bijection $\varphi$ between $\recur_H$ and the set of all spanning trees of $H$, that we denote by $\ST_H$; see \cite{MD92}. We briefly describe this bijection here. In Section \ref{ssec:newspan}, we give a version of it for ``waves'' which will be necessary in our analysis.

Recall that $a_{xy}$ denotes the number of edges between vertices $x$ and $y$.

\begin{lem}[{Burning algorithm \cite{D90}, \cite[Lemma 4.1]{HLMPPW}}]
\label{lem:burning}
Let $\eta$ be a stable sandpile on a connected finite graph
$H = (U \cup \{ s \}, F)$. At each $x \in U$, add $a_{xs}$ grains of sand, and 
stabilize. We have $\eta \in \recur_H$ if and only if each vertex 
in $U$ topples exactly once. 
\end{lem}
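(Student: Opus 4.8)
The plan is to prove the Burning Algorithm (Lemma~\ref{lem:burning}) by combining the classical characterization of recurrent configurations via Dhar's burning test with the observation that ``running the burning test'' is the same as stabilizing after adding $a_{xs}$ grains at each $x$. Recall that Dhar's burning test declares a stable $\eta$ recurrent iff, starting a ``fire'' at the sink $s$ and repeatedly burning any $x \in U$ for which the number of already-burnt neighbours (counted with edge multiplicity) is strictly greater than $\deg_H(x) - 1 - \eta(x)$, every vertex of $U$ eventually burns. The first thing I would do is record this statement as the known input (it is exactly \cite[Lemma 4.1]{HLMPPW}), so the task reduces to identifying the burning process with the toppling process described in the lemma.

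\medskip

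The key step is the ``burning = toppling'' dictionary. Form $\eta' := \eta + \sum_{x \in U} a_{xs}\,\mathbf{1}_x$ and stabilize it on $H$. I claim that a vertex $x$ topples during this stabilization exactly when it burns in Dhar's test, and that the order can be matched step by step. Proceed by induction on the burning/toppling time: suppose at some stage the set of burnt vertices equals the set of toppled vertices, call it $\Burnt$. A vertex $x \notin \Burnt$ burns next iff the number of edges from $x$ to $\Burnt \cup \{s\}$ exceeds $\deg_H(x) - 1 - \eta(x)$, i.e. iff $\eta(x) + \#\{\text{edges from } x \text{ to } \Burnt \cup \{s\}\} \ge \deg_H(x)$. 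On the toppling side, after the vertices in $\Burnt$ have each toppled once, the current height at $x$ is $\eta(x) + a_{xs} + \#\{\text{edges from } x \text{ to } \Burnt\}$ — the $a_{xs}$ from the initial addition (which is $\#\{$edges from $x$ to $s\}$), and one grain per edge to each already-toppled neighbour in $U$. These two conditions coincide. Since in the burning test each vertex of $U$ burns at most once, and an easy argument (every vertex topples at most once because a second toppling would require height $\ge 2\deg_H(x)$, impossible since $\eta$ is stable and at most $\deg_H(x)$ grains can arrive via distinct edges plus at most... ) — more cleanly: the burning process and the toppling process have the same update rule and the burning process visits each site once, so by the Abelian property the induced toppling sequence is legal and also visits each site at most once. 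Conclude: every vertex of $U$ topples exactly once in the stabilization of $\eta'$ iff every vertex burns iff $\eta$ is recurrent.

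\medskip

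Then I would assemble the pieces: $\eta \in \recur_H \iff$ (Dhar's test succeeds) $\iff$ (every $x \in U$ burns) $\iff$ (every $x \in U$ topples exactly once in the stabilization of $\eta + \sum_x a_{xs}\mathbf{1}_x$), which is the assertion of the lemma. One should also note the ``only if'' half needs the remark that \emph{if} some vertex fails to burn then it never becomes unstable in the toppling process either (same correspondence), so the stabilization leaves it untoppled — handled automatically by the step-by-step matching above, which shows the two processes terminate simultaneously with the same set of affected vertices.

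\medskip

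The main obstacle is purely bookkeeping: making the inductive ``burnt set $=$ toppled set'' argument rigorous requires care with edge multiplicities (the graph $H$ is a multigraph), with the role of $a_{xs}$ as precisely the sink-edge count, and with invoking the Abelian property to know that the particular order in which we choose to topple (namely, mimicking a valid burning order) does reach full stabilization and that no vertex is forced to topple twice. None of this is deep, but it is the part where sign/off-by-one errors creep in, so I would write the height-at-$x$-after-$\Burnt$-topples formula explicitly and check the inequality translates correctly. Everything else is a direct citation of Dhar's burning test.
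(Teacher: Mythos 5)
The paper does not in fact prove Lemma~\ref{lem:burning}: it is stated as a citation to \cite{D90} and \cite[Lemma~4.1]{HLMPPW}, and no argument is supplied. So there is no ``paper's proof'' to compare against; what you have written is a reconstruction of the omitted argument rather than an alternative to one.

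With that said, your reconstruction is essentially sound, and the main idea --- identifying the Dhar burning test with the stabilization of $\eta + \sum_{x \in U} a_{xs}\,\mathbf{1}_x$ by matching ``burnt set $=$ toppled set'' inductively --- is the standard and correct route. The height bookkeeping in the inductive step is right: the toppling criterion $\eta(x) + a_{xs} + \#\{\text{edges from $x$ to }\Burnt\} \ge \deg_H(x)$ agrees with the burning criterion with $s$ included among the burnt vertices, provided one is careful with edge multiplicities, which you are.

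The one place where the argument is not yet clean is the assertion that no vertex topples twice. Your first attempt trails off, and the replacement (``the burning process visits each site once, so by the Abelian property the toppling sequence also does'') inverts the logic: the burning process visits each site at most once by fiat, but that does not by itself control what the toppling dynamics can do; you need to show toppling cannot force a second pass. The correct argument is short and worth stating: suppose $x$ is the \emph{first} vertex to topple a second time. At that moment every other vertex has toppled at most once, so the total number of grains $x$ has received is at most $a_{xs} + \deg_U(x) = \deg_H(x)$ (one per incident edge). Since $\eta(x) \le \deg_H(x) - 1$ and $x$ has already shed $\deg_H(x)$ grains in its first toppling, its current height is at most $\eta(x) + \deg_H(x) - \deg_H(x) = \eta(x) < \deg_H(x)$, so it is not unstable --- contradiction. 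Once this is in place, the two processes terminate with the same set of affected vertices and the equivalence with recurrence follows exactly as you conclude.
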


Note that instead of adding sand as in the lemma, we may initiate the 
toppling process by placing all $\sum_{x \in U} a_{xs} = \deg_H(s)$ 
grains at $s$, and toppling $s$ first. Suppose we carry out any 
possible topplings in parallel. We say that \textbf{$x$ burns at time $k$}, 
if it is toppled in the $k$-th parallel toppling step, where we regard 
$s$ to have burnt at time $0$.

The bijection is defined as follows. For each $y \in U$, fix an arbitrary ordering $\prec_y$ of the edges adjacent to $y$. Given $\eta \in \recur_H$, 
for each $y \in U$, we adjoin to the tree $T$ an edge connecting $y$ 
to a neighbour burnt one time step before, chosen as follows. 
If $P_y$ is the number of edges joining $y$ to neighbours burnt 
before $y$, and $A_y$ is the subset of such edges leading to sites burnt 
one step before $y$, then the burning rule implies
\[ \eta(y) = \deg_H(y) - P_y + i \quad \text{for some $0 \leq i < |A_y|$}\ .\]
We add to $T$ the $i$-th edge in $A_y$ in the ordering $\prec_y$.

The resulting graph $T$ will be a spanning tree (the fact that it spans --- i.e., that every site topples in this procedure --- is part of the content of Lemma \ref{lem:burning}), and we set $\varphi(\eta) = T$. The map $\varphi : \recur_H \to \ST_H$ is usually referred to as the ``burning bijection'', and the toppling procedure used to construct $\varphi$ will be referred to as the ``burning procedure''.



\subsection{Intermediate configurations}
\label{ssec:interm}

Now, we give a description of waves in terms of recurrent configurations on an 
auxiliary graph, as introduced in \cite{IKP94}. As in the previous section, we
describe this in an arbitrary finite connected graph $H = (U \cup \{ s \}, F)$.
Suppose we are interested in waves started by the addition of a particle at a fixed vertex $w \in U$. Consider the graph $H^\newp$ obtained from $H$ by adding the edge $f' := \{ w, s \}$. 
For readability, we denote $\Delta^\newp_H := \Delta_{H^\newp}$ and $\recur^\newp_H := \recur_{H^\newp}$. Let $a^\newp_x = a^\newp_{x,H}$, $x \in U$ denote the addition operators on $H^\newp$. We reserve the notation $\eta^\circ$ for stabilization on the original graph $H$. When we need to emphasize that addition is applied on the graph derived from $H$, we prefer the $a^{\newp}_{x,H}$ notation.

The burning algorithm (Lemma \ref{lem:burning}) implies that 
$\recur_H \subset \recur^\newp_H$. The following lemma
compares the sizes of these two sets.
 
\begin{lem}
\label{lem:wave-to-recur}
For any finite connected graph $H=(U \cup \{ s \}, F)$, and $w \in U$, we have
$|\recur^\newp_H| = ( 1 + g_H(w,w) ) |\recur_H|$.
\end{lem}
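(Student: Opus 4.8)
The plan is to compute both sides as determinants of reduced Laplacians and relate them by a rank-one perturbation.

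First, I would invoke the burning bijection of Section \ref{ssec:trees} (built from Lemma \ref{lem:burning}): $\varphi$ gives a bijection between $\recur_H$ and $\ST_H$, and applying the same construction to $H^\newp$ gives a bijection between $\recur^\newp_H = \recur_{H^\newp}$ and $\ST_{H^\newp}$. By the matrix-tree theorem, $|\ST_H| = \det \Delta_H$ and $|\ST_{H^\newp}| = \det \Delta^\newp_H$, where $\Delta_H$ is the reduced Laplacian with the row and column of the sink $s$ deleted. Hence it suffices to prove $\det \Delta^\newp_H = (1 + g_H(w,w)) \det \Delta_H$.

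Next I would observe that $H^\newp$ differs from $H$ only by the single added edge $f' = \{w,s\}$. Since $s \notin U$ and the sink's row and column are deleted from the reduced Laplacian, this edge affects only the diagonal entry at $w$: $\deg_{H^\newp}(w) = \deg_H(w)+1$, while every other entry of the matrix indexed by $U$ is unchanged. Thus $\Delta^\newp_H = \Delta_H + e_w e_w^\top$, where $e_w \in \mathbb{R}^U$ is the standard basis vector at $w$. Applying the matrix determinant lemma (equivalently, cofactor expansion along row $w$) and using $g_H = (\Delta_H)^{-1}$,
\eqnst
{ \det \Delta^\newp_H = \det\!\big(\Delta_H + e_w e_w^\top\big) = \big(1 + e_w^\top (\Delta_H)^{-1} e_w\big)\det \Delta_H = \big(1 + g_H(w,w)\big)\det \Delta_H, }
which combined with the previous paragraph yields $|\recur^\newp_H| = (1 + g_H(w,w))|\recur_H|$.

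I do not expect a serious obstacle; the argument is short. The only point requiring care is the reduced-Laplacian bookkeeping — verifying that adjoining an edge to the sink changes only the $(w,w)$ diagonal entry, which is precisely what makes the perturbation rank one. As an alternative to the determinant lemma, one can argue purely combinatorially: a spanning tree of $H^\newp$ either avoids $f'$, giving exactly $\ST_H$, or contains $f'$, in which case deleting $f'$ produces a spanning $2$-forest of $H$ with $w$ and $s$ in distinct components; since the number of such $2$-forests equals the $(w,w)$-minor $\det \Delta_{H \setminus w} = g_H(w,w)\,\det \Delta_H$, the identity follows the same way.
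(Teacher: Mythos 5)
Your proof is correct and follows essentially the same route as the paper: both identify $\Delta^\newp_H = \Delta_H + \mathbf{1}_{w,w}$ as a rank-one perturbation of the reduced Laplacian and invoke the matrix determinant lemma to get the factor $1 + g_H(w,w)$, after identifying $|\recur_H|$ with $\det \Delta_H$. The extra combinatorial alternative you sketch at the end is a nice aside but not needed.
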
 
 
\begin{proof}
Let $\mathbf{1}_{w,w}$ denote the $U \times U$ matrix whose only non-zero
entry is a $1$ at $(w,w)$. We have 
\eqnspl{e:recur-compare}
{ |\recur^\newp_H|
  &= \det (\Delta^\newp_H)
  = \det (\Delta_H + \mathbf{1}_{w,w})
  = (1+ (\Delta_H)^{-1}(w,w)) \, \det(\Delta_H) \\
  &= (1 + g_H(w,w)) \, |\recur_H|. }
\end{proof}

The following immediate corollary will be useful in controlling avalanches. 
A version of part (i) was proved in \cite[Lemma 7.5]{JR08}.

\begin{cor} 
\label{cor:wave-to-recur-Zd}
Consider the sequence $(V_L)_L$. \\
(i) Suppose $d \ge 3$. There exists a constant $C(d)$ such that 
$|\recur^\newp_L| \leq C(d) \, |\recur_L|$ for all $L \ge 1$. \\
(ii) Suppose $d = 2$. There exists a constant $C$ such that 
$|\recur^\newp_L| \leq C \, \log L \, |\recur_L|$ for all $L \ge 2$.
\end{cor}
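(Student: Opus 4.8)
The plan is to obtain Corollary~\ref{cor:wave-to-recur-Zd} as a direct consequence of Lemma~\ref{lem:wave-to-recur}. Applying that lemma with $H = G_L$ and $w = o$ gives
\[ |\recur^\newp_L| = \bigl( 1 + g_L(o,o) \bigr) \, |\recur_L|, \]
so it suffices to bound $g_L(o,o)$ from above, uniformly in $L$ when $d \ge 3$ and up to a $\log L$ factor when $d = 2$. Since $2d\, g_L(\cdot,\cdot) = G_L(\cdot,\cdot)$, where $G_L(x,y) = \E_x\bigl[ \sum_{0 \le n < \sigma_{V(L)}} \mathbf{1}_{S(n)=y} \bigr]$ is the Green function of simple random walk killed on leaving $V(L)$, this reduces to a standard Green-function estimate.

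For part (i), $d \ge 3$: because $V(L) \subset \Zd$ we have $\sigma_{V(L)} \le \infty$, hence the sum defining $G_L(o,o)$ runs over no more terms than the one defining $G(o,o)$, giving $G_L(o,o) \le G(o,o) < \infty$ by transience. Thus $g_L(o,o) \le (2d)^{-1} G(o,o)$ for all $L$, and the claim follows with $C(d) = 1 + (2d)^{-1} G(o,o)$.

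For part (ii), $d = 2$: here the walk is recurrent, so $G_L(o,o) \to \infty$, and one needs its growth rate. I would invoke the asymptotics of the two-dimensional potential kernel $a(\cdot)$, namely $a(x) = \frac{2}{\pi} \log |x| + O(1)$ as $|x| \to \infty$ (see, e.g., \cite{Lawlim}), together with the identity $G_L(o,o) = \E_o\bigl[ a\bigl(S_{\sigma_{V(L)}}\bigr) \bigr] - a(0) = \E_o\bigl[ a\bigl(S_{\sigma_{V(L)}}\bigr) \bigr]$. Since $S_{\sigma_{V(L)}} \in \partial V(L)$ satisfies $|S_{\sigma_{V(L)}}| \le c L$, this yields $G_L(o,o) \le \frac{2}{\pi} \log L + O(1)$, hence $g_L(o,o) = \tfrac14 G_L(o,o) \le C \log L$, and for $L \ge 2$ we get $1 + g_L(o,o) \le C' \log L$. (Alternatively, one can use $G_L(o,o) = \deg(o)\, R_{\mathrm{eff}}(o \leftrightarrow V(L)^c)$ and the classical estimate $R_{\mathrm{eff}}(o \leftrightarrow V(L)^c) \asymp \log L$ in $\Z^2$.) Substituting into the displayed identity completes the proof.

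Main obstacle: there is essentially no real obstacle — this is a genuine corollary of Lemma~\ref{lem:wave-to-recur}. The only point requiring a little care is citing the correct two-dimensional Green-function (equivalently, potential-kernel or effective-resistance) asymptotics; everything else is bookkeeping with the relation $2d\, g_L = G_L$ and the monotonicity $G_L(o,o) \le G(o,o)$.
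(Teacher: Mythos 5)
Your proof is correct and follows exactly the paper's route: apply Lemma~\ref{lem:wave-to-recur} with $H=G_L$, $w=o$, use the identity $g_L(o,o)=(2d)^{-1}G_L(o,o)$, and then invoke standard Green-function estimates (boundedness for $d\ge 3$ by transience, $\asymp\log L$ growth for $d=2$). The paper leaves the Green-function asymptotics as a citation to \cite{Lawlim}; you have merely spelled out the same facts.
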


\begin{proof}
Both statements follow from Lemma \ref{lem:wave-to-recur}, the equality 
$g_L(o,o) = (2d)^{-1} G_L(o,o),$ and known properties of the 
Green function $G_L(o,o)$; see e.g.~\cite{Lawlim}. 
\end{proof}

We now describe the interpretation of waves as elements of 
$\recur^\newp_H \setminus \recur_H$; introduced in \cite{IKP94}. 
Let $\eta \in \recur_H$, and suppose that $\eta + \mathbf{1}_w$
is unstable at $w$ in $H$, i.e.~$\eta(w) = \deg_H(w) - 1$.
Consider the waves occurring in stabilizing $\eta + \mathbf{1}_w$ in $H$.
Recall that $N = N(\eta)$ denotes the number of waves. For $1 \le k \le N,$ let 
$\eta_k = a^\newp_w \eta_{k-1}$, where $\eta_0 = \eta$. It is straightforward 
to check that $\eta_k$ is the configuration seen just before the $k$-th wave is
carried out, and $a_w \eta = (a'_w)^{N+1} \eta$. Note that the latter statement
also holds, trivially, when $\eta + \mathbf{1}_w$ is stable, in which case $N = 0$. 

%
\begin{defin}
Let $\eta \in \recur_H$ be such that $\eta + \mathbf{1}_w$ is unstable at $w$. We call the sequence $\alpha(\eta) := (\eta_1, \ldots, \eta_N)$ 
the \textbf{intermediate configurations} corresponding to $\eta$. 
\end{defin}

We record here the characterization of $\recur'_H \setminus \recur_H$; a similar statement was shown in \cite{JRS15} for a continuous height model. 

\begin{lem}
The collection $\{ \alpha(\eta) : \eta \in \recur_H,\, \eta(w) = \deg_H(w) - 1 \}$ 
forms a partition of $\recur'_H \setminus \recur_H$.
\end{lem}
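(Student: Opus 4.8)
The plan is to show two things: first, that each $\alpha(\eta)$ is a sequence of configurations lying in $\recur'_H \setminus \recur_H$; second, that every element of $\recur'_H \setminus \recur_H$ arises as exactly one coordinate of exactly one $\alpha(\eta)$. For the first part, fix $\eta \in \recur_H$ with $\eta(w) = \deg_H(w)-1$, and consider the intermediate configurations $\eta_k = (a'_w)^k \eta$ for $1 \le k \le N$. Since $a'_w$ maps $\recur'_H$ to itself, each $\eta_k \in \recur'_H$; it remains to check $\eta_k \notin \recur_H$, i.e.\ that $\eta_k$ is recurrent on $H'$ but not on $H$. This is where the burning algorithm (Lemma~\ref{lem:burning}) enters: $\eta_k$ is the configuration seen just before the $k$-th wave, so $\eta_k(w) = \deg_H(w) - 1 = \deg_{H'}(w) - 2$, and $\eta_k$ is unstable at $w$ in $H$ after adding one grain (since the $k$-th wave does occur). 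I would argue that a configuration that still requires a further wave to stabilize cannot be recurrent on $H$, by relating the number of topplings at $w$ to the burning procedure; alternatively, one can use that $a_w \eta = (a'_w)^{N+1}\eta$ together with $\eta_k \ne \eta_0$ being reachable only through the non-trivial (sink-edge-using) dynamics to place it outside $\recur_H$.

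For the partition property, I would count. By Lemma~\ref{lem:wave-to-recur}, $|\recur'_H \setminus \recur_H| = g_H(w,w)\,|\recur_H|$. On the other hand, Dhar's formula (Lemma~\ref{lem:Dhar}) gives $\E_{\nu_H} n(w,w) = g_H(w,w)$, so $\sum_{\eta \in \recur_H} n(w,w;\eta) = g_H(w,w)\,|\recur_H|$. By Lemma~\ref{lem:num-waves}(i), $N(\eta) = n(w,w;\eta)$, which is exactly the length of the sequence $\alpha(\eta)$ (and equals $0$ precisely when $\eta(w) < \deg_H(w)-1$, i.e.\ when $\eta+\mathbf{1}_w$ is stable, so those $\eta$ contribute nothing). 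Hence $\sum_\eta |\alpha(\eta)| = |\recur'_H \setminus \recur_H|$, and the collection $\{\alpha(\eta)\}$ partitions a set of the right total cardinality provided the coordinates are pairwise distinct across all $\eta$. So the crux reduces to injectivity: the map sending a pair $(\eta, k)$ with $1 \le k \le N(\eta)$ to the configuration $\eta_k$ is injective into $\recur'_H \setminus \recur_H$.

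For injectivity, I would invert the wave dynamics. Given a configuration $\xi \in \recur'_H \setminus \recur_H$, I claim one can recover $(\eta,k)$ by "running the waves backwards": apply the inverse of $a'_w$ repeatedly. The operator $a'_w$ is a bijection on $\recur'_H$, so $(a'_w)^{-1}$ is well-defined there; iterating it starting from $\xi$ produces a sequence $\xi, (a'_w)^{-1}\xi, (a'_w)^{-2}\xi, \dots$, and I would show this sequence stays in $\recur'_H \setminus \recur_H$ until it first lands in $\recur_H$, at which point we have reached $\eta_0 = \eta$, and the number of steps taken is $k$. The key points to verify are: (a) once the backward orbit enters $\recur_H$ it must do so at a configuration $\eta$ with $\eta(w) = \deg_H(w)-1$ (so that $\alpha(\eta)$ is defined and non-empty) — this follows because the element of $\recur_H$ immediately preceding it in forward time, under $a'_w$, would otherwise already be in $\recur_H$, contradicting that the forward image left $\recur_H$; and (b) the backward orbit cannot cycle within $\recur'_H \setminus \recur_H$ forever, which holds since $\recur_H \subsetneq \recur'_H$ is non-empty and $a'_w$ is a finite-order-free... more precisely, since on the finite set $\recur'_H$ the orbit of $a'_w$ through any point is a union of cycles, and every such cycle must meet $\recur_H$ (because $a_w = (a'_w)^{N+1}$ preserves $\recur_H$ and $(a'_w)$'s orbit structure forces a return).

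The main obstacle I expect is part (b) / the precise bookkeeping of where the backward orbit of $a'_w$ lies relative to $\recur_H$: one must show cleanly that starting inside $\recur'_H \setminus \recur_H$ and going backward under $(a'_w)^{-1}$, one eventually and unambiguously enters $\recur_H$, and that the entry point has the correct height at $w$. I would handle this by exploiting the explicit description of intermediate configurations already established in the text — namely that $\eta_k$ is "the configuration seen just before the $k$-th wave," which characterizes the forward orbit concretely in terms of topplings — and combining it with the cardinality identity above, so that injectivity on a finite set plus equality of cardinalities upgrades automatically to a bijection, sidestepping a fully independent verification of surjectivity.
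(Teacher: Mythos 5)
Your overall plan matches the paper's: show each $\eta_k$ lies in $\recur'_H \setminus \recur_H$, establish that the entries across all $\alpha(\eta)$ are pairwise distinct, and then use the cardinality identity (Dhar's formula for $\sum_\eta N(\eta)$ together with Lemma~\ref{lem:wave-to-recur}) to conclude surjectivity. The counting step and the final ``injectivity plus equal cardinality gives a bijection'' move are exactly what the paper does, and your injectivity argument via $(a'_w)^{-1}$ is a wordier version of the same idea (if $\eta_i = \eta_j$ with $0 \le i < j \le N$, apply $(a'_w)^{-i}$ to get $\eta_0 = \eta_{j-i}$, contradicting $\eta_0 \in \recur_H$, $\eta_{j-i} \notin \recur_H$).

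However, there is a genuine gap in the first step. You assert $\eta_k(w) = \deg_H(w) - 1$; this is wrong. Tracking the definition: $\eta_1 = a'_w \eta_0 = \eta_0 + \mathbf{1}_w$ (the added grain makes $w$ have $\deg_H(w)$ particles, which is still below $\deg_{H'}(w) = \deg_H(w)+1$, so no toppling occurs), hence $\eta_1(w) = \deg_H(w)$; and by induction, for $2 \le k \le N$ the $k$-th wave exists precisely because $w$ has regained $\deg_H(w)$ particles after the $(k-1)$-th wave, so $\eta_k(w) = \deg_H(w)$ as well. The point is that $\eta_k$ is \emph{unstable at $w$ in $H$}, hence not even a stable sandpile on $H$, and therefore trivially $\eta_k \notin \recur_H$. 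Once you have the wrong height, you no longer get this for free, and the two fallback arguments you sketch do not close the gap: ``a configuration that still requires a further wave cannot be recurrent on $H$'' conflates a property of the dynamics with a property of the configuration (perfectly good recurrent configurations on $H$ can generate arbitrarily many waves when a grain is added at $w$), and ``reachable only through the non-trivial sink-edge-using dynamics'' is not a well-defined criterion for membership in $\recur'_H \setminus \recur_H$. Fixing the height computation to $\eta_k(w) = \deg_H(w)$ makes the membership claim immediate and simultaneously tightens the rest of the argument.
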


\begin{proof}
Since $\eta_k(w) = \deg_H(w)$, $k = 1, \dots, N$, we have $\eta_k \in \recur'_H \setminus \recur_H$, $k = 1, \dots, N$. This and the relation $a_w \eta = (a'_w)^{N+1} \eta$ imply that $\alpha(\eta)$ has distinct entries (the order of $a'_w$ is at least $N+1$). By Dhar's formula, the average number of waves per recurrent configuration is $g_H(w,w)$, so $g_H(w,w) \vert \recur_H \vert$ elements of $\recur_H'$ correspond to intermediate configurations.

It is similarly easy to check that $\eta \in \recur_H'$, accounting for another $\vert \recur_H \vert$ elements of $\recur_H'$. Comparing this with Lemma \ref{lem:wave-to-recur}, we see that every element of $\recur_H'$ is either an intermediate configuration or recurrent on $H$, completing the proof.
\end{proof}

Given $\eta_* \in \recur'_H \setminus \recur_H$, we denote by $\wave(\eta_*)$ the set of vertices that topple in the stabilization $a'_{w,H}(\eta_*)$. 
It is immediate from this definition that if $\alpha(\eta) = (\eta_1, \dots, \eta_N)$, 
then $\wave(\eta_k)$ is the $k$-th wave corresponding to $\eta$, i.e.~$\wave_{k,H}(w;\eta)$. Of particular interest will be the last wave $\wave(\eta_N)$. The following corollary follows directly from the definitions.

\begin{cor}
\label{cor:last-waves}
An intermediate configuration $\eta_* \in \recur'_H \setminus \recur_H$ is a last wave if and only if there exists $y \sim w$, $y \in U \cup \{ s \}$, such that $y \not\in \wave(\eta_*)$.
\end{cor}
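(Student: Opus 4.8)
The plan is to read off the statement from the dictionary between intermediate configurations and waves built up in this subsection, reducing everything to a count of grains at $w$.

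First I would use the partition lemma above: the given $\eta_* \in \recur'_H \setminus \recur_H$ equals $\eta_k$ for a unique recurrent $\eta$ with $\eta(w) = \deg_H(w) - 1$ and a unique index $1 \le k \le N := N(\eta)$, and by definition ``$\eta_*$ is a last wave'' means exactly $k = N$. By the definition of waves (equivalently Lemma \ref{lem:num-waves}), $k = N$ holds if and only if, after toppling $w$ once in $\eta_k$ and then performing every further toppling that does not topple $w$ again --- that is, after carrying out the $k$-th wave $\wave_{k,H}(w;\eta)$ in $H$ --- the vertex $w$ is stable (it is the only vertex that can be unstable at that point).

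Second I would identify $\wave(\eta_*)$ with $\wave_{k,H}(w;\eta)$. This is the only step needing an argument, and it is short: $\wave(\eta_*)$ is the toppling support of the stabilization of $\eta_* + \mathbf 1_w$ on $H^\newp = (U \cup \{s\}, F \cup \{\{w,s\}\})$. Toppling $w$ on $H^\newp$ agrees with toppling $w$ on $H$ except that one additional grain is sent to $s$ and lost, and the extra edge is incident only to $w$ and to the never-toppling sink, so every other vertex topples exactly as it would in $H$. After toppling $w$ once and completing the induced cascade, $w$ holds $\sum_{y \sim w,\, y \in \wave(\eta_*)} a_{wy} \le \deg_H(w) < \deg_{H^\newp}(w)$ grains, so $w$ never becomes unstable again on $H^\newp$; hence the $H^\newp$-stabilization topples precisely the vertices of the $k$-th wave in $H$, i.e. $\wave(\eta_*) = \wave_{k,H}(w;\eta)$.

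It then remains to do the grain count from the first step. Since $w$ starts the wave, topples exactly once, and then receives $a_{wy}$ grains from each toppled neighbour $y$, after the $k$-th wave $w$ holds $m := \sum_{y \sim w,\, y \in \wave(\eta_*)} a_{wy}$ grains (vertices $y \not\sim w$ contribute nothing, and $s \notin \wave(\eta_*)$), while $\deg_H(w) = \sum_{y \sim w} a_{wy}$ with every $a_{wy} > 0$. Therefore $m < \deg_H(w)$ --- i.e. $w$ is stable, i.e. $k = N$, i.e. $\eta_*$ is a last wave --- if and only if some neighbour $y \sim w$ with $y \in U \cup \{s\}$ fails to lie in $\wave(\eta_*)$. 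Combining the three steps yields the corollary. I do not expect any genuine obstacle here; the only place to be slightly careful is the middle step, where one must check that the auxiliary edge $\{w,s\}$ forces $w$ to topple just once.
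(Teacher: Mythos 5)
Your argument is correct and amounts to spelling out the definitional unwinding that the paper leaves implicit (``follows directly from the definitions''); note that your middle step, identifying $\wave(\eta_*)$ with the $k$-th wave $\wave_{k,H}(w;\eta)$ on $H$, is already asserted in the paragraph preceding the corollary, so you could simply cite it. The grain count in your final step is exactly the observation the paper makes when defining waves in Section \ref{ssec:waves-def} --- namely that $w$ is unstable after a wave if and only if every neighbour of $w$ has toppled --- with the sink $s$ automatically failing to topple, so the two characterizations coincide.
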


We will also need the following lemma.

\begin{lem}
\label{lem:num-last-waves}
We have
\eqnst
{ \frac{1}{\deg_H(w)} | \recur_H |
  \le | \{ \eta_* \in \recur'_H : \, \text{$\eta_*$ is a last wave} \} |
  \le | \recur_H |. }
\end{lem}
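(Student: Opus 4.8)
The plan is to count last waves by setting up a map from recurrent configurations to last waves and a map in the other direction, getting each of the two bounds from one of these maps.

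First I would establish the upper bound. Every intermediate configuration $\eta_*$ that is a last wave lies in $\recur'_H \setminus \recur_H$, and by the partition lemma preceding this statement, each such $\eta_*$ arises as $\eta_N$ in the intermediate sequence $\alpha(\eta) = (\eta_1,\dots,\eta_N)$ for a \emph{unique} $\eta \in \recur_H$ with $\eta(w) = \deg_H(w)-1$ (namely, the one whose avalanche produced it). Moreover $\eta$ has exactly one last wave among its intermediate configurations. Hence the assignment ``$\eta_* \mapsto$ the $\eta$ generating it'' is a well-defined injection from the set of last waves into $\{\eta \in \recur_H : \eta(w) = \deg_H(w)-1\} \subseteq \recur_H$, giving $|\{\eta_* : \eta_* \text{ is a last wave}\}| \le |\recur_H|$.

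For the lower bound, I would go the other direction and produce, from many recurrent configurations, distinct last waves. Using Corollary \ref{cor:last-waves}, an intermediate configuration is a last wave precisely when some neighbour $y \sim w$ fails to topple in $a'_{w,H}(\eta_*)$. The idea is: starting from the configuration $\eta$ just \emph{after} its final wave, i.e.\ $a_w\eta = (a'_w)^{N+1}\eta \in \recur_H$, one recovers $\eta_N$ by running the process backwards one wave; more usefully, I would exhibit for each $\eta \in \recur_H$ and each of the $\deg_H(w)$ possible values of $\eta(w) \in \{0,\dots,\deg_H(w)-1\}$ a last wave, in a way that is injective after fixing the value. Concretely: given any $\eta \in \recur_H$, consider $\tilde\eta$ obtained by decreasing $\eta(w)$ to some value $j < \deg_H(w)$ that still keeps it recurrent and such that adding one grain at $w$ triggers a single wave that does not reach some fixed neighbour of $w$ — the resulting $a'_w(\tilde\eta)$ is then a last wave by Corollary \ref{cor:last-waves}. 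Partitioning $\recur_H$ according to the height at $w$ and observing there are $\deg_H(w)$ height classes of (roughly) equal importance, one extracts at least $|\recur_H|/\deg_H(w)$ distinct last waves.

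The main obstacle I expect is the lower bound: I need to be careful that the last waves produced for different $\eta$ are genuinely distinct and that ``decreasing the height at $w$'' actually yields a configuration whose addition at $w$ produces a wave missing a prescribed neighbour. A clean way to handle this is to note that $\eta_* \in \recur'_H$ is a last wave iff $\eta_*(w) = \deg_H(w)$ and the stabilization $a'_{w,H}(\eta_*)$ omits a neighbour of $w$, then to set up the bijection between last waves omitting a \emph{fixed} $y \sim w$ and a subset of $\recur_{H''}$ for $H''$ the graph with $\{w,y\}$ deleted (or $w,y$ identified), analogously to Lemma \ref{lem:wave-to-recur}; a determinant/Green-function comparison of the form $|\recur_H| \le \deg_H(w) \cdot |\{\text{last waves}\}|$ then follows, since summing over the $\deg_H(w)$-many choices... actually the sink already accounts for the edge from $w$, so one uses that a uniformly random recurrent configuration has $\eta(w)$ uniform on $\{0,\dots,\deg_H(w)-1\}$ conditionally, which is itself a consequence of the burning bijection. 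Either route — direct injection or the determinant comparison — should close the argument; I would present whichever turns out to have fewer technical caveats.
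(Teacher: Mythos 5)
Your upper bound is correct and is exactly what the paper means by ``obvious'': the partition lemma preceding the statement makes $\eta_* \mapsto \eta$ a well-defined injection from last waves into $\{\eta \in \recur_H : \eta(w) = \deg_H(w)-1\} \subset \recur_H$.

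The lower bound is where the gap lies, and one of your proposed routes is actually going in the wrong direction. If you \emph{decrease} $\eta(w)$ to some $j < \deg_H(w)-1$, then $\tilde\eta + \mathbf{1}_w$ is already stable on $H$, so $a'_w(\tilde\eta) = \tilde\eta + \mathbf{1}_w$ lies in $\recur_H$, not in $\recur'_H \setminus \recur_H$; it is not an intermediate configuration at all, and Corollary \ref{cor:last-waves} does not even apply to it. You must \emph{raise} $\eta(w)$ to $\deg_H(w)-1$, not lower it. The extra requirement that the addition ``triggers a single wave that does not reach some fixed neighbour of $w$'' is something you neither need nor can arrange for general $\eta$. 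Finally, the claim that the conditional law of $\eta(w)$ given the rest is uniform on all of $\{0,\dots,\deg_H(w)-1\}$ is false: by the burning bijection it is uniform on an interval $\{m,\dots,\deg_H(w)-1\}$ whose left endpoint $m$ depends on the rest of the configuration. This is precisely why the lemma has the slack factor $1/\deg_H(w)$ rather than being an equality, so the determinant-style argument you sketch cannot produce an identity of the form $|\recur_H| = \deg_H(w)\cdot|\{\text{last waves}\}|$.

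The paper's lower-bound argument is the clean version of the idea you were circling. Assign to each $\eta \in \recur_H$ the \emph{last} intermediate configuration in the stabilization of $\eta + (\deg_H(w)-\eta(w))\mathbf{1}_w$. Raising $\eta(w)$ to $\deg_H(w)-1$ preserves recurrence (heights only increase), giving some $\tilde\eta$ with $\tilde\eta(w)=\deg_H(w)-1$, and then $\tilde\eta_N$ is a last wave. The composite map is at most $\deg_H(w)$-to-$1$: the first step collapses at most the $\deg_H(w)$ possible values of $\eta(w)$ over a fixed restriction to $U\setminus\{w\}$, and the second step $\tilde\eta \mapsto \tilde\eta_N$ is injective by the partition lemma. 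No determinant comparison is required.
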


\begin{proof}
The upper bound is obvious. To see the lower bound, we assign to $\eta \in \recur_H$ the last intermediate configuration in the stabilization of $\eta + (\deg_H(w) - \eta(w)) \mathbf{1}_w$. This map is at most $\deg_H(w)$ to $1$, proving the lower bound.
\end{proof}


\subsection{Bijection for intermediate configurations}
\label{ssec:newspan}

We now specialize to the set-up where $H = G_V$, $V \subset \Z^d$ finite, $o \in V$.
In this section, we describe a version of the burning bijection on $G'_V$, that will allow us to control topplings occurring in a wave. To the best of our knowledge, such a bijection was first introduced by Ivashkevich, Ktitarev and Priezzhev \cite{IKP94}. See also \cite{P00,JR08,JRS15}, where it played a key role. For many of our results, the variant in \cite{IKP94} would suffice. However, a more careful choice of the burning process will be needed in Section \ref{sec:last-k-waves}, so we introduce here the version we need. Our burning process is similar to burning processes introduced in \cite{JW14} and \cite{GJ14}.


Let $\eta_* \in \recur'_V \setminus \recur_V$. We define a pair of vertex-disjoint trees
$(T_o,T_s) = \varphi'(\eta_*)$, such that $T_o \cup T_s$ spans $G_V$.
Send one grain of sand from $s$ to $o$, resulting in $2d$ grains at $o$.
We sequentially topple vertices in the balls $B(0) \cap V, B(1) \cap V, B(2) \cap V, \dots$, and build a tree rooted at $o$, similarly to the usual burning bijection.
The precise definitions of burnt and unburnt sets are as follows. We let
\begin{align*}
  \Burnt^{(0)}_0
  &= \{ o \} & 
  \Unburnt^{(0)}_0
  &= V \cup \{ s \} \setminus \{ o \} \\
  \Burnt^{(0)}_k
  &= \es, \quad k \ge 1, & 
  \Unburnt^{(0)}_k
  &= V \cup \{ s \} \setminus \{ o \}, \quad k \ge 1. 
\end{align*}
For $r \ge 1$, inductively, we set
\begin{align*}
  \Burnt^{(r)}_0
  &= \cup_{\ell \ge 0} \Burnt^{(r-1)}_\ell &
  \Unburnt^{(r)}_0
  &= V \cup \{ s \} \setminus \Burnt^{(r)}_0 \\
  \Burnt^{(r)}_k
  &= \left\{ x \in B(r) \cap \Unburnt^{(r)}_{k-1} : 
    \eta_*(x) \ge \deg_{\Unburnt^{(r)}_{k-1}}(x) \right\} &
  \Unburnt^{(r)}_k 
  &= \Unburnt^{(r)}_{k-1} \setminus \Burnt^{(r)}_k, \quad k \ge 1. 
\end{align*}
For each $r \ge 1$ there exists a smallest index $J = J(r) \ge 1$
such that $\Burnt^{(r)}_J = \es$, and there is a smallest index $R \ge 1$
such that $J(R+1) = 1$. Then $\Burnt^{(R+1)}_0 = \wave(\eta_*)$ is the set of 
vertices toppled in the wave represented by $\eta_*$. 

We complete the burning process by sending $a_{sx}$ grains of sand from $s$ to $x$ for each $x \in V$, and follow the usual burning rule. That is, we set:
\begin{align*}
  \tBurnt_0
  &= \cup_{r \ge 0} \cup_{\ell \ge 0} \Burnt^{(r)}_\ell &
  \tUnburnt_0
  &= V \setminus \tBurnt_0 \\
  \tBurnt_k
  &= \left\{ x \in \tUnburnt_{k-1} : 
    \eta_*(x) \ge \deg_{\tUnburnt_{k-1}}(x) \right\} &
  \tUnburnt_k
  &= \tUnburnt_{k-1} \setminus \tBurnt_k, \quad k \ge 1. 
\end{align*}

We now define the bijection. If $o \not= u \in V \cap \Burnt^{(R+1)}_0$, then there exists a unique pair $(r,k)$ with $r \ge 1$ and $k \ge 1$ such that $u \in \Burnt^{(r)}_k$. Due to the definition of the burning rule, there exists at least one $y \sim x$ such that $y \in \Burnt^{(r)}_{k-1}$. We select an edge that connects $u$ to one of these vertices, using the ordering $\prec_u$, as in Section \ref{ssec:trees}. Namely, if $P_u$ is the number of edges joining $u$ to neighbours in $\cup_{\ell < k} \Burnt^{(r)}_\ell$, and $A_u$ is the subset of such edges leading to vertices in $\Burnt^{(r)}_{k-1}$, then necessarily
\[ \eta_*(u) = 2d - P_u + i \quad \text{for some $0 \leq i < |A_u|$}\ .\]
We add to $T_o$ the $i$-th edge in $A_u$ in the ordering $\prec_u$.

If $u \in \Unburnt^{(R+1)}_0 = \tUnburnt_0$, there is a unique $k \ge 1$ such that $u \in \tBurnt_k$, and there exists at least one $y \sim x$ with $y \in \tBurnt_{k-1}$. We select an edge to one of these vertices, using the ordering $\prec_u$ as before. Namely, if $P_u$ is the number of edges joining $u$ to neighbours in $\cup_{\ell < k} \tBurnt_\ell$, and $A_u$ is the subset of such edges leading to vertices in $\tBurnt_{k-1}$, then necessarily 
\[ \eta_*(u) = 2d - P_u + i \quad \text{for some $0 \leq i < |A_u|$}\ .\]
We add to $T_s$ the $i$-th edge in $A_u$ in the ordering $\prec_u$.
Let $\varphi'(\eta_*) := (T_o,T_s)$ denote the two components spanning forest obtained by the above construction. Let us write $\ST_{V,o}$ for the set of all spanning forests of $G_V$ rooted at $\{ s, o \}$. 

\begin{lem} 
\label{lem:varphi'} \ \\
(i) The map $\varphi'$ is a bijection between $\recur'_V \setminus \recur_V$ and $\ST_{V,o}$.\\
(ii) For any $\eta_* \in \recur'_V \setminus \recur_V$, the vertex set of $T_o(\eta_*)$ equals $\wave(\eta_*)$. \\
(iii) We have the following property:
\eqn{e:path-property-gen}
{ \parbox{14.5cm}{If there is a path from $o$ to a vertex $x \in V$
    in $T_o = \varphi'(\eta_*)$ that stays inside $B(r)$, 
    then starting from $\eta_* + \mathbf{1}_o$ there is a sequence of 
    topplings in $B(r)$ that topples $x$.} }
\end{lem}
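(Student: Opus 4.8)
The plan is to treat the three parts in the order (ii), (i), (iii), since (ii) isolates the combinatorial content and is used in the other two. For (ii), note first that by construction the vertex set of $T_o$ is $\{o\}$ together with all $u\in V$ admitting a (unique) pair $(r,k)$, $r,k\ge1$, with $u\in\Burnt^{(r)}_k$; since $\Burnt^{(r)}_k\subseteq B(r)\cap V$ and $o\in V$, this set is exactly $\Burnt^{(R+1)}_0$. It remains to show $\Burnt^{(R+1)}_0=\wave(\eta_*)$. The inequality $\eta_*(x)\ge\deg_{\Unburnt^{(r)}_{k-1}}(x)$ says precisely that, after each already-burnt (hence already-toppled) neighbour of $x$ has sent it a grain, $x$ has at least $2d$ grains; thus the ball-by-ball burning realizes a legitimate order of topplings for stabilizing $\eta_*+\mathbf1_o$ in which $o$ is never toppled twice, i.e.\ a legitimate order for carrying out the wave. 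Since $V$ is finite the burnt set grows strictly at each round until it first stalls; once a round burns nothing every later round burns nothing too (a vertex outside the already-exhausted balls cannot have a burnt neighbour, as burnt vertices lie in $B(R)$ and a neighbour of a $B(R)$-vertex lies in $B(R+1)$), and at that point no vertex of $V$ is unstable. By the Abelian property for restricted topplings, $\Burnt^{(R+1)}_0$ therefore equals the full set of sites toppled in the wave, giving (ii).

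For (i), one checks $\varphi'$ is well defined, injective, and between sets of equal cardinality. Each non-root vertex of $T_o$ (resp.\ $T_s$) receives exactly one edge to a vertex burnt one step earlier, and ``burnt earlier'' is a strict partial order terminating at $o$ (resp.\ $s$), so $T_o,T_s$ are trees; they are vertex-disjoint since $T_o$ has vertex set $\Burnt^{(R+1)}_0\subseteq V$ and $T_s$ has vertex set $\{s\}\cup(V\setminus\Burnt^{(R+1)}_0)$. The second phase does burn all of $\tUnburnt_0$ because phases one and two together are exactly the burning test of Lemma \ref{lem:burning} on $G_V'$ — phase one sends the grain along the added edge $f'=\{o,s\}$, phase two sends $a_{sx}$ grains along the original edges at $s$ — read in a particular Abelian order, so since $\eta_*\in\recur_V'$ every vertex of $V$ topples exactly once and phase two topples exactly $V\setminus\Burnt^{(R+1)}_0$. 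The edge rule is consistent since at the step $u$ burns it has just become unstable, forcing $0\le i<|A_u|$ in $\eta_*(u)=2d-P_u+i$. Hence $\varphi'(\eta_*)\in\ST_{V,o}$. Injectivity: from $(T_o,T_s)$ one recovers the burning order (for $T_s$ via the classical fact that a vertex burns at its $T_s$-distance from $s$; for $T_o$ via the analogous recovery, using that the parent of $u$ burns at step $k_u-1$ of round $r_u$, together with the identity that $r_u$ equals the round at which $u$ first enters the exhausted balls whenever its parent burns in a strictly earlier round), and then $\eta_*(u)=2d-P_u+i$ with $i$ the position of $u$'s tree-edge among its eligible edges in $\prec_u$, together with $\eta_*(o)=2d$, determine $\eta_*$. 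Finally $\varphi'$ is a bijection by a count: by Lemma \ref{lem:wave-to-recur} one has $|\recur_V'\setminus\recur_V|=g_V(o,o)\,|\recur_V|$, while by the all-minors matrix-tree theorem $|\ST_{V,o}|$ equals the determinant of the principal submatrix of $\Delta_V$ obtained by deleting the row and column of $o$, which by Cramer's rule is $g_V(o,o)\det\Delta_V=g_V(o,o)\,|\recur_V|$; an injection between finite sets of equal size is a bijection.

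For (iii), run only rounds $1,\dots,r$ of the ball-by-ball construction. Every toppling performed is at a vertex of some $B(r')\subseteq B(r)$, $r'\le r$, and by the above each such vertex is genuinely unstable when toppled; the set of sites toppled is exactly $\Burnt^{(r+1)}_0$. So it suffices to show that if the $T_o$-path $o=u_0,u_1,\dots,u_m=x$ stays inside $B(r)$, then $x\in\Burnt^{(r+1)}_0$, i.e.\ every $u_j$ burns in round $\le r$. Argue by induction on the burning order. Write $(r_u,k_u)$ for the round and step at which $u\ne o$ burns. The construction puts the parent of $u$ in $\Burnt^{(r_u)}_{k_u-1}$; so if $k_u\ge2$ the parent burns at round $r_u$, while if $k_u=1$ the parent burns at some round $<r_u$. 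In the latter case $u$ is eligible at the start of round $r_u$ but was not eligible at the end of round $r_u-1$ — yet its burnt-neighbour set is the same at both times — which is only possible if $u\notin B(r_u-1)$, i.e.\ $r_u$ is the round at which $u$ first enters the exhausted balls. Now for $u_j$: its parent $u_{j-1}$ lies on the sub-path inside $B(r)$, so $r_{u_{j-1}}\le r$ by induction; if $k_{u_j}\ge2$ then $r_{u_j}=r_{u_{j-1}}\le r$, and if $k_{u_j}=1$ then $r_{u_j}\le r$ because $u_j\in B(r)$. Either way $r_{u_j}\le r$, completing the induction and hence the proof.

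The main obstacle is the inductive claim in (iii), and specifically the identity ``$k_u=1\Rightarrow r_u$ is the round $u$ first enters the exhausted balls'': this is exactly what rules out the a priori danger that a vertex sitting close to $o$ and joined to it by a short $T_o$-path is only burnt much later because it needs support from far away — the same fact is what makes the burning order in (i) recoverable from the output trees. Everything else is bookkeeping with the burnt/unburnt sets and the Abelian property.
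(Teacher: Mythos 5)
Your proof is correct and follows the same structure as the paper's: identify the ball-by-ball burning with the wave topplings (giving (ii) and (iii)), then establish (i) by injectivity plus a matrix-tree count — your count through Lemma~\ref{lem:wave-to-recur} and the all-minors theorem is equivalent to the paper's direct application of matrix-tree to $G'_V$. The main added value of your write-up is that it supplies the justification the paper elides: in particular the observation that $k_u=1$ forces $r_u$ to be the first round in which $u$ lies in the expanding ball, which both powers the reconstruction of the burning order in (i) and is exactly the content behind the paper's one-line dismissal of (iii).
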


\begin{proof}
(i) 
Let $\eta_* \not= \heta_* \in \recur'_V \setminus \recur_V$. Tracing the burning process to the first time when a vertex with $\eta_*(x) \not= \heta_*(x)$ is encountered, we see that $\varphi'$ is injective on $\recur'_V \setminus \recur_V$. It follows from the definitions that $\varphi'(\recur'_V \setminus \recur_V)$ is a subset of the set of spanning forests of $G_V$ rooted at $\{ o, s \}$. 
By the matrix-tree theorem applied to $G'_V$, the number of spanning forests of $G_V$ rooted at $\{ o, s \}$ equals $\det(\Delta'_V) - \det(\Delta_V)$. This also equals $|\recur'_V \setminus \recur_V|$ \cite{D90}, so statement (i) follows.

(ii) The burning process that was used to define $\Burnt^{(R+1)}_0$ can be identified with topplings in the wave corresponding to $a'_{o,V}(\eta_*)$. This implies that $\Burnt^{(R+1)}_0 = \wave(\eta_*)$, and this is the vertex set of $T_o$.

(iii) This again follows directly from the interpretation of the burning process in terms of topplings in the wave.
\end{proof}

\subsection{Random walk notation and basic facts}
\label{ssec:rwdef}

Many of our techniques require a detailed analysis of spanning trees via Wilson's algorithm. For this reason, we will often have to consider collections of simple random walks (``SRW'') and loop-erased random walks (``LERW'') on (subsets of) $\mathbb{Z}^d$.

We will denote by $S_x = (S_x(0),\, S_x(1),\, \ldots)$ an infinite simple random walk on $\mathbb{Z}^d$ started at $x$, so that $S_x(0) = x$.
We will suppress the subscript when the choice of $x$ is clear or when $x = o$. In general, $S_x$ and $S_y$ will be assumed independent when $x \not= y$. When multiple independent walks from one site are necessary, we write $S_x, \, S_x^{\newp}$ (etc).

If $A \subset \mathbb{Z}^d$ we define the standard stopping times
\begin{align}
\sigma_A &= \inf\{n \geq 0: S_x(n) \notin A\} & 
\xi_A &= \inf\{n \geq 0: S_x(n) \in A\}\ , \\
\osigma_A &= \inf\{n \geq 1: S_x(n) \notin A\} & 
\oxi_A &= \inf\{n \geq 1: S_x(n) \in A\}\ . \label{eq:stoptimedefs}
\end{align}
Note that we suppress any dependence of these stopping times on the starting point $x$; when we write (for instance) $S_x(\xi_A)$, we are referring to the location of $S_x$ at its first hitting time on $A$. 
When the starting vertex may be ambiguous, we use subscripts on the symbol $\prob$; for instance,
\[\prob_x(\xi_A < \xi_B) = \prob(S_x[0, \xi_A] \cap B = \emptyset)\ . \]
We will abbreviate $\sigma_{V(n)}$ to $\sigma_n$, and 
write $\xi_o$ for $\xi_{\{o\}}$.

The trace of a walk $S_x$ between two times $a < b$ (where $b$ can be infinite) will be denoted
\[S_x[a,b] := \{ S_x(j) : a \le j \le b \}, \]
and similar notation will be used for other intervals--e.g., $S(a,b)$ and so on. We will sometimes abuse notation and treat $S_x[a,b]$ as a sequence instead of an unordered set.

If $A \subset \Z^d$ is a finite connected set and if $x, y \in \Zd$, recall the Green function
\begin{equation}
  G_A(x,y) := \sum_{0 \le j < \sigma_A} \prob(S_x(j) = y)\ .
\label{eq:green_fun_def}
\end{equation}
As before, $G_{V(n)}$ is abbreviated $G_n$.
We will use the following standard asymptotics for the Green function inside a large ball and the probability of hitting $o$ before exiting a large ball:
\begin{thm}[See \cite{Lawler}, Prop. 1.5.9 and 1.6.7]
\label{thm:green_asymp} \ \\
(i) If $d = 2$, we have uniformly in $x \in B(n)$:
\begin{align*}
G_{B(n)}(o,x) &=  \frac{2}{\pi}\left[\log n - \log |x| \right] + O\left(|x|^{-1} + 1/n\right) \\
\prob_x\left(\xi_o < \sigma_{B(n)} \right) &= \frac{1}{\log n}\left[\log n - \log |x| +  O\left(|x|^{-1} + 1/n\right) \right]\ .
\end{align*}
(ii) For $d \geq 3$, there exist $c_1 = c_1(d), c_2 = c_2(d) > 0$ such that, uniformly in $n$ and $x \in B(n)$:
\begin{align*}
G_{B(n)}(o,x) =  c_1 \left[|x|^{2-d} - n^{2-d}\right]  + O\left(|x|^{1-d}\right)\\
\prob_x\left(\xi_o < \sigma_{B(n)} \right) = c_2 \left[|x|^{2-d} - n^{2-d}\right]  + O(|x|^{1-d})\ .
\end{align*}
\end{thm}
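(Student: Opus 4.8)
The two estimates are quoted from \cite[Propositions 1.5.9 and 1.6.7]{Lawler}, so strictly speaking the proof is a reference; let me indicate the route I would take to obtain the ball-restricted estimates from standard full-space facts. The plan is to take as input the whole-space Green function asymptotics for $d \ge 3$, namely $G(x,y) = c_1 |x-y|^{2-d} + O(|x-y|^{-d})$, and for $d = 2$ the potential kernel estimate $a(x) = \tfrac{2}{\pi} \log |x| + \kappa + O(|x|^{-2})$ with $a(o) = 0$; both follow from the local central limit theorem via Fourier inversion (\cite[Theorems 1.5.4 and 1.6.2]{Lawler}). One then transfers these to $B(n)$ using the strong Markov property at the exit time $\sigma_{B(n)}$, and deduces the hitting probabilities by a first-entrance decomposition at $\xi_o$.

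For $d \ge 3$, splitting the visits to $o$ into those before and after $\sigma_{B(n)}$ and applying the strong Markov property at $\sigma_{B(n)}$ gives
\[ G_{B(n)}(x,o) = G(x,o) - \E_x\big[ G(S(\sigma_{B(n)}),o) \big] . \]
Since a single step changes $|\cdot|$ by at most $1$, we have $n < |S(\sigma_{B(n)})| \le n+1$, so $G(S(\sigma_{B(n)}),o) = c_1 n^{2-d} + O(n^{1-d})$ deterministically; inserting the whole-space estimate for $G(x,o)$ and using $n \ge |x|$ to absorb $O(n^{1-d})$ and $O(|x|^{-d})$ into $O(|x|^{1-d})$ gives the first display of (ii). For $d = 2$, the process $a(S(k)) - \#\{ 0 \le j < k : S(j) = o \}$ is a martingale (because $a$ is harmonic off $o$, with the normalization at $o$ pinned down by the local CLT), and optional stopping at $\sigma_{B(n)}$ gives
\[ G_{B(n)}(x,o) = \E_x\big[ a(S(\sigma_{B(n)})) \big] - a(x) . \]
Because $|S(\sigma_{B(n)})| = n + O(1)$, we have $a(S(\sigma_{B(n)})) = \tfrac{2}{\pi} \log n + \kappa + O(1/n)$, and subtracting $a(x)$ cancels the constant $\kappa$, producing the first display of (i). Equivalently, $G_{B(n)}(\cdot,o)$ and $\E_\cdot[a(S(\sigma_{B(n)}))] - a(\cdot)$ are both the function on $B(n)$ that is harmonic off $o$ with prescribed values at $o$ and outside $B(n)$, hence equal.

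For the hitting probabilities, conditioning on the first visit to $o$ and using the strong Markov property at $\xi_o$ gives $G_{B(n)}(x,o) = \prob_x(\xi_o < \sigma_{B(n)}) \, G_{B(n)}(o,o)$, so
\[ \prob_x(\xi_o < \sigma_{B(n)}) = \frac{G_{B(n)}(x,o)}{G_{B(n)}(o,o)} . \]
Setting $x = o$ in the two displays above gives $G_{B(n)}(o,o) = G(o,o) - c_1 n^{2-d} + O(n^{1-d})$ when $d \ge 3$ (which is bounded and bounded away from $0$ for large $n$) and $G_{B(n)}(o,o) = \tfrac{2}{\pi} \log n + \kappa + O(1/n)$ when $d = 2$. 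Dividing the numerator estimate by this, and expanding $(A+B)^{-1} = A^{-1}(1 + O(B/A))$, yields the stated forms, with $c_2 = c_1/G(o,o)$ for $d \ge 3$.

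The one genuinely delicate point will be uniformity of the error terms. For $d \ge 3$ one must verify that the corrections coming both from replacing $|S(\sigma_{B(n)})|$ by $n$ and from the division step stay $O(|x|^{1-d})$ even when $|x|$ is of order $n$, where the main term $|x|^{2-d} - n^{2-d}$ itself becomes small; writing $|x| = \lambda n$ and tracking the exponents of $\lambda$ and $n$ shows it does. For $d = 2$, the subtlety is that one divides by a quantity of size $\log n$, so an $O(1)$ error in the numerator survives only as an $O(1/\log n)$ error, which is why the estimate is naturally normalized by $1/\log n$ up to bounded corrections as in \cite[Proposition 1.6.7]{Lawler}. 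This bookkeeping, together with self-contained proofs of the two whole-space inputs, is exactly what the cited propositions provide, so we simply quote them.
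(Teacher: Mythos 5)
Your proposal is correct: since the paper cites Theorem~\ref{thm:green_asymp} directly from Lawler (Propositions 1.5.9 and 1.6.7) without giving its own argument, there is nothing internal to compare against, and the route you sketch (last-exit/strong-Markov decomposition at $\sigma_{B(n)}$ combined with the full-space Green function or potential-kernel asymptotics, then the first-entrance identity $\prob_x(\xi_o < \sigma_{B(n)}) = G_{B(n)}(x,o)/G_{B(n)}(o,o)$) is precisely the derivation given in the cited reference. Your remarks on uniformity of the error terms are the right thing to be careful about and are handled the same way there.
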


We also note the following simple observation about $G$. If $K_1 \subset K_2$ and $x, y \in K_1,$
\begin{equation}
\label{eq:greenmono}
G_{K_1}(x,y) \leq G_{K_2}(x,y). \end{equation}

We will need the following result, usually called the Beurling estimate, 
which gives an upper bound on the probability that a path in $\mathbb{Z}^2$ 
is not hit by SRW.

\begin{lem}[{Beurling estimate \cite{kestenwalk}, \cite[Section 6.8]{Lawlim}}]
\label{lem:Beurling}
Consider $\mathbb{Z}^2$. There is a constant $C$ such that the following bound holds, uniformly in $x$, $n$, and lattice paths $\alpha$ connecting $o$ to $\partial B(n)$:
\[\prob_x(\sigma_{B(n)} < \xi_\alpha) \leq C \left( \frac{|x|}{n}\right)^{1/2}\ . \]
\end{lem}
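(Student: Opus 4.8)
This is the discrete Beurling projection inequality, a classical fact; if one does not wish to reprove it, one may simply cite \cite{kestenwalk} or \cite[Section~6.8]{Lawlim}. My plan would be to first establish the weaker bound $\prob_x(\sigma_{B(n)}<\xi_\alpha)\le C(|x|/n)^{\beta}$ with some positive absolute constant $\beta$, which is elementary, and then upgrade the exponent to $1/2$. First I would dispose of trivialities: if $x=o$ then $o\in\alpha$ and the left side is $0$; if $|x|\ge\delta_0 n$ for a small absolute $\delta_0$, or if $n$ is bounded, then the right side exceeds $1$ once $C$ is large. So I may assume $x\neq o$, $|x|\le\delta_0 n$, and $n$ large; replacing $\alpha$ by a sub-path, I may also assume it is simple and runs from $o$ to a vertex of $\partial B(n)$.

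For the weaker bound I would decompose $B(n)$ into dyadic shells. Put $r_k=16^{k}|x|$ for $0\le k\le K:=\lfloor\log_{16}(n/(8|x|))\rfloor\asymp\log(n/|x|)$, so that $B(8r_{k-1})\subset B(r_k)$ and $B(8r_k)\subset B(n)$. Since $\alpha$ is connected and joins $o\in B(r_k)$ to a vertex outside $B(2r_k)$, it contains for each $k$ a connected sub-arc $\gamma_k$ that meets both $\partial B(r_k)$ and $\partial B(2r_k)$. The geometric core is the claim: there is $c_1>0$ so that $\prob_y(\xi_\gamma<\sigma_{B(8r)})\ge c_1$ whenever $\gamma$ is a connected set meeting $\partial B(r)$ and $\partial B(2r)$ and $|y|\le 2r$. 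To prove it I would pick $w\in\gamma$ with $|w|\approx 3r/2$ (possible since $\gamma$ is connected and meets both circles); then $\gamma$ contains a connected arc of diameter $\ge r/4$ lying in $\overline{B_w(r/4)}$. A Harnack-chain argument (a chain of $O(1)$ discs of radius $\asymp r$ inside $B(8r)$) shows the walk from $y$ reaches $\partial B_w(r/2)$ before leaving $B(8r)$ with probability bounded below, and from $\partial B_w(r/2)$ it then hits that arc before leaving $B_w(r/2)$ with probability bounded below, since a connected set of diameter comparable to a disc has harmonic measure bounded below from a concentric disc of twice the radius (a standard estimate — e.g.\ compare with the harmonic measure of a diameter of a disc).

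With the claim in hand I would chain: let $\zeta_k=\sigma_{B(8r_k)}$, so $\zeta_0\le\zeta_1\le\cdots$ and, by the nesting, $|S(\zeta_{k-1})|\le 8r_{k-1}+1\le 2r_k$. On $\{\sigma_{B(n)}<\xi_\alpha\}$ the walk exits each $B(8r_k)\subset B(n)$ without meeting $\gamma_k\subset\alpha$. Applying the strong Markov property at $\zeta_{k-1}$ together with the claim at scale $k$ (noting that for the shifted walk $\sigma_{B(8r_k)}=\zeta_k-\zeta_{k-1}$), the conditional probability of not meeting $\gamma_k$ during $(\zeta_{k-1},\zeta_k]$ is at most $1-c_1$; iterating over $k=1,\dots,K$,
\[
\prob_x(\sigma_{B(n)}<\xi_\alpha)\ \le\ (1-c_1)^{K}\ \le\ C\,(|x|/n)^{\beta},\qquad \beta=\tfrac14\log_2\tfrac1{1-c_1}>0 .
\]

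To obtain the sharp exponent $1/2$ one must replace the crude per-shell estimate by the continuum Beurling projection theorem: thickening $\alpha$ to a connected compact set $\widehat\alpha\subset\mathbb R^2$ and coupling $(S_x(j))_j$ with a planar Brownian motion started at $x$, the relevant Brownian escape probability is $\le C(|x|/n)^{1/2}$, the extremal configuration being a radial slit, for which the conformal map $z\mapsto\sqrt z$ reduces the computation to a one-dimensional gambler's-ruin estimate with exponent exactly $1/2$. The hard part is making this comparison rigorous at all scales: $\alpha$ is one-dimensional while the strong-approximation error is of order $\log n$, so the coupling is usable only above scale $\gg\log n$ and the innermost scales need a separate, multi-scale treatment. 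Since this is the technical heart of \cite{kestenwalk} and \cite[Section~6.8]{Lawlim}, from which the bound with exponent $1/2$ is quoted, I would cite those references for the sharp statement rather than reproduce the full argument.
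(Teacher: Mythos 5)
The paper offers no proof of this lemma: it is stated as a cited classical fact, with references to Kesten and to \cite[Section~6.8]{Lawlim}. Your plan to cite those references for the sharp exponent is therefore exactly what the paper itself does, and your preliminary observations (dispose of $x=o$, $|x|\gtrsim n$, $n$ small; reduce to $\alpha$ simple) are clean.

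Your elementary dyadic-shell argument for the weaker bound $\prob_x(\sigma_{B(n)}<\xi_\alpha)\le C(|x|/n)^{\beta}$ is correct as a sketch. The nesting $r_k=16^k|x|$ does give $|S(\zeta_{k-1})|\le 8r_{k-1}+1=r_k/2+1\le 2r_k$, so the per-shell claim applies at each scale; the claim itself (a connected crossing of an annulus is hit with probability bounded below before exiting a slightly larger ball) is a standard 2D harmonic-measure estimate, and the strong Markov chaining over $K\asymp\log(n/|x|)$ disjoint time intervals gives the stated exponent $\beta=\tfrac14\log_2\tfrac1{1-c_1}$. You also correctly identify that upgrading $\beta$ to $1/2$ is the actual content of the Beurling projection theorem (extremality of the radial slit, conformal map $z\mapsto\sqrt z$, gambler's ruin in the half-plane), and that the coupling with Brownian motion must be handled multi-scale because of the $O(\log n)$ strong-approximation error. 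This is the correct architecture of the proof in the cited references. In short: your approach and the paper's both rest on the same citations for the sharp constant; you add a self-contained warm-up for the positive-exponent version, which the paper omits. No gap.
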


Given a transient random walk $S_x$, we denote by $\looper S_x$ the loop-erasure of $S_x$, where loops are erased in forward chronological order. A similar definition is made for $\looper S_x[a,b] := \looper (S_x[a,b])$, etc.; note that for a finite segment of a random walk, the loop erasure can be defined even in the case that the walk is recurrent. See \cite[Chapter 9]{Lawlim} and \cite[Chapter 7]{Lawler} for background on LERW; in particular, for properties not detailed below.

If $A \ni x$ is a finite subset of $\mathbb{Z}^d$, let $\hS_x^A$ denote a finite loop-erased random walk killed at the boundary of $A$:
\[\hS_x^A = \looper S_x[0, \sigma_A]\ . \]
We will also make use of infinite loop-erased random walks $\hS_x$ on $\mathbb{Z}^d$. 
When $d \geq 3$, the definition
\[ \hS_x := \looper S_x \]
is unambiguous with probability $1$, as noted in the last paragraph. 

For $d = 2$, the loop-erasure of the infinite random walk $S_x$ is not well-defined; in this case, $\hS_x$ is defined by taking limits. It is known (see 
\cite[Section 7.4]{Lawler}) that for any finite lattice path $\gamma$ with $|\gamma| = k$,
\[\lim_{n \rightarrow \infty} \prob(\hS_x^{B(n)}[0, k] = \gamma) =: \prob(\hS_x[0, k] = \gamma) \]
exists, and the extension of this to a measure on infinite paths gives a definition of the distribution of $\hS_x$. The rate of convergence of $\hS_x^{B(n)}$ to $\hS_x$ is well-controlled; see Lemma \ref{lem:seg_indep} below. We will refer to all of the processes $\hS_x$, $\hS_x^A$ as loop-erased random walks or LERW. We will also assume as usual (unless stated otherwise) that a LERW is independent of any other walks appearing in a given statement. 

We define LERW stopping times $\hxi$, $\hsigma$ analogously to $\xi$ and $\sigma$; for instance,
\[ \hsigma_K = \inf\{ n \ge 0 : \, \hS_x(n) \notin K \}\ .  \]
As before, we will use $x$ as a subscript on $\mathbb{P}$ when considering stopping times to indicate the starting point. When the LERW is finite (i.e., we are considering $\hS_x^A$), we also will use the superscript $A$ on $\prob$, writing (for instance)
\[ \prob_x^A \big( \hxi_{K_1} < \hxi_{K_2} \big) \]
to avoid confusing the set in which the LERW lives with the set it is hitting. When the LERW is infinite, we omit superscripts altogether.

One result important for analyzing LERW is the following ``Domain Markov Property" (DMP). This roughly says that the terminal segment of a LERW can be built by starting a SRW at the tip of the initial segment, conditioning it not to hit the initial segment, then erasing loops.
\begin{lem}[{Domain Markov Property; see \cite[Chapter 11]{Lawlim}}]
\label{lem:dmp}
Let $\hS_x^{K}$ be a loop-erased walk in $K$, and let $\alpha$ be a finite path of length $m$ such that
\[ \prob \Big( \hS_x^{K}[0,m] = \alpha \Big) > 0 \ .\]
Then for all paths $\beta$,
\[\prob \Big( \hS_{x}^K [m, \hsigma_K] = \beta \, \Big| \, \hS_{x}^K[0, m] = \alpha \Big) 
  = \prob \Big(\looper S_{\alpha(m)}[0, \sigma_K] = \beta \,
    \Big| \, \sigma_K < \oxi_\alpha \Big). \]
\end{lem}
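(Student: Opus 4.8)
The plan is to push everything back to the underlying simple random walk $S_x$ and to exploit how loop-erasure interacts with last-exit times. Write $\alpha$ also for the vertex set $\{\alpha(0),\dots,\alpha(m)\}$. We may assume $\alpha(m)\in K$; otherwise $\hsigma_K=m$, both sides describe the trivial one-vertex path, and there is nothing to prove. Since $\hS_x^K=\looper S_x[0,\sigma_K]$ is self-avoiding, we may also assume $\beta(0)=\alpha(m)$ and that the concatenation of $\alpha$ and $\beta$ is self-avoiding---otherwise both sides vanish, the right-hand one because the conditioning $\{\sigma_K<\oxi_\alpha\}$ forces $S_{\alpha(m)}$, hence its loop-erasure $\beta$, to avoid $\alpha(0),\dots,\alpha(m-1)$.

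First I would record the standard structural fact about loop-erasure: for a finite path $\omega$ with $\gamma:=\looper\omega$, letting $L_k$ be the last time $\omega$ visits $\gamma(k)$, one has $L_0<L_1<\cdots$, the path $\looper(\omega[0,L_k])$ equals $\gamma$ truncated to its first $k+1$ vertices, and $\looper(\omega[L_k,\cdot])$ equals $\gamma$ from its $k$-th vertex onward. Applying this with $\omega=S_x[0,\sigma_K]$ and $k=m$: on $\{\hS_x^K[0,m]=\alpha\}$, letting $\ell^\ast$ be the last visit of $S_x$ to $\alpha(m)$ before $\sigma_K$ (necessarily $\ell^\ast<\sigma_K$, since $\alpha(m)\in K$), we get $\looper(S_x[0,\ell^\ast])=\alpha$, $\hS_x^K[m,\hsigma_K]=\looper(S_x[\ell^\ast,\sigma_K])$, and---because the last visits of $S_x$ to $\alpha(0),\dots,\alpha(m-1)$ all precede $\ell^\ast$---the segment $S_x[\ell^\ast+1,\sigma_K]$ misses $\alpha$ entirely. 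A short induction gives the converse: if $\looper(S_x[0,t])=\alpha$, $S_x[0,t]\subseteq K$, and $S_x[t+1,\sigma_K]\cap\alpha=\emptyset$, then $\hS_x^K[0,m]=\alpha$ and $\ell^\ast=t$. Setting $A_t:=\{\looper S_x[0,t]=\alpha,\ S_x[0,t]\subseteq K\}$---a function of $S_x[0,t]$ that forces $S_x(t)=\alpha(m)$ and $t<\sigma_K$---and $B_t:=\{S_x[t+1,\sigma_K]\cap\alpha=\emptyset\}$, we obtain the disjoint decomposition
\[
  \{\hS_x^K[0,m]=\alpha\}=\bigsqcup_{t}\big(A_t\cap B_t\big),\qquad
  \hS_x^K[m,\hsigma_K]=\looper\big(S_x[t,\sigma_K]\big)\ \text{ on }\ A_t\cap B_t .
\]

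Next I would apply the ordinary Markov property of $S_x$ at the \emph{deterministic} time $t$: $A_t$ depends only on $S_x[0,t]$, while the pair $\big(B_t,\ \looper(S_x[t,\sigma_K])\big)$ depends only on the walk after time $t$, which, given $S_x(t)=\alpha(m)$, is a fresh walk from $\alpha(m)$. Reindexing that walk to start at time $0$, the event $B_t$ becomes $\{\sigma_K<\oxi_\alpha\}$ (here $\alpha\subseteq K$ is used, so the walk is never in $\alpha$ at its exit step), under which $\looper S_{\alpha(m)}[0,\sigma_K]$ starts at $\alpha(m)$. Hence for every $t$,
\[
  \prob\big(A_t\cap B_t\cap\{\looper S_x[t,\sigma_K]=\beta\}\big)
  =\prob(A_t)\,\prob_{\alpha(m)}\big(\sigma_K<\oxi_\alpha,\ \looper S_{\alpha(m)}[0,\sigma_K]=\beta\big).
\]
Summing over $t$ and putting $Z:=\sum_t\prob(A_t)$---positive by hypothesis, and finite since $\prob(A_t)\le\prob(S_x(t)=\alpha(m),\,\sigma_K>t)$ and $\sum_t\prob(\sigma_K>t)<\infty$ because $K$ is finite---gives
\[
  \prob\big(\hS_x^K[0,m]=\alpha,\ \hS_x^K[m,\hsigma_K]=\beta\big)
  =Z\cdot\prob_{\alpha(m)}\big(\sigma_K<\oxi_\alpha,\ \looper S_{\alpha(m)}[0,\sigma_K]=\beta\big),
\]
and summing over $\beta$, $\prob(\hS_x^K[0,m]=\alpha)=Z\cdot\prob_{\alpha(m)}(\sigma_K<\oxi_\alpha)$. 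Dividing the last two identities yields the lemma.

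The hard part---and the only genuinely delicate point---is that the natural cut point $\ell^\ast$ is a last-exit time, not a stopping time, so one cannot invoke the Markov property at $\ell^\ast$ directly. The fix is the disjoint decomposition over the deterministic values $t$ that $\ell^\ast$ can take, together with the identity $\hS_x^K[m,\hsigma_K]=\looper(S_x[t,\sigma_K])$ on $A_t\cap B_t$; checking that $\{A_t\cap B_t\}_t$ genuinely partitions $\{\hS_x^K[0,m]=\alpha\}$---which is exactly where the structural fact about loop-erasure and the ordering $L_0<\cdots<L_m=\ell^\ast$ are used---is the technical core, and everything else is bookkeeping.
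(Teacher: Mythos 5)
Your proof is correct, and since the paper only cites this fact from Lawler--Limic without supplying a proof, what you have written is a careful, self-contained verification of the cited result along the standard lines. The key steps are all handled properly: the reduction to $\alpha\subseteq K$ and a self-avoiding concatenation; the structural fact about chronological loop-erasure via last-exit times $L_0<L_1<\cdots$ together with its converse (that a segment missing $\alpha$ after time $t$ freezes the initial $\alpha$-segment of the loop-erasure); the observation that the last-visit time $\ell^*$ is not a stopping time, which you circumvent in the standard way by the disjoint decomposition over deterministic $t$ combined with the Markov property at a fixed time; the reconciliation of $B_t$ with $\{\sigma_K<\oxi_\alpha\}$ using $\alpha\subseteq K$; and the finiteness of $Z=\sum_t\prob(A_t)$ from finiteness of $K$. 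Dividing the two identities obtained by summing over $t$ and then over $\beta$ gives exactly the stated equality. Nothing is missing.
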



Note that Lemma \ref{lem:dmp} is in fact much more general:
it holds for infinite LERW (on $\Zd$ for $d \geq 3$) and 
finite LERW on graphs which are not necessarily subsets 
of $\mathbb{Z}^d$.



\subsection{Wilson's algorithm}
\label{ssec:wilson}

Let $H = (U \cup \{ s \}, F)$ be a connected finite graph. Since $\nu_H$ is uniform on $\recur_H$, the bijection $\varphi$ maps it to the uniform measure on spanning trees of $H$, which we denote by $\mu_H$. In this section, we collect basic facts about Wilson's algorithm, that we will use to analyze $\mu_H$. For an in-depth introduction to uniform spanning trees (UST) see the book \cite{LPbook}.

We denote a sample from $\mu_H$ by $\frT^H$. We will usually identify a spanning tree of $H$ with the set of edges it contains, so $\frT^H \subset F$.
Pemantle \cite{pemantle} proved that for $x, y \in U$, the path in $\frT^H$ between $x$ and $y$ is distributed as a LERW from $x$ to $y$ (i.e.~as $\looper S_x[0, \xi_y]$). Wilson's algorithm \cite{W96} provides a method for constructing the full UST (a sample from $\mu_H$) from LERWs. 

\emph{Wilson's algorithm.} Let $v_1, \, \ldots, \, v_n$ be an enumeration of the vertices in $U$. We construct a random sequence of tree subgraphs 
$\frF_0 \subset \ldots \subset \frF_n$. 
Let $\frF_0$ have vertex set $\{ s \}$ and empty edge set. If $\frF_{i-1}$ has been defined for some $1 \le i \le n$, we let $S_{v_i}$ be a simple random walk on $H$ started at $v_i$, and let $\xi_{V(\frF_{i-1})}$ be the first hitting time of the vertex set of $\frF_{i-1}$ by $S_{v_i}$. We set $\frF_i = \frF_{i-1} \cup \looper S_{v_i}[0, \xi_{V(\frF_{i-1})}]$--that is, the edges in the loop-erasure of $S_{v_i}[0, \xi_{V(\frF_{i-1})}]$ are added to $\frF_{i-1}$. The output of the algorithm is $\frF_n$. Wilson's theorem \cite{W96} implies that $\frF_n$ is uniform, i.e.~distributed as $\frT^H$. 


The measure $\mu_L := \mu_{G_L}$ is known as the UST in $V(L)$ with the \textbf{wired boundary condition}. For studying the $L \to \infty$ limit of sandpiles on $G_L$, as well as sandpiles on $\mathbb{Z}^d$, it will be useful to consider the weak limit 
$\lim_{L \to \infty} \mu_{L} =: \wsf$, called the \textbf{wired spanning forest measure}.
Existence of the limit is implicit in \cite{pemantle}; see \cite{LPbook} for an in-depth treatment. 
We denote a sample from $\wsf$ by $\frT$.

It is well known that $\wsf$ concentrates on spanning forests of $\Zd$ all whose components are infinite. Pemantle \cite{pemantle} showed that for $d \leq 4$, $\frT$ is a tree $\wsf$-a.s, while for $d \geq 5$, $\frT$ has infinitely many connected components $\wsf$-a.s. This dichotomy is the underlying fact behind mean-field behaviour of the sandpile model for $d \ge 5$; which is reflected in some of our results and proofs. We write $\frT_x$ for the component of $\frT$ containing $x \in \Zd$.

It is possible to construct $\frT$ more directly, using an appropriate extensions of Wilson's algorithm. Let $v_1, v_2, \dots$ be an enumeration of $\Zd$.
When $d \ge 3$, we set $\frF_1 = \looper S_{v_1}[0,\infty)$. Then for $i \ge 2$, we inductively define $\frF_i = \frF_{i-1} \cup \looper S_{v_i}[0,\xi_{V(\frF_{i-1})}]$, where the stopping time may be finite or infinite. See \cite{BLPS,LPbook} for a proof that $\cup_{i \ge 1} \frF_i$ has the distribution of $\frT$ given by $\wsf$. This is called Wilson's method rooted at infinity.

When $d = 2$, a method analogous to that in finite volume can be used. 
We set $\frF_1 = \{ v_1 \}$, and for $i \ge 2$ we inductively define $\frF_i = \frF_{i-1} \cup \looper S_{v_i}[0,\xi_{V(\frF_{i-1})}]$; see \cite{BLPS,LPbook}.


It will be important for our proofs that $\wsf$-a.s.~each component of $\frT$ has \textbf{one end}, for all $d \ge 2$. This means that any two infinite self-avoiding paths lying in the same component of $\frT$ have a finite symmetric difference. For $2 \le d \le 4$ this was proved by Pemantle \cite{pemantle}. For $d \ge 5$ this was first shown by Benjamini, Lyons, Peres and Schramm \cite{BLPS} (who generalized it to a much larger class of infinite graphs).

\subsection{Wiring $o$ to the boundary}
\label{ssec:wsfo}

Let $\mu_{L,o}$ be the uniform measure on $\cT_{L,o}$.  
Under $\mu_{L,o}$, we denote the components containing
$o$ and $s$, respectively, by $\frT_{L,o}$ and $\frT_{L,s}$, respectively.


Due to monotonicity, the weak limit $\lim_{L \to \infty} \mu_{L,o} =: \wsf_o$ exists;
see \cite{BLPS,LPbook}. We also use $\frT$ to denote a sample from $\wsfo$, and write $\frT_x$ for its component containing $x \in \Z^d$. In particular, $\frT_o$ is the component containing $o$ under the measure $\wsfo$. It was shown by Lyons, Morris and Schramm \cite{LMS} that for $d \ge 3$ (and more generally under a suitable isoperimetric condition), we have $\wsfo (|\frT_o| < \infty) = 1$, and also that this is equivalent to the one-end property of $\wsf$. J\'arai and Redig \cite{JR08} used the finiteness of $\frT_o$ to show that $\nu ( S < \infty ) = 1$ when $d \ge 3$.

\section{Toppling probability bounds in low dimensions}
\label{sec:toppling}

In this section, we give a proof of our toppling probability lower bounds
stated in Theorem \ref{thm:topple}. These will follow from the following theorem, 
which gives a lower bound in terms of a non-intersection probability 
between a loop-erased walk and a simple random walk, 
and a random walk hitting probability. Our arguments also apply to
$d \ge 5$ with an identical statement, however, this case is already known 
from \cite[Section 6.2]{JRS15}.

Recall that $S_x$ and $S'_x$ are independent simple random walks from the site $x$.
\begin{thm}
\label{thm:gen_prob_bd}
Assume $2 \le d \le 4$. There is a constant $c = c(d) > 0$ such that, 
for all $z \in \Z^d$ we have
\eqn{e:toppling-lbd-Zd}
{ \nu(z \in \av) 
  \geq c \, \prob \left( S'_o[0, \sigma_{|z|}] \cap 
       \hS_o(0, \hsigma_{|z|}] = \varnothing \right)\, 
       \prob (z \in S_o[0, \infty))\ .}
Moreover, the right hand side of \eqref{e:toppling-lbd-Zd} is a lower bound on
$\nu_L (z \in \av)$ for all $L \ge 4 \| z \|$.
\end{thm}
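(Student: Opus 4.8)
The plan is to prove the finite-volume bound first and deduce \eqref{e:toppling-lbd-Zd} by letting $L\to\infty$; throughout we take $L\ge 4\|z\|$, so $o$ is interior. Since $z\in\av$ iff $z$ lies in some wave, it suffices to have $z$ in the \emph{last} wave, i.e.\ $\nu_L(z\in\av)\ge\nu_L(z\in\wave_{L,N})$, and I would rewrite this in terms of $\mu_{L,o}$, the uniform measure on two-component spanning forests of $G_L$ rooted at $\{o,s\}$ (Section~\ref{ssec:wsfo}). The ingredients are: the bijection $\varphi'$ for intermediate configurations with $\wave(\eta_*)=V(T_o)$ (Lemma~\ref{lem:varphi'}); the characterisation of a last wave as one in which some neighbour of $o$ fails to topple, equivalently lies in $V(T_s)$ (Corollary~\ref{cor:last-waves}), together with the fact --- via the partition of $\recur'_L\setminus\recur_L$ into intermediate configurations --- that such an $\eta_*$ is the \emph{last} intermediate configuration of the recurrent state it comes from; and the count $|\recur'_L\setminus\recur_L|=g_L(o,o)|\recur_L|$ (Lemma~\ref{lem:wave-to-recur}). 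These combine to the exact identity
\[
 \nu_L(z\in\wave_{L,N})=g_L(o,o)\,\mu_{L,o}\big(z\in\frT_{L,o}\ \text{and some neighbour of }o\text{ lies in }\frT_{L,s}\big),
\]
so it remains to bound the right-hand side below, absorbing the factor $g_L(o,o)$ (of order $\log L$ when $d=2$).

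For this I would sample $\mu_{L,o}$ by Wilson's algorithm rooted at $\{o,s\}$, processing a fixed neighbour $o'=o+\unitv_1$ first. By symmetry of the cube, $\prob_{o'}(\sigma_L<\xi_o)=1/G_L(o,o)=1/(2d\,g_L(o,o))$; on this event the path $\beta:=\looper S_{o'}[0,\sigma_L]$ is added, with $o'\in\frT_{L,s}$, and then processing $z$ places it in $\frT_{L,o}$ exactly when $S_z$ reaches $o$ before $\beta\cup\partial V(L)$, giving
\[
 \mu_{L,o}\big(z\in\frT_{L,o},\,o'\in\frT_{L,s}\big)=\tfrac{1}{2d\,g_L(o,o)}\,\E\big[\prob_z(\xi_o<\xi_\beta\wedge\sigma_L)\big],
\]
so the $g_L(o,o)$'s cancel. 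Because $o'\in\beta$ is adjacent to $o$, the walk from $o$ leaves $V(L)\setminus\beta$ within a geometric number of returns to $o$, so $G_{V(L)\setminus\beta}(o,o)\le 2d$; with reversibility of simple random walk this yields $\prob_z(\xi_o<\xi_\beta\wedge\sigma_L)\ge\tfrac1{2d}G_{V(L)\setminus\beta}(z,o)=\tfrac1{2d}G_{V(L)\setminus\beta}(o,z)\ge\tfrac1{2d}\prob_o(\xi_z<\xi_\beta\wedge\sigma_L)$, reducing everything to a lower bound on $\prob\big(S'_o\text{ hits }z\text{ before }\beta\cup\partial V(L)\big)$ for an independent simple random walk $S'_o$ from $o$.

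Finally I would pass to the product form. By reversibility of loop-erased walk and the reflection symmetries of $\Zd$, $\beta$ has the law of $\hS_o^{V(L)}(0,\hsigma_{V(L)}]$ conditioned on $\hS_o^{V(L)}(1)=o'$ (the first step being uniform over neighbours of $o$); removing this conditioning costs a factor $\le 2d$, and replacing $\hS_o^{V(L)}$ by the infinite LERW $\hS_o$ costs only a constant on the scale $B(|z|)$, by the stability estimates for loop-erased walk (Section~\ref{ssec:rwdef}). One then runs $S'_o$ up to $\partial B(2|z|)$ requiring it to avoid $\beta$ --- probability $\ge c\,\prob\big(S'_o[0,\sigma_{|z|}]\cap\hS_o(0,\hsigma_{|z|}]=\varnothing\big)$ after the comparison --- and from the exit point (which lies in $V(L)$, as $B(2|z|)\subset V(4\|z\|)$) requires it to hit $z$ before $\beta\cup\partial V(L)$; by transience and Harnack, with Beurling/separation estimates controlling the chance that $\beta$ comes near or surrounds $z$, this has probability $\ge c\,\prob(z\in S_o[0,\infty))$, which is $\ge c$ when $d=2$. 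Multiplying the three displays gives the asserted bound for all $L\ge4\|z\|$; for \eqref{e:toppling-lbd-Zd} itself one lets $L\to\infty$, which is immediate when $d\ge3$ since avalanches are $\nu$-a.s.\ finite \cite{JR08}, while for $d=2$ this limit is the delicate point handled through the last-$k$-waves argument of Section~\ref{sec:last-k-waves}. The main obstacle is precisely this last step: extracting a genuine \emph{product} of a non-intersection probability and a hitting probability requires decoupling those two events, and the Beurling/separation machinery is needed to discard the rare but not obviously negligible configurations in which the conditioned path $\beta$ shields, surrounds, or passes close to $z$ --- the comparison of $\beta$ with $\hS_o$ near $\partial B(|z|)$ entering here as well.
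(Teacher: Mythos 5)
Your framework matches the paper's proof quite closely: the identity $\nu_L(z\in\wave_{L,N})=g_L(o,o)\,\mu_{L,o}(\cdots)$ via the intermediate-configuration bijection (Corollary~\ref{cor:last-waves}, Lemmas~\ref{lem:num-last-waves}, \ref{lem:wave-to-recur}, \ref{lem:varphi'}), Wilson's algorithm started from a neighbour of $o$, the observation that $G_{V(L)\setminus\beta}(o,o)\le 2d$ because $o'\in\beta$, and the Green-function reversal are all the same ingredients the paper uses in Section~\ref{ssec:prelim}. Your accounting is marginally cleaner than the paper's in one respect: by processing $o'$ first you get the factor $\prob_{o'}(\sigma_L<\xi_o)=1/(2d\,g_L(o,o))$ to cancel $g_L(o,o)$ explicitly, where the paper instead divides by $\mu_{L,o}(e\notin\frT_{L,o})$ and invokes Lemma~\ref{lem:rwprob}(ii). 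These are equivalent.

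However, there is a genuine gap in the $d=2$ case, precisely where you discard the second Green-function factor. In the chain
\[
\prob_z(\xi_o<\xi_\beta\wedge\sigma_L)\ \ge\ \tfrac1{2d}\,G_{V(L)\setminus\beta}(o,z)
\ =\ \tfrac1{2d}\,G_{V(L)\setminus\beta}(z,z)\,\prob_o(\xi_z<\xi_\beta\wedge\sigma_L)
\ \ge\ \tfrac1{2d}\,\prob_o(\xi_z<\xi_\beta\wedge\sigma_L),
\]
the last inequality uses $G_{V(L)\setminus\beta}(z,z)\ge 1$. That is harmless for $d\ge 3$, but in $d=2$ it throws away a factor of order $\log|z|$ which the argument cannot afford to lose. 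On the event that $\beta$ avoids $V_z(\|z\|/10)$ (which the separation machinery gives you), one has $G_{V(L)\setminus\beta}(z,z)\ge G_{V_z(\|z\|/10)}(z,z)\asymp\log|z|$; this is exactly what the paper retains in Lemma~\ref{lem:reverse_walk_z}. Your final step claims
\[
\E\bigl[\prob_o(\xi_z<\xi_\beta\wedge\sigma_L)\bigr]\ \ge\ c\,\Es(|z|)\,\prob(z\in S_o[0,\infty)),
\]
invoking ``transience and Harnack.'' In $d=2$ the walk is recurrent, $\prob(z\in S_o[0,\infty))=1$, and the claimed bound becomes $\ge c\,\Es(|z|)$. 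But this is false by a factor $\log|z|$: after separating to $\partial B(2|z|)$ (cost $\asymp\Es$), the walk must hit the single point $z$ before touching $\beta$, and since $\beta$ stays at distance comparable to $|z|$ from $z$, two-dimensional potential theory gives $G_{\Z^2\setminus\beta}(w,z)=O(1)$ while $G_{\Z^2\setminus\beta}(z,z)\asymp\log|z|$, so the conditional hitting probability is $\asymp 1/\log|z|$, not $\asymp 1$. The $\log|z|$ that you dropped in the Green-function step is precisely what cancels this $1/\log|z|$; without it, your argument only yields $\nu_L(z\in\av)\gtrsim\Es(|z|)/\log|z|$, which is weaker than the statement (and would weaken Theorem~\ref{thm:topple}(i) by a logarithm). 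To close the gap you should retain $G_{V(L)\setminus\beta}(z,z)$ and impose the event $\beta\cap V_z(\|z\|/10)=\varnothing$ before bounding, as in the paper's $\Gamma_{z,L}$ and Lemma~\ref{lem:Gamma_z,L}.
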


In Sections \ref{ssec:prelim}--\ref{ssec:steer}, we state and prove preliminary results which are useful for establishing Theorem \ref{thm:gen_prob_bd}, and in Section \ref{ssec:lower-bound-sep} we use these to prove the theorem. 
In Section \ref{ssec:lower-bound-sep}, we also give a corollary which will be useful for proofs of later theorems.

\subsection{Preliminary setup}
\label{ssec:prelim}

Our strategy for proving Theorem \ref{thm:gen_prob_bd} will be to work in large finite volume $V(L)$. That is, given a particular $z \in \Zd,$ we will choose some $L_0$ sufficiently large, so that the probability $\nu_L(z \in \av)$ is close to the claimed value for all $L \ge L_0$.
We will require $L_0$ to be on the order of some large multiple of $\| z \|$. 

The main idea of the proof is to show a lower bound for the probability that $z$ is in the last wave of the avalanche. 
By Corollary \ref{cor:last-waves}, 
\eqnsplst
{ |\{\eta \in \recur_L : \, z \in \av\}| 
  &\ge |\{\eta \in \recur_L : \, \eta(o) = 2d-1,\, z \in \wave_{N(\eta)} \}| \\
  &= |\{\eta_* \in \recur'_L \setminus \recur_L : \, z \in \wave(\eta_*), \, v \notin \wave(\eta_*) \text{ for some } v \sim o \} |\ .}
Dividing by $|\recur_L|$, using Lemma \ref{lem:num-last-waves} and symmetry of $V(L)$, for any fixed $e \sim o$ we get
\eqnspl{eq:treebd}
{ \nu_L (z \in \av) 
  &\geq \mu_{L,o} ( z \in \frT_{L,o} \,|\, 
     v \notin \frT_{L,o} \text{ for some } v \sim o ) \\
  &\geq (2d)^{-1} \, \mu_{L,o} ( z \in \frT_{L,o} \,|\, 
     e \not\in \frT_{L,o} ) }
uniformly in $z, L$.

We analyze the event 
\eqnst
{ A(z,e)
  = \{ z \in \frT_{L,o},\, e \not\in \frT_{L,o} \}. }
Let us apply Wilson's algorithm in the graph $G_{L,o}$, starting with a walk $S_e$ from $e$, followed by a walk $S_z$ from $z$. This gives that for fixed $e \sim o$, the occurrence of $A(z,e)$ is equivalent to a LERW from $e$ to exit $V(L)$ without hitting $o$, and a SRW from $z$ to hit $o$ before hitting the LERW. We will bound the 
right-hand side of \eqref{eq:treebd} from below by analyzing this random walk event. 
By time reversal, we will be able to consider the SRW going from $o$ to $z$.
The lower bound then contains two factors: the LERW and SRW avoiding 
each other near $o$, and the SRW subsequently hitting $z$. This leads 
to the two probabilities in Theorem \ref{thm:gen_prob_bd}.

We begin by expressing the probability in \eqref{eq:treebd} in terms of the random walk construction specified above (Lemma \ref{lem:rwprob}). We then lower bound the probability of the resulting walk event by something amenable to analysis by walk intersection techniques that we give in Section \ref{ssec:steer}. 
Let $\pi = \looper S_e[0, \sigma_L]$. 

\begin{lem}
\label{lem:rwprob} \ \\
(i) We have
\begin{align}
  \mu_{L,o} (A(z,e))
  = \prob \left( \pi \cap S_z[0, \xi_o] = \varnothing,\, 
    \xi_o^{S_z} < \sigma_{L}^{S_z} \right).
\label{eq:to_reverse}
\end{align}
(ii) There are constants $\kappa(d) > 0$, $d \ge 2$, such that as $L \to \infty$, we have 
\begin{align}
  \mu_{L,o} (e \notin \frT_{L,o})
  = \prob \left( o \notin S_e[0, \sigma_{V(L)}] \right)
  \sim \begin{cases}
      \kappa(2) (\log L)^{-1} & \text{when $d = 2$;} \\
      \kappa(d) & \text{when $d \ge 3$.}
      \end{cases}
\label{eq:for_cond_walk}.
\end{align}
\end{lem}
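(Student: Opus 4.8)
The plan is to read off both identities directly from Wilson's algorithm run on the two-rooted graph $G_{L,o}$. Recall that a $\mu_{L,o}$-sample is a uniformly random spanning forest of $G_L$ with exactly two components, $\frT_{L,o} \ni o$ and $\frT_{L,s} \ni s$, and that by the rooted-forest version of Wilson's algorithm it can be generated as follows: start from $\frF_0$ with vertex set $\{o,s\}$ and no edges; run an SRW $S_e$ from $e$ until its first visit to $\{o,s\}$, which is time $\xi_o^{S_e} \wedge \sigma_{V(L)}^{S_e}$ (exiting $V(L)$ being the same as hitting $s$), and set $\frF_1 = \frF_0 \cup \looper S_e[0, \xi_o \wedge \sigma_{V(L)}]$; then run an SRW $S_z$ from $z$ until its first visit to $V(\frF_1)$ and set $\frF_2 = \frF_1 \cup \looper S_z[0,\xi_{V(\frF_1)}^{S_z}]$; then continue with the remaining vertices in any order. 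Since membership of $e$ in $\frT_{L,o}$ is decided after the first walk and membership of $z$ after the second, only these two walks matter for the events in the lemma.

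For part (i) I would argue as follows. On the event $\{\sigma_{V(L)}^{S_e} < \xi_o^{S_e}\} = \{o \notin S_e[0,\sigma_{V(L)}]\}$ the first walk is absorbed at $s$, so $e \in \frT_{L,s}$, the $e$--$s$ branch of the forest is exactly $\pi := \looper S_e[0,\sigma_{V(L)}]$ (which on this event coincides with the Wilson path $\looper S_e[0,\xi_o \wedge \sigma_{V(L)}]$), and $\frF_1$ has vertex set $\{o,s\}\cup\pi$ with $\pi \subset \frT_{L,s}$; on the complementary event $e \in \frT_{L,o}$, so $A(z,e)$ fails. Conditionally on $\frF_1$ on the former event, $z \in \frT_{L,o}$ holds precisely when $S_z$ reaches $o$ strictly before it meets $\pi$ and strictly before it leaves $V(L)$, i.e.\ precisely on $\{S_z[0,\xi_o^{S_z}] \cap \pi = \es\}\cap\{\xi_o^{S_z} < \sigma_{V(L)}^{S_z}\}$. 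As $S_z(\xi_o) = o \notin \pi$ on this event, the right-hand side of \eqref{eq:to_reverse} is automatically supported on $\{e \notin \frT_{L,o}\}$, so $\pi$ there really is the $e$--$s$ branch, and the identity follows.

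For part (ii), the equality $\mu_{L,o}(e\notin\frT_{L,o}) = \prob(o \notin S_e[0,\sigma_{V(L)}])$ is the same observation ($e \notin \frT_{L,o}$ iff the first Wilson walk is absorbed at $s$ before visiting $o$), so it remains to estimate $q_L := \prob_e(\sigma_{V(L)} < \xi_o) = 1 - G_{V(L)}(e,o)/G_{V(L)}(o,o)$, using the standard identity $\prob_e(\xi_o < \sigma_{V(L)}) = G_{V(L)}(e,o)/G_{V(L)}(o,o)$. When $d \ge 3$, $G_{V(L)} \uparrow G$ as $L \to \infty$, so $q_L \to 1 - G(e,o)/G(o,o) = \prob_e(\xi_o = \infty) =: \kappa(d)$, which is $>0$ by transience. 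When $d = 2$, I would sandwich $B(L) \subseteq V(L) \subseteq B(L\sqrt 2)$ and use the Green's-function / potential-kernel asymptotics behind Theorem \ref{thm:green_asymp}(i): $G_{V(L)}(o,o) = \tfrac{2}{\pi}\log L + O(1)$, while $G_{V(L)}(o,o) - G_{V(L)}(e,o) \to a(e) = 1$ (the potential kernel of planar SRW has $a(o)=0$ and $a(e)=1$ for every neighbor $e$ of $o$), whence $q_L = \kappa(2)(\log L)^{-1}(1+o(1))$ with $\kappa(2) = \pi/2$.

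The computations above are all routine; the one place needing care is the bookkeeping in part (i) --- checking that, given $\frF_1$, the event $\{z\in\frT_{L,o}\}$ is exactly $\{S_z$ hits $o$ before $\pi$ and before $\partial V(L)\}$, and that the resulting random-walk event lies on $\{e\notin\frT_{L,o}\}$ so that the definition $\pi = \looper S_e[0,\sigma_{V(L)}]$ matches the forest branch from $e$ to $s$. For part (ii) the only subtlety is getting the sharp constant (not merely $q_L \asymp (\log L)^{-1}$) when $d = 2$, which is exactly where the potential-kernel asymptotics with controlled error are needed rather than Theorem \ref{thm:green_asymp}(i) alone.
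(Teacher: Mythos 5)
Your proposal is correct and follows essentially the same route as the paper: run Wilson's algorithm on $G_{L,o}$ with a first walk from $e$ (so $e\notin\frT_{L,o}$ iff the walk exits $V(L)$ before reaching $o$) and a second walk from $z$ (which then lands in $\frT_{L,o}$ iff it reaches $o$ before $\pi$ and before leaving $V(L)$). The paper's proof is terser; your added bookkeeping in (i) and the potential-kernel computation in (ii), including the remark that Theorem~\ref{thm:green_asymp}(i) alone only yields $q_L = O((\log L)^{-1})$ rather than the sharp constant, are accurate elaborations of the same argument rather than a different approach.
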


\begin{proof}
Note that $e \notin \frT_{L,o}$ if and only if $S_e$ exits $V(L)$ before hitting $o$.
This implies the equality in (ii). The asymptotics in (ii) for $d = 2$ follow from Theorem \ref{thm:green_asymp}.

Given that the event $e \notin \frT_{L,o}$ has occurred, $z$ will be in
$\frT_{L,o}$ if and only if $S_z$ hits $o$ before exiting $V(L)$, and 
does so avoiding $\pi$. This implies statement (i).
\end{proof}

%
In the sequel, we make use of the event $\Gamma_{z,L}$, defined as
\[ \Gamma_{z,L} 
   = \left\{ \pi \cap V_z(\| z \| / 10) = \varnothing,\, 
     \xi_z^{S_o} < \sigma^{S_o}_{4 \| z \|},\, 
     \pi \cap S_o[0, \xi_z] = \varnothing \right\}\ . \]

\begin{lem}
\label{lem:reverse_walk_z}
For all $L \ge 100 |z|$, we have
\begin{equation}
\label{eq:reverse_walk_z}
 \prob \left( \pi \cap S_z[0, \xi_o] = \varnothing,\, 
   \xi_o^{S_z} < \sigma_{L}^{S_z} \right) 
 \geq \begin{cases}
   c \, \prob(\Gamma_{z,L}) \, \log |z|, \quad &d = 2\ ,\\
   (2d)^{-1} \, \prob(\Gamma_{z,L}), \quad &d > 2\ .
 \end{cases}
 \end{equation}
\end{lem}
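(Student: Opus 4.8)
The plan is to condition on the loop-erased path $\pi$ (equivalently, on the walk $S_e$) and to reduce both sides of \eqref{eq:reverse_walk_z} to Green's function quantities for simple random walk killed on exiting $D := V(L) \setminus \pi$. First note that if $o \in \pi$ then both the event on the left of \eqref{eq:reverse_walk_z} and the event $\Gamma_{z,L}$ have conditional probability $0$ given $\pi$: the former asks $S_z$ to reach $o$ while avoiding $\pi \ni o$, and the latter asks $S_o$, which starts at $o$, to avoid $\pi \ni o$. So we may work on $\{ o \notin \pi \}$, on which $o \in D$; write $\sigma_D$ for the first exit time of a walk from $D$ (i.e.\ the first time it hits $\pi$ or leaves $V(L)$) and $G_D(x,y) = \sum_{j \ge 0} \prob(S_x(j) = y,\ j < \sigma_D)$. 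Then, conditionally on $\pi$,
\[ \prob\big( \pi \cap S_z[0,\xi_o] = \varnothing,\ \xi_o^{S_z} < \sigma_L^{S_z} \ \big|\ \pi \big)
   = \prob_z\big( \xi_o < \sigma_D \big)
   = \frac{G_D(z,o)}{G_D(o,o)}, \]
by the last-exit identity $G_D(a,b) = \prob_a(\xi_b < \sigma_D)\,G_D(b,b)$ for $b \in D$. Since the simple random walk kernel is symmetric, so is $G_D$; hence $G_D(z,o) = G_D(o,z) = \prob_o(\xi_z < \sigma_D)\,G_D(z,z)$, and the conditional probability above equals $\prob_o(\xi_z < \sigma_D)\cdot G_D(z,z)/G_D(o,o)$.

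The second step is to bound the ratio $G_D(z,z)/G_D(o,o)$. Because $e \sim o$ and $e \in \pi$ (the first vertex of a loop-erasure is never erased), a walk from $o$ leaves $D$ in one step with probability at least $(2d)^{-1}$, so $G_D(o,o) = 1/\prob_o(\sigma_D < \oxi_{\{o\}}) \le 2d$. When $d \ge 3$ I would simply use $G_D(z,z) \ge 1$, giving $G_D(z,z)/G_D(o,o) \ge (2d)^{-1}$. When $d = 2$ this is where the extra clause of $\Gamma_{z,L}$ becomes essential: let $\Gamma' = \{ \pi \cap V_z(\|z\|/10) = \varnothing \}$, a $\sigma(\pi)$-measurable event containing $\Gamma_{z,L}$; on $\Gamma'$ one has $V_z(\|z\|/10) \subseteq D$ (the hypothesis $L \ge 100|z|$ guarantees $V_z(\|z\|/10) \subseteq V(L)$), so by monotonicity \eqref{eq:greenmono} and translation invariance $G_D(z,z) \ge G_{V_z(\|z\|/10)}(z,z) \ge G_{B(\|z\|/10)}(o,o) \ge c\log|z|$ for all large $|z|$, by the standard planar Green's function asymptotics underlying Theorem \ref{thm:green_asymp}(i) (the finitely many small $|z|$ being absorbed using $G_D(z,z) \ge 1$). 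Thus $G_D(z,z)/G_D(o,o) \ge c'\log|z|$ on $\Gamma'$.

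It remains to integrate over $\pi$. The left side of \eqref{eq:reverse_walk_z} equals the expectation over $S_e$ (equivalently over $\pi$, which is independent of $S_o$) of the conditional probability computed above. Restricting this expectation to $\Gamma'$, applying the ratio bounds from the previous paragraph, and using
\[ \prob_o\big( \xi_z < \sigma_D \big) \ \ge\ \prob_o\big( \xi_z^{S_o} < \sigma_{4\|z\|}^{S_o},\ S_o[0,\xi_z] \cap \pi = \varnothing \big) \]
(valid because $V(4\|z\|) \subseteq V(L)$), the surviving $\pi$-expectation is exactly $\prob(\Gamma_{z,L})$ by the independence of $S_o$ and $S_e$; this produces both displayed cases of \eqref{eq:reverse_walk_z}. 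The one point that genuinely needs thought is the source of the $\log|z|$ gain in $d = 2$: it comes entirely from the asymmetry between $G_D(z,z)$ and $G_D(o,o)$ --- the walk is essentially trapped near $o$ since $\pi$ touches its neighbour $e$, whereas on $\Gamma_{z,L}$ the endpoint $z$ lies at the centre of a macroscopic $\pi$-free box --- and this is precisely why the clause $\pi \cap V_z(\|z\|/10) = \varnothing$ is built into $\Gamma_{z,L}$. The remaining verifications (consistency of both events when $o \in \pi$, and $z \in D$ on $\Gamma'$) are routine.
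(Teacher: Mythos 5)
Your proof is correct and follows essentially the same route as the paper's: you derive the reversal identity $\prob_z(\xi_o<\sigma_D)=\prob_o(\xi_z<\sigma_D)\,G_D(z,z)/G_D(o,o)$ (which the paper states as "reversibility"), bound $G_D(o,o)\le 2d$ using $e\in\pi$, restrict to $\{\pi\cap V_z(\|z\|/10)=\varnothing\}$ to lower bound $G_D(z,z)$ by the Green function of the small box, and then shrink $\sigma_L$ to $\sigma_{4\|z\|}$ to recover $\prob(\Gamma_{z,L})$. The only difference is cosmetic: you spell out the last-exit decomposition and symmetry of $G_D$ explicitly where the paper simply cites reversibility, and you explicitly dispose of the measure-zero case $o\in\pi$.
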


\begin{proof}
Using reversibility of the random walk, we can rewrite the probability in the left hand side of \eqref{eq:reverse_walk_z} as follows:
\eqnspl{e:rev-green}
{ &\prob \left( \pi \cap S_z[0, \xi_o] = \varnothing,\, 
    \xi_o^{S_z} < \sigma_{L}^{S_z} \right) \\
  &\qquad = \E \Big( 
    \prob \Big( \pi \cap S_z[0, \xi_o] = \varnothing,\, 
    \xi_o^{S_z} < \sigma_{L}^{S_z} \,\Big|\, \pi \Big) \Big) \\
  &\qquad = \E \left( \frac{G_{V(L) \setminus \pi}(z,z)}{G_{V(L) \setminus \pi}(o,o)} \,
    \prob \Big( \pi \cap S_o[0, \xi_z] = \varnothing,\, 
    \xi_z^{S_o} < \sigma_{L}^{S_o} \,\Big|\, \pi \Big) \right) \\
  &\qquad \ge \E \left( \mathbf{1}_{\pi \cap V_z(\| z \|/10) = \varnothing} \, 
    \frac{G_{V(L) \setminus \pi}(z,z)}{G_{V(L) \setminus \pi}(o,o)} \,
    \prob \Big( \pi \cap S_o[0, \xi_z] = \varnothing,\, 
    \xi_z^{S_o} < \sigma_{L}^{S_o} \,\Big|\, \pi \Big) \right).}
In the presence of the indicator, \eqref{eq:greenmono} implies 
$G_{V(L) \setminus \pi}(z,z) \ge G_{V_z(\| z \| / 10)}(z,z)$. 
Since $e \in \pi$, we also have 
\eqnst
{ G_{V(L) \setminus \pi}(o,o)
  \le G_{\Zd \setminus \{ e \}}(o,o)
  \le 2d, } 
since after each visit to $o$, the random walk next hits $e$ with probability $(2d)^{-1}$.
It follows that the right-hand side of \eqref{e:rev-green} is at least 
\eqnspl{e:bound-green}
{ &(2d)^{-1} \, G_{V_z(\| z \| / 10)}(z,z) \, 
    \E \left( \mathbf{1}_{\pi \cap V_z(\| z \|/10) = \varnothing} \,  
    \prob \Big( \pi \cap S_o[0, \xi_{z}] = \varnothing,\, 
    \xi_{z}^{S_o} < \sigma_{L}^{S_o} \,\Big|\, \pi \Big) \right) \\
  &\qquad \ge (2d)^{-1} \, G_{V_z(\| z \| / 10)}(z,z) \, \prob (\Gamma_{z,L}). }
Using Theorem \ref{thm:green_asymp} and \eqref{eq:greenmono}, we have
\[G_{V_z(\| z \| / 10)}(z,z) 
   \geq \begin{cases} 
   c \, \log |z|\, ,\quad &d = 2\\
   1 \, ,\quad &d > 2\ ,
\end{cases} \]
Inserting this estimates into \eqref{e:bound-green} completes the proof.
\end{proof}

\subsection{SRW and LERW steering} 
\label{ssec:steer}

In order to control the spanning tree event in \eqref{eq:treebd}, we need to give a lower bound on the probability of the event $\Gamma_{z,L}$ of Lemma \ref{lem:reverse_walk_z}. This will be achieved by ``steering'' the two walks so that they become well separated 
as they reach distance of order $\|z\|$, which allows us to arrange that $\pi$ avoids the box $V_z(\| z \|/10)$, and it does not influence very much the probability that $S_o$ hits $z$. In Section \ref{sssec:single-walks} we collect results we need for a single walk.
In Section \ref{sssec:sep-lem} we prove the required separation estimate. In Section \ref{ssec:lower-bound-sep} we prove a lower bound on $\prob ( \Gamma_{z,L} )$ and 
complete the proof of Theorem \ref{thm:gen_prob_bd}.

\subsubsection{Estimates for a single walk}
\label{sssec:single-walks}

The first lemma we need, shown by Masson, holds for general $d$,
and compares an infinite LERW to a finite LERW, allowing us to 
control probabilities by restricting to finite balls. Masson stated
this in the case $K \supset B(4 n)$, however, his proof applies
in the slightly more general case we use here.

\begin{lem}\cite[Corollary 4.5]{Masson}
\label{lem:seg_indep}
Let $d \geq 2$ be arbitrary. For any $\delta > 0$, we have
\[ \prob(\hS_o[0, \hsigma_n] = \alpha) 
   \asymp_\delta \prob(\hS_{o}^K[0, \hsigma_n] = \alpha), \]
for all $\alpha$, all $n \ge 1/\delta$, and all $K \supset V \big( (1 + \delta) n \big)$,
where the constants implied by the $\asymp_\delta$ notation 
only depend on $\delta$ and $d$.
\end{lem}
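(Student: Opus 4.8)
The plan is to compare both sides to the same explicit formula for the loop-erased measure of a fixed path, and then to exploit the buffer annulus $V((1+\delta)n)\setminus V(n)$ supplied by the hypothesis. First I would reduce to finite admissible paths $\alpha=(\alpha_0,\dots,\alpha_m)$ with $\alpha_0=o$, $\alpha_0,\dots,\alpha_{m-1}\in V(n)$ and $\alpha_m\notin V(n)$, since otherwise both probabilities vanish and $\{\hS_o[0,\hsigma_n]=\alpha\}=\{\hS_o[0,m]=\alpha\}$ (likewise with $\hS_o^K$). Writing $\alpha'=\{\alpha_0,\dots,\alpha_{m-1}\}$, decomposing the underlying simple random walk into the successive loops it makes at $\alpha_0,\alpha_1,\dots,\alpha_{m-1}$ before each step along $\alpha$, and requiring that from $\alpha_m$ the walk avoid $\alpha'$ until it leaves the domain, gives the standard loop-erased measure identity: for any domain $D\supseteq\alpha$ (with $D=\Zd$ allowed when $d\ge 3$),
\[
 \prob\big(\hS_o^D[0,\hsigma_n]=\alpha\big)
 =(2d)^{-m}\Big(\prod_{i=0}^{m-1}G_{D\setminus\{\alpha_0,\dots,\alpha_{i-1}\}}(\alpha_i,\alpha_i)\Big)\,
 \prob_{\alpha_m}\big(S_{\alpha_m}[0,\sigma_D]\cap\alpha'=\es\big).
\]
For $d\ge 3$ one takes $D=\Zd$, the last factor becoming the escape probability $\prob_{\alpha_m}(S_{\alpha_m}[0,\infty)\cap\alpha'=\es)$; for $d=2$, since by definition $\hS_o=\lim_{N\to\infty}\hS_o^{B(N)}$, it suffices to compare the identity for two finite domains each containing $V((1+\delta)n)$ and then let one of them grow to $\Zd$.

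For $d\ge 3$ the comparison is then reasonably direct. The escape factors are comparable: avoiding $\alpha'\subseteq V(n)$ forever is, up to a factor $c_\delta$, no harder than avoiding it until exiting $K$, because from any point outside $V((1+\delta)n)$ the walk hits $V(n)$ with probability at most $1-c_\delta$ (a standard consequence of the Green function asymptotics, cf.\ Theorem \ref{thm:green_asymp}). For the product of Green functions, $G_{K\setminus\{\cdots\}}(\alpha_i,\alpha_i)\le G_{\Zd\setminus\{\cdots\}}(\alpha_i,\alpha_i)$ by \eqref{eq:greenmono}, and the $i$-th relative error equals $\prob_{\alpha_i}(\text{the walk leaves }K\text{ and returns to }\alpha_i\text{ while avoiding }\alpha_0,\dots,\alpha_{i-1})$; the point of the argument is that this is not merely $O((\delta n)^{2-d})$ but becomes prohibitively small once the path $\{\alpha_0,\dots,\alpha_{i-1}\}$ that must be avoided is long, so that the telescoping product stays bounded below by $c_\delta$. (Equivalently, and more transparently: couple the walk for $K$ with one for the reference domain $V((1+\delta)n)$ so that they agree until leaving $V((1+\delta)n)$; their loop-erasures have the same prefix before exiting $V(n)$ --- since that prefix depends on a walk only through its final visit to $V(n)$ --- on the event that the longer walk never re-enters $V(n)$, an event of conditional probability in $[c_\delta,1]$ by transience.)

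The case $d=2$ is the main obstacle. There recurrence destroys these estimates: as the outer domain grows, the escape factor tends to $0$ while the product of Green functions tends to $\infty$ (each $G_{B(N)\setminus\{\cdots\}}(\alpha_i,\alpha_i)$ grows like $\log N$), so the two must be controlled jointly rather than separately. The plan is a multi-scale argument. To compare $\hS_o^{K_1}$ with $\hS_o^{K_2}$ (both domains containing $V((1+\delta)n)$), run each loop-erased walk out through the dyadic scales $(1+\delta)n\le 2^{j}\le \min_i\diam(K_i)$. By the Domain Markov Property (Lemma \ref{lem:dmp}) the prefix up to scale $2^{j-1}$ is a deterministic function of the prefix up to scale $2^{j}$, so it is enough to show that, conditionally on the scale-$2^{j-1}$ prefix, the law of the scale-$2^{j}$ prefix under $K_1$ has Radon--Nikodym derivative bounded above and below with respect to the one under $K_2$, with constants depending only on $\delta$ (and $d$), uniformly in $j$ and in the $K_i$; composing over $j$ and then letting $K_2=B(N)\uparrow\Zd$ gives the claim. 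This per-scale bound would be proved by combining the Beurling estimate (Lemma \ref{lem:Beurling}) and harmonic-measure comparisons with a separation estimate controlling how the tip of the growing loop-erased walk stays separated from its bulk --- exactly the type of estimate developed in Section \ref{sssec:sep-lem}. Making the per-scale Radon--Nikodym bound uniform --- in particular handling the configurations where the tip of the LERW returns close to its own past --- is the delicate point, and a fully self-contained proof would essentially reproduce the machinery of Masson's Section~4 that we are citing.
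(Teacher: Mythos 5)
The paper does not prove this lemma; it is cited as \cite[Corollary 4.5]{Masson}, with the remark that Masson's argument (stated for $K\supset B(4n)$) extends to $K\supset V\big((1+\delta)n\big)$. So there is no in-paper proof to compare your sketch against, and it must be judged as an independent attempt.

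Your sketch assembles the right ingredients --- the loop-erased path-probability identity (which you state correctly), transience for $d\ge 3$, and DMP/Beurling machinery for $d=2$ --- and one direction of the comparison does come out cleanly from your coupling: on the event $E$ that a single underlying walk does not re-enter $V(n)$ after exiting $K$, the two loop-erasures agree inside $V(n)$; since $\{\hS_o^K[0,\hsigma_n]=\alpha\}$ is $\sigma(S[0,\sigma_K])$-measurable and $\prob(E\mid S[0,\sigma_K])\ge c_\delta$ uniformly, this yields $\prob(\hS_o[0,\hsigma_n]=\alpha)\ge c_\delta\,\prob(\hS_o^K[0,\hsigma_n]=\alpha)$. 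The substantive gap is in the converse inequality, and your sketch does not close it. The coupling version would require a lower bound on $\prob(E\mid \hS_o[0,\hsigma_n]=\alpha)$, but the conditioning event depends on the walk through its last visit to $V(n)$, which may occur after $\sigma_K$, so it is not $\sigma(S[0,\sigma_K])$-measurable and the Markov property does not apply directly. Your Green-function rewriting runs into the same wall: the ratio of the two probabilities factorizes into a product over $i=0,\dots,m-1$ of factors each lying in $[c_\delta,1]$, and $m$ is unbounded, so the naive bound degenerates to $c_\delta^m$; the claim that the errors decay so that the product ``stays bounded below by $c_\delta$'' is precisely the nontrivial estimate, and no mechanism for it is given. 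For $d=2$, your dyadic Radon--Nikodym outline has a compounding issue as written: a per-scale bound of $C_\delta$, uniform in $j$, multiplies to $C_\delta^J$ across $J$ scales, which diverges; one would need per-scale ratios of the form $1+\eps_j$ with $\sum_j \eps_j$ controlled, or a single-scale comparison of $K$ against a reference ball, neither of which is indicated. You acknowledge at the end that filling these gaps would reproduce Masson's Section~4, and I think that is exactly right --- which is presumably why the paper cites the result rather than reproving it.
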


We will need the following ``Boundary Harnack inequality'', to control
a LERW after it has reached the boundary of a box. Estimates
of this flavour were proved in \cite[Proposition 3.5]{Masson}, 
\cite[Proposition 6.1.1]{S14}, \cite{BK15} and \cite[Section 3]{BJ15}. 
The variant we need here is a simplified version of \cite[Lemma 3.8]{BJ15}.
We define 
\[ H_n = \{ x \in \Zd : \, x \cdot \unitv_1 = n\}, \quad 
   H_n^{+} = \{ x \in \Zd : \, x \cdot \unitv_1 \geq n\}, \quad
   H_n^{-} = \{ x \in \Zd : \, x \cdot \unitv_1 \leq n\}\ . \]

\begin{lem}
\label{lem:bdry-Harnack}
There exists $c(d) > 0$ such that the following holds.
Let $\pi \subset V(n/2)$ and $x \in \partial V(n/2) \cap H_{n/2}$. 
Let $1 \le m \le n/4$, and $L \ge 4n$. We have
\eqnst
{ \prob_x \big( S(\sigma_{V_x(m)}) \in H_{n/2+m} \,\big|\, 
    \sigma_L < \oxi_{\pi} \big)
  \ge c(d) \, \frac{m}{n}. }
\end{lem}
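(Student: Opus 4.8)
The plan is to estimate $\prob_x \big( S(\sigma_{V_x(m)}) \in H_{n/2+m} \,\big|\, \sigma_L < \oxi_\pi \big)$ by comparing it to the unconditioned probability. Write $E_m = \{ S(\sigma_{V_x(m)}) \in H_{n/2+m} \}$ for the event that the walk exits the box $V_x(m)$ through its ``far'' face (the one in the $+\unitv_1$ direction, away from $\pi$), and write $F = \{ \sigma_L < \oxi_\pi \}$ for the conditioning event. I would first record the easy unconditioned bound: since $x \in H_{n/2}$ and $\pi \subset V(n/2) \subset H_{n/2}^-$, a simple random walk started at $x$ has probability bounded below by an absolute constant of making its first $m$ steps all in the $+\unitv_1$ direction, hence exiting $V_x(m)$ through $H_{n/2+m}$; more robustly, by the invariance principle / a standard estimate, $\prob_x(E_m) \ge c(d) > 0$ uniformly in $m, n$. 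In fact one can do better and will need to: conditioning on exiting through the far face, the exit point is at distance $\ge m$ from $H_{n/2}$ hence from $\pi$, which is what makes the Harnack-type comparison work.

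The core step is to upgrade this to the conditional statement. I would use the strong Markov property at time $\sigma_{V_x(m)}$ and write, for any target face event,
\[
\prob_x(E_m \cap F) = \E_x \big[ \mathbf{1}_{E_m}\, \mathbf{1}_{\sigma_{V_x(m)} < \oxi_\pi}\, \prob_{S(\sigma_{V_x(m)})}(\sigma_L < \xi_\pi) \big],
\]
and similarly $\prob_x(F) = \E_x\big[ \mathbf{1}_{\sigma_{V_x(m)} < \oxi_\pi}\, \prob_{S(\sigma_{V_x(m)})}(\sigma_L < \xi_\pi) \big] + \prob_x(\oxi_\pi < \sigma_{V_x(m)})$; the last term is $0$ because $\pi \subset V(n/2)$ is at distance $\ge n/2 - $ (diam issues) from $V_x(m) \subset V_x(n/4)$, wait — more carefully, $\pi \subset V(n/2)$ and $x \in \partial V(n/2)$, so a walk from $x$ staying within $V_x(m)$, $m \le n/4$, cannot reach $\pi$ unless $\pi$ comes within $m$ of $x$; I would simply note that whether or not $\pi$ is hit before exiting $V_x(m)$, the decomposition reduces the problem to controlling the harmonic-measure weights $\prob_{y}(\sigma_L < \xi_\pi)$ for $y \in \partial V_x(m)$. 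The point is then a \emph{Harnack inequality for the function $h(y) = \prob_y(\sigma_L < \xi_\pi)$}, which is harmonic in $\Zd \setminus \pi$ (away from $\partial V(L)$): on the far face $\partial V_x(m) \cap H_{n/2+m}$, all points $y$ are at distance $\ge m$ from $\pi$ and from each other's order, and at distance $\le O(n)$ from everything relevant, so $h(y) \asymp h(y')$ for $y, y'$ on that face, and moreover $h(y) \asymp h(x)$ up to the loss coming from the fact that $x$ itself is close to $\pi$. This is exactly the regime covered by the boundary Harnack estimates cited (\cite[Lemma 3.8]{BJ15}, \cite[Prop.~3.5]{Masson}), and I would invoke that machinery rather than reprove it: the statement to extract is that $h$ restricted to $\partial V_x(m/2)$ is comparable to its value at points exiting through the far face, uniformly.

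Putting these together: $\prob_x(E_m \mid F) = \prob_x(E_m \cap F)/\prob_x(F) \gtrsim \prob_x(E_m)\cdot \inf_{\text{far face}} h / \sup_{\partial V_x(m)} h$, and the Harnack comparison bounds the ratio of $h$'s below by a constant $c(d)$. So far this gives a bound of the form $c(d)$, not $c(d)\, m/n$ — so the final ingredient is to see where the factor $m/n$ enters. It enters because the unconditioned probability of exiting through the \emph{specific} hyperplane $H_{n/2+m}$ (a face of a box of radius $m$ sitting on the hyperplane $H_{n/2}$, with $x \in H_{n/2}$) combined with the conditioning that the walk must \emph{survive} to distance $L$ without hitting $\pi$: a walk conditioned to reach $\partial V(L)$ is pushed toward the complement of $\pi$, and the gain in probability of exiting $V_x(m)$ on the $\pi$-avoiding side is of order $m/n$ when $x$ is on the boundary of the box containing $\pi$ — this is the standard ``a walk started at distance $n$ from a set, conditioned to escape to distance $L \gg n$, moves away at linear speed'' heuristic, quantified via the gambler's-ruin / Beurling-type estimate (Lemma \ref{lem:Beurling} in $d=2$, and its higher-dimensional analogues). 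Concretely I would lower-bound $\prob_x(E_m \cap F)$ by the probability that $S$ takes $m$ consecutive $+\unitv_1$ steps (probability $(2d)^{-m}$ — too small!), so instead I'd lower-bound by: with probability $\gtrsim 1$, $S$ exits $V_x(m)$ somewhere, and with probability $\gtrsim m/n$ \emph{conditionally on} $F$ it does so through the far face, the latter being the genuine content. The cleanest route is: condition on $F$; under $\prob_x(\cdot \mid F)$ the walk is an $h$-transform; estimate directly that this $h$-process, which near $x$ behaves like SRW conditioned to avoid $\pi$ and reach $\partial V(L)$, exits $V_x(m)$ through the face at distance $m$ from $H_{n/2}$ with probability $\ge c(d)\, m/n$ using the explicit form of $h$ near the flat piece of $\partial V(n/2)$ containing $x$ (here $h$ grows linearly in the distance from $H_{n/2}$, at rate $\asymp 1/n$, on scale $m \le n/4$).

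\textbf{Main obstacle.} The genuine difficulty is the last paragraph: extracting the \emph{sharp} factor $m/n$ rather than a mere constant. This requires knowing that $h(y) = \prob_y(\sigma_L < \xi_\pi)$ vanishes \emph{linearly} as $y$ approaches $\pi$ near the point $x$ — i.e., $h(y) \asymp \dist(y,\pi)/n$ for $y \in V_x(n/4)$ — which is precisely a (lower) boundary Harnack / boundary regularity statement for the (possibly very rough) set $\pi$. Since $\pi$ is an arbitrary subset of $V(n/2)$ with no regularity, one cannot expect this pointwise; what saves us is that $x \in \partial V(n/2)$ lies \emph{outside} $V(n/2) \supset \pi$, so locally near $x$ the relevant boundary is the flat hyperplane $H_{n/2}$ (through the complement $H_{n/2}^+$ the walk is free), and the linear growth of $h$ is really the linear growth of the escape probability from a half-space, perturbed by $\pi$ only through an overall multiplicative constant. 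I would therefore route the argument through: (a) the half-space escape estimate giving the $m/n$ lower bound for $\prob_x(E_m, \text{walk stays in } H_{n/2}^+ \text{ during } [0,\sigma_{V_x(m)}])$ combined with surviving to $L$, and (b) the cited boundary Harnack inequality to transfer from ``walk conditioned to avoid $\pi$'' to the unconditioned half-space computation up to constants — this is the step where I would lean hardest on \cite[Lemma 3.8]{BJ15}, and verifying that its hypotheses (position of $x$ relative to $\pi$, the scales $m, n, L$) are met in the stated generality is where the care is needed.
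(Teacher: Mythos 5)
Your plan correctly identifies the essential ingredients---the exit-side estimate from \cite[Section 3]{BJ15}, a gambler's ruin producing the factor $m/n$, and the Harnack principle to handle the conditioning at large scales---and this is indeed the paper's strategy. But you circle around the decomposition without pinning it down, and at one point assign the $m/n$ to the wrong place: the probability $\prob_x(E_m,\,\text{walk stays in } H_{n/2}^{+})$ is $\asymp 1/m$ (gambler's ruin between $H_{n/2}$ and $H_{n/2+m}$ with absorption at the lower plane), not $m/n$. You also propose routing through the $h$-transform, which is heavier machinery than the paper uses and forces you to control the boundary behavior of $h(y)=\prob_y(\sigma_L<\xi_\pi)$ near $\pi$, the delicacy you yourself flag as the ``main obstacle.''

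The paper's proof sidesteps the $h$-transform entirely by lower-bounding the numerator $\prob_x(E_m,\,\sigma_L<\oxi_\pi)$ directly as a product of three factors. Set $\pi':=\pi\cap V_x(m)$. Then: (1) exit $V_x(m)$ through $H_{n/2+m}$ while avoiding $\pi'$; by \cite[Section 3]{BJ15} this is at least $\tfrac{1}{2d}\,\prob_x(\sigma_{V_x(m)}<\oxi_{\pi'})$; (2) from the far face $\partial V_x(m)\cap H_{n/2+m}$ (which is at distance $m$ from $H_{n/2}$), reach $\partial V(2n)$ before returning to $H_{n/2}$; this is $\ge c\,m/n$ by gambler's ruin and is the \emph{only} place $m/n$ enters; (3) from $\partial V(2n)$, escape to $\partial V(L)$ without hitting $\pi$, at cost $\min_{z\in\partial V(2n)}\prob_z(\sigma_L<\xi_\pi)$. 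The reconciliation with the denominator $\prob_x(\sigma_L<\oxi_\pi)$ then uses three elementary observations: the inclusion $\{\sigma_{V(2n)}<\oxi_\pi\}\subset\{\sigma_{V_x(m)}<\oxi_{\pi'}\}$ (since $V_x(m)\subset V(2n)$ and $\pi'\subset\pi$); the Harnack principle to replace $\min_{z\in\partial V(2n)}\prob_z(\sigma_L<\xi_\pi)$ by a constant times $\max_z$; and the one-step Markov bound $\prob_x(\sigma_{V(2n)}<\oxi_\pi)\cdot\max_{z\in\partial V(2n)}\prob_z(\sigma_L<\xi_\pi)\ge\prob_x(\sigma_L<\oxi_\pi)$. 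Nothing about the pointwise behavior of $h$ near $\pi$ is ever needed; it is all absorbed into monotonicity. Your plan has the right pieces but would need this recombination to actually close; as written, the step ``with probability $\gtrsim m/n$ conditionally on $F$ it exits through the far face'' is the conclusion, not a lemma you can cite.
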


\begin{proof}
Let $\pi' = \pi \cap V_x(m)$. It is shown in \cite[Section 3]{BJ15} that
\eqnst
{ \prob_x \big( S(\sigma_{V_x(m)}) \in H_{n/2+m},\, 
    \sigma_{V_x(m)} < \oxi_{\pi'} \big)
  \ge \frac{1}{2d} \prob_x \big( \sigma_{V_x(m)} < \oxi_{\pi'} \big). }
This yields
\eqnsplst
{ &\prob_x \big( S(\sigma_{V_x(m)}) \in H_{n/2+m},\, \sigma_L < \oxi_{\pi} \big) \\
  &\qquad \ge \prob_x \big( S(\sigma_{V_x(m)}) \in H_{n/2+m},\, 
    \sigma_{V_x(m)} < \oxi_{\pi'} \big) \\
  &\qquad\quad 
    \min_{w \in (\partial V_x(m)) \cap H_{n/2+m}} 
    \prob_w \big( \sigma_{2n} < \xi_{H_{n/2}} \big) \,
    \min_{z \in \partial V(2n)} 
    \prob_z \big( \sigma_L < \xi_{\pi} \big) \\
  &\qquad \ge \frac{1}{2d} \prob_x \big( \sigma_{V_x(m)} < \oxi_{\pi'} \big) \,
    c \, \frac{m}{n} \, \max_{z \in \partial V(2n)} 
    \prob_z \big( \sigma_L < \xi_{\pi} \big) \\
  &\qquad \ge \frac{c (m/n)}{2d} 
    \prob_x \big( \sigma_{V(2n)} < \oxi_{\pi} \big) \,
    \max_{z \in \partial V(2n)} 
    \prob_z \big( \sigma_L < \xi_{\pi} \big) 
  \ge c(d) \, \frac{m}{n} \, \prob_x \big( \sigma_L < \oxi_{\pi} \big). }
Here we used a gambler's ruin estimate and the Harnack principle in the second inequality. 
\end{proof}

\subsubsection{Separation lemma}
\label{sssec:sep-lem}

Separation lemmas for a loop-erased walk and a simple random walk 
appeared in \cite{Masson} for $d = 2$ and in \cite{S14} for $d = 3$.
We give here a unified proof that works for all $d \ge 2$.
Let us write $\hS_{L,o} = \looper S_o[0,\sigma_L]$.
Recall that $S'_x$ generally denotes a simple random walk independent of the walk $S_x$. 
We define $A_n = \{ \hS_{L,o}(0,\hsigma_n] \cap S'_o[0,\sigma_n] = \es \}$,
$n \le L$.
Let 
\eqnst
{ D_n
  = \min \Big\{ \dist \big( \hS_{L,o}(\hsigma_n), S'_o[0,\sigma_n] \big),\, 
    \dist \big( \hS_{L,o}[0,\hsigma_n], S'_o(\sigma_n) \big) \Big\}. }

\begin{lem}[\textbf{Separation Lemma}]
\label{lem:sep-lem}
Let $d \ge 2$. There exists $\delta = \delta(d) > 0$ and $c = c(d) > 0$ such that 
for all $1 \le n \le L/4$ we have
\eqnst
{ \prob ( D_n \ge \delta n \,|\, A_n )
  \ge c. }
\end{lem}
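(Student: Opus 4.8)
The plan is to follow the standard "separation lemma" strategy (as in Lawler's work on intersection exponents and in \cite{Masson}, \cite{S14}), adapted so that it treats the LERW $\hS_{L,o}$ and the independent SRW $S'_o$ simultaneously and works in all dimensions $d \ge 2$. The key point is to show that, conditionally on the non-intersection event $A_n$, with probability bounded below the two paths are not only disjoint but \emph{well separated} at scale $n$: the endpoints $\hS_{L,o}(\hsigma_n)$ and $S'_o(\sigma_n)$ are at distance $\ge \delta n$ from the other path. I would run the argument by a dyadic/geometric iteration over scales $n, 2n, 4n, \dots$ combined with a "one-step improvement" estimate.

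First I would set up the quantity to be controlled: let $p_k$ denote (roughly) $\prob(D_{2^k n_0} \ge \delta\, 2^k n_0 \mid A_{2^k n_0})$, and the goal is $\inf_k p_k \ge c > 0$. The heart of the matter is a \textbf{one-scale improvement lemma}: there exist $\delta_0 > 0$ and $c_0 > 0$ such that, given that at scale $n$ the paths are disjoint (event $A_n$), the probability that by scale $2n$ they have become $\delta_0$-separated — i.e.\ $D_{2n} \ge \delta_0 (2n)$ — is at least $c_0$, uniformly in the configuration of the paths inside $V(n)$. To prove this I would: (a) condition on $\hS_{L,o}[0,\hsigma_n]$ and $S'_o[0,\sigma_n]$; (b) use the Domain Markov Property (Lemma \ref{lem:dmp}) to describe the continuation of the LERW from $\hS_{L,o}(\hsigma_n)$ as a loop-erased walk conditioned to avoid the past, and the continuation of $S'_o$ as an ordinary SRW conditioned to avoid the LERW; (c) use the Boundary Harnack inequality (Lemma \ref{lem:bdry-Harnack}) and a gambler's-ruin/Harnack argument to show that each path, conditioned on continuing to distance $2n$ without intersecting the other, has a probability bounded below of exiting the annulus $V(2n)\setminus V(n)$ through a prescribed "good" region — in particular through regions that are far from where the other path exits. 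Steering both walks into opposite good regions of the sphere of radius $2n$ forces $D_{2n} \ge \delta_0(2n)$. A small but necessary technical point is handling the LERW's dependence on the far-away future of $S_o$ (through $\sigma_L$): here I would invoke Masson's comparison between finite and infinite LERW, Lemma \ref{lem:seg_indep}, to reduce to a LERW killed on a ball of radius $O(n)$, whose behaviour inside $V(2n)$ is controlled up to $\delta$-dependent constants.

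Once the one-scale improvement is in hand, I would prove the lemma by the usual coupling/iteration: even if at some intermediate scale the paths are "bad" (not separated), they are still disjoint, so the improvement lemma applies at the next scale and gives a fresh chance $\ge c_0$ to become separated; and — this is the second ingredient — \textbf{once separated at scale $n'$, separation at scale $n'$ is "self-sustaining" with probability bounded below up to scale $n$}, because two paths that start $\delta_0 n'$-apart have a positive (dimension-dependent but scale-independent) probability of reaching distance $2n'$ while staying in disjoint cones. Chaining these two facts across the $O(\log(n))$ dyadic scales between the starting scale and $n$, and taking $\delta$ a bit smaller than $\delta_0$ to absorb boundary effects, yields a uniform lower bound $c = c(d)>0$ on $\prob(D_n \ge \delta n \mid A_n)$, as desired.

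The main obstacle I expect is \textbf{the conditioning on the non-intersection event $A_n$}: conditioning distorts the law of both paths, and one must ensure that the "good" steering events have probability bounded below \emph{after} this conditioning, not before. The clean way around this is the standard trick of comparing the conditioned continuation to an unconditioned one via the Domain Markov Property and Boundary Harnack: one shows that the Radon–Nikodym derivative between "LERW conditioned to avoid $S'_o$ and reach $2n$" and "LERW reaching $2n$" is bounded above and below on the relevant good events, uniformly at scale $n$. Getting these Harnack-type comparisons to hold uniformly in $d$, and uniformly in the (arbitrary) conditioned configuration of the paths inside $V(n)$, is the delicate part; the estimates in Lemma \ref{lem:bdry-Harnack} and Lemma \ref{lem:seg_indep} are exactly what is designed to supply them, so the work is in correctly combining them rather than in proving anything fundamentally new.
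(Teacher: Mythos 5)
Your central claim---the ``one-scale improvement lemma'' asserting that, \emph{uniformly over all configurations of the paths inside $V(n)$} satisfying $A_n$, one has $\prob(D_{2n} \ge \delta_0 (2n)) \ge c_0$---is false, and this is where the argument breaks. The event $A_n$ allows the LERW and the SRW to exit $V(n)$ at adjacent vertices, and starting two paths from adjacent points, the probability that they even reach $\partial V(2n)$ without meeting (a consequence of $D_{2n}\ge\delta_0(2n)$) decays to zero with $n$: in $d=2$ the difference walk must avoid the origin up to radius $n$, which has probability of order $1/\log n$, and in general the decay is governed by an intersection-type exponent. The Radon--Nikodym comparison you invoke to control the conditioning is also where this surfaces: the derivative of ``conditioned to avoid the other path'' against ``unconditioned'' is not bounded above and below uniformly in the configuration, but only once the two exit points are already well separated---which is the conclusion you are trying to reach, so the argument is circular. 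Such a uniform pointwise improvement would, averaging over configurations, immediately yield the Separation Lemma, so it cannot be weaker than the statement to prove.

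The paper avoids this by never making a pointwise claim about arbitrary $A_n$-configurations. It proves two strictly weaker facts and combines them via a Kesten-type iteration. First (Lemma~\ref{lem:basic-sep}), an \emph{averaged} bound: conditioned on $A_{n/2}$ (not on a specific configuration), the probability that $A_n$ holds yet $D_n < \delta n$ is at most $r(\delta)$, with $r(\delta)\to0$ as $\delta\to0$. Second (Lemma~\ref{lem:extend}), a \emph{pointwise} extension bound that is only required on already well-separated configurations, which is essentially your ``self-sustaining'' observation and is correct. Writing $f(n)=\prob(A_n)$ and $g(n)=\prob(A_n,\,D_n\ge\delta n)$, the first lemma gives $f(n)\le g(n)+r(\delta)f(n/2)$, and iterating produces a geometric series in $r(\delta)$ whose coefficients are controlled by $g(n)$ via the second lemma, together with a gambler's-ruin factor. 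Choosing $\delta$ so that $4r(\delta)<1/2$ makes the series converge and gives $f(n)\le C(\delta)g(n)$. The key structural difference from your proposal is that the contribution of ``bad'' intermediate configurations is never estimated pointwise---it is carried along as a multiplicative $r(\delta)^\ell$ factor and absorbed by the convergent series, precisely because the bad configurations are exponentially unlikely to persist across many scales.
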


In the proof of the separation lemma, a basic step is to show that given that
non-intersection occurred to distance $n/2$, there is small probability that
separation at distance $n$ is bad.

\begin{lem}
\label{lem:basic-sep}
Let $d \ge 2$. There exists a function $r : (0,1/2] \to (0,\infty)$
with $\lim_{\delta \to 0} r(\delta) = 0$, and for all $\delta \in (0,1/2]$
there exists $n_0(\delta)$ such that we have 
we have
\eqnst
{ \prob \big( A_n \cap \{ D_n < \delta n \} \,\big|\, A_{n/2} \big)
  \le r(\delta), \qquad
  n \ge n_0(\delta), \, 0 < \delta \le 1/2. }
\end{lem}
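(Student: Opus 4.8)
The plan is to decompose the bad event according to where the SRW $S'_o$ and the LERW $\hS_{L,o}$ fail to be well-separated at scale $n$, and to use a standard "second moment of the hitting distribution" / "many-to-one" argument to show that if two paths are close near $\partial V(n)$ after surviving non-intersection to $\partial V(n/2)$, this forces one path to hug the other over a long piece, which is costly. Concretely, on $A_{n/2}$ the two pieces $\hS_{L,o}[0,\hsigma_{n/2}]$ and $S'_o[0,\sigma_{n/2}]$ are disjoint; we continue both walks from $\partial V(n/2)$ to $\partial V(n)$. First I would condition on these initial pieces and on the exit points, and estimate, for fixed starting configurations at scale $n/2$, the conditional probability that the continuations reach $\partial V(n)$ without intersecting and yet land with $D_n < \delta n$. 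The key point is that by the domain Markov property (Lemma~\ref{lem:dmp}) the continuation of the LERW is a SRW from $\hS_{L,o}(\hsigma_{n/2})$ conditioned to reach $\partial V(L)$ before hitting its past, and after erasing loops; so both continuations are (conditioned) simple random walks in the annulus $V(n)\setminus V(n/2)$, and we may bound the relevant probabilities by SRW hitting-probability estimates.

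The main steps, in order: (1) Condition on $\cF = \sigma\big(\hS_{L,o}[0,\hsigma_{n/2}],\, S'_o[0,\sigma_{n/2}]\big)$ and restrict to the event $A_{n/2}$; write the bad probability as an expectation over $\cF$ of a conditional probability involving only the two continuations in the annulus. (2) Split $\{D_n < \delta n\}$ into the event that the LERW's exit point $\hS_{L,o}(\hsigma_n)$ is within $\delta n$ of the SRW trace $S'_o[0,\sigma_n]$, and the symmetric event with the roles of the terminal point of $S'_o$ and the trace of the LERW exchanged. (3) For the first event, use a first-moment bound: sum over lattice points $w \in \partial V(n)$ with $\dist(w, S'_o[0,\sigma_n]) < \delta n$ the probability that the LERW continuation exits near $w$ and avoids $S'_o[0,\sigma_n]$; each such term is comparable, via the Beurling estimate (Lemma~\ref{lem:Beurling} in $d=2$, and its higher-dimensional analogues) and Green-function asymptotics (Theorem~\ref{thm:green_asymp}), to the unconditioned non-intersection probability $\prob(A_n \mid A_{n/2})$ times a geometric factor that is $O(\delta^{c})$ for some $c=c(d)>0$ coming from the number of boundary points within distance $\delta n$ of a connected set reaching from $\partial V(n/2)$ to $\partial V(n)$ (which is at most $O(\delta n \cdot (\text{surface terms}))$ relative to the total surface $n^{d-1}$, but more carefully using that $S'_o[0,\sigma_n]$ is one-dimensional so the $\delta n$-neighborhood it can "shadow" on $\partial V(n)$ is controlled). (4) Handle the symmetric second event the same way with the roles of the two walks swapped, using that the conditioned LERW continuation is dominated by / comparable to an unconditioned SRW continuation. (5) Combine: conditioning on $\cF$ and dividing by $\prob(A_{n/2})$ is absorbed into the ratio, and the sum of the two pieces is $\le r(\delta)$ with $r(\delta)\to 0$ as $\delta\to 0$, once $n \ge n_0(\delta)$ so that the Green-function error terms in Theorem~\ref{thm:green_asymp} are negligible relative to the main terms.

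The hard part will be making precise the "shadowing is costly" estimate in step~(3)--(4): one must show that the conditional probability that the LERW continuation both avoids the SRW trace and exits $V(n)$ within distance $\delta n$ of that trace is small \emph{relative to} the conditional non-intersection probability itself — i.e., that conditioning a non-intersecting walk to additionally finish close to the other path is a genuine restriction with cost tending to $0$ with $\delta$. This is where the Boundary Harnack inequality (Lemma~\ref{lem:bdry-Harnack}) and Beurling-type estimates do the real work: the Boundary Harnack estimate lets one compare the conditioned walk's exit distribution on $\partial V(n)$ to an unconditioned harmonic measure up to constants, reducing the problem to the purely geometric statement that harmonic measure on $\partial V(n)$ of the $\delta n$-neighborhood of a connected curve crossing the annulus is $O(\delta^{c(d)})$. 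A secondary technical point is that $\hS_{L,o}$ is a \emph{loop-erased} walk, so to invoke these SRW estimates one first passes to the underlying SRW via Lemma~\ref{lem:dmp}, and to replace $\hS_{L,o}$ by the infinite LERW (or to handle the dependence on $L$ uniformly) one uses Masson's comparison, Lemma~\ref{lem:seg_indep}; all of these introduce only $\delta$-independent multiplicative constants, which is exactly what is needed since $r(\delta)$ only has to vanish as $\delta\to 0$ with $d$ fixed.
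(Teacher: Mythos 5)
Your proposal uses the same case split as the paper --- decompose $\{D_n < \delta n\}$ into the event that the LERW exit point $\hS_{L,o}(\hsigma_n)$ lies near the SRW trace, and the symmetric event --- but the method of estimation is quite different, and the paper's route is both simpler and more robust in ways that matter here.

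The paper exploits the trivial inclusion $A_n \cap \{D_n<\delta n\} \subset \{D_n<\delta n\}$ and simply shows $\prob(D_n < \delta n \mid A_{n/2}) \le r(\delta)$; the non-intersection $A_n$ plays no further role in the bound. For the first case it conditions on the LERW exit point $x_2 \in \partial V(n)$, at which point the claim reduces to a pure single-walk statement: an unconditioned SRW from $\partial V(n/2)$ has small probability of visiting $B_{x_2}(\delta n)$ before exiting $V(n)$. This is read off from the invariance principle (Brownian motion exits the cube at a single point a.s.\ different from $x_2$, so it a.s.\ avoids a shrinking neighbourhood of $x_2$). For the second case it conditions on the SRW exit point $y_2$, uses the domain Markov property to pass to the conditioned walk $X$, and bounds $\prob_{x'}(\xi_{B_{y_2}(\delta n)} < \sigma_L)/\prob_{x'}(\sigma_L < \xi_{\pi_0})$ directly by a Green-function/Harnack computation. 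No Beurling-type or non-intersection exponent enters at all, and the argument is dimension-uniform.

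Your proposal, by contrast, tries to control $\prob(A_n \cap \{D_n<\delta n\}\mid A_{n/2})$ \emph{relative to} $\prob(A_n \mid A_{n/2})$, and the geometric input you invoke --- ``harmonic measure on $\partial V(n)$ of the $\delta n$-neighbourhood of a connected curve crossing the annulus is $O(\delta^{c(d)})$'' --- is not true in the generality you need. A SRW segment from $\partial V(n/2)$ to $\partial V(n)$ can, with non-negligible probability, graze $\partial V(n)$ on a macroscopic fraction of the boundary before exiting, in which case the set $\{w\in\partial V(n): \dist(w, S'_o[0,\sigma_n])<\delta n\}$ carries harmonic measure of order one, not $O(\delta^c)$. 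The paper sidesteps this completely by conditioning on a single boundary point $x_2$ (resp.\ $y_2$) rather than on the whole trace, so the target is always one small ball. A second difficulty is the circularity lurking in the relative comparison: to divide by $\prob(A_n\mid A_{n/2})$ and make the ratio uniform over the configuration at scale $n/2$, you would need to know that the conditional non-intersection probability is not pathologically small for badly separated configurations --- but obtaining such uniform control over the conditional non-intersection probability is precisely the content of the Separation Lemma that this lemma is a stepping stone towards. Removing $A_n$ from the upper bound dissolves both issues at once.

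The tools you list (DMP, Masson's comparison, boundary Harnack) are all relevant and some appear in the paper's proof, but the overall architecture needs to be revised: drop the $A_n$ constraint, condition on a single exit point per case, and for the unconditioned walk use a soft compactness argument via the invariance principle rather than a quantitative harmonic-measure bound over a random target set.
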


\begin{proof}
We condition on $\pi_0 := \hS_{L,o}[0,\hsigma_{n/2}]$ and 
$S'_o[0,\sigma_{n/2}]$, and denote $x_1 = \hS_{L,o}(\hsigma_{n/2})$,
$x_2 = \hS_{L,o}(\hsigma_n)$, $y_1 = S'_o(\sigma_{n/2})$,
$y_2 = S'_o(\sigma_n)$. We distinguish two cases:\\
(a) $S'_o[\sigma_{n/2},\sigma_n]$ visits $B_{x_2}(\delta n)$; \\
(b) $\hS_{L,o}[\hsigma_{n/2},\hsigma_n]$ visits $B_{y_2}(\delta n)$. \\
We bound the probabilities of the two cases separately, showing that each
is bounded by a suitable $r(\delta)$.

\emph{Case (a).} Let us further condition on $x_2$. Since 
Brownian motion in the cube $\{ u \in \mathbb{R}^d : \| u \| \le 1 \}$ 
has continuous paths, and the path tends to its exit point, and the 
probability of any given exit point is $0$, there exists
$r_1(\delta)$, tending to $0$, such that given any boundary 
point $w$, the Brownian path intersects 
$\{ u \in \R^d : \| u \| \le 1,\, |u - w| \le 2 \delta \}$
with probability $\le r_1(\delta)$. Hence the required
bound follows from the invariance principle.

\emph{Case (b).} Let us condition on $y_2$. Due to the Domain Markov Property
(Lemma \ref{lem:dmp}), the path $\hS_{L,o}[\hsigma_{n/2},\hsigma_L]$ has the law of 
$\looper (X[0,\sigma_L])$, where $X$ has the law of
$S_{x_1}$ conditioned on the event $\{ \sigma_L < \oxi_{\pi_0} \}$.
On the event in Case (b), the path $X[\sigma_{3n/4},\sigma_L]$
has to visit $B_{y_2}(\delta n)$. Conditioning on the
point $x' = X(\sigma_{3n/4})$, the probability of this
event is 
\eqnspl{e:X-bound}
{ &\prob \big( X[\sigma_{3n/4},\sigma_L] \cap B_{\delta n}(y_2) \not= \es \,\big|\, 
     X(\sigma_{3n/4}) = x' \big) 
  = \frac{\prob_{x'} \big( \sigma_L < \xi_{\pi_0},\, 
     \xi_{B_{y_2}(\delta n)} < \sigma_L \big)}
     {\prob_{x'} \big( \sigma_L < \xi_{\pi_0} \big)}. }
When $d \ge 3$, the right hand side is at most
\eqnst
{ \frac{\prob_{x'} ( \xi_{B_{\delta n}(y_2)} < \infty )}
     {\prob_{x'} \big( \xi_{B_{n/2}(o)} = \infty \big)}
  \le C \delta^{d-2}. }
When $d = 2$, consider
\eqnsplst
{ h(w) 
  := \prob_w \left( \sigma_L < \xi_{\pi_0},\, \xi_{B_{y_2}(\delta n)} < \sigma_L \right)
  \quad \text{ and } \quad
  w^*
  := \underset{w \in V(2n) \setminus V_{y_2}(n/8)}{\operatorname{argmax}} h(w). }
Using the Harnack principle, we have
\eqnsplst
{ h(w^*)
  &\le \prob_{w^*} \left( \xi_{B_{y_2}(\delta n)} < \sigma_{4n} \right) \,
      \max_{v \in \partial V(4n)} 
      \prob_v \left( \sigma_L < \xi_{\pi_0} \right)
      + \prob_{w^*} \left( \sigma_{4n} < \xi_{\pi_0} \wedge 
      \xi_{B_{y_2}(\delta n)} \right) \,
      \max_{w \in \partial V(2n)} h(w) \\
  &\le C \, (\log 1/\delta)^{-1} \, C \, 
      \prob_{x'} \left( \sigma_L < \xi_{\pi_0} \right)
      + c_1 \, h(w^*), }
where $c_1 < 1$. Therefore, we get 
\eqnsplst
{ (1 - c_1) h(x') 
  \le (1 - c_1) h(w^*)
  \le C (\log 1/\delta)^{-1} \, \prob_{x'} \left( \sigma_L < \xi_{\pi_0} \right). }
This completes the proof.
\end{proof}

The next step 
is to show that 
given a ``good'' separation at distance $n/2$, the probability that the
paths can be ``very well'' separated at distance $n$ is at least a
constant. 
We denote $\cG_n := \sigma ( \hS^L_o[0,\hsigma_n], S'_o[0,\sigma_n] )$
the information about the paths up to the exit from $V_n$.

\begin{lem}
\label{lem:extend}
For any $0 < \delta \le 1/2$ there exists $c(\delta) > 0$ and 
$n_1(\delta)$ such that for all $n \ge n_1(\delta)$ and
$L \ge 4n$ the following hold.\\
(i) We have
\eqn{e:extend}
{ \prob \Big( A_n \cap \big\{ \hS^L_o(\hsigma_{3n/4},\hsigma_{n}) \in H^-_{-n/2}, \, 
    S'_o(\sigma_{3n/4},\sigma_{n}) \in H^+_{n/2} \big\} \,\Big|\, \cG_{n/2} \Big)
  \ge c(\delta) }
everywhere on the event $A_{n/2} \cap \{ D_{n/2} \ge \delta n/2) \}$.\\
(ii) Moreover, we may further require the event 
$\hS^L_o[\hsigma_{2n},\hsigma_L] \cap V_n = \es$ in \eqref{e:extend}.
\end{lem}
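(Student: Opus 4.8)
\emph{Setup.} The plan is to exploit the conditional independence, given $\cG_{n/2}$, of the two continuations and to \emph{steer} each of them along a thin tube so that the pieces beyond $\partial V(3n/4)$ land in opposite half-spaces. Conditionally on $\cG_{n/2}$, the piece $S'_o[\sigma_{n/2},\sigma_n]$ is a simple random walk from $y_1:=S'_o(\sigma_{n/2})$ run to time $\sigma_n$, independent of everything else; and by the Domain Markov Property (Lemma~\ref{lem:dmp}), $\hS^L_o[\hsigma_{n/2},\hsigma_L]$ is distributed as $\looper(X[0,\sigma_L])$, where $X$ is a simple random walk from $x_1:=\hS^L_o(\hsigma_{n/2})$ conditioned on $\{\sigma_L<\oxi_{\pi_0}\}$, with $\pi_0:=\hS^L_o[0,\hsigma_{n/2}]$, and $X$ independent of $S'_o$. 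On $A_{n/2}\cap\{D_{n/2}\ge\delta n/2\}$ we have $\pi_0, S'_o[0,\sigma_{n/2}]\subset V(n/2)\cup\partial V(n/2)$ and the two ``cross'' separations $\dist(x_1,S'_o[0,\sigma_{n/2}])\ge\delta n/2$ and $\dist(\pi_0,y_1)\ge\delta n/2$. Using these separations one builds (as in the separation lemmas of \cite{Masson,S14}) two \emph{disjoint} tubes of width of order $\delta n$: a tube $\mathcal{C}^{+}$ running from $y_1$ inside $V(n)$ and leaving $V(n)$ through the ``$+\unitv_1$ cap'', its part beyond $\partial V(3n/4)$ contained in $V(n)\cap H^{+}_{n/2}$; and a tube $\mathcal{C}^{-}$ running from $x_1$, leaving $V(3n/4)$ through the ``$-\unitv_1$ cap'', continuing to $\partial V(2n)$ near $-2n\unitv_1$ with its part beyond $\partial V(3n/4)$ contained in $H^{-}_{-n/2}$, and then running out to $\partial V(L)$; both tubes stay away from $V(n/2)$ except near their start, $\mathcal{C}^{+}$ avoids $\pi_0$, and the part of $\mathcal{C}^{-}$ inside $V(n)$ avoids $S'_o[0,\sigma_{n/2}]$ (the loop-erased continuation avoids $\pi_0$ automatically).

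\emph{Steering the random walk.} A chain-of-balls argument --- cover $\mathcal{C}^{+}$ by a number of balls of radius of order $\delta n$ bounded in terms of $\delta$, and force the walk across them in turn, each crossing having probability bounded below --- shows that a simple random walk from $y_1$ stays in $\mathcal{C}^{+}$ until it leaves $V(n)$ with probability at least $c_1(\delta)>0$; on this event $S'_o[0,\sigma_n]$ is disjoint from $\pi_0$ and $S'_o(\sigma_{3n/4},\sigma_n)\subset H^{+}_{n/2}$.

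\emph{Steering the loop-erased walk.} The difficulty is that $X$ is conditioned on the non-local event $\{\sigma_L<\oxi_{\pi_0}\}$, which in $d=2$ has probability tending to $0$ as $L\to\infty$. We therefore pass to unconditioned probabilities and divide. Let $F$ be the event that $S_{x_1}$ follows $\mathcal{C}^{-}$ up to $\partial V(2n)$; its first step off $\partial V(n/2)$ is supplied by Lemma~\ref{lem:bdry-Harnack} with $m$ of order $\delta n$ (contributing a factor of order $\delta$), the remaining boundedly-many crossings each have probability bounded below, and on $F$ the walk avoids $\pi_0$, so $\prob_{x_1}(F)\ge c_2(\delta)$. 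The harmonic function $w\mapsto\prob_w(\sigma_L<\xi_{\pi_0})$ is comparable across all $w\in\partial V(2n)$ by the Harnack principle on $V(L)\setminus V(n)$ (legitimate since $\pi_0\subset V(n/2)\cup\partial V(n/2)$ and $L\ge4n$), whence
\[
  \prob_{x_1}\!\left(\text{$X$ follows $\mathcal{C}^{-}$ up to $\partial V(2n)$}\right)
  = \frac{\E\!\left[\mathbf{1}_F\,\prob_{S_{x_1}(\sigma_{2n})}(\sigma_L<\xi_{\pi_0})\right]}{\prob_{x_1}(\sigma_L<\oxi_{\pi_0})}
  \ge c_3(\delta)>0,
\]
using $\prob_{x_1}(\sigma_L<\oxi_{\pi_0})\le\max_{w'\in\partial V(2n)}\prob_{w'}(\sigma_L<\xi_{\pi_0})$. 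For part~(ii) we also require $X[\sigma_{2n},\sigma_L]\cap V(n)=\es$; conditionally on $\{\sigma_L<\oxi_{\pi_0}\}$ and on $X(\sigma_{2n})=w$ this has probability $\prob_w(\sigma_L<\xi_{V(n)})/\prob_w(\sigma_L<\xi_{\pi_0})$, bounded below --- in $d\ge3$ by transience, and in $d=2$ because both numerator and denominator are of order $(\log(L/n))^{-1}$ (the denominator since $\pi_0$ is connected of diameter of order $n$) and $\{\sigma_L<\xi_{V(n)}\}\subset\{\sigma_L<\xi_{\pi_0}\}$ as $\pi_0\subset V(n)$. Since the last vertex of $\looper(X[0,\sigma_L])$ outside $V(2n)$ is visited by $X$ at a time $\ge\sigma_{2n}$, imposing $X[\sigma_{2n},\sigma_L]\cap V(n)=\es$ forces $\hS^L_o[\hsigma_{2n},\hsigma_L]\cap V(n)=\es$. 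Write $c_4(\delta)$ for this last lower bound (with $c_4\equiv1$ for part~(i)).

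\emph{Conclusion and main difficulty.} The random-walk event and the loop-erased-walk event are conditionally independent given $\cG_{n/2}$; on their intersection, outside $V(n/2)$ the traces $S'_o[0,\sigma_n]$ and $\hS^L_o(0,\hsigma_n]$ lie in the disjoint tubes $\mathcal{C}^{+},\mathcal{C}^{-}$ and avoid the pieces recorded in $\cG_{n/2}$ by the design of the tubes, while inside $V(n/2)$ they are already disjoint by $A_{n/2}$; so $A_n$ holds, both half-space constraints hold, and (for (ii)) $\hS^L_o[\hsigma_{2n},\hsigma_L]\cap V(n)=\es$. This yields the bound with $c(\delta)=c_1(\delta)c_3(\delta)c_4(\delta)$, valid for all $n\ge n_1(\delta)$ (so that the balls involved have integer radii $\ge1$ and the gambler's-ruin and invariance-principle estimates apply) and $L\ge4n$. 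The principal obstacle is the loop-erased-walk steering under the non-local conditioning $\{\sigma_L<\oxi_{\pi_0}\}$; it is overcome by the ratio computation above, the Harnack principle making the denominator essentially independent of the exit point on $\partial V(2n)$, and --- for part~(ii) in $d=2$ --- by the fact that ``escaping to $\partial V(L)$ without re-entering $V(n)$'' is comparable in probability to the conditioning event itself.
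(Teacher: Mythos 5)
Your overall strategy is the same as the paper's: condition on $\cG_{n/2}$, use the Domain Markov Property to write $\hS^L_o[\hsigma_{n/2},\hsigma_L]$ as $\looper X[0,\sigma_L]$ with $X$ the walk from $x_1$ conditioned to avoid $\pi_0$, build disjoint $\delta n$-width corridors, steer each walk, and handle the conditioning by the ratio computation with a Harnack-principle equalization over $\partial V(2n)$. The ratio argument for the conditioned walk is an explicit and clean way to express what the paper does via the potential-theoretic manipulation of $\prob_{w_*}(\sigma_L < \xi_{\pi_0})$.

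However, there is a genuine gap in your treatment of part (i). You set $c_4 \equiv 1$ for (i), i.e.\ you do \emph{not} require $X[\sigma_{2n},\sigma_L]\cap V(n) = \es$. But this constraint is not an optional add-on for (ii); it is essential already for (i). The event ``$X$ follows $\mathcal{C}^-$ up to $\partial V(2n)$'' controls $X[0,\sigma_{2n}]$, but the loop erasure $\looper X[0,\sigma_L]$ depends on the whole trajectory. If $X$ re-enters $V(n)$ after $\sigma_{2n}$ and hits a vertex of $X[0,\sigma_{2n}]\cap\mathcal{C}^-$, the loop erasure deletes the corridor excursion entirely, and then $\hS^L_o[\hsigma_{n/2},\hsigma_n]$ --- and in particular its first exit point from $V(n)$ --- is determined by the later, uncontrolled portion of $X$. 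In that case neither $\hS^L_o(\hsigma_{3n/4},\hsigma_n)\in H^-_{-n/2}$ nor $A_n$ is guaranteed. The paper makes exactly this point and devotes the final portion of its proof to establishing $\prob_w\big(\sigma^X_L < \xi^X_{V(n)}\big) \ge c > 0$ for $w\in\partial V(2n)$ (equation \eqref{e:X-not-return}), noting that together with the corridor constraint this forces the last visit of $X$ to $\partial V(n)$ to lie in $H_{-n}$. The fix to your proposal is easy --- you already prove the needed lower bound on $\prob_w(\sigma_L<\xi_{V(n)})/\prob_w(\sigma_L<\xi_{\pi_0})$ in your part (ii) discussion (including the $d=2$ case via the observation that both probabilities are of order $(\log(L/n))^{-1}$) --- but as written, part (i) does not follow from the events you impose.
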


\begin{proof}
We condition on $\pi_0 = \hS^L_o[0,\hsigma_{n/2}]$ and $S'_o[0,\sigma_{n/2}]$.
Let us write $x_1 = \hS^L_o(\hsigma_{n/2})$ and $y_1 = S'_o(\sigma_{n/2})$.
Due to the conditioning in \eqref{e:extend}, we have $|x_1 - y_1| \ge \delta (n/2)$.
We write $X$ for a random walk starting at $x_1$ conditioned on the event
$\{ \sigma_L < \oxi_{\pi_0} \}$, so that $\hS^L_o[\hsigma_{n/2},\hsigma_L]$
has the law of $\looper X[0,\sigma_L]$.

An application of Lemma \ref{lem:bdry-Harnack} yields that with probability 
$\ge c \delta$, the process $X$ exits $V_{x_1}(\delta n/8)$ on the face
furthest from $V(n/2)$. Also, there is probability $\ge (2d)^{-1}$
that $S'_o[\sigma_{n/2},\infty)$ exits $V_{y_1}(\delta n/8)$ on the face
furthest from $V(n/2)$. Using the Harnack principle for $X$, and 
appropriate disjoint corridors of width of order $\delta n$ for the 
LERW and the SRW, respectively (see Figure \ref{fig:fig_extend} below), there is probability $\ge c(\delta)$ that:\\
(a) $S'_o$ exits $V(n)$ in $H_n$, with the appropriate portion in the 
required halfspace;\\
(b) $X$ exits $V(2n)$ in $H_{-2n}$ with 
$X[\sigma_{3n/4},\sigma_{2n}] \subset [-2n,-n/2] \times [-3n/4,3n/4]^{d-1}$;\\
(c) $S'_o[0,\sigma_n] \cap X[0,\sigma_n] = \varnothing$.\\
In order to further ensure that $\looper X$ first exits $V(n)$ at a point in 
$H_{-n}$, and that $A_n$ occurs, we show that for $w \in \partial V(2n)$ we have
\eqn{e:X-not-return}
{ \prob_w \left( \sigma^X_L < \xi^X_{V(n)} \right)
  \ge c > 0. }
This is indeed sufficient, since the events in point (b) and \eqref{e:X-not-return}
imply that the last visit of $X$ to $\partial V(n)$ must occur at a point
in $H_{-n}$. In order to see \eqref{e:X-not-return}, first note that the
statement is clear for $d \ge 3$, since then
\eqnst
{ \prob_w \big( \sigma_L < \xi_{V(n)} \big)
  \ge \prob_w \big( \xi_{V(n)} = \infty \big)
  \ge c 
  \ge c \, \prob_w ( \sigma_L < \xi_{\pi_0} \big). }
When $d = 2$, let $w_* := \underset{w \in \partial V(2n)}{\operatorname{argmax}} \, 
\prob_w \big( \sigma_L < \xi_{\pi_0} \big)$. Then we have
\eqnsplst
{ \prob_{w_*} \big( \sigma_L < \xi_{\pi_0} \big)
  &\le \prob_{w_*} \big( \sigma_L < \xi_{V(n)} \big)
     + \max_{y \in \partial V(n)} \prob_y \big( \sigma_{2n} < \xi_{\pi_0} \big)
       \, \prob_{w_*} \big( \sigma_L < \xi_{\pi_0} \big). }
The maximum over $y$ is $\le c_2 < 1$, which after rearranging gives 
\eqnst
{ \prob_{w_*} \big( \sigma_L < \xi_{\pi_0} \big)
  \le \frac{1}{1-c_2} \, \prob_{w_*} \big( \sigma_L < \xi_{V(n)} \big). }
For other $w \in \partial V(2n)$ we obtain the statement from the Harnack
principle. 

The construction we gave ensures both the event in (i) and (ii), and thus 
completes the proof of the lemma.
\end{proof}

\begin{figure}
\setlength{\unitlength}{1cm}
\begin{minipage}[c]{0.05\textwidth}
\hskip0.5cm
\end{minipage}
\begin{minipage}[c]{0.4\textwidth}
\begin{picture}(5,6)(-2,-2)
  \linethickness{0.3mm}
  \put(0,0){\line(1,0){1}}
  \put(1,0){\line(0,1){1}}
  \put(1,1){\line(-1,0){1}}
  \put(0,1){\line(0,-1){1}}
  
  \put(-2,-2){\line(1,0){5}}
  \put(3,-2){\line(0,1){5}}
  \put(3,3){\line(-1,0){5}}
  \put(-2,3){\line(0,-1){5}}

  \linethickness{0.1mm}
  \put(2,0.25){\dashbox{0.1}(0,2.75){}} 
  \put(2,-2){\dashbox{0.1}(0,2){}}      
  \put(1,2){\dashbox{0.05}(0,1){}}      
  \put(1,1){\dashbox{0.05}(0,0.75){}}   
  \put(1,-2){\dashbox{0.05}(0,2){}}     

  \setlength{\fboxsep}{0pt}
  \put(1,0){\colorbox[gray]{0.5}{\makebox(2,0.25){}}}
  \put(1,0){\line(1,0){2}}
  \put(3,0){\line(0,1){0.25}}
  \put(3,0.25){\line(-1,0){2}}
  \put(1,0.25){\line(0,-1){0.25}}

  \put(1,0.5){\colorbox[gray]{0.5}{\makebox(0.75,0.25){}}}
  \put(1.5,0.5){\colorbox[gray]{0.5}{\makebox(0.25,1.5){}}}
  \put(-2,1.75){\colorbox[gray]{0.5}{\makebox(3.75,0.25){}}}
  \put(1,0.5){\line(1,0){0.75}}
  \put(1.75,0.5){\line(0,1){1.5}}
  \put(1.75, 2){\line(-1,0){3.75}}
  \put(-2, 2){\line(0,-1){0.25}}
  \put(-2, 1.75){\line(1,0){3.5}}
  \put(1.5, 1.75){\line(0,-1){1}}
  \put(1.5, 0.75){\line(-1,0){0.5}}
  \put(1, 0.75){\line(0,-1){0.25}}

  \put(-2.5,-2.5){\large $V(n)$}
  \put(-1,-0.5){\large $V(n/2)$}
  \put(1.5,3.25){\large $H_{3n/4}$}
\end{picture}
\end{minipage}\hfill
\begin{minipage}[c]{0.55\textwidth}
\caption{Depiction of the extension in disjoint corridors. Each walk is 
assumed to exit $V_{n/2}$ at the center of one of the corridors, which are 
distance $\sim \delta n$ apart. Note the placement of the corridors ensures 
that terminal segments of each walk remain in appropriate half-spaces as 
in \eqref{e:extend}.}\label{fig:fig_extend}
\end{minipage}
\end{figure}

We are ready to prove the Separation Lemma. The argument we give is inspired by
\cite{kestenscaling}.

\begin{proof}[Proof of Lemma \ref{lem:sep-lem}]
Let us write 
\eqnst
{ f(n)
  = \prob \big( A_n \big) \qquad \text{ and } \qquad
  g(n)
  = \prob \big( A_n \cap \{ D_n \ge \delta n \} \big). }
Let $\delta > 0$ that we will choose later. 
Let $n \ge \max \{ n_0(\delta), n_1(\delta) \}$, where $n_0$ and $n_1$ 
are the constants from Lemmas \ref{lem:basic-sep} and \ref{lem:extend}.
Lemma \ref{lem:basic-sep} implies 
\eqnspl{e:f(n)-ineq}
{ f(n)
  &= g(n) + f(n/2) \,
    \prob \big( A_n \cap \{ D_n < \delta n \} \,\big|\, A_{n/2} \big) 
  \le g(n) + f(n/2) \, r(\delta) \\
  &\le \sum_{\ell=0}^{k-1} r(\delta)^\ell \, g(n/2^\ell)
     + r(\delta)^k \, f(n/2^k). }
Lemma \ref{lem:extend} implies that on the event
$A_{n/2^\ell} \cap \{ D_{n/2^\ell} \ge \delta n/2^\ell \}$,
we can extend both the loop-erased walk and the random walk
to opposite faces of $\partial V(n/2^{\ell-1})$, with probability
at least $c(\delta)$. A gambler's ruin estimate then implies that, there is 
probability $\ge (c/2^{\ell})^2$ that the walks reach $\partial V(n)$
without intersecting. This shows that $g(n/2^\ell) \le c(\delta) 2^{2 \ell} g(n)$.
Substituting this into \eqref{e:f(n)-ineq} yields
\eqn{e:f(n)-bound}
{ f(n)
  \le g(n) \, \left[ 1 + c(\delta) \sum_{\ell=1}^{k-1} (4 \, r(\delta))^\ell \right]
     + r(\delta)^k \, f(n/2^k). }
Choose $\delta > 0$ so that $4 \, r(\delta) < 1/2$, and the smallest $k$ 
such that $\max \{ n_0(\delta_0), n_1(\delta) \} \le n/2^{k}$. Then \eqref{e:f(n)-bound} implies $f(n) \le C(\delta) \, g(n)$.
\end{proof}

\subsubsection{Estimate on $\Gamma_{z,L}$}
\label{sssec:Gamma_z,L}

In this section, we prove a lower bound on the probability of the event
$\Gamma_{z,L}$, which yields a finite volume analogue of 
Theorem \ref{thm:gen_prob_bd}. 
In the following, let 
\[\Es^L(n) := \prob \big( \hS^L_o(0,\hsigma_{n}] 
\cap S'_o[0,\sigma_n] = \varnothing \big)\ ,\]
which is an (expectation of an) escape probability for the walk $S'_o$.
\begin{lem}
\label{lem:Gamma_z,L}
Let $d \ge 2$. There exists $c = c(d) > 0$ such that for any 
$z \in \Z^d \setminus \{ o \}$ and $L > 4 \| z \|$ we have
\eqn{e:Gamma_z,L-bound}
{ \prob \big( \Gamma_{z,L} \big)
  \ge \begin{cases}
      c \, \Es^L(\| z \|) \,
      \prob_o \big( \xi_{\{ z \}} < \sigma_{4 \| z \|} \big) & \text{if $d \ge 3$;}\\
      c \, (\log L)^{-1} \, \Es^L(\| z \|) \,
      \prob_o \big( \xi_{\{ z \}} < \sigma_{4 \| z \|} \big) & \text{if $d = 2$.} 
      \end{cases} } 
\end{lem}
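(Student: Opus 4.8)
The plan is to bound $\prob(\Gamma_{z,L})$ from below by the probability of an explicit sub-event $E$, constructed in two stages: first produce two disjoint, well-separated initial path segments at scale comparable to $\|z\|$ using the Separation Lemma, then ``complete'' the configuration by steering the simple random walk $S_o$ to $z$ and the loop-erased walk $\pi=\looper S_e[0,\sigma_L]$ out to $\partial V(L)$. For the given $z$ I would first choose a coordinate direction $\unitv$ maximizing $z\cdot\unitv$, so that $z$ lies on the face $\{x:x\cdot\unitv=\|z\|\}$ of $V(\|z\|)$ while $-\unitv$ points away from $z$; by lattice symmetry of $V(L)$ we may take $\unitv=\unitv_1$, and I write $n=\|z\|$ and $\pi_0=\pi[0,\hsigma_n]$. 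The event $E$ is the intersection of the following three events: (a) $\pi_0$ and $S_o[0,\sigma_n]$ are disjoint, $D_n\ge\delta n$, the part of $\pi$ between scales $3n/4$ and $n$ lies in $H^-_{-n/2}$, and $S_o$ exits $V(n)$ through $H^+_{n/2}$ (the side of $z$); (b) after time $\sigma_n$ the walk $S_o$ stays in the slab $\{7n/8<x\cdot\unitv_1<2n\}\cap V(4\|z\|)$, travels through a corridor of width $\asymp\delta n$ to within distance $n/100$ of $z$, and then hits $z$ before $\sigma_{4\|z\|}$; (c) after time $\hsigma_n$ the remainder of $\pi$ reaches $\partial V(L)$ without meeting the ``blob'' $S_o[0,\xi_z]\cup V_z(\|z\|/10)$. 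One checks $E\subseteq\Gamma_{z,L}$ directly: by (a) we have $\pi_0\subset V(3n/4)\cup H^-_{-n/2}$, which is disjoint both from $V_z(\|z\|/10)\subset\{x\cdot\unitv_1\ge9n/10\}$ and from the slab used by $S_o$ after time $\sigma_n$, so together with (a)--(b) this yields $\xi_z^{S_o}<\sigma_{4\|z\|}^{S_o}$ and that $\pi$ avoids $S_o[0,\xi_z]$ at scales $\le n$; the slab keeps $S_o[0,\xi_z]\cup V_z(\|z\|/10)$ inside $V(4\|z\|)$; and (c) finishes $\pi$ at scales $\ge n$.

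For the separation step I would show that the probability of (a) is $\gtrsim\Es^L(n)$. The Separation Lemma (Lemma~\ref{lem:sep-lem}) together with one more extension step exactly as in the proof of Lemma~\ref{lem:extend} --- both of which hold verbatim with the loop-erased walk started at $e\sim o$ in place of $o$ --- give this: running the separation at scale $n/2$ yields $\prob(D_{n/2}\ge\delta n/2\mid A_{n/2})\ge c$, and then, conditionally on $A_{n/2}\cap\{D_{n/2}\ge\delta n/2\}$, the extension to scale $n$ with the prescribed exit faces has probability $\ge c(\delta)$. Here one uses Masson's Lemma~\ref{lem:seg_indep} to compare scales ($\Es^L(n/2)\asymp\Es^L(n)$), and the standard fact that replacing the loop-erased walk from $o$ by the one from a neighbour $e$ changes the relevant escape probability only by a bounded factor.

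For the completion step I would use the chain rule and lower bound separately the conditional probability of (b) given (a), and the conditional probability of (c) given (a) and (b). Given (a) with its prescribed geometry, $S_o$ leaves $V(n)$ on $H^+_{n/2}$ at distance $\ge\delta n$ from $\pi_0$; by the Harnack principle and gambler's-ruin estimates, with probability $\ge c(\delta,d)$ it stays in the prescribed slab (disjoint from $\pi_0$) and reaches $B_z(n/100)\subset V(4\|z\|)$, and from there it hits $z$ before leaving $V(4\|z\|)$ with probability $\asymp\prob_o(\xi_{\{z\}}<\sigma_{4\|z\|})$ by the Harnack principle (hitting a single target at distance $\asymp\|z\|$ from a point at distance $\asymp\|z\|$, inside a ball of radius $\asymp\|z\|$); this gives the conditional probability of (b) given (a). Given (a) and (b), the blob $S_o[0,\xi_z]\cup V_z(\|z\|/10)$ is a fixed set --- a thin corridor from $o$ to $z$ together with a box near $z$, all inside $V(4\|z\|)$ --- and by the Domain Markov Property (Lemma~\ref{lem:dmp}) the part of $\pi$ after $\hsigma_n$ has the law of $\looper X[0,\sigma_L]$, where $X$ is a simple random walk from $\pi(\hsigma_n)\in H^-_{-n/2}$ conditioned on $\{\sigma_L<\oxi_{\pi_0}\}$ and independent of $S_o$; since $\looper X\subseteq X$ as sets, it suffices that $X[0,\sigma_L]$ avoid the blob. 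For $d\ge3$ this has probability $\ge c(d)$: the conditioning event has probability $\ge c$ by transience, and an unconditioned walk from a point at distance $\asymp\|z\|$ from the blob (which subtends bounded solid angle) avoids it and reaches $\partial V(L)$ with probability $\ge c$; Lemma~\ref{lem:bdry-Harnack} controls $X$ near the faces throughout. For $d=2$ the blob contains $o$, hence reaches scale $1$, so avoiding it forces $X$ not to return to a bounded neighbourhood of $o$ before reaching $\partial V(L)$; a gambler's-ruin estimate in the logarithmic variable --- together with avoidance of the remaining part of the blob, which lies in $H^+_{n/2}$ at scales $\le n$ --- gives probability $\ge c(\log n)/(\log L)\ge c/\log L$. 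Multiplying the three factors yields the two stated bounds.

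The step I expect to be the main obstacle is the $d=2$ bookkeeping around the logarithmic factor: one must be sure that the set $\pi$ has to avoid in (c), namely $S_o[0,\xi_z]\cup V_z(\|z\|/10)$, stays inside $V(4\|z\|)$ and is thin away from $o$, so that avoiding it from scale $n$ out to $\partial V(L)$ costs only $\asymp(\log n)/(\log L)$ and not a negative power of $L/\|z\|$; this is precisely why $S_o$ must be steered into a narrow corridor rather than allowed to reach $z$ along an arbitrary path. The other somewhat technical ingredient is the analysis of the conditioned walk $X$ in the loop-erased-walk extension, for which Lemma~\ref{lem:bdry-Harnack} is tailor-made. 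Since the completion step costs only constants when $d\ge3$, I would carry out the argument first in that case and then indicate the logarithmic modification needed for $d=2$.
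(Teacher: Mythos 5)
Your approach is a genuine deviation from the paper's and it has a gap at its foundation, not merely in the technical details you flag at the end.

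The central issue is the claim that $\prob((a))\gtrsim\Es^L(\|z\|)$, justified by saying the Separation Lemma ``holds verbatim with the loop-erased walk started at $e\sim o$ in place of $o$.'' Recall that $\Es^L(n)=\prob(\hS^L_o(0,\hsigma_n]\cap S'_o[0,\sigma_n]=\varnothing)$, where $\hS^L_o=\looper S_o[0,\sigma_L]$ is the LERW \emph{from} $o$. Because $\hS^L_o$ is self-avoiding and we look at $\hS^L_o(0,\hsigma_n]$ \emph{excluding} the initial point, the LERW's avoidance of $o$ comes for free in this definition. If instead you take $\pi=\looper S_e[0,\sigma_L]$ and ask that $\pi[0,\hsigma_n]$ be disjoint from $S_o[0,\sigma_n]$, you are demanding that $\pi$ avoid $o$ as an additional constraint — one that the LERW from $o$ satisfies automatically. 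In $d=2$ this constraint is far from a ``bounded factor'': $\prob(o\notin\pi)=\prob_e(\sigma_L<\xi_o)\asymp(\log L)^{-1}$. The paper handles this precisely by first conditioning on $B(e)=\{\pi\cap\{o\}=\varnothing\}$, at cost $q_L\asymp(\log L)^{-1}$, after which the law of $\pi$ coincides with the law of $\hS^L_o[1,\hsigma_L]$ given $\hS^L_o(1)=e$, and only then can the Separation Lemma be invoked as stated. That conditioning is exactly what produces the $(\log L)^{-1}$ factor in the statement. Your decomposition instead tries to recover $(\log L)^{-1}$ from the final step (c); but the combination of claiming (a) is $\gtrsim\Es^L(n)$ (with no $\log L$) and (c) is $\gtrsim(\log L)^{-1}$ is not justified, and it is not clear the two claims can both hold as stated — this requires a careful analysis of exactly where the avoidance-of-$o$ cost is being paid, which the proposal does not supply.

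There is also a second difference worth noting, which is the source of further difficulties in your step (c). The paper extends the LERW \emph{before} steering $S_o$: it uses the second part of the Extension Lemma to push $\pi$ into $H^-_{3\|z\|/8}\cup V(4\|z\|)^c$ at constant cost, so that once $S_o$ is steered into the complementary half-space it automatically avoids $\pi$ and the hitting estimate $\prob_o(\xi_z<\sigma_{4\|z\|})$ can be applied with no interaction term. Your ordering (steer $S_o$ first, then extend $\pi$ around the resulting blob) forces you to analyze, via the Domain Markov Property, a conditioned walk $X$ that must avoid $S_o[0,\xi_z]\cup V_z(\|z\|/10)$, including the point $o$ adjacent to $\pi_0$. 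This is a genuinely more delicate estimate than the ``stay in a half-space'' argument the paper uses, and the gambler's-ruin-in-logs bound you sketch does not obviously deliver the required rate because $X$ is already conditioned to avoid $\pi_0$, a set that itself reaches within distance one of $o$. Both the misaccounted $(\log L)^{-1}$ factor and this harder final extension step would need to be resolved before the argument can be made rigorous.
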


\begin{proof}
We may assume that $\| z \|$ is sufficiently large. 
Without loss of generality, we also assume that the
first coordinate of $z$ is positive and has maximal 
absolute value among all coordinates.

We first require the occurrence of the event 
\eqn{e:pi-avoid}
{ B(e)
  := \big\{ \pi \cap \{ o \} = \varnothing \big\}. }
We have 
\eqn{e:pi-avoid-o}
{ q_L 
  := \prob ( B(e) )
  = \prob_e \big( \sigma_L < \xi_{o} \big) 
  \ge \begin{cases}
     c (\log L)^{-1} & \text{when $d = 2$;} \\
     c & \text{when $d \ge 3$.}
     \end{cases} } 
Conditional on $B(e)$, the law of $\pi$ is the same as the law of 
$\hS^L_o[1,\hsigma_L]$ conditional on $\hS^L_o(1) = e$. Therefore, 
we will express properties of $\pi$ conditional on the event $B(e)$
in terms of the properties of $\hS^L_o[1,\hsigma_L]$ conditional on 
$\hS^L_o(1) = e$.

Let us require the occurrence of the event 
\eqn{e:avoid-path}
{ A_{\| z \|/4} \cap \big\{ D_{\| z \|/4} \ge \delta \| z \|/4 \big\}, }
where $\delta = \delta_0$ is the constant chosen in Lemma \ref{lem:sep-lem}. 
According to that lemma, the event in \eqref{e:avoid-path} has unconditional 
probability $\ge c \Es^L(\| z \|/4) \ge c \Es^L( \| z \| )$. 
Due to $\Z^d$-symmetry, the conditional probability of \eqref{e:avoid-path}
given $\hS^L(1) = e$ is the same as the unconditional probability. 

Let us further require the event in Lemma \ref{lem:extend} with $n = \| z \|/2$, 
that is, that the paths extend disjointly to opposite faces of $V(\| z \|/2)$,
with the random walk landing on $H_{\| z \|/2}$, and the LERW landing on 
$H_{-\| z \|/2}$. According to Lemma \ref{lem:extend}, this happens with 
conditional probability $\ge c$. It follows from Lemma \ref{lem:extend}, 
that there is probability bounded away from $0$ that the LERW can be 
further extended to land on $H_{-8 \| z \|}$, in such a way that 
$\pi$ is contained in $H^-_{3 \| z \|/8} \cup V(4 \| z \|)^c$. Since 
$S'_o(\sigma_{\| z \|/2}) \in H_{\| z \|/2}$, the conditional probability,
given $\pi$ and $S'_o[0,\sigma_{\| z \|/2}]$ that $S'_o$ hits $z$ before
$\xi_{\pi} \wedge \sigma_{4 \| z \|}$ is $\ge c \, |z|^{2-d}$ when $d \ge 3$,
and $\ge (\log |z|)^{-1}$ when $d = 2$. Combining the estimates for 
each part of the construction yields:
\eqnst
{ \prob \big( \Gamma_{z,L} \big)
  \ge \begin{cases}
      c \, \Es^L(\| z \|) \, |z|^{2-d} & \text{when $d \ge 3$;} \\
      c \, (\log L)^{-1} \, \Es^L(\| z \|) \, \frac{1}{\log |z|} &\text{when $d = 2$.}
      \end{cases} }
This completes the proof of the lemma when $\| z \|$ is sufficiently large.
\end{proof}

The following proposition summarizes the result of the finite $L$
arguments we made in Sections \ref{ssec:prelim}--\ref{ssec:steer}.

\begin{prop}
Let $d \ge 2$. For any $z \in \Z^d$ and all $L \ge 4 \| z \|$ we have 
\eqnspl{e:toppling-lbd-finite}
{ \nu_L (z \in \av) 
  &\geq c \, \Es^L(\| z \|) \, G_{V_z(\| z \|/10)}(z,z) \, 
       \prob_o \big( \xi_z < \sigma_{4 \| z \|} \big) \\
  &\asymp \Es^{\| z \|} (\| z \|) \, \prob_o ( \xi_z < \infty ) \ .}
\end{prop}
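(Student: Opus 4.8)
The plan is to concatenate, in order, the four estimates established in Sections~\ref{ssec:prelim}--\ref{ssec:steer}, keeping careful track of the powers of $\log L$ and $\log|z|$ that appear in dimension two. Throughout I would assume $\|z\|$ is large (hence, since $L\ge 4\|z\|$, that $L$ is large), and dispose of the finitely many remaining values of $\|z\|$ crudely: for $\|z\|$ in any bounded range, the right-hand side of \eqref{e:toppling-lbd-finite} is bounded above, while $\nu_L(z\in\av)$ is bounded below uniformly in large $L$ (for instance, conditioning on the cylinder event that $\eta$ agrees with the maximal recurrent configuration on $V_o(\|z\|)$ forces the first wave to topple every site of $V_o(\|z\|)$ along a monotone lattice path to $z$, and this event has $\nu_L$-probability bounded away from $0$), so the claim follows after adjusting constants.

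I would begin from \eqref{eq:treebd}, which gives
\[
  \nu_L(z\in\av)\;\ge\;(2d)^{-1}\,\frac{\mu_{L,o}(A(z,e))}{\mu_{L,o}(e\notin\frT_{L,o})}.
\]
Lemma~\ref{lem:rwprob}(i) identifies the numerator with $\prob(\pi\cap S_z[0,\xi_o]=\es,\ \xi_o^{S_z}<\sigma_L^{S_z})$, and Lemma~\ref{lem:rwprob}(ii) gives the denominator as $(1+o(1))\kappa(2)(\log L)^{-1}$ for $d=2$ and $(1+o(1))\kappa(d)$ for $d\ge3$, the $o(1)$ being harmless for $L$ large. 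Applying Lemma~\ref{lem:reverse_walk_z} to the numerator replaces it by $c\,\prob(\Gamma_{z,L})\log|z|$ when $d=2$ and by $(2d)^{-1}\prob(\Gamma_{z,L})$ when $d\ge3$, and then Lemma~\ref{lem:Gamma_z,L} bounds $\prob(\Gamma_{z,L})$ below by $c\,\Es^L(\|z\|)\,\prob_o(\xi_z<\sigma_{4\|z\|})$, up to an extra factor $(\log L)^{-1}$ present only when $d=2$. In dimension two the two stray factors $\log L$ cancel against the $(\log L)^{-1}$ from the denominator, and in every dimension one is left with
\[
  \nu_L(z\in\av)\;\ge\;c\,\Lambda_d(z)\,\Es^L(\|z\|)\,\prob_o(\xi_z<\sigma_{4\|z\|}),
\]
where $\Lambda_2(z)=\log|z|$ and $\Lambda_d(z)=1$ for $d\ge3$. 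Since $G_{V_z(\|z\|/10)}(z,z)\asymp\log|z|$ for $d=2$ and $\asymp1$ for $d\ge3$ by Theorem~\ref{thm:green_asymp} together with \eqref{eq:greenmono}, one has $\Lambda_d(z)\asymp G_{V_z(\|z\|/10)}(z,z)$, which is the first bound in \eqref{e:toppling-lbd-finite}.

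For the final $\asymp$, I would argue by three separate comparisons. First, $\Es^L(\|z\|)\asymp\Es^{\|z\|}(\|z\|)$: by Masson's estimate (Lemma~\ref{lem:seg_indep}) with a fixed $\delta$, the law of $\hS^L_o[0,\hsigma_{\|z\|}]$ is comparable, uniformly over all $L\ge4\|z\|$, to that of the infinite loop-erased walk stopped at $\hsigma_{\|z\|}$, and likewise for $\hS^{\|z\|}_o[0,\hsigma_{\|z\|}]$; averaging the $S'_o$-escape event over the two comparable laws gives the claim. Second, for $d\ge3$, Theorem~\ref{thm:green_asymp}(ii) applied to a ball inscribed in the cube $V(4\|z\|)$, together with the full-space Green function asymptotics, gives $\prob_o(\xi_z<\sigma_{4\|z\|})\asymp|z|^{2-d}\asymp\prob_z(\xi_o<\infty)=\prob_o(\xi_z<\infty)$, and combined with $G_{V_z(\|z\|/10)}(z,z)\asymp1$ this yields $G_{V_z(\|z\|/10)}(z,z)\,\prob_o(\xi_z<\sigma_{4\|z\|})\asymp\prob_o(\xi_z<\infty)$. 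Third, for $d=2$, recurrence gives $\prob_o(\xi_z<\infty)=1$, while Theorem~\ref{thm:green_asymp}(i) gives $\prob_o(\xi_z<\sigma_{4\|z\|})\asymp(\log|z|)^{-1}$ and $G_{V_z(\|z\|/10)}(z,z)\asymp\log|z|$, so again the product is $\asymp1$. Multiplying the first comparison with the second or third gives the second line of \eqref{e:toppling-lbd-finite}.

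The argument is mostly bookkeeping: the only step that is not purely formal is the first comparison, i.e.\ the use of Masson's lemma to trade the finite-volume escape probability $\Es^L(\|z\|)$ for a quantity depending only on $\|z\|$. This is precisely what later allows the right-hand side to be rewritten, in Theorem~\ref{thm:gen_prob_bd}, in terms of the whole-space non-intersection probability of an infinite LERW and an independent SRW.
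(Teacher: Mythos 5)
Your concatenation of \eqref{eq:treebd}, Lemma \ref{lem:rwprob}, Lemma \ref{lem:reverse_walk_z}, and Lemma \ref{lem:Gamma_z,L}, with the cancellation of $\log L$ factors in dimension two and the identification $\Lambda_d(z)\asymp G_{V_z(\|z\|/10)}(z,z)$ via Theorem \ref{thm:green_asymp} and \eqref{eq:greenmono}, is exactly the paper's route to the first inequality in \eqref{e:toppling-lbd-finite}, and is correct. Your handling of $G_{V_z(\|z\|/10)}(z,z)\,\prob_o(\xi_z<\sigma_{4\|z\|})\asymp\prob_o(\xi_z<\infty)$ in each dimension is also right.

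There is, however, a genuine gap in your first comparison, $\Es^L(\|z\|)\asymp\Es^{\|z\|}(\|z\|)$. Lemma \ref{lem:seg_indep} compares the laws of $\hS_o^K[0,\hsigma_n]$ and $\hS_o[0,\hsigma_n]$ only under the hypothesis $K\supset V\big((1+\delta)n\big)$ with $\delta>0$. With $n=\|z\|$ and $K=V(L)$, $L\ge4\|z\|$, this applies, and it likewise covers $K=V(4\|z\|)$. But your ``likewise for $\hS^{\|z\|}_o[0,\hsigma_{\|z\|}]$'' fails: there $K=V(\|z\|)$, and $V(\|z\|)\not\supset V\big((1+\delta)\|z\|\big)$ for any $\delta>0$, so Lemma \ref{lem:seg_indep} says nothing about this LERW. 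In $\Es^{\|z\|}(\|z\|)$ the killing boundary of the LERW coincides with the box in which non-intersection is tested, and the loop-erasure near that boundary is genuinely affected by the killing; comparing this to a LERW killed much further out is precisely what Masson's Lemma 5.1 provides, which the paper cites as a separate ingredient. You need to invoke that result (or prove $\Es^{4\|z\|}(\|z\|)\asymp\Es^{\|z\|}(\|z\|)$ directly) to close the argument.
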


\begin{proof}
Combining \eqref{eq:treebd}, Lemma \ref{lem:rwprob}, Lemma \ref{lem:reverse_walk_z},
and Lemma \ref{lem:Gamma_z,L}, we obtain the statement in both $d \ge 3$ and 
$d = 2$. Lemma \ref{lem:seg_indep} implies that $\Es^L(\| z \|)$ is 
comparable to $\Es^{4 \| z \|} (\| z \|)$. Masson \cite[Lemma 5.1]{Masson} showed
that the latter is comparable to $\Es^{\| z \|} (\| z \|)$.
\end{proof}

\subsection{Proof of Theorem \ref{thm:topple}}
\label{ssec:lower-bound-sep}

In passing to the limit $L \to \infty$, we use the following proposition.
We prove only the $d \geq 3$ case here; the proof of the more technical $d = 2$ case is deferred to the end of
Section \ref{sec:last-k-waves}; see Lemma \ref{lem:approx} there.

\begin{prop}
\label{prop:lim-toppling} \ \\
Assume $d \ge 2$. Then we have 
$\nu ( z \in \av ) = \lim_{L \to \infty} \nu_L ( z \in \av )$.
\end{prop}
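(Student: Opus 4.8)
The plan is to exploit the almost-sure finiteness of the avalanche on $\Z^d$ when $d \ge 3$ (the J\'arai--Redig result $\nu(S < \infty) = 1$), together with the weak convergence $\nu_L \Rightarrow \nu$, to show that the event $\{z \in \av\}$ is, up to $\nu$-null sets, a local event, and that the discrepancy between finite-volume and infinite-volume avalanches is controlled uniformly. First I would fix $z$ and note that $\{z \in \av(\xi)\}$ for $\xi$ on $\Z^d$ depends on the addition of $\mathbf 1_o$ to $\xi$ and on stabilization; the key point is that if $\av(\xi) \subset V(M)$ for some (random) finite $M$, then the set of vertices that topple, and in particular whether $z$ topples, is determined by $\xi|_{V(M)}$ via \emph{restricted stabilization} inside $V(M)$ --- this is exactly the kind of statement already used in the proof of Lemma~\ref{lem:ptwise}, where restricted topplings on $G_L$ agree with topplings on $\Z^d$ once $L$ exceeds the radius of the avalanche.

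Next, for the lower bound $\liminf_L \nu_L(z \in \av) \ge \nu(z \in \av)$: given $\eps > 0$, choose $M$ large enough that $\nu(\av \subset V(M)) \ge \nu(z \in \av) - \eps$ wait --- more precisely, choose $M$ so that $\nu(z \in \av,\ \av \subset V(M)) \ge \nu(z \in \av) - \eps$, which is possible since $\nu(S < \infty) = 1$ forces $\nu(\av \subset V(M)) \to 1$. On the event $\{\av \subset V(M)\}$, monotonicity of restricted stabilization (adding more sites can only help toppling propagate, and Lemma~\ref{lem:ptwise}-style reasoning shows the restricted avalanche on $G_L$ for $L \ge M$ coincides with the full one) gives that $\{z \in \av,\ \av \subset V(M)\}$ is a cylinder event in the heights on $V(M)$ which implies $z \in \av$ in $G_L$ for all $L \ge M$. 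Weak convergence $\nu_L \Rightarrow \nu$ then yields $\liminf_L \nu_L(z \in \av) \ge \nu(z \in \av,\ \av\subset V(M)) - o(1) \ge \nu(z \in \av) - 2\eps$, wait I should be careful: $\nu_L(z \in \av) \ge \nu_L(B_M)$ where $B_M$ is the cylinder event, and $\nu_L(B_M) \to \nu(B_M)$ since $B_M$ is a continuity set (it is a cylinder), giving the bound; let $\eps \to 0$.

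For the upper bound $\limsup_L \nu_L(z \in \av) \le \nu(z \in \av)$ one argues dually: if $z \in \av$ in $G_L$, then $z$ topples during restricted stabilization in $V(L)$, hence --- again by the monotonicity/comparison in Lemma~\ref{lem:ptwise} and the Abelian property --- $z$ also topples in the full stabilization on $\Z^d$ of the same underlying configuration, because performing more topplings (those outside $V(L)$) cannot \emph{prevent} a site from toppling. Thus $\{z \in \av \text{ in } G_L\} \subset \{z \in \av \text{ on } \Z^d\}$ after identifying configurations via the natural projection; more carefully, for a cylinder event $D_M := \{z \in \av,\ \av \subset V(M)\}$ one has $\nu_L(z \in \av) \le \nu_L(D_M) + \nu_L(\av \not\subset V(M))$, and the second term is handled by the uniform tail bound $\nu_L(R \ge r) \le \wsfo(\diam(\frT_o) > cr) + o(1)$-type estimates available from the wave/spanning-forest correspondence in dimensions $d \ge 3$ (this is precisely the content behind Theorem~\ref{thm:radius}(ii)--(iv) and \eqref{e:wsfo-rad-bnd}), giving $\sup_L \nu_L(R \ge M) \to 0$ as $M \to \infty$. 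Then $\limsup_L \nu_L(z \in \av) \le \lim_L \nu_L(D_M) + \eps(M) = \nu(D_M) + \eps(M) \le \nu(z \in \av) + \eps(M)$, and $\eps(M) \to 0$.

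The main obstacle is the tightness of the finite-volume radii, i.e. proving $\sup_L \nu_L(R \ge M) \to 0$ as $M \to \infty$ --- this cannot come from $\nu$ alone but must be established uniformly in $L$, and the natural route is the bijective identification of waves with the wired spanning forest on $G_L$ together with the uniform-in-$L$ diameter estimate \eqref{e:wsfo-rad-bnd} of Lyons--Morris--Schramm; combining this with Corollary~\ref{cor:wave-to-recur-Zd} to control the number of waves (bounded by $O(1)$ in expectation for $d \ge 3$) gives the uniform tail bound. Everything else is a routine weak-convergence-plus-cylinder-event argument, using that $\nu(S<\infty)=1$ (so that $\{z\in\av\}$ is a $\nu$-a.s.\ limit of cylinder events) and the monotonicity of restricted stabilization already exploited in Lemma~\ref{lem:ptwise}.
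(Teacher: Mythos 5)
Your lower-bound argument is essentially the paper's; the gap is in the upper bound, where you declare "the main obstacle is the tightness of the finite-volume radii, i.e.\ $\sup_L \nu_L(R\ge M)\to 0$" and propose to get this uniformly in $L$ from the Lyons--Morris--Schramm diameter bound. No such uniform-in-$L$ estimate is needed, and the paper does not establish one (the radius bound in Theorem~\ref{thm:radius}(ii)--(iii) is obtained only after first passing to the $L\to\infty$ limit, so using it here would be circular as written). The point you miss is that $\{\av\subset V(M)\}$ \emph{is itself a cylinder event}: performing the restricted stabilization inside $V(M)$ of $\eta+\mathbf{1}_o$ and then inspecting whether any vertex of $\partial V(M)$ has become unstable determines, from $\eta|_{V(M+1)}$ alone, whether the avalanche stays in $V(M)$ (this is exactly the restricted-stabilization reasoning you yourself invoke from Lemma~\ref{lem:ptwise}, and it works the same way in $G_L$ for every $L>M+1$ and on $\Z^d$). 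Once you see this, weak convergence $\nu_L\Rightarrow\nu$ applied to the cylinder event $\{\av\subset V(M)\}$ gives directly
\[
\nu_L(\av\not\subset V(M)) \longrightarrow \nu(\av\not\subset V(M)) < \eps \quad\text{as } L\to\infty,
\]
so the error term in your upper bound needs no separate uniform control; the $\nu$-a.s.\ finiteness $\nu(|\av|<\infty)=1$ from \cite[Theorem 3.11]{JR08} already makes $\nu(\av\not\subset V(M))$ small. This is precisely the paper's proof: the triangle inequality
\[
|\nu(z\in\av)-\nu_L(z\in\av)| \le \nu(\av\not\subset V(M)) + |\nu(D_M)-\nu_L(D_M)| + \nu_L(\av\not\subset V(M))
\]
with $D_M=\{z\in\av,\ \av\subset V(M)\}$, where each term is $<\eps$ for suitable $M$ and then $L$ large, with \emph{both} side terms handled via weak convergence on a cylinder event. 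So the approach you call the "main obstacle" is not an obstacle at all; the LMS-based tightness detour should be dropped.
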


\begin{proof}[{Proof of Proposition \ref{prop:lim-toppling}, $d \geq 3$ case}]
Due to \cite[Theorem 3.11]{JR08} we have $\nu ( |\av| < \infty ) = 1$.
Therefore, given $\eps > 0$, we can find $|z| < M < \infty$ such that 
$\nu (\av \subset V(M)) > 1 - \eps$. Due to the weak convergence 
$\nu_L \to \nu$, there exists $M < L_0 < \infty$ such that for all
$L \ge L_0$ we have
\eqnsplst
{ \left| \nu ( z \in \av ) - \nu_L ( z \in \av ) \right| 
  &\le \left| \nu ( z \in \av ) - \nu ( z \in \av,\, \av \subset V(M) ) \right| \\
  &\qquad + \left| \nu( z \in \av,\, \av \subset V(M) ) - 
        \nu_L ( z \in \av,\, \av \subset V(M) ) \right| \\
  &\qquad + \left| \nu_L ( z \in \av,\, \av \subset V(M) ) - 
        \nu_L ( z \in \av ) \right| \\
  &< 3 \eps. }
This completes the proof.
\end{proof}

We can now complete the proof of Theorem \ref{thm:gen_prob_bd}. 

\begin{proof}[Proof of Theorem \ref{thm:gen_prob_bd}]
Lemma \ref{lem:seg_indep} implies that $\Es^L(\| z \|)$ in 
Proposition \ref{prop:lim-toppling} is comparable to the 
avoidance probability in the theorem, so the statement follows 
using Proposition \ref{prop:lim-toppling}.
\end{proof}

For later use we state here a corollary of the construction.

\begin{cor}
\label{cor:sep_bdd}
Let $d \ge 2$. There exists a constant $c = c(d) > 0$ such that the following holds. For each $z \in \Zd$ and $L \ge 4 \|z\|$ and $e \sim o$, with $\pi$ denoting the path in
$\frT_{L,s}$ connecting $e$ to $s$, we have
\[ \mu_{L,o} \big( z \in \frT_{L,o}, \, \pi \cap V_z(\|z\|/10) = \varnothing \,\big|\, 
      e \notin \frT_{L,o} \big) 
   \geq c \, \prob \big( S_o[0, \sigma_{|z|}] \cap 
      \hS'_o(0, \hsigma_{|z|}] = \varnothing \big) \,
      \prob_o \big( \xi_{z} < \infty \big)\ . \] 
\end{cor}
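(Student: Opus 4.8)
The plan is to revisit the chain of estimates used to prove the Proposition preceding Section \ref{ssec:lower-bound-sep}, and observe that the event $\{\pi \cap V_z(\|z\|/10) = \varnothing\}$ is already present at every stage of that argument, so that it costs nothing extra to carry it along in the final bound. Concretely, recall that by Lemma \ref{lem:rwprob}(i), $\mu_{L,o}(A(z,e)) = \prob(\pi \cap S_z[0,\xi_o] = \varnothing,\ \xi_o^{S_z} < \sigma_L^{S_z})$, where $\pi = \looper S_e[0,\sigma_L]$ is exactly the path in $\frT_{L,s}$ from $e$ to $s$ produced by Wilson's algorithm. Dividing by $\mu_{L,o}(e \notin \frT_{L,o}) = \prob(o \notin S_e[0,\sigma_L])$ gives the conditional probability in the statement, except that we also wish to intersect with $\{\pi \cap V_z(\|z\|/10) = \varnothing\}$. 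First I would write
\[ \mu_{L,o}\big( z \in \frT_{L,o},\ \pi \cap V_z(\|z\|/10) = \varnothing \,\big|\, e \notin \frT_{L,o}\big) = \frac{1}{q_L}\,\prob\big( \pi \cap V_z(\|z\|/10) = \varnothing,\ \pi \cap S_z[0,\xi_o] = \varnothing,\ \xi_o^{S_z} < \sigma_L^{S_z}\big), \]
where $q_L = \prob(o \notin S_e[0,\sigma_L]) \asymp (\log L)^{-1}$ in $d=2$ and $\asymp 1$ in $d \ge 3$ by Lemma \ref{lem:rwprob}(ii).

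Next I would follow the reversibility computation of Lemma \ref{lem:reverse_walk_z} verbatim: the indicator $\mathbf 1_{\pi \cap V_z(\|z\|/10) = \varnothing}$ is retained throughout the display \eqref{e:rev-green}--\eqref{e:bound-green}, since that proof already inserts precisely this indicator to bound $G_{V(L)\setminus\pi}(z,z)$ from below by $G_{V_z(\|z\|/10)}(z,z)$. Thus the right-hand side above is at least $(2d)^{-1} q_L^{-1}\, G_{V_z(\|z\|/10)}(z,z)\,\prob(\Gamma_{z,L})$ with $\Gamma_{z,L}$ the event defined before Lemma \ref{lem:reverse_walk_z}, which already includes $\{\pi \cap V_z(\|z\|/10) = \varnothing\}$ as its first clause. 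Then Lemma \ref{lem:Gamma_z,L} gives $\prob(\Gamma_{z,L}) \ge c\, \Es^L(\|z\|)\, \prob_o(\xi_z < \sigma_{4\|z\|})$ in $d \ge 3$, and with an extra $(\log L)^{-1}$ in $d=2$; combining with the Green function bound $G_{V_z(\|z\|/10)}(z,z) \ge c\log|z|$ in $d=2$ (and $\ge 1$ in $d \ge 3$) from Theorem \ref{thm:green_asymp}, the factors of $\log L$ and $\log|z|$ arrange exactly as in the Proposition. Finally, Lemma \ref{lem:seg_indep} and Masson's Lemma 5.1 convert $\Es^L(\|z\|)$ into $\Es^{\|z\|}(\|z\|) \asymp \prob(S_o[0,\sigma_{|z|}] \cap \hS'_o(0,\hsigma_{|z|}] = \varnothing)$, and $\prob_o(\xi_z < \sigma_{4\|z\|}) \asymp \prob_o(\xi_z < \infty)$ by Theorem \ref{thm:green_asymp}, yielding the stated bound.

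I do not expect any genuine obstacle here: the corollary is essentially a bookkeeping observation that the separation event $\{\pi \cap V_z(\|z\|/10) = \varnothing\}$, far from being an additional constraint, is a byproduct already guaranteed by the construction underlying Theorem \ref{thm:gen_prob_bd}. The only mild care needed is to make sure the roles of $S_o, S'_o$ and of the loop-erased versus simple walk match the notation in the corollary's statement (the LERW here plays the role of $\pi$, i.e.\ of $\hS'_o$ after time reversal, and the SRW from $z$ reversed is $S_o$), and to track that the logarithmic factors from $q_L^{-1}$, from $G_{V_z}(z,z)$, and from the hitting probability of $z$ all combine into the clean $\prob_o(\xi_z < \infty)$ on the right-hand side in $d = 2$; this is exactly the cancellation already performed in the proof of the Proposition.
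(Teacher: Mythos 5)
Your proposal is correct and matches what the paper intends: the paper states Corollary \ref{cor:sep_bdd} without separate proof as ``a corollary of the construction,'' and your argument simply carries the indicator $\mathbf{1}_{\pi \cap V_z(\|z\|/10) = \varnothing}$ through the displays \eqref{e:rev-green}--\eqref{e:bound-green} (where it already appears) and through Lemma \ref{lem:Gamma_z,L}, exactly as one should. The only small imprecision is that $\prob_o(\xi_z < \sigma_{4\|z\|}) \asymp \prob_o(\xi_z < \infty)$ holds only for $d \ge 3$; for $d = 2$ these differ by a factor $\log|z|$ which is exactly cancelled by $G_{V_z(\|z\|/10)}(z,z)$, a point you do acknowledge in your final paragraph.
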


We now turn to the proofs of the explicit bounds for $2 \le d \le 4$.

\subsubsection{Proof of Theorem \ref{thm:topple} when $d = 2$}
\label{sssec:2D}

By Theorem \ref{thm:gen_prob_bd}, Theorem \ref{thm:topple}(i) will hold once we know
\begin{equation}
\label{eq:basicallymasson}
\mathbb{P}\left(S'_o[0, \sigma_n] \cap \hS(0,\hsigma_n] = \varnothing\right) 
\geq n^{-3/4+ o(1)}.
\end{equation}
The exponent $3/4 + o(1)$ was first proved by Kenyon \cite{K00}, who stated it
for simple random walk in the half plane. A proof for more general walks was given by Masson, who derived it from results on $\SLE_2$ \cite[Theorem 5.7]{Masson}. He established the analogue of \eqref{eq:basicallymasson} for a SRW and a finite LERW -- via \cite[Lemma 5.1]{Masson}, where the intersection probabilities for finite and infinite LERW are related. This implies Theorem \ref{thm:topple}(i).
\qed


\subsubsection{Proof of Theorem \ref{thm:topple} when $d = 3$}
\label{sssec:3D}

In this section, we complete the proof of the explicit lower bound in Theorem \ref{thm:topple}(ii) by showing that $\prob(S'_o[0, \sigma(n)] \cap \hS_o(0, \hsigma(n)] = \varnothing) \geq c n^{-2 \zeta}$ (this suffices, by the previously proven Theorem \ref{thm:gen_prob_bd}). Since $\hS_o$ is the loop-erasure of $S$, it is enough to show
\begin{equation}
\label{eq:3dreplaced}
\prob \big( S'_o[0, \sigma(n)] \cap S(0, \infty) = \varnothing \big) 
  \geq c n^{-2 \zeta}\ .
\end{equation}

This is a simple adaptation of the results of \cite{Lawcut}. Indeed, there 
exists a $c_1 > 0$ such that (uniformly in $m$)
\begin{align*} 
\prob( S'_o[0, m] \cap S(0, \infty) = \varnothing) \geq c_1 m^{-\zeta}\\
\end{align*}
by \cite[discussion after (3)]{Lawcut}. On the other hand, by \cite[Lemma 4.7]{Lawcut}, there exists $C_2, c_2 > 0$ such that (uniformly in $a, n$)
\begin{align*}
\prob(S'_o[0, \sigma_n] \cap S(0, \infty) = \varnothing,\, \sigma_n > a n^2) 
\leq C_2 \exp(-a / c_2) n^{-2 \zeta}\ .
\end{align*}
Choosing $a$ sufficiently large (relative to $c_1, \, c_2$) and $m = a n^2$ completes the proof of \eqref{eq:3dreplaced}. From this Theorem \ref{thm:topple}(ii) follows. 
\qed

\subsubsection{Proof of Theorem \ref{thm:topple} when $d = 4$}
\label{sssec:4D}

In four dimensions, the avoidance probability was determined by Lawler 
\cite{Lawler95}, who showed that 
\eqnst
{ \Es(n)
  \asymp (\log n)^{-1/3}. }
This and Theorem \ref{thm:gen_prob_bd} yield Theorem \ref{thm:topple}(iii).

\section{Low-dimensional radius bounds}
\label{sec:radius-low}

In this section we prove the radius bounds stated in Theorem \ref{thm:radius}(i)--(iii) for dimensions $2 \le d \le 4$. We start with the lower bounds.

\begin{proof}[Proof of Theorem \ref{thm:radius}(i)--(iii), lower bounds]
Observe that
\begin{equation}
\label{eq:rad_trivial}
  \nu \left( R \geq r \right) 
  \geq \nu ( r \unitv_1 \in \av )\ .
\end{equation}
Therefore, the claimed lower bounds follow immediately from Theorem \ref{thm:topple}(i)--(iii).
\end{proof}




\begin{proof}[Proof of Theorem \ref{thm:radius}(ii)--(iii), upper bounds]
Let us first consider a finite volume $V(L)$. Recall that for $\eta \in \cR_L$
we write $\alpha(\eta) = (\eta_1, \dots, \eta_N)$ for the waves in the 
stabilization $S_o(\eta + \mathbf{1}_{o})$. Let us extend the notation 
for the radius to waves and two-component spanning trees in the natural way:
\eqnsplst
{ R(\eta_*)
  &= \sup \{ |z| : z \in \cW(\eta_*) \} \\
  R(T_o)
  &= \sup \{ |z| : z \in T_o \} } 
Under the bijection of Section \ref{ssec:newspan} these two notions coincide.
We have
\eqnsplst
{ \big| \big\{ \eta \in \cR_L : R(\eta) > r \big\} \big|
  &= \big| \big\{ \eta \in \cR_L : \text{$R(\eta_i) > r$ for some 
     $1 \le i \le N(\eta)$} \big\} \big| \\
  &\le \big| \big\{ \eta_* \in \cR'_L \setminus \cR_L : R(\eta_*) > r \big\} \big|. } 
Hence we get
\eqnst
{ \nu_L \big( R > r \big)
  \le \frac{| \cR'_L \setminus \cR_L |}{|\cR_L|} \,
    \mu_{L,o} \big( R(\frT_{L,o}) > r \big)
  = g_L(o,o) \, \mu_{L,o} \big( R(\frT_{L,o}) > r \big), }
where the last equality uses Lemma \ref{lem:wave-to-recur}.

Since $\{ R > r \}$ and $\{ R(\frT_{L,o}) > r \}$ are both cylinder events,
we can take the limit $L \to \infty$ on both sides to get
\eqn{e:R-upper-wsfo}
{ \nu \big( R > r \big)
  \le C(d) \, \wsfo \big( R(\frT_{L,o}) > r \big). }
Lyons, Morris and Schramm \cite{LMS} proved that
\eqnst
{ \wsfo \big( R(\frT_{L,o}) > r \big)
  \le C(d) \, r^{-\beta_d}, \quad d \ge 3, }
with $\beta_d = \frac{1}{2} - \frac{1}{d}$. 
Inserting this into \eqref{e:R-upper-wsfo} yields the upper
bounds $C \, r^{-1/6}$ and $C \, r^{-1/4}$ in dimensions 
$d = 3$ and $d = 4$, respectively.
\end{proof}

%

\section{The last $k$ waves in 2D}
\label{sec:last-k-waves}

In this section we prove that for any $1 \le k < \infty$, as $L \to \infty$, the last $k$ waves on $G_L$ (when they exist) have a weak limit. We introduce some notation for the
last $k$ waves. Recall that for $\eta \in \cR_L$, we denote by $N = N(\eta) = n_L(o,o)$ the number of times $o$ topples during the stabilization 
of $\eta + \mathbf{1}_o$. Recall from Section \ref{ssec:interm}, that given $\eta \in \recur_L$, we write $\alpha(\eta) = (\eta_1, \dots, \eta_N)$ for the 
set of intermediate configurations (right before each wave). Let
\eqnst
{ \xi^L_0 
  = \xi^L_0(\eta)
  := a_{o,L} \eta} 
and whenever $N(\eta) \ge k$, define 
\eqnst
{ \xi^L_k 
  = \xi^L_k(\eta)
  := \eta_{N-k+1}. } 
Observe that when $\xi^L_k$ is defined, we have 
$\xi^L_{k-1} = a'_{o,L} \xi^L_k $. 
Analogously to the finite graph case, $a'_{o, \Z^2}$ is the operator which adds a particle at the origin and stabilizes the resulting configuration in the graph $(\Z^2)'$.

\begin{thm}
\label{thm:last-k-waves}
Assume $d = 2$. We have:\\
(i) For every $1 \le k < \infty$ the limit $b_k = \lim_{L \to \infty} \nu_{L}(N \ge k)$ exists.\\
(ii) For every $1 \le k < \infty$, the law of $\xi^L_k$ under the measure
$\nu_{L} ( \cdot \,|\, N \ge k )$ converges weakly to the law $\rho_k$ of 
a configuration $\xi_k$.\\
(iii) The configuration $\xi_k + \mathbf{1}_o$ can be stabilized in $(\Z^2)'$ 
with finitely many topplings $\rho_k$-a.s.\\
(iv) The transformation $\xi_k \mapsto a'_{o,\Z^2}\xi_k$ is 
measure-preserving between $b_k \rho_k$ and the restriction of $b_{k-1} \rho_{k-1}$ 
to the image. (Here $b_0 = 1$, $\rho_0 := \nu$.) \\
(v) With $\nu$-probability $1$ on the event $\{ N = k \}$, all $k$ waves are 
finite, and we have $\nu ( N = k ) = b_k - b_{k+1}$.
\end{thm}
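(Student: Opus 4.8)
\emph{Plan.} I would prove Theorem~\ref{thm:last-k-waves} together with Theorem~\ref{thm:2d-upper} by induction on $k$, the case $k=0$ being the weak convergence $\nu_L\to\nu$ of \cite{AJ04} (with $b_0=1$, $\rho_0=\nu$). The first step is an injectivity observation reducing everything to the spanning-forest picture: on $\{N(\eta)\ge k\}$ one has $(a'_{o,L})^{k}\xi^L_k=(a'_{o,L})^{N+1}\eta=a_{o,L}\eta$, and since $a_{o,L}$ is a bijection of $\recur_L$ the map $\eta\mapsto\xi^L_k$ is injective there; hence under $\nu_L(\,\cdot\mid N\ge k)$ the configuration $\xi^L_k$ is uniform on its image $A^L_k\subset\recur'_L\setminus\recur_L$, and $\nu_L(N\ge k)=|A^L_k|/|\recur_L|$. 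Pushing forward by the bijection $\varphi'$ of Lemma~\ref{lem:varphi'}, $A^L_k$ corresponds to the two-component spanning forests $(T_o,T_s)$ of $G_L$ rooted at $\{o,s\}$ for which stabilizing $(\varphi')^{-1}(T_o,T_s)$ in $G_L$ topples $o$ exactly $k$ more times, with $T_o$ the first (and largest) of those $k$ waves; in particular the law of $\xi^L_1$ is $\mu_{L,o}$ conditioned on the last-wave event of Corollary~\ref{cor:last-waves} (some neighbour of $o$ lies in $T_s$). The point that makes the limit tractable is Lemma~\ref{lem:ptwise}: the number of remaining waves is at most $\diam(T_o)$, so membership of $(T_o,T_s)$ in $\varphi'(A^L_k)$ is determined by the forest restricted to $B(r)$ on the event $\{\diam(T_o)\le r\}$.

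Everything then hinges on one estimate: $\nu_L\big(R(\xi^L_k)>r\ \big|\ N\ge k\big)\le C_k r^{-\alpha_k}$ uniformly in $L\ge 4r$, with $\alpha_1>\alpha_2>\dots>0$; this is the finite-volume form of Theorem~\ref{thm:2d-upper}(i), and part (ii) follows by summing over $j\le k$ (on $\{N=j\}$ the first wave is the whole avalanche, so $R=R(\xi^L_j)$). I would prove the estimate by induction on $k$. For $k=1$ the event is $\mu_{L,o}(\diam(T_o)>r,\ \text{some nbr of }o\text{ in }T_s)$; running Wilson's algorithm from that neighbour and then, after a time reversal as in Lemma~\ref{lem:rwprob}, from $o$ towards distance $r$, the event forces a loop-erased walk emanating near $o$ to reach $\partial B(r)$ together with a simple walk from $o$ reaching that distance while avoiding it, which the Beurling estimate (Lemma~\ref{lem:Beurling}) and the single-walk estimates of Section~\ref{sssec:single-walks} bound by a power of $r$. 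For the step $k\to k+1$ one passes from $\xi^L_k$ to $\xi^L_{k+1}$ by a single application of $(a'_{o,L})^{-1}$, which reverses one wave, and must show this enlarges the toppled region only at polynomial cost: via $\varphi'$ the enlargement is created by an extra loop-erased excursion near $o$ that must avoid the walks already built, so growing a further wave to radius $\gg r$ while leaving $o$ unstable costs a factor controlled by the LERW/SRW separation lemma (Lemma~\ref{lem:sep-lem}), Corollary~\ref{cor:sep_bdd} and the escape probabilities; summing the resulting geometric series yields the strictly decreasing $\alpha_k$. This radius bound is the main obstacle --- waves are highly dependent, and quantifying how much one reversal of $o$ can blow up the avalanche, equivalently controlling large $o$-components of the forest under the ``exactly $k$ waves remain'' constraint, is the real content.

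Granting the estimate, the remaining statements are routine. Off an event of probability $\le C_j r^{-\alpha_j}$ (the finite-volume form of Theorem~\ref{thm:2d-upper}(ii)) the event $\{N=j\}$ coincides with the cylinder event $\{N=j,\ R\le r\}$, so $\nu_L(N=j)\to\nu(N=j)=:p_j$ for every $j$; summing gives $\nu_L(N\ge k)=1-\sum_{j<k}\nu_L(N=j)\to b_k:=\sum_{j\ge k}p_j$, which is (i), and $\nu(N=k)=p_k=b_k-b_{k+1}$. Letting $r\to\infty$ in the same estimate shows $R<\infty$ holds $\nu$-a.s.\ on $\{N=k\}$, so all $k$ waves there are finite; this completes (v). For (ii): for fixed $r$ the law of $\xi^L_k|_{B(r)}$ under $\nu_L(\,\cdot\mid N\ge k)$ agrees, up to probability $\le C_k r^{-\alpha_k}$, with a fixed bounded function of the two-component forest restricted to a bounded neighbourhood of $B(r)$; since $\mu_{L,o}$ converges weakly to $\wsfo$, the latter converges, and $r\to\infty$ yields the weak limit $\rho_k$. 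The same bound passes to the limit, so $\rho_k(R_k<\infty)=1$ and under $\rho_k$ the configuration $\xi_k+\mathbf{1}_o$ stabilizes in $(\Z^2)'$ in exactly $k$ finite waves, which is (iii).

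Finally (iv): since $a'_{o,L}$ is a measure-preserving bijection of $\recur'_L$, it carries the uniform law on $A^L_k$ onto the uniform law on its image, which is the subset of $A^L_{k-1}$ whose preimage under $a'_{o,L}$ still lies in $\recur'_L\setminus\recur_L$; dividing by $|\recur_L|$ this says $b_k\rho_k$ is pushed to the restriction of $b_{k-1}\rho_{k-1}$. By (iii) the operator $a'_{o,\Z^2}$ is $\rho_k$-a.s.\ a local, hence continuous, function of $\xi_k$, so this identity passes to the $L\to\infty$ limit.
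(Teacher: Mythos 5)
Your outline identifies the right objects --- the injectivity $\eta\mapsto\xi^L_k$ on $\{N\ge k\}$, the bijection $\varphi'$, and the reduction to a tightness estimate for the radius of the $k$-th-last wave --- and these match the paper's approach. But you treat two of the genuinely hard steps as routine, and in both cases the dismissal hides a gap.

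First, part (ii): it is not true that $\xi^L_k|_{B(r)}$ (or its law on the event that the waves stay in $B(r)$) is a local function of the two-component forest restricted to a neighbourhood of $B(r)$. The inverse burning bijection is not local: a vertex's height depends on the burning time, i.e.\ its graph distance to the root inside the forest, which is global. The paper's proof conditions on the exact vertex set $B_0$ of $T_o$ and then observes that the configuration on $B(\ell)\setminus B_0$ is distributed as a recurrent sandpile on the subgraph induced by $V(L)\setminus B_0$; the $L\to\infty$ convergence of these subgraph sandpile measures is a separate input from \cite{JW14}, and the forest factor needs Proposition~\ref{prop:cond-lim-exists}. Weak convergence $\mu_{L,o}\to\wsfo$ alone does not give this, and your plan as written skips it.

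Second, and more seriously, part (v): the argument you give yields $\lim_{L\to\infty}\nu_L(N=k)=\nu(N=k,\ R<\infty)$, by splitting on the cylinder event $\{N=k,\ R\le r\}$ and using $\nu_L(N=k,\ R>r)\le C_k r^{-\alpha_k}$ for $L$ large. But $\nu(N=k)=b_k-b_{k+1}$ and the a.s.\ finiteness of the waves both require $\nu(N=k,\ R=\infty)=0$, which does not follow from either estimate: $\{N=k,\ R=\infty\}$ cannot be exhausted from inside by cylinder events, so the finite-volume radius bound controls only $\nu(N=k,\ r<R<\infty)$. Your sentence ``Letting $r\to\infty$ in the same estimate shows $R<\infty$ holds $\nu$-a.s.\ on $\{N=k\}$'' is exactly where the missing argument lies. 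This is precisely the part of the paper's proof of (v) that introduces the truncated wave count $N_{B(\ell)}$, the events $F_{\ell,k}$ and $H(i)$, and invokes the quantitative rate $|\nu_L(E)-\nu(E)|\le C\ell^\beta L^{-\alpha}$ of Theorem~\ref{thm:quantitative} from \cite{GJ14}: without some quantitative control, one cannot rule out that the $\nu$-measure concentrates mass on configurations that locally look like $k$ waves but globally have $N>k$ or an infinite first wave.

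Finally, the induction step for the radius bound is too vague to assess. You propose to control the effect of one backward application of $a'_{o,L}$ via the separation lemma and Corollary~\ref{cor:sep_bdd}, but the paper's Propositions~\ref{prop:in-rad-tight} and \ref{prop:out-rad-tight} are organized quite differently: the in-radius $R^L_{\rin,k}$ is first bounded by the path quantity $P^L_k$ through Lemma~\ref{lem:inequalities}(ii), $P^L_k$ is estimated using Wilson's algorithm with Beurling, and only then is the out-radius controlled via the circuit argument of Lemma~\ref{lem:circuit}. The in-radius/out-radius distinction is what makes the induction close with strictly decreasing exponents $\alpha_k$; your sketch does not reproduce that mechanism, and it is not clear that the separation lemma gives a substitute.
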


\begin{rem}
It is not difficult to construct, for every $1 \le k < \infty$, an explicit finite 
configuration around $o$ showing that $\liminf_{L \to \infty} \nu_{L} ( N = k ) > 0$.
\end{rem}

Recall the bijection for intermediate configurations from Section \ref{ssec:newspan}.
This bijection was between $\eta_* \in \recur'_L \setminus \recur_L$ and the set of spanning forests of $G_L$ with two components $T_o = T_o(\eta_*)$ and $T_s = T_s(\eta_*)$, where $o \in T_o$ and $s \in T_s$. Recall the following property of the bijection from Section \ref{ssec:newspan}, rephrased for the configurations $\xi^L_k$. 
\eqn{e:path-property}
{ \parbox{14.5cm}{If 
    there is a path in $T_o(\xi^L_k(\eta))$ from $o$ to a vertex $x$
    that stays inside $B(r)$, then starting from 
    $\xi^L_k + \mathbf{1}_o$ there is a sequence of topplings
    in $B(r)$ that topples $x$.} }
We write $\frT_{L,o,k} = T_o(\xi^L_k)$ and $\frT_{L,s,k} = T_s(\xi^L_k)$ for short. 
When $x \in \frT_{L,o,k}$, we denote $\pi_{L,k}(x)$ the unique self-avoiding
path in $\frT_{L,o,k}$ from $o$ to $x$.

Our control on the size of waves will be in terms of the following
random variables.
\eqnspl{e:Rk-in-etc}
{ R^L_{\rin,k} 
  &= \sup \{ r \ge 0 : B(r) \subset \frT_{L,o,k} \}, \quad k \ge 1; \\
  R^L_{\rout, k}
  &= \inf \{ r \ge 0 : \frT_{L,o,k} \subset B(r) \}, \quad k \ge 1; \\
  P^L_{k}
  &= \inf \left\{ r \ge 0 : \text{$\pi_{L,k}(x) \subset B(r)$ for all 
     $x \in B \left( R^L_{\rin,(k-1)} + 1 \right)$} \right\}, \quad k \ge 2. }
All quantities in \eqref{e:Rk-in-etc} are defined to be $0$ when $N < k$.
The following lemma states a basic inequality we will need.

\begin{lem}
\label{lem:inequalities}
We have:\\
(i) $R^L_{\rin, 1} = 0$. \\
(ii) $R^L_{\rin, k} \le P^L_{k}$, $k \ge 2$. 
\end{lem}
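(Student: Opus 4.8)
For part (i) I would argue directly. If $N=0$ there is nothing to prove, and if $N\ge 1$ then $\frT_{L,o,1}=\wave(\xi^L_1)$ is a last wave, so by Corollary~\ref{cor:last-waves} it omits some neighbour $y$ of $o$; since $o$ is not adjacent to the sink in $G_L$, this $y$ is one of the $2d$ lattice neighbours of $o$, so $B(1)\not\subseteq\frT_{L,o,1}$ and hence $R^L_{\rin,1}=0$.

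For part (ii) the plan is to first reduce to $N\ge k$ (both sides vanish otherwise), and then to exploit two structural facts. First, consecutive waves are nested, so $\frT_{L,o,k-1}\subseteq\frT_{L,o,k}$ and in particular $R^L_{\rin,k-1}\le R^L_{\rin,k}$. Second, a wave leaves the heights on the interior of its support unchanged; writing $r=R^L_{\rin,k}$, this gives $\xi^L_{k-1}=\xi^L_k$ on $B(r-1)$, because every vertex of $B(r-1)$ together with all of its neighbours lies in $B(r)\subseteq\frT_{L,o,k}$ and therefore topples exactly once in the wave $\wave(\xi^L_k)=\frT_{L,o,k}$. Set also $s=R^L_{\rin,k-1}$.

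If $r\le s+1$, then the vertex $r\,\unitv_1$ lies in $B(r)\subseteq\frT_{L,o,k}$ and in $B(s+1)$, so $\pi_{L,k}(r\,\unitv_1)$ is one of the paths constrained by $P^L_k$, and since it ends at distance $r$ we get $P^L_k\ge r$. If instead $r\ge s+2$, then $B(s+1)\subseteq B(r-1)\subseteq\frT_{L,o,k}$, and since $R^L_{\rin,k-1}=s$ we may fix $u\in B(s+1)\setminus\frT_{L,o,k-1}$, which then lies in $\frT_{L,o,k}$. I claim $\pi_{L,k}(u)$ leaves $B(r-1)$, which again yields $P^L_k\ge r$ and finishes the proof. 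Indeed, if $\pi_{L,k}(u)\subseteq B(\rho)$ for some $\rho\le r-1$, then by the path property \eqref{e:path-property} there is a toppling sequence confined to $B(\rho)\subseteq B(r-1)$ which, started from $\xi^L_k+\mathbf{1}_o$, topples $u$ and topples $o$ exactly once (being contained in a single wave). Since $\xi^L_{k-1}$ and $\xi^L_k$ agree on $B(r-1)$ and the sequence only moves vertices there, the same sequence is legal started from $\xi^L_{k-1}+\mathbf{1}_o$; as toppling $o$ acts identically on $V(L)$ in $G_L$ and $G'_L$, the abelian property then forces $u$ to be toppled in the stabilization of $\xi^L_{k-1}+\mathbf{1}_o$ in $G'_L$, in which $o$ topples only once, so $u\in\wave(\xi^L_{k-1})=\frT_{L,o,k-1}$ --- a contradiction.

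I expect the contradiction step to be the delicate part: it rests on the ``frozen interior'' identity $\xi^L_{k-1}=\xi^L_k$ on $B(r-1)$, on transporting a toppling sequence across the two configurations via \eqref{e:path-property}, and on $o$ toppling only once when stabilizing $\xi^L_{k-1}+\mathbf{1}_o$ in $G'_L$. A few routine points should be handled along the way: the elementary height bookkeeping behind the frozen-interior identity; the fact that a vertex toppled at least once lies in the first wave (the waves containing a fixed vertex form an initial segment); and that ``$\pi_{L,k}(u)$ leaves $B(r-1)$'' does force $P^L_k\ge r$ under the conventions of \eqref{e:Rk-in-etc}.
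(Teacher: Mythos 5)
Your proof is correct and hinges on the same machinery as the paper's own argument: the frozen-interior identity $\xi^L_{k-1}=\xi^L_k$ on the interior of the $k$-th last wave (each interior vertex topples once and receives once from each neighbour), the path-property \eqref{e:path-property}, and the Abelian property to transport a confined toppling sequence from $\xi^L_k+\mathbf{1}_o$ to $\xi^L_{k-1}+\mathbf{1}_o$, contradicting $u\notin\frT_{L,o,k-1}$. The packaging differs: the paper fixes $r=P^L_k$, assumes $R^L_{\rin,k}\ge r+1$, and derives a single-stroke contradiction, whereas you fix $r=R^L_{\rin,k}$, $s=R^L_{\rin,k-1}$ and prove $P^L_k\ge r$ directly by cases, handling $r\le s+1$ trivially via a boundary witness and reserving the transport-and-contradict argument for $r\ge s+2$; this makes explicit a point the paper handles implicitly, namely that the defining set $B(R^L_{\rin,k-1}+1)$ of $P^L_k$ actually contains vertices of $\frT_{L,o,k}$. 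One remark: your opening appeal to nestedness of waves ($\frT_{L,o,k-1}\subseteq\frT_{L,o,k}$, giving $s\le r$) is, as far as I can tell, never actually used — the case split is exhaustive on its own, and $u\in\frT_{L,o,k}$ in the second case is obtained from $B(s+1)\subseteq B(r-1)\subseteq\frT_{L,o,k}$ rather than from nestedness — so it can safely be dropped. (Both your argument and the paper's implicitly treat the radii as advancing in unit steps so that $P^L_k>r-1$ upgrades to $P^L_k\ge r$; this is a shared convention, not a gap specific to your write-up.)
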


\begin{proof}
(i) This follows directly from Corollary \ref{lem:num-last-waves}.

(ii) 
Write $r = P^L_{k}$ for short, and assume for a proof by contradiction, that we had $R^L_{\rin,k} \ge r + 1$. This implies that in the stabilization $a'_{o,L}(\xi^L_k)$ all vertices
in $B(r+1)$ topple, and hence $(\xi^L_{k-1})_{B(r)} = (\xi^L_k)_{B(r)}$.
Starting from the configuration $\xi^L_{k-1} + \mathbf{1}_o$, let us topple all sites
in $B(r)$ we can. The definition of $P^L_{k}$ and property \eqref{e:path-property} 
imply that all vertices in $B(R^L_{\rin, (k-1)} + 1)$ topple. However, this contradicts 
the definition of $R^L_{\rin, (k-1)}$.
\end{proof}

In the following proposition we show that the in-radius of the
$k$-th last wave is tight, with a power law upper bound on the tail.
\begin{prop}
\label{prop:in-rad-tight}
There exist constants $\alpha'_1 > \alpha'_2 > \dots > 0$ and
$C_1, C_2, \dots$ such that 
\eqn{e:power-wave-k}
{ \limsup_{L \to \infty} \nu_L \big( R^L_{\rin, k} > r \big)
  \le C_k r^{-\alpha'_k}, \quad \forall r \ge 1,\, \forall k \ge 1. }
In particular, for all $1 \le k < \infty$, the sequence 
$\{ R^L_{\rin,k} \}_{L \ge 1}$ is tight.
\end{prop}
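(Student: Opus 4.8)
The plan is to prove \eqref{e:power-wave-k} by induction on $k$. The case $k=1$ is vacuous since $R^L_{\rin,1}=0$ by Lemma~\ref{lem:inequalities}(i), so that $\nu_L(R^L_{\rin,1}>r)=0$ for all $r\ge1$; we choose $\alpha'_1$ larger than $\alpha'_2$ at the end. Assume \eqref{e:power-wave-k} holds for $k-1$, with exponent $\alpha'_{k-1}$ and constant $C_{k-1}$. By Lemma~\ref{lem:inequalities}(ii) we have $R^L_{\rin,k}\le P^L_k$, so it suffices to bound $\nu_L(P^L_k>r)$. Fix a cutoff $m=m(r)$, to be taken of the form $r^{\beta}$ with $\beta$ small, and write
\[
\nu_L\big(P^L_k>r\big)\le\nu_L\big(R^L_{\rin,k-1}>m\big)+\nu_L\big(P^L_k>r,\ R^L_{\rin,k-1}\le m\big).
\]
By the induction hypothesis the first term is at most $C_{k-1}r^{-\beta\alpha'_{k-1}}+o(1)$ as $L\to\infty$. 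I will argue below that the second term is at most $C\,m^{O(1)}\,r^{-c}$ for a constant $c=c(d)>0$, uniformly in $L$ larger than a fixed multiple of $r$; choosing $\beta$ small enough and passing to the $\limsup$ then gives \eqref{e:power-wave-k} for $k$ with $\alpha'_k$ equal to the smaller of the two resulting exponents, which is positive and strictly less than $\alpha'_{k-1}$ since $\beta<1$. Tightness of $\{R^L_{\rin,k}\}_{L\ge1}$ follows from the uniformity of the bound in $L$.

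For the second term I would argue as follows. On the event $\{R^L_{\rin,k-1}\le m\}$, the definition \eqref{e:Rk-in-etc} of $P^L_k$ forces the existence of $x\in B(m+1)\cap\frT_{L,o,k}$ whose path to $o$ in $\frT_{L,o,k}$ exits $B(r)$; equivalently, by property \eqref{e:path-property} together with the bootstrap-percolation description of a wave (a vertex topples iff sufficiently many of its neighbours do), the toppling of $x$ in the $k$-th last wave is triggered only by a cascade reaching out to $\partial B(r)$. To estimate the probability of this I would expose $\frT_{L,o,k}=T_o(\xi^L_k)$ with the radial burning bijection of Section~\ref{ssec:newspan}, which reveals the wave ``from the inside out'' and is compatible with Wilson's algorithm, so that the branch of $\frT_{L,o,k}$ through such an $x$ is realised as a loop-erased walk; by the Domain Markov Property (Lemma~\ref{lem:dmp}) the associated simple random walk must leave a ball of radius of order $m$ around $o$, reach $\partial B(r)$, and return, all while avoiding the already-exposed part of the wave. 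The key point — and here one uses that $\xi^L_k$ is the $k$-th last intermediate configuration, so that, by the analysis of Section~\ref{ssec:interm}, the nested waves satisfy $\frT_{L,o,1}\subseteq\cdots\subseteq\frT_{L,o,k}$ with $R_{\rin}(\frT_{L,o,1})=0$ — is that this forces the walk to skirt a connected set joining the neighbourhood of $o$ to $\partial B(r)$, at which stage the Beurling estimate (Lemma~\ref{lem:Beurling}) bounds the returning crossing of the intervening annulus by a factor $C(m/r)^{1/2}$. Summing over the $O(m^2)$ vertices of $B(m+1)$, and using Lemma~\ref{lem:seg_indep} and the Green's function asymptotics of Theorem~\ref{thm:green_asymp} to keep all estimates uniform in $L$, yields the bound $C\,m^{O(1)}\,r^{-c}$.

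I expect the main obstacle to be precisely the production of this connected barrier. A single random walk excursion from a ball of radius of order $m$ out to $\partial B(r)$ and back is, on its own, only logarithmically rare, so the desired power-law bound can be obtained only if the walk is forced to avoid a macroscopic connected set, and this set must be manufactured from the fine structure of the conditioned wave. This is the reason a more careful burning process than the one in \cite{IKP94} is required: one must run the radial burning for $\xi^L_1,\dots,\xi^L_k$ in a coordinated way, so that the exposed information about the last wave $\frT_{L,o,1}$ (which cannot contain all neighbours of $o$) together with the nesting propagate outward into a barrier reaching distance $r$. Verifying the compatibility of this burning order with the conditioning, and controlling uniformly in $L$ the Radon--Nikodym factor associated with the event ``$\xi^L_k$ is the $k$-th last intermediate configuration'', is the technical heart of the argument; the separation estimates of Section~\ref{ssec:steer} and the loop-erased walk comparison of Lemma~\ref{lem:seg_indep} are the main tools for this.
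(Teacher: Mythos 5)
Your overall scheme — induction on $k$ via Lemma~\ref{lem:inequalities}(ii), conditioning on $\{R^L_{\rin,k-1}\le m\}$, and then a ``walk must avoid a barrier'' estimate — is the right skeleton, but there are two genuine gaps in how you propose to fill it in, and both concern precisely the parts you yourself flag as the ``technical heart.''

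First, the barrier. You argue that the set the returning walk must avoid has to be ``manufactured from the fine structure of the conditioned wave,'' and you propose producing it by running the radial burnings of $\xi^L_1,\dots,\xi^L_k$ in a coordinated way so that $R^L_{\rin,1}=0$ and the nesting $\frT_{L,o,1}\subseteq\cdots\subseteq\frT_{L,o,k}$ propagate outward into a macroscopic connected set. This is both more complicated and less correct than what is actually needed. No coordination across the $k$ intermediate configurations is required: one works directly with the single two-component spanning forest $(\frT_{L,o,k},\frT_{L,s,k})$ under $\mu_{L,o}$, and the barrier is supplied automatically by Wilson's algorithm. Running the first Wilson walk from $x\in\partial B(r_0+1)$, on the event that $\pi(x)$ exits $B(r_02^\ell)$, one lets $\tau$ be the \emph{last} visit of $S_x$ to $\partial B(r_02^\ell)$ before hitting $o$ and sets $\gamma_0$ to be the initial segment of $\looper S_x[0,\tau]$ up to its first hit of $\partial B(r_02^\ell)$. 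The returning walk $S'=S_x[\tau,\xi_o]$ (an $h$-transformed walk that must reach $o$ before leaving $B(r_02^\ell)$) cannot hit $\gamma_0$, and $\gamma_0$ is a lattice path joining a neighbourhood of $o$ to $\partial B(r_02^\ell)$. So the barrier is the walk's own outgoing loop-erased segment; nothing about the last wave $\frT_{L,o,1}$, and no coupling of burnings, is used. Your route, even if it could be carried out, is not the one the paper takes and you do not carry it out.

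Second, the $\log L$ cancellation, which you compress into ``controlling uniformly in $L$ the Radon--Nikodym factor.'' In $d=2$, passing from $\nu_L$ to $\mu_{L,o}$ via Corollary~\ref{cor:wave-to-recur-Zd}(ii) introduces a divergent prefactor $C\log L$, and the spanning-forest event you estimate must supply a compensating $1/\log L$. In the paper's proof this is built in structurally: $P^L_k$ is decomposed dyadically, and on $\{r_02^\ell<P^L_k\le r_02^{\ell+1}\}$ Lemma~\ref{lem:inequalities}(ii) forces $R^L_{\rin,k}\le r_02^{\ell+1}$, hence forces $\frT_{L,s,k}$ to reach $\partial B(r_02^{\ell+1})$. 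It is this \emph{third} clause of $E(x,r_0,\ell)$, estimated with a further Wilson walk from a point at distance $(r_02^{\ell+1})^4$ together with Theorem~\ref{thm:green_asymp} and Beurling, that produces the factor $(\log r_02^{\ell+1})/\log L$ killing the $\log L$. Your proposal never mentions the upper dyadic bound on $P^L_k$, never derives the constraint on $\frT_{L,s,k}$, and so has no mechanism to cancel the $\log L$; the resulting $\limsup_{L\to\infty}$ would be $+\infty$.

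A smaller point: you propose applying the Beurling estimate (Lemma~\ref{lem:Beurling}) directly to the returning crossing. But $S'$ is not a simple random walk; it is conditioned to reach $o$ before leaving $B(r_02^\ell)$, so Lemma~\ref{lem:Beurling} does not apply as stated. The paper instead runs a multi-scale argument: at each dyadic crossing $S'$ has a uniformly positive conditional probability of hitting $\gamma_0$, with the Harnack principle transferring the unconditioned estimate to $S'$, yielding $(1-c_1)^{\ell-1}$. This is weaker than $1/2$ in the exponent but robust to the conditioning.

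So the induction structure, the use of $P^L_k$, and the choice $m=r^{\beta}$ with $\beta$ small match the paper, but the two load-bearing estimates — identification of the barrier and the $\log L$ cancellation via the $\frT_{L,s,k}$ clause — are either misidentified or absent.
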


\begin{proof}
We prove the statement by induction on $k$. The case $k = 1$ holds trivially
due to Lemma \ref{lem:inequalities}(i). Assume $k \ge 2$, and that 
\eqref{e:power-wave-k} holds for $k-1$. Let $1 \le r_0 < \infty$ be fixed, 
and find $L_0 = L_0(r_0) < \infty$ such that for $L \ge L_0$ we have 
\eqn{e:wave-(k-1)}
{ \nu_{L}( R^L_{\rin, (k-1)} > r_0 ) 
  \le 2 C_{k-1} r_0^{-\alpha'_{k-1}}. } 
It is sufficient to bound $R^L_{\rin,k}$ when the event 
$\{ R^L_{\rin, (k-1)} \le r_0 \}$ occurs, and due to Lemma \ref{lem:inequalities}(ii), it is enough to bound $P^L_k$ on this event. 
In what follows, we assume the event $\{ R^L_{\rin, (k-1)} \le r_0 \}$.

For $\ell \ge 1$ we are going to bound the probability
that $r_0 2^\ell < P^L_{k} \le r_0 2^{\ell+1}$. Due to Lemma \ref{lem:inequalities}(ii),
this event implies that $(\frT_{L,o,k}, \frT_{L,s,k})$ belongs to the following event 
$E(x,r_0,\ell)$ for some $x \in \partial B(r_0 + 1)$:
\eqnst
{ E(x,r_0,\ell)
  = \left\{ (T_o,T_s) \in \ST_{L,o} : 
     \text{$x \in T_{o}$, 
     $\pi(x)$ visits $B(r_0 2^{\ell})^c$ and 
     $T_{s} \cap \partial B(r_0 2^{\ell+1}) \not= \varnothing$} \right\}, }
where $\pi(x)$ is the path in $T_o$ from $x$ to $o$. 
Therefore, using Corollary \ref{cor:wave-to-recur-Zd}(ii), we have
\eqnspl{e:Pk-bound}
{ \nu_{L} \left( R^L_{\rin, (k-1)} \le r_0,\, P^L_k > r_0 2^{\ell_0} \right)
  &\le \sum_{\ell \ge \ell_0} \frac{\big| \{ \eta \in \recur_{L} : 
      r_0 2^{\ell} < P^L_{k}(\eta) \le r_0 2^{\ell+1} \} \big|}{|\recur_{L}|} \\
  &\le C \, (\log L) \, \sum_{\ell \ge \ell_0} \, \sum_{x \in \partial B(r_0 + 1)} \, 
      \mu_{L,o} ( E(x,r_0,\ell) ). }
We use Wilson's algorithm to get an upper bound on the probability of $E(x,r_0,\ell)$. 
Let the first random walk start at $x$. Let $\tau$ be the time of the last visit, before $\xi_{\{ o \}}$, to a vertex in 
$\partial B(r_0 2^{\ell})$. Let us condition on the path 
$S_x[0,\tau]$. Let $\gamma = \looper S_x[0,\tau]$,
and let $\gamma_0$ be the initial segment of $\gamma$ from $x$ to the first visit of $\gamma$ to $\partial B(r_0 2^{\ell})$. The walk 
\eqnst
{ S'(m)
  = S_x(\tau+m), \quad m = 0, \dots, \xi_{o} - \tau; }
is a simple random walk on $\Z^2$ conditioned on 
$\xi_{o} < \oxi_{B(r_0 2^{\ell})^c}$.
On the event $E(x,r_0,\ell)$, $S'$ cannot hit 
$\gamma_0$, so we bound the probability that 
$S'$ hits $B(r_0+1)$ before $\gamma_0$. 

The walk $S'$ has to successively cross from $\partial B(r_0 2^q)$
to $B(r_0 2^{q-1})$ for $q = \ell-1, \dots, 1$. During each crossing, 
it has a fixed constant probability of hitting $\gamma_0$, since 
this holds for simple random walk, and the Harnack principle \cite{Lawlim}
then implies it holds for $S'$. 
Hence the probability that $S'$ reaches $B(r_0+1)$
before hitting $\gamma_0$ is less than $(1 - c_1)^{\ell-1}$ for some
$0 < c_1 < 1$. This bounds from above the probability that 
$x \in \frT_{L,o}$ and $\pi(x)$ visits 
$B(r_0 2^{\ell})^c$. Assuming that this event occurs, 
we now bound the conditional probability that $\frT_{L,s}$ contains a 
vertex $y \in \partial B(r_0 2^{\ell+1})$. For this, continue 
Wilson's algorithm with a walk $S_{y_0}$ starting at any 
$y_0 \in \partial B \left( (r_0 2^{\ell+1})^4 \right)$,
followed by walks starting at $y_1, \dots, y_M$, where
the latter is an enumeration of all vertices in 
$\partial B(r_0 2^{\ell+1})$. Let $\frF_j$ denote the
tree generated by the walks $S_x, S_{y_0}, \dots, S_{y_j}$.
Denote 
\eqnst
{ E_j
  = \left\{ \sigma^{S_{y_j}}_L < \xi^{S_{y_j}}_{\frF_{j-1}} \right\}, 
    \quad j = 0, 1, \dots, M, }
where $\frF_{-1} := \pi(x)$. Then on the event 
$\{ x \in \frT_{L,o},\, \pi(x) \cap B(r_0 2^\ell)^c \not= \varnothing \}$ we have
\eqnsplst
{ \mu_{L,o} \big( \frT_{L,s} \cap \partial B(r_0 2^{\ell+1}) \not= \varnothing \,\big|\, 
     \frF_{-1} \big)
  &\le \sum_{j=0}^M \E \left( \prob ( E_j \,|\, \frF_{j-1} ) 
      \mathbf{1}_{E_0^c} \dots \mathbf{1}_{E_{j-1}^c} \right). }
Application of Theorem \ref{thm:green_asymp}(ii) yields that the $j = 0$ term is at most $C \, \log \, (r_0 2^{\ell+1})^4 / \log L$
when $L > (r_0 2^{\ell+1})^4$. 
For $1 \le j \le M$, using Beurling's estimate (Lemma \ref{lem:Beurling}),
on the event $E_0^c \cap \dots E_{j-1}^c$ we have 
\eqnsplst
{ \prob ( E_j \,|\, \frF_{j-1} ) 
  &\le \prob_{y_j} \left( \sigma_{B( (r_0 2^{\ell+1})^4 )} < \xi_{\frF_0} \right)
     \, \max_{w \in \partial B( (r_0 2^{\ell+1})^4 )} 
     \prob_w \left( \sigma_L < \xi_o \right) \\
  &\le C (r_0 2^{\ell+1})^{-1} \, (\log \, r_0 2^{\ell+1}) / \log L. }
Since $M \le C r_0 2^{\ell+1}$, putting the $j = 0$ and $1 \le j \le M$ cases 
together we get that the sum over $0 \le j \le M$ is bounded by 
$C \, (\log \, r_0 2^{\ell+1}) / \log L$. 
Together with the earlier bound on $\pi(x)$ leaving $B_o(r_0 2^{\ell})$ this gives
\eqnst
{ \prob \big( E(x, r_0, \ell) \big)
  \le C \, ( \log \, r_0 2^{\ell+1} ) \, \frac{(1 - c_1)^\ell}{\log L}. }
Substituting into \eqref{e:Pk-bound}, and summing over $\ell \ge \ell_0$ implies, for $L$ sufficiently large that 
\eqnspl{e:Pk-bound2}
{ \nu_{L} \big( R^L_{\rin, (k-1)} \le r_0,\, P^L_{k} > r_0 2^{\ell_0} \big)
  \le C r_0 (\log \, r_0 2^{\ell_0}) \, (1 - c_1)^{\ell_0}. }
We apply \eqref{e:Pk-bound2} with $2^{\ell_0} = r_0^{\beta'}$, for some $\beta' > 0$. 
The expressions in the right hand sides of \eqref{e:Pk-bound2} and \eqref{e:wave-(k-1)} are of equal order (up to logarithms), when
\eqnst
{ \beta' 
  = - (1 + \alpha'_{k-1}) \frac{\log 2}{\log (1 - c_1)}. } 
Since $R^{(k)}_\rin \le P^{(k)}$, the bounds \eqref{e:wave-(k-1)} and \eqref{e:Pk-bound2}
imply, for all large enough $L$, that 
\eqnst
{ \nu_L ( R^L_{\rin,k} > r_0^{1+\beta'} )
   \le \nu_L (R^L_{\rin,k-1} > r_0 ) 
       + \nu_L (R^L_{\rin,k-1} \le r_0,\, P^{(k)} > r_0^{1+\beta'} )
   \le C_k \, (\log r_0 ) \, r_0^{- \alpha'_{k-1}}. }
Hence we get \eqref{e:power-wave-k} for $k$ with a choice of 
$0 < \alpha'_k < \alpha'_{k-1}/(1+\beta')$.
\end{proof}

We next prove that the out-radius of the $k$-th last wave is also
tight, and satisfies a power law upper bound. We are going to need 
the following lemma. 

\begin{lem}
\label{lem:circuit}
There exist constants $C$ such that the following holds. 
Let $1 \le r < r' < L$, and let $K \subset V(L) \cup \{ s \}$ 
be a connected set of edges that contains a path connecting 
$B(r)$ to $s$. We have
\eqnst
{ \mu_{L,o} ( \frT_{L,o} \not\subset B(r') \,|\, K \subset \frT_{L,s} ) 
  \le C \, r^{3/2} \, (r')^{-1/2}. }
\end{lem}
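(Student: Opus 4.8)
The plan is to use Wilson's algorithm on $G_{L,o}$ in an order tailored to the conditioning. Since we are conditioning on $K \subset \frT_{L,s}$, where $K$ already connects $B(r)$ to $s$, I would first run Wilson's algorithm to generate the component $\frT_{L,s}$ (or at least a connected subgraph of it containing $K$ and reaching $B(r)$), and only afterwards start walks from vertices of $B(r)$ to build the part of the tree containing $o$. The key observation is that $o \in \frT_{L,o}$, and the path in $\frT_{L,o}$ from $o$ to any vertex outside $B(r')$ must, in particular, cross the annulus $B(r') \setminus B(r)$ without touching the already-revealed set $\frT_{L,s} \supset K$; but since $K$ reaches into $B(r)$, there is (at least) one ``barrier'' path from $B(r)$ to $s$ that a random walk from near $o$ can hit. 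The idea is to bound the probability that the loop-erased walks building $\frT_{L,o}$ reach $\partial B(r')$ before hitting $K$ (or the rest of $\frT_{L,s}$), and for this the Beurling estimate (Lemma \ref{lem:Beurling}) is the natural tool.

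More concretely: condition on $\frT_{L,s}$ (equivalently on a set $\mathcal{K} \supseteq K$ of revealed edges forming part of $\frT_{L,s}$). Then generate $\frT_{L,o}$ by starting Wilson's algorithm from $o$, then from an enumeration of the remaining vertices; a vertex $x$ with $|x| > r'$ lies in $\frT_{L,o}$ only if the loop-erased walk from $x$ (run until it hits the forest built so far) avoids $\mathcal{K}$ until it gets inside $B(r')$. In fact it suffices to estimate, for a walk started anywhere on $\partial B(r')$, the probability it reaches $B(r)^c$'s complement — rather, reaches inside the forest-so-far near $o$ — without hitting $\mathcal{K}$. I would instead bound things from the $o$ side: the component $\frT_{L,o}$ escapes $B(r')$ iff some loop-erased walk started from a vertex of $\partial B(r')$ and run until hitting the current forest avoids $\mathcal{K}$; by a union bound over the $O(r')$ vertices of $\partial B(r')$ and Beurling's estimate applied to a crossing of the annulus from $\partial B(r')$ inward past the barrier $K$, each such walk hits $K$ (which contains a path from $B(r)$ to $s$, hence in particular a path connecting $\partial B(r)$ to $\partial B(r')$ inside the annulus, or reaching outside it) with probability $\ge 1 - C(r/r')^{1/2}$ — wait, the correct bookkeeping gives a per-vertex escape probability $\le C (r/r')^{1/2}$, and the union bound over $\sim r'$ vertices together with the fact that each successful escaping walk must also connect back to $o$ inside $B(r')$ (costing another factor handled by a Green's function / last-exit decomposition near $o$) yields the total bound $C r^{3/2} (r')^{-1/2}$. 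The exponent $3/2$ arises as $1$ (from the $\sim r'$ union bound, combined with a $(r')^{-1}$ gain) plus the Beurling $1/2$ plus a hitting-$o$ factor of order $(\log)$ absorbed into constants; I would organize the estimate so that the walk from $\partial B(r')$ must (a) reach $\partial B(r)$ avoiding $K$ — Beurling gives $(r/r')^{1/2}$ — and then (b) actually hit the portion of $\frT_{L,o}$ near $o$, but I can also just bound $|\frT_{L,o} \cap \partial B(r')| \ge 1$ directly via a first-moment estimate: $\mu_{L,o}(x \in \frT_{L,o}, \text{path avoids } K) \le C (r/|x|)^{1/2} \cdot \prob_x(\text{hit } o \text{ region})$, summed over $|x| \ge r'$, using that $K$ separates and Beurling.

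The cleanest route, which I would actually write up: show $\mu_{L,o}(\frT_{L,o} \not\subset B(r') \mid K \subset \frT_{L,s}) \le \sum_{x \in \partial B(r')} \mu_{L,o}(x \in \frT_{L,o}, \pi_{L,o}(x) \cap K = \varnothing \mid K \subset \frT_{L,s})$, then generate the tree by Wilson's algorithm starting the walk at $x$: the path from $x$ to $o$ in the tree is $\looper S_x[0,\xi_o]$ (conditioned appropriately via $\frT_{L,s}$), and for $\pi_{L,o}(x)$ to avoid $K$, the walk $S_x$ must reach $B(r)$ before hitting $K$; since $K$ contains a path from $B(r)$ to $s = \partial V(L)$, Beurling's estimate gives $\prob_x(S_x \text{ reaches } B(r) \text{ before } K) \le C(r/r')^{1/2}$, uniformly, even after conditioning (the conditioning on $\frT_{L,s}$ can be handled by running Wilson with $\frT_{L,s}$ revealed first and noting the extra conditioning only makes hitting $K$ more likely, or by a Radon–Nikodym/Harnack comparison). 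Then $\prob_x(\xi_o < \infty$, further conditioning$) \le C (r')^{-1}$-type bound — rather, summing the Beurling factor over the $\le C r'$ vertices of $\partial B(r')$ and multiplying by a uniform $\prob_x(\text{actually connect to } o)$ factor of order $r / r'$ coming from a standard last-exit/Green's-function estimate for reaching a single point after entering $B(r)$ — gives $C r' \cdot (r/r')^{1/2} \cdot (r/r') = C r^{3/2} (r')^{-1/2}$, as claimed.

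\textbf{Main obstacle.} The hard part will be handling the conditioning on $\{K \subset \frT_{L,s}\}$ rigorously while keeping the Beurling estimate applicable: Beurling as stated (Lemma \ref{lem:Beurling}) is for unconditioned simple random walk, so I need to set up Wilson's algorithm in an order where $\frT_{L,s}$ (containing $K$) is generated first, so that the walks subsequently generating $\frT_{L,o}$ are genuinely unconditioned SRW run until hitting the already-built forest — and then argue that ``$\pi_{L,o}(x)$ avoids $K$'' forces the relevant SRW to cross the annulus avoiding the barrier $K$, to which Beurling applies directly. A secondary technical point is making sure $K$, being merely a connected edge set containing \emph{a} path from $B(r)$ to $s$, really does act as a Beurling-type barrier for every crossing of $B(r') \setminus B(r)$; this is where the planarity of $\Z^2$ and the topological fact that such a path separates the annulus are used, and I would cite or briefly reprove the standard fact that a connected set joining $B(r)$ to $\partial V(L) \supset \partial B(r')$ blocks all SRW crossings in the Beurling sense.
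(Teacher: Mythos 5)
Your setup is right: to handle the conditioning on $\{K \subset \frT_{L,s}\}$ you pass to Wilson's algorithm in the contracted graph $G_L/K$ (equivalently, reveal $\frT_{L,s}$ first), so that the walks building $\frT_{L,o}$ are genuine simple random walks and the Beurling estimate applies directly. That part matches the paper. The gap is in the counting. You sum over $\partial B(r')$, which has order $r'$ vertices, so to reach the bound $C r^{3/2}(r')^{-1/2}$ each summand must be of order $(r/r')^{3/2}$. Beurling supplies only a factor $(r/r')^{1/2}$ for the walk from $x \in \partial B(r')$ to cross the annulus avoiding $K$. You attribute the missing factor $(r/r')$ to ``a standard last-exit/Green's-function estimate for reaching a single point after entering $B(r)$'', but in $d = 2$ (the only case where Lemma \ref{lem:circuit} is used) hitting a fixed point gives only logarithmic corrections, not a power of $r/r'$: by Theorem \ref{thm:green_asymp}(i), $\prob_w(\xi_o < \sigma_{B(n)})$ for $w \in \partial B(r)$ is of order $(\log n - \log r)/\log n$, and a Green's-function ratio in the slit domain $V(L) \setminus K$ shows that $\prob_x(\xi_o < \xi_K \wedge \sigma_L)$ for $x \in \partial B(r')$ is of order $(r/r')^{1/2}/\log r$, not $(r/r')^{3/2}$. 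Consequently the first moment $\sum_{x \in \partial B(r')} \mu_{L,o}(x \in \frT_{L,o} \,|\, K \subset \frT_{L,s})$ is of order $(rr')^{1/2}/\log r$, which exceeds the target bound by roughly a factor $r'/(r\log r)$. A first-moment bound over $\partial B(r')$ therefore cannot give the lemma.

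The paper places the union bound on the inner sphere instead: it runs Wilson's algorithm in $G_L/K$ with walks started at all vertices of $\partial B(r)$, which has only $O(r)$ vertices, and observes that $\frT_{L,o} \not\subset B(r')$ forces at least one of these walks to reach $\partial B(r')$ before hitting $K$. Each such crossing has Beurling probability at most $C(r/r')^{1/2}$, and the union bound over $O(r)$ vertices yields $C r^{3/2}(r')^{-1/2}$ with no extra hitting-a-point factor required. If you want to keep your overall structure, replace the sum over $\partial B(r')$ by a sum over $\partial B(r)$ and bound, for each $z \in \partial B(r)$, the probability that the branch of $\frT_{L,o}$ hanging outward from $z$ reaches $\partial B(r')$; the count $O(r)$ then combines with the single Beurling factor to give the exponent $3/2$.
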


\begin{proof}
Let us use Wilson's algorithm in the contracted graph
$G_L / K$, that is, the edges in $K$ are already 
present at the start of the algorithm. We let walks start 
at $\{ x_1, \dots, x_M \} = \partial B(r)$. 
If $\frT_{L,o} \not\subset B(r')$, then at least one of these 
walks has to reach $\partial B(r')$ before hitting $K$.
Beurling's estimate (Lemma \ref{lem:Beurling}) implies that for each $x_j$, this 
has probability at most $C \, (r'/r)^{-1/2}$. Since
$M = O(r)$, the statement follows.
\end{proof}

\begin{prop}
\label{prop:out-rad-tight}
There exist constants $\alpha_1 > \alpha_2 > \dots > 0$ and
$C_1, C_2, \dots$ such that 
\eqn{e:power-wave-k2}
{ \limsup_{L \to \infty} \nu_L \big( R^L_{\rout, k} > r \big)
  \le C_k r^{-\alpha_k}, \quad \forall r \ge 1,\, \forall k \ge 1. }
In particular, for all $1 \le k < \infty$ the sequence
$\{ R^L_{\rout,k} \}_{L \ge 1}$ is tight.
\end{prop}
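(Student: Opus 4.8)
## Proof proposal for Proposition \ref{prop:out-rad-tight}

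The plan is to reduce the out-radius tail bound to the in-radius tail bound from Proposition \ref{prop:in-rad-tight} together with the circuit estimate of Lemma \ref{lem:circuit}. The key observation is that if $R^L_{\rout,k} > r$, then the component $\frT_{L,o,k}$ reaches distance $r$ from the origin, and since $(T_o,T_s)$ is a two-component spanning forest of $G_L$ with $s \in T_s$, the complementary tree $\frT_{L,s,k}$ must separate $o$ from the sink by forming (morally) a ``circuit'' around $\frT_{L,o,k}$. More precisely, using the burning bijection $\varphi'$ of Section \ref{ssec:newspan}, we pass to $\mu_{L,o}$ and bound $\mu_{L,o}(R(\frT_{L,o}) > r)$, paying the factor $C \log L$ from Corollary \ref{cor:wave-to-recur-Zd}(ii) as in the proof of Proposition \ref{prop:in-rad-tight}. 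On the event that $R^L_{\rin,k} \le r_0$ (which holds with the high probability guaranteed by Proposition \ref{prop:in-rad-tight}), the ball $B(r_0)$ is \emph{not} entirely inside $\frT_{L,o,k}$, so $\frT_{L,s,k}$ contains a vertex $y \in B(r_0+1)$; since $s \in \frT_{L,s,k}$ and the component is connected, there is a path in $\frT_{L,s,k}$ from $B(r_0+1)$ to $s$. Lemma \ref{lem:circuit}, applied with $r = r_0+1$ and $r' = r$ and $K$ this path, then gives
\eqnst
{ \mu_{L,o}\big( \frT_{L,o} \not\subset B(r) \,\big|\, K \subset \frT_{L,s} \big)
  \le C\, (r_0+1)^{3/2}\, r^{-1/2}. }

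Carrying this out: first fix $r_0$ and use Proposition \ref{prop:in-rad-tight} to choose $L_0(r_0)$ so that $\nu_L(R^L_{\rin,k} > r_0) \le 2 C_k r_0^{-\alpha'_k}$ for $L \ge L_0$. Then, decompose
\eqnst
{ \nu_L(R^L_{\rout,k} > r)
  \le \nu_L(R^L_{\rin,k} > r_0)
  + \nu_L\big( R^L_{\rin,k} \le r_0,\, R^L_{\rout,k} > r \big). }
For the second term, translate to $\mu_{L,o}$ using the bijection (paying $g_L(o,o) \le C \log L$, absorbed by Corollary \ref{cor:wave-to-recur-Zd}(ii)), condition on the initial Wilson's algorithm steps that build $\frT_{L,s}$ up to and including a path $K$ from $B(r_0+1)$ to $s$ — one can run Wilson starting from a vertex of $\partial B(r_0+1) \cap \frT_{L,s}$ and from $s$ — and apply Lemma \ref{lem:circuit}. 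Summing over the at most $C r_0$ choices of the starting vertex in $B(r_0+1)$ (or more simply bounding via a union over which vertex of $\partial B(r_0+1)$ lies in $\frT_{L,s}$, which is automatic on the event $R^L_{\rin,k} \le r_0$), we obtain, for $L$ large,
\eqnst
{ \nu_L\big( R^L_{\rin,k} \le r_0,\, R^L_{\rout,k} > r \big)
  \le C\, (\log r_0)\, r_0^{5/2}\, r^{-1/2}. }
Now balance: take $r_0 = r^{\beta}$ with $\beta > 0$ small enough that both $r_0^{-\alpha'_k} = r^{-\beta \alpha'_k}$ and $r_0^{5/2} r^{-1/2} = r^{(5\beta - 1)/2}$ are bounded by $r^{-\alpha_k}$ (up to logarithms) for a suitable $\alpha_k \in (0, \min\{\beta \alpha'_k, (1-5\beta)/2\})$; any $\beta < 1/5$ works, and one should also check $\alpha_k$ can be chosen strictly decreasing in $k$ by taking $\alpha_k$ slightly smaller than $\alpha_{k-1}$, since $\alpha'_k$ is already decreasing and the balancing exponent only shrinks. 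Absorbing logarithmic factors into a slight reduction of the exponent gives \eqref{e:power-wave-k2}, and tightness of $\{R^L_{\rout,k}\}_L$ is immediate.

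The main obstacle I anticipate is not the balancing but the bookkeeping around conditioning in Wilson's algorithm: one must be careful that the path $K$ from $B(r_0+1)$ to $s$ inside $\frT_{L,s}$ can be exposed \emph{before} the portion of $\frT_{L,o}$ that reaches distance $r$ is built, so that Lemma \ref{lem:circuit} (which conditions on $K \subset \frT_{L,s}$ and then bounds $\frT_{L,o} \not\subset B(r')$ via fresh walks from $\partial B(r)$) applies cleanly. The clean way to handle this is to run Wilson's algorithm with the vertex ordering chosen so that $s$ and the vertices of $B(r_0+1)$ come first; then on the event $\{R^L_{\rin,k} \le r_0\}$ the tree $\frT_{L,s}$ already contains a connected set $K$ joining $B(r_0+1)$ to $s$ after these initial steps, and all later walks (in particular those that could carry $\frT_{L,o}$ out past distance $r$) start from $\partial B(r)$ or beyond, so the Beurling-type argument of Lemma \ref{lem:circuit} goes through verbatim. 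A secondary, purely cosmetic issue is that $R^L_{\rout,k}$ and $R(\frT_{L,o,k})$ use the Euclidean norm while several intermediate balls are $\ell^\infty$; this costs only dimension-dependent constants and does not affect the exponents.
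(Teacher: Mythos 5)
Your proposal follows the same path as the paper's proof: decompose on $\{R^L_{\rin,k} \le r_0\}$, pass to $\mu_{L,o}$ via the bijection and Corollary \ref{cor:wave-to-recur-Zd}(ii), locate a vertex of $\frT_{L,s,k}$ near $\partial B(r_0)$, condition on the path $\cK$ from that vertex to $s$, apply Lemma \ref{lem:circuit}, and balance $r_0$ against $r$. The bound $C(\log r_0)\,r_0^{5/2}\,r^{-1/2}$ you derive is exactly what the paper obtains (written there as $C(\log r_0)\,r_0^{2-\beta/2}$ with $r=r_0^{1+\beta}$), so both the argument and the resulting exponent are the same up to reparametrization.
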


\begin{proof}
Fix $1 \le k < \infty$, and $1 \le r_0 < \infty$. From Proposition \ref{prop:in-rad-tight} we have that there exists $L_0 = L_0(r_0) < \infty$ such that for all $L \ge L_0$ we have
\eqn{e:Rin-bnd}
{ \nu_L ( R^L_{\rin,k} > r_0 )
  \le 2 \, C_k \, r_0^{-\alpha'_k} }
Assume the event $\{ R^L_{\rin,k} \le r_0 \}$,
which implies that $\frT_{L,s,k} \cap \partial B(r_0) \not= \varnothing$. 
We bound the probability that $R^L_{\rout,k} > r_0^{1 + \beta}$, where
the parameter $\beta > 0$ will be chosen at the end.

Similarly to \eqref{e:Pk-bound}, we have:
\eqnspl{e:Rk-bound}
{ \nu_{L} \left( R^L_{\rin,k} \le r_0,\, R^L_{\rout,k} > r_0^{1+\beta} \right)   
  \le C \, (\log L) \, \sum_{x \in \partial B(r_0)} 
      \mu_{L,o} \left( x \in \frT_{L,s},\, 
      \frT_{L,o} \not\subset B ( r_0^{1+\beta} ) \right). }
Let $\cK$ be the set of edges in $\frT_{L,s}$ on the path from $x$ to $s$. Conditioning
on the value $\cK = K$, the right hand side of \eqref{e:Rk-bound} equals
\eqnspl{e:sum-x}
{  C \, (\log L) \, \sum_{x \in \partial B(r_0)} 
      \mu_{L,o} \left( x \in \frT_{L,s} \right) \, 
      \sum_{K} \mu_{L,o} \left( \cK = K \,|\, x \in \frT_{L,s} \right) \,
      \mu_{L,o} \Big( T_o \not\subset B ( r_0^{1+\beta} ) \,\Big|\, \cK = K \Big). }
We have $\mu_{L,o} ( x \in \frT_{L,s} ) \le C (\log r_0) / (\log L)$.
Applying Lemma \ref{lem:circuit} to the conditional probability in 
\eqref{e:sum-x} gives that the expression in \eqref{e:sum-x} is at most
\eqnspl{e:r0-power}
{ C \, (\log r_0) \, r_0^{3/2} \, r_0^{-(1+\beta)/2} \, \sum_{x \in \partial B(r_0)}  
      \sum_{K} \mu_{L,o} \left( \cK = K \,|\, x \in \frT_{L,s} \right) 
  = C \, (\log r_0) \, r_0^{2 - \beta/2}. }
Choose $\beta$ so that $2 - \beta/2 < -\alpha'_k$, so that  
\eqref{e:r0-power} together with \eqref{e:Rin-bnd} gives
\eqnst
{ \nu_L ( R^L_{\rout,k} > r_0^{1+\beta} )
  \le C_k r_0^{-\alpha'_k}. }
This implies the statement of the proposition with $\alpha_k = \alpha'_k/(1+\beta)$.
\end{proof}

The next proposition shows that 
$\mu_{L,o} ( A \subset \frT_{L,s},\, B \subset \frT_{L,o} \}$ 
for fixed finite fixed sets of vertices $A$ and $B$, satisfies a certain
normalization as $L \to \infty$.
Tightness of the in-radius established in Proposition \ref{prop:in-rad-tight} 
will allow us to apply this proposition, and subsequently prove 
Theorem \ref{thm:last-k-waves}. We introduce the notation 
$q_L := \mu_{L,o} ( e \not\in \frT_{L,o} )$, where $e \sim o$. Due to symmetry, 
$q_L$ does not depend on $e$. In fact, since $q_L$ is the escape
probability of random walk from $o$, we have $q_L = G_L(o,o)^{-1}$.
We remark that for the square grid, the quantity $G_L(o,o)$ has an explicit 
formula:
\eqn{e:GLoo}
{ G_L(o,o)
  = \sum_{\ell=0}^L
    \frac{T'_{L+1}(2 - \cos \theta_\ell)}{T_{L+1}(2 - \cos \theta_\ell)}, }  
where $\theta_\ell = 2 \pi (2 \ell + 1) / (4 L + 4)$, and 
$T_{L+1}$ is the degree $L+1$ Tchebyshev polynomial. The formula \eqref{e:GLoo}
can be derived via contour integration. However, we will not need it, and we 
omit the proof.

\begin{prop}
\label{prop:cond-lim-exists}
Assume $d = 2$. Let $A, B \subset \Z^2$ be disjoint, non-empty finite sets, 
with $o \in B$. Then the limit $p_{A,B} := \lim_{L \to \infty} q_L^{-1} 
\mu_{L,o} (A \subset \frT_{L,s},\, B \subset \frT_{L,o})$ exists.
\end{prop}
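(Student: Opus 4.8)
The plan is to evaluate the limit by running Wilson's algorithm for $\mu_{L,o}$ and following the two components as they are built. Recall that $\mu_{L,o}$ is the uniform spanning tree on the graph obtained from $G_L$ by identifying $o$ with the sink $s$; in this picture a simple random walk from $x \in V(L) \setminus \{o\}$ reaches the combined root either by hitting $o$ or by leaving $V(L)$, and the loop-erasure of its excursion is joined to $\frT_{L,o}$ in the first case and to $\frT_{L,s}$ in the second. First I would fix $a_1 \in A$ (note $a_1 \neq o$ since $A \cap B = \varnothing$) and enumerate $V(L) \setminus \{o\}$ starting with $a_1$, then the rest of $A$, then $B \setminus \{o\}$, then everything else. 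Since $\{ A \subset \frT_{L,s} \} \subset \{ a_1 \in \frT_{L,s} \}$, we factor
\[ q_L^{-1} \mu_{L,o}(A \subset \frT_{L,s},\, B \subset \frT_{L,o})
   = \big[ q_L^{-1} \mu_{L,o}(a_1 \in \frT_{L,s}) \big] \cdot
     \mu_{L,o}\big( A \subset \frT_{L,s},\, B \subset \frT_{L,o} \,\big|\, a_1 \in \frT_{L,s} \big). \]
The bracket equals $G_L(o,o)\, \prob_{a_1}(\sigma_L < \xi_o)$. Running the martingale $\mathfrak{a}(S_n) - \#\{ k < n : S_k = o \}$, where $\mathfrak{a}$ is the recurrent potential kernel of $\Z^2$, $\mathfrak{a}(x) = \lim_L ( G_L(o,o) - G_L(o,x) )$, up to time $\xi_o \wedge \sigma_L$ gives $\mathfrak{a}(a_1) = \prob_{a_1}(\sigma_L < \xi_o)\, \E_{a_1}[\mathfrak{a}(S_{\sigma_L}) \mid \sigma_L < \xi_o]$; since $S_{\sigma_L}$ sits at distance $\asymp L$ one has $\E_{a_1}[\mathfrak{a}(S_{\sigma_L})\mid \sigma_L<\xi_o] = \tfrac2\pi\log L + O(1)$, and with $q_L^{-1} = G_L(o,o) = \tfrac2\pi\log L + O(1)$ (Theorem \ref{thm:green_asymp}(i) and standard $2$-dimensional potential theory) the bracket converges to $\mathfrak{a}(a_1) \in (0,\infty)$. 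So it remains to show the conditional probability on the right converges.

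Conditioning on $\{ a_1 \in \frT_{L,s} \}$ runs the loop-erased walk $\Lambda_1$ from $a_1$ under the law of $\looper S_{a_1}[0,\sigma_L]$ conditioned on $\{ \sigma_L < \xi_o \}$; continuing Wilson's algorithm we add the remaining points of $A$ (each must attach to the current $s$-tree, i.e.\ its walk hits $\partial V(L)$ or a previously built $s$-path before reaching $o$) and then the points of $B \setminus \{o\}$ (each must attach to the current $o$-tree, i.e.\ its walk reaches $o$ before $\partial V(L)$ or the $s$-paths). The assertion to prove is that, for every fixed $\rho$ with $A \cup B \subset B(\rho/2)$, the joint law of the restriction to $B(\rho)$ of the partial forest built after processing $A \cup B$, together with the indicators of all these attachment events, converges as $L \to \infty$. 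Granting this, the conditional probability is exactly such a joint probability and hence converges. There are two ingredients. \textbf{(a) Convergence of the conditioned pieces near $o$:} the law of $\Lambda_1 \cap B(\rho)$ under $\{ \sigma_L < \xi_o \}$ converges; by the Domain Markov Property (Lemma \ref{lem:dmp}) this reduces to the convergence of the conditioned walk $S_{a_1}$ given $\{ \sigma_L < \xi_o \}$, whose initial segment up to exiting $B(\rho)$ converges in total variation to that of the $\mathfrak{a}$-transform of simple random walk (the walk conditioned never to hit $o$), and the later loop-erased increments are treated the same way. \textbf{(b) Locality:} a simple random walk from any fixed $x \in B(\rho/2)$ reaches $\partial B(\rho)$ before hitting $o$ only with probability $O(1/\log\rho)$, uniformly in $L$ (again Theorem \ref{thm:green_asymp}(i), i.e.\ recurrence); consequently the probability that any of the finitely many walks added during the algorithm leaves $B(\rho)$ before being absorbed is $O(|A\cup B|/\log\rho)$, uniformly in $L$, so the macroscopic $s$-tree $\Lambda_1 \cup \cdots$ — which does reach $\partial V(L)$ — can be replaced by its trace in $B(\rho)$ at a cost that is uniform in $L$ and $\to 0$ as $\rho \to \infty$.

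Combining (a) and (b): fix $\rho$, couple (for all large $L$) the conditioned walks so that their initial $B(\rho)$-segments agree off an event of probability $\to 0$, run all of Wilson's algorithm \emph{inside} $B(\rho)$ on this coupling, and absorb into a uniformly small error the event that some added walk leaves $B(\rho)$ before being absorbed; each attachment probability is then, up to $o(1)$, a bounded functional of $(\text{partial forest}) \cap B(\rho)$, and these converge jointly. Iterating over the finitely many vertices of $A \cup B$ gives the convergence of the conditional probability and hence of $q_L^{-1}\mu_{L,o}(A \subset \frT_{L,s},\, B \subset \frT_{L,o})$; the limit is automatically independent of the chosen enumeration since the left-hand side is.

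The step I expect to be most delicate is (b) together with the coupling above. Unlike the transient case, one cannot appeal to $\wsf_o$ having finite $o$-component: in $d=2$ the $s$-side tree is a genuinely macroscopic random object stretching to $\partial V(L)$, and every later walk must be shown to feel only its local trace near $o$, which works precisely because $\Z^2$ is recurrent (excursions to distance $\rho$ cost $O(1/\log\rho)$, uniformly in $L$). Making these error bounds uniform in the already-built configuration — which is itself random and $L$-dependent — rather than for a fixed target set, is the crux.
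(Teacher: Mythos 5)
Your proof takes essentially the same route as the paper's: factor off the escape probability of the first walk from $a_1 \in A$, showing $q_L^{-1}\mu_{L,o}(a_1 \in \frT_{L,s}) \to a(a_1)$ (this is the content of Lemma \ref{lem:path-converge}(i), using $a(e)=1$), then use the weak limit of the conditioned LERW path from $a_1$ to $s$ (Lemma \ref{lem:path-converge}(ii)) together with recurrence of $\Z^2$ to localize the remaining Wilson walks near $o$. The only spot to tighten is your phrase ``the later loop-erased increments are treated the same way'': since forward loop-erasure is non-local, the trace of $\Lambda_1$ in $B(\rho)$ is not determined by the conditioned walk's trace in $B(\rho)$, and the clean statement --- which is exactly what Lemma \ref{lem:path-converge}(ii) supplies via weak convergence of the $h$-transform and \cite[Exercise 11.2]{Lawlim} --- is that the entire conditioned LERW converges weakly as a self-avoiding path, after which restriction to $B(\rho)$ is a continuous functional; this is a presentational point rather than a gap.
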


We first need the following lemma. In its statement, $a(x)$ is the 
potential kernel for simple random walk on $\Z^2$; see \cite{Lawlim}.

\begin{lem}
\label{lem:path-converge}
Fix $x \in A$.\\
(i) We have
$\lim_{L \to \infty} q_L^{-1} \mu_{L,o} ( x \not\in \frT_{L,o} ) = \frac{a(x)}{a(e)}$,
where $e \sim o$.\\
(ii) Conditional on $x \not\in \frT_{L,o}$, the law of the  
path from $x$ to $s$ has a weak limit, as $L \to \infty$. 
\end{lem}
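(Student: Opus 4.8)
The plan is to reduce both statements to a Wilson's-algorithm computation in $G_{L,o}$, exploiting reversibility and the known asymptotics of the potential kernel and Green's function. For part (i), recall that $x \notin \frT_{L,o}$ if and only if, when we run Wilson's algorithm starting with a walk $S_x$ from $x$ in $G_{L,o}$, this walk reaches $\partial V(L)$ (i.e.\ the sink $s$) before hitting $o$. Hence $\mu_{L,o}(x \notin \frT_{L,o}) = \prob_x(\sigma_L < \xi_o)$. The same identity with $x$ replaced by a neighbour $e$ of $o$ gives $q_L = \prob_e(\sigma_L < \xi_o)$. I would then use the standard representation $\prob_x(\sigma_L < \xi_o) = a(x)/\big(a(x) + \E_x[a(S(\sigma_L)) \mathbf{1}_{\sigma_L < \xi_o}] \cdot(\text{correction})\big)$ — more precisely, the optional stopping theorem applied to the martingale $a(S(n \wedge \xi_o \wedge \sigma_L))$, together with the fact that $a(y) = \frac{2}{\pi}\log|y| + \kappa + O(|y|^{-2})$ is essentially constant ($= \frac{2}{\pi}\log L + O(1)$) on $\partial V(L)$. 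This yields $\prob_x(\sigma_L < \xi_o) = \frac{a(x)}{\frac{2}{\pi}\log L}(1 + o(1))$ as $L \to \infty$, for each fixed $x$, and the ratio with the $e$ computation gives $q_L^{-1}\mu_{L,o}(x \notin \frT_{L,o}) \to a(x)/a(e)$.

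For part (ii), I would again use Wilson's algorithm: conditionally on $x \notin \frT_{L,o}$, the path from $x$ to $s$ in $\frT_{L,o}$ is distributed as $\looper S_x[0,\sigma_L]$ conditioned on $\{\sigma_L < \xi_o\}$. The claim is that this loop-erased, killed-and-conditioned walk has a weak limit as $L \to \infty$, in the sense of finite-dimensional distributions of initial segments (i.e.\ the law of $\looper S_x[0,\sigma_L]$ restricted to any fixed ball $B(m)$ with $m$ fixed converges). The natural candidate for the limit is the loop-erasure of an infinite walk $S_x$ conditioned to avoid $o$ forever — which in $d = 2$ is a null event, so this must be made sense of via an $h$-transform using the potential kernel $a$, i.e.\ the walk $S_x$ Doob-conditioned by the harmonic function $a(\cdot)$ (which vanishes at $o$) to avoid $o$. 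Concretely, I would show: for any finite self-avoiding path $\gamma$ starting at $x$ with $o \notin \gamma$,
\[
\lim_{L \to \infty} \prob\big( \looper S_x[0,\sigma_L] \text{ begins with } \gamma \,\big|\, \sigma_L < \xi_o \big)
\]
exists. This follows by the domain-Markov property (Lemma \ref{lem:dmp}) reducing the event to a walk crossing event whose probability, by the same potential-kernel estimates used in part (i), is $\frac{a(\text{tip of }\gamma)}{\frac{2}{\pi}\log L}(1 + o(1))$ times a $\gamma$-dependent but $L$-independent factor; dividing by $\prob_x(\sigma_L < \xi_o) \sim \frac{a(x)}{\frac{2}{\pi}\log L}$ the $\log L$ cancels and the limit exists. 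Kolmogorov extension then produces the limiting law on infinite paths.

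The main obstacle is making the estimates in part (ii) uniform enough to pass to the limit — in particular, controlling the probability that the loop-erased walk's initial segment in $B(m)$ is not yet "finalized" by the time the walk exits $V(L)$, i.e.\ that later excursions of $S_x$ back into $B(m)$ do not alter $\looper S_x[0,\sigma_L] \cap B(m)$. This is exactly the type of stability estimate provided by Masson's Lemma \ref{lem:seg_indep} (comparison of finite and infinite LERW), but here the walk is additionally conditioned on $\{\sigma_L < \xi_o\}$, so I would need a version of that comparison for the $a$-conditioned walk; the cleanest route is to absorb the conditioning into an $h$-transform and observe that the $h$-transformed walk is again a nice Markov chain (transient, with explicit Green's function built from $a$) to which the loop-erasure stability arguments apply. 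The potential-kernel asymptotics and the Harnack principle on annuli (both already invoked elsewhere in the paper) supply the remaining uniformity. Given these inputs, assembling parts (i) and (ii) is routine.
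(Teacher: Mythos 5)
Your proposal matches the paper's proof in both parts: for (i), Wilson's algorithm reduces $\mu_{L,o}(x \notin \frT_{L,o})$ to $\prob_x(\sigma_L < \xi_o)$, and the bounded martingale $a(S(t \wedge \sigma_L \wedge \xi_o))$ with optional stopping gives $\prob_x(\sigma_L < \xi_o) = a(x)(\log L)^{-1} + o((\log L)^{-1})$, so the ratio converges to $a(x)/a(e)$; for (ii), the paper does exactly what you identify as the ``cleanest route'': it observes that the conditioned walk $S_x^{h,L}$ converges weakly to the transient $h$-process $S_x^h$ (the Doob transform by the potential kernel $a$), and then invokes \cite[Exercise 11.2]{Lawlim} for the fact that weak convergence of the underlying transient walks implies weak convergence of their loop erasures, which disposes of the stability/uniformity concern you raise.
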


\begin{proof}
(i) We use Wilson's algorithm in the graph $G_{L,o}$ with a walk starting at $x$.
Considering the limit of the bounded martingale $\{ a(S_x(t \wedge \sigma_{V(L)} \wedge \xi_{\{ o \}})) \}_{t \ge 0}$, and using Lemma \ref{lem:rwprob} we have
\eqnsplst
{ q_L^{-1} \, \mu_{L,o} ( x \not\in \frT_{L,o} )
  &= \frac{\prob \big( o \not\in S_x[0,\sigma_{V(L)}] 
     \big)}{\prob \big( o \not\in S_e[0,\sigma_{V(L)}] \big)} 
  = \frac{a(x) (\log L)^{-1} + o \big( (\log L)^{-1} 
     \big)}{a(e) (\log L)^{-1} + o \big( (\log L)^{-1} \big)}
  = \frac{a(x)}{a(e)} + o(1). }

(ii) Let $S^{h,L}_x$ denote a random walk conditioned on the event
$\{ \sigma_L < \xi_{o} \}$, started from $x$. The path in $\frT_{L,s}$ 
from $x$ to $s$, conditional on $x \not\in \frT_{L,o}$ has the law of 
$\looper S^{h,L}_x[0,\sigma_L]$. As $L \to \infty$, $S^{h,L}_x$ converges
weakly to a transient process $S^h_x$ (the $h$-transform of random walk
by the potential kernel $a(\cdot)$). This implies that 
$\looper S^{h,L}_x[0,\sigma_L]$ converges weakly to 
$\looper S^h_x[0,\infty)$; see \cite[Exercise 11.2]{Lawlim}.
\end{proof}

\begin{proof}[Proof of Proposition \ref{prop:cond-lim-exists}.]
Let $A = \{ x_1, \dots, x_p \}$, ($p \ge 1$) and 
$B = \{ o, w_1, \dots, w_q \}$, ($q \ge 0$). 
We use Wilson's algorithm in the graph $G_{L,o}$. 
We start with the vertex $x_1$, followed by the vertices 
$x_2, \dots, x_p$, followed by the vertices in $B$. For the rest of the 
vertices we use an ordering such that their Euclidean norms form a non-decreasing 
sequence. Due to Lemma \ref{lem:path-converge}(i), the probability that 
the first walk hits $s$ before $o$ is asymptotic to $q_L a(x_1)/a(e)$, 
as $L \to \infty$. Assuming this event happens, let $\pi^{L}_{x_1}$ denote 
the loop-erasure of the walk starting at $x_1$, 
and write $\pi_{x_1}$ for its weak limit under the conditioning,
whose existence is guaranteed by Lemma \ref{lem:path-converge}(ii).
The probability that the walks starting in $(A \setminus \{ x_1 \}) \cup B$
terminate before exiting a ball $B(r)$ of large radius $r$ goes to $1$ as
$r \to \infty$, and $L > r$, uniformly in the path $\pi^{L}_{x_1}$. 
Since these walks determine the event 
$\{ A \subset \frT_{L,s},\, B \subset \frT_{L,o} \}$, 
statement (i) follows.
\end{proof}

In the proof of Theorem \ref{thm:last-k-waves} we are going to need
the following quantitative bound from \cite{GJ14} on the rate of 
convergence of $\nu_{L}$ to $\nu$.

\begin{thm}[Theorem 4.1 of \cite{GJ14}]
\label{thm:quantitative}
Let $d = 2$. There exist constants $0 < \alpha < \beta$ and $C$ such that 
if $E$ is any cylinder event depending only on the configuration
in $B(\ell)$, then 
\eqnst
{ \left| \nu_{L}(E) - \nu(E) \right|
  \le C \ell^\beta L^{-\alpha}. }
\end{thm}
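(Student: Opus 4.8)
The plan is to transfer the problem to uniform spanning trees via the burning bijection and to build an explicit coupling of the spanning tree on $G_L$ with the wired spanning forest on $\Z^2$ under which the sandpile heights on $B(\ell)$ agree with high probability.

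First, fix a scale $m$ with $\ell < m < L$ (a power of $L$, to be chosen at the end). By the burning bijection of Section \ref{ssec:trees} the law of $\eta|_{B(\ell)}$ under $\nu_L$ is the pushforward of $\mu_L$, and $\eta|_{B(\ell)}$ is a deterministic function of the \emph{local skeleton} $\mathfrak{S}^{G_L}$ -- the subtree of $\frT^{G_L}$ spanning $B(\ell+1)$, rooted at the vertex through which it attaches to the remainder of the tree. This is because $\eta(x)$ is recovered from the tree-parent edge at $x$ and from the burning order of the lattice-neighbours of $x$, and the burning order of two lattice-neighbours $x \sim y$ is encoded by the length of the tree-path between them and the location of its highest vertex. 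The same description holds for $\nu$ and the one-ended tree $\frT$ sampled from $\wsf$. So it suffices to couple $\frT^{G_L}$ and $\frT$ so that $\mathfrak{S}^{G_L} = \mathfrak{S}$ with probability $1 - C\ell^{\beta}L^{-\alpha}$.

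For the coupling I would run Wilson's algorithm in both graphs started from the vertices of $B(\ell+1)$, driven by the same simple random walks, and then from the remaining vertices; the skeleton is determined by the first $|B(\ell+1)|$ walks, more precisely by their ``pruned'' initial portions. The two runs agree as long as none of these pruned loop-erased paths is affected by the difference between the boundary at distance $L$ and its absence; by a quantitative sharpening of Lemma \ref{lem:seg_indep} (relating $\hS^{G_L}$ to $\hS$ with a polynomial rate) this difference is felt only once a relevant path escapes a box $B(m)$, an event whose probability one wants to bound by $C\ell^{\beta'}L^{-\alpha'}$. Combined with a transfer-current/Green's-function estimate for the part of the skeleton inside $B(m)$ -- the divergent $\tfrac{2}{\pi}\log L$ terms in $G_L$ cancel in the relevant differences, leaving errors controlled by Theorem \ref{thm:green_asymp} -- this gives the claim after optimising $m$.

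The main obstacle is precisely the bound on the escape probability of the skeleton. A direct estimate through the Beurling bound (Lemma \ref{lem:Beurling}) gives only $\mathbf{P}(\mathfrak{S}\not\subset B(m)) \le C/\log m$: in $d = 2$, because simple random walk is recurrent, the tree-path between two lattice-neighbours reaches distance $m$ with probability of order $1/\log m$, and since $m \le L$ this alone yields merely a $1/\log L$ rate. Overcoming this requires showing that the \emph{non-local} part of the skeleton influences $\eta|_{B(\ell)}$ in an essentially boundary-independent way -- roughly, that conditionally on the skeleton inside $B(m)$ the law of $\eta|_{B(\ell)}$ depends on the exterior only through a low-complexity boundary datum on $\partial B(m)$ whose law converges at a polynomial rate -- and then iterating over a geometric family of scales $\ell = m_0 < m_1 < \dots \le \sqrt{L}$ to turn the per-scale logarithmic loss into a genuine power of $L$ at the cost of a factor $\ell^{\beta}$. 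I expect this step, where the recurrence-driven long-range dependence of sandpile heights on the spanning tree must be tamed using the determinantal structure of the model, to be the crux of the proof.
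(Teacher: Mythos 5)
The paper does not prove this statement; it is quoted directly from Gamlin--J\'arai \cite{GJ14} (their Theorem 4.1), so there is no internal proof in the present paper to compare your attempt against.

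Your reduction to the spanning-tree picture is sensible: by the burning bijection, $\eta|_{B(\ell)}$ is determined by the subtree of $\frT$ spanning $B(\ell+1)$ together with the burning-time comparisons of adjacent vertices, and these comparisons are governed by the tree-paths up to the relevant merge points. Coupling via Wilson's algorithm with shared driving walks is also a natural move, and you correctly flag that a naive Beurling/recurrence estimate caps the per-scale agreement probability at $1 - c/\log m$, far from the required $1 - C\ell^\beta L^{-\alpha}$.

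The gap, which you identify yourself, is that the mechanism for turning this logarithmic loss into a genuine power of $L$ is only gestured at, and as stated it does not close. The events ``skeleton escapes $B(m_i)$'' across a geometric family of scales $m_0 < m_1 < \dots$ are not independent -- they are nested and the largest-scale event already has probability $\Theta(1/\log L)$ -- so a naive iteration or union bound cannot produce a polynomial rate. What is really needed is a structural statement of a different kind: either that the far-away part of the skeleton influences $\eta|_{B(\ell)}$ in a way that can be matched between $\mu_L$ and $\wsf$ even when the skeleton escapes $B(m)$, or that the event on which the distant tree structure actually changes the burning-order comparisons near $o$ has polynomially small probability for a reason intrinsic to the burning bijection. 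Your sketch correctly locates this as the crux and honestly declines to supply it; the proposal is therefore incomplete precisely where the content of the theorem lies.
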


\begin{proof}[Proof of Theorem \ref{thm:last-k-waves}.]
(i)--(ii) We showed in Proposition \ref{prop:out-rad-tight} that 
for any fixed $1 \le k < \infty$, the sequence $R^{(k)}_\rout = R^{(k)}_{L,\rout}$, 
$L \ge 1$, is tight. Therefore, we have
\begin{equation}
\label{e:waves-tight} 
  \lim_{\ell \to \infty} \limsup_{L \to \infty} 
     \nu_{L} ( N \ge k,\, \text{$\cW_{N-i+1} \not\subset B(\ell)$ 
       for some $1 \le i \le k$} ) 
   = 0. 
\end{equation}

We establish weak convergence of $\xi^L_k$. Fix $\eps > 0$, and let
$\ell$ and $L_0$ be such that for all $L \ge L_0$ the probability appearing in 
\eqref{e:waves-tight} is $\le \eps$. Let $\zeta \in \cR'_{B(\ell)}$ be a 
configuration with the following properties:\\
\ \\
(a) $(a'_{o,{B(\ell)}})^{j}(\zeta) \in \cR'_{B(\ell)} \setminus \cR_{B(\ell)}$ for
$j = 1, \dots, k-1$ and $(a'_{o,{B(\ell)}})^k(\zeta) \in \cR_{B(\ell)}$. \\
(b) In the stabilization $(a'_{o,{B(\ell)}})^k(\zeta)$ none of the 
boundary vertices of $B(\ell)$ topple; \\
\ \\
In other words, $\zeta$ is an intermediate configuration in $B(\ell)$,
corresponding to a $k$-th last wave such that all of the last $k$ waves 
stay inside $B(\ell)$. We first show that 
\[ \lim_{L \to \infty} \nu_{L} \big( (\xi^L_k)_{B(\ell)} = \zeta \,\big|\, N \ge k \big) \]
exists for any $\zeta$ satisfying (a)--(c).

First observe that the properties of $\zeta$ imply that for any
$\eta_* \in \cR'_{L} \setminus \cR_L$, if $(\eta_*)_{B(\ell)} = \zeta$ then 
$\eta_*$ is an intermediate configuration, in the graph $G_L$, corresponding to a 
$k$-th last wave, and the last $k$ waves all stay in $B(\ell)$. 
In particular, using that the $k$-th last wave stays inside
$B(\ell)$, property \eqref{e:path-property} of the bijection 
implies that $\zeta$ determines a unique pair $(A_0,B_0)$, 
with $A_0 \cup B_0 = B(\ell)$, such that whenever 
$\eta_* \in \cR'_{L} \setminus \cR_L$ and $\eta_*\vert_{B(\ell)} = \zeta$,
then we have $V(T_o(\eta_*)) = B_0$. 
Therefore, using Lemma \ref{lem:wave-to-recur} we can write
\eqnspl{e:dist-zeta}
{ \nu_{L} \left( N \ge k,\, \xi^L_k\vert_{B(\ell)} = \zeta \right) 
  &= \frac{ \big| \{ \eta_* \in \cR'_{L} \setminus \cR_L : 
     \eta_*\vert_{B(\ell)} = \zeta \} \big| }{ \big| \cR_{L} \big| } \\
  &= g_L(o,o) \, \nu_{L,o} \left( \eta_* : 
     \eta_*\vert_{B(\ell)} = \zeta \right). }
For any $\eta_*$ appearing in the right hand side of \eqref{e:dist-zeta}, let
$\eta = \eta_*\vert_{V_L \setminus B_0}$.
Due to the burning process, the conditional distribution of $\eta$, given the event 
$\{ \eta_*\vert_{B_0} = \zeta\vert_{B_0},\, V(T_o(\eta_*)) = B_0 \}$, 
equals that of a recurrent sandpile in the subgraph $G^{B_0}_L$ of $G_L$ induced by 
the set of vertices $V(L) \cup \{ s \} \setminus B_0$ (i.e. with closed boundary condition 
at $B_0$). Hence the last expression in \eqref{e:dist-zeta} equals
\eqnspl{e:zeta-cond}
{ &q_L^{-1} \, \nu_{L,o} \big( \eta_* : \eta_*\vert_{B_0} = \zeta_{B_0},\, 
    V(T_o(\eta_*)) = B_0 \big)
    \, \nu_{G^{B_0}_L} \big( \eta : \eta\vert_{A_0} = \zeta\vert_{A_0} \big). }
Since the wired spanning forest in the subgraph of $\Z^2$ induced by 
$\Z^2 \setminus B_0$ is one-ended, we can apply \cite[Theorem 3]{JW14}  
to deduce that the last factor in \eqref{e:zeta-cond} has a limit as 
$L \to \infty$. The first factor equals
\eqn{e:num-trees}
{ \frac{1}{| \cT_{B_0} |} \, q_L^{-1} \, \mu_{L,o} \left( V(\frT_{L,o}) = B_0 \right), }
where $| \cT_{B_0} |$ is the number of spanning trees in the graph induced by $B_0$.
Due to Proposition \ref{prop:cond-lim-exists}, the product of the second and third factors in \eqref{e:num-trees} approaches $p_{A_0,B_0}$, as $L \to \infty$. 
This implies the existence of the limit 
\eqnst
{ \lim_{L \to \infty} \nu_L \left( N \ge k,\, (\xi^L_k)\vert_{B(\ell)} = \zeta \right)
  =: c_k(\zeta). }
Summing over all $\zeta$ satisfying (a)--(c), and using the choice of $\ell$ made after \eqref{e:waves-tight}, we get
\eqnst
{ \left| \limsup_{L \to \infty} \nu_L ( N \ge k ) - 
    \liminf_{L \to \infty} \nu_L ( N \ge k ) \right|
  \le \eps. }
But since the left hand side does not depend on $\ell$, we have that 
the limit $\lim_{L \to \infty} \nu_L (N \ge k ) =: b_k$ exists, proving
statement (i). It follows that 
\eqnst
{ \lim_{L \to \infty} \nu_L \big( (\xi^L_k)_{B(\ell)} = \zeta \,\big| N \ge k \big)
  = \frac{c_k(\zeta)}{b_k}
  =: \rho_k \left( \xi_k : (\xi_k)\vert_{B(\ell)} = \zeta \right). }
Statement (ii) follows immediately from this.

(iii) Observe that the proof of parts (i)--(ii) shows that 
up to a set of measure $0$, the support of $\rho_k$ can be partitioned into
a countable disjoint union of cylinder sets, such that on each element of
the partition, the stabilization $(a'_{o,{\mathbb{Z}^2}})^k (\xi_k)$ 
takes place within a finite set $B(\ell)$.

(iv) The countable partition into cylinder sets has the further property that 
the map $\xi_k \mapsto a'_{o,{\mathbb{Z}^2}}(\xi_k)$ is measure
preserving on each cylinder set of the partition. Hence the claim
follows.

(v) Let $\eps > 0$ be fixed. Let $N_{B(\ell)}$ denote the number of times 
$o$ topples if all topplings in $B(\ell)$ are carried out, but no 
site in $B(\ell)^c$ is allowed to topple. Due to the $\nu$-a.s.\ convergence 
$N_{B(\ell)} \uparrow N$, there exists $1 \le \ell < \infty$
such that $\nu ( \{ N = k \} \Delta \{ N_{B(\ell)} = k \} ) < \eps$,
where $\Delta$ denotes symmetric difference. Let 
\eqnst
{ F_{\ell,k} 
  = \{ \text{$N_{B(\ell)} = k$ and some boundary vertex of $B(\ell)$ topples} \}. }
Since $\{ N = k \} \cap F_{\ell,k}^c$ is a cylinder event, we have
\eqnsplst
{ \nu ( N = k )
  &\ge \nu ( N = k,\, F_{\ell,k}^c )  
  = \lim_{L \to \infty} \nu_L ( N = k,\, F_{\ell,k}^c ) 
  \ge \lim_{L \to \infty} \nu_L ( N = k ) - \eps(\ell,k), }
where $\eps(\ell,k) \to 0$ as $\ell \to \infty$, due to \eqref{e:waves-tight}.
It follows that $b_k - b_{k+1} = \lim_{L \to \infty} \nu_{L} ( N = k ) \le \nu ( N = k )$. 

For an inequality in the other direction, we write:
\eqnsplst
{ \nu ( N = k )
  &\le \nu ( N = k,\, F_{\ell,k}^c ) + \nu ( F_{\ell,k} ) 
  = \nu ( N_{B(\ell)} = k,\, F_{\ell,k}^c ) + \nu ( F_{\ell,k} ) \\
  &= \lim_{L \to \infty} \nu_L ( N_{B(\ell)} = k,\, F_{\ell,k}^c )
    + \lim_{L \to \infty} \nu_L ( F_{\ell,k} ) \\
  &\le \lim_{L \to \infty} \nu_L ( N = k )
    + \limsup_{L \to \infty} \nu_L ( N > k,\, F_{\ell,k} )
    + \lim_{L \to \infty} \nu_L ( N = k,\, F_{\ell,k} ). }
Due to \eqref{e:waves-tight}, the third term on the right hand side is at most $\eps(\ell,k) \to 0$
as $\ell \to \infty$. Therefore, it is enough to show that 
\eqn{e:on-F-ell-k}
{ \lim_{\ell \to \infty} \limsup_{L \to \infty} 
    \nu_{L} \left( N > k,\, F_{\ell,k} \right)
  = 0. }

We fix $0 < \delta < \alpha / \beta$ (where $\alpha, \beta$ are the 
constants from Theorem \ref{thm:quantitative}). 
Let $r(i) = L^{\delta/\rho^i}$, $i = 0, \dots, k$, where 
the constant $1 < \rho < \infty$ will be chosen later.
Recall we denote by $\eta_1, \dots, \eta_k \in \cR'_L \setminus \cR_L$ the 
first $k$ waves corresponding to $\eta \in \cR_L$. We define the events
\eqnst
{ H(i)
  = \left\{ \parbox{8.5cm}{the $i$-th wave $\cW(\eta_i)$ does not topple any vertices in
    $B(r(i))$ after it has reached $\partial B(r(i-1))$} \right\}, \quad
    i = 1, \dots, k. }
Recalling property \eqref{e:path-property} of the bijection,
an argument similar to the one made in Proposition \ref{prop:in-rad-tight}
yields
\eqn{e:H(i)^c-bound}
{ \nu_{L} (H(i)^c)
  \le C \, (\log L) \, r(i-1)^{-1/4} r(i)^{9/4}
  \le C \, (\log L) \, L^{2 \delta \rho^{-i}} \, L^{-\delta \rho^{-i} (\rho - 1) / 4}. }
We choose $\rho > 9$, so that 
the right hand side of \eqref{e:H(i)^c-bound} 
goes to $0$ as $L \to \infty$.
Therefore, in order to prove \eqref{e:on-F-ell-k}, it is enough to bound 
\eqn{e:H's}
{ \nu_{L} \left( N > k,\, F_{\ell,k},\, H(1) \cap \dots \cap H(k) \right)
  \le \nu_{L} \left( N > k,\, N_{B(\ell)} = k,\, H(1) \cap \dots \cap H(k) \right). }

Suppose now that we are given a configuration $\eta \in \cR_{L}$. Let us 
carry out the first wave up to $\partial B(r(0))$, that is, stop the first
wave when a vertex of $B(r(0))^c$ would need to be toppled, if any.
Then carry out the second wave up to $\partial B(r(1))$, the third
wave up to $\partial B(r(2))$, etc. Let $F'$ denote the event
that during the $k$-th ``partial wave'' defined this way, all neighbours of the
origin topple. The event in the right hand side of \eqref{e:H's}
implies the event $F'$. This is because the event $\{ N > k \}$, in the
presence of $H(1) \cap \dots \cap H(k)$, implies that the origin can be toppled
a $k+1$-st time after the first $k$ partial waves. 

Observe that $F'$ is measurable with respect to the pile inside $B(r(0))$,
and $r(0) = L^\delta$. Hence, using Theorem \ref{thm:quantitative}, and
$\nu ( \{ N = k \} \Delta \{ N_{B(\ell)} = k \} ) \le \eps$, 
the right hand side of \eqref{e:H's} is at most
\eqnsplst
{ \nu_{L} \left( F',\, N_{B(\ell)} = k \right)
  &\le \nu \left( F',\, N_{B(\ell)} = k \right) 
      + C L^{\delta \beta} L^{-\alpha}
  \le \nu ( F',\, N = k ) + \eps + C L^{-\alpha + \beta \delta} \\
  &= \eps + C L^{-\alpha + \beta \delta}, }
where the last equality follows from the fact that $F' \subset \{ N > k \}$.
Due to the choice of $\delta$, the second term goes to $0$, as $L \to \infty$.
Since $\eps$ is arbitrary, we obtain statement (v) of the Theorem. 
\end{proof}

We can now complete the proof of Theorem \ref{thm:last-k-limit}.

\begin{proof}[Proof of Theorem \ref{thm:last-k-limit}]
(i) This follows immediately from Theorem \ref{thm:last-k-waves}(ii)--(iii).

(ii) This follows from Theorem \ref{thm:last-k-waves}(i),(v).

(iii) This follows from Theorem \ref{thm:last-k-waves}(ii),(v), since on the
event $\{ N = k \}$ we can approximate by cylinder events on which 
no vertex topples outside a fixed ball.
\end{proof}

The following lemma completes the proof of the $d = 2$ case of Proposition \ref{prop:lim-toppling}.

\begin{lem}
\label{lem:approx}
We have
\[ \nu ( z \in \av )
   = \lim_{L \to \infty} \nu_L ( z \in \av ). \]
\end{lem}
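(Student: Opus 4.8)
The plan is to prove the two one‑sided estimates $\liminf_{L\to\infty}\nu_L(z\in\av)\ge\nu(z\in\av)$ and $\limsup_{L\to\infty}\nu_L(z\in\av)\le\nu(z\in\av)$ separately. The main obstacle, compared with the proof of Proposition \ref{prop:lim-toppling} for $d\ge 3$, is that $\nu(R<\infty)=1$ is not available when $d=2$, so one cannot localize the avalanche inside a fixed ball with $\nu$‑probability close to $1$. The resolution will combine the last‑$k$‑wave theory of this section with one elementary structural fact about infinite avalanches.

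\emph{Spreading.} If $N=N(o;\eta)=\infty$, then in any exhaustive toppling sequence for $\eta+\mathbf 1_o$ every vertex of $\Z^2$ topples; hence $\{N=\infty\}\subseteq\{z\in\av\}$ for every $z\in\Z^2$, and therefore $\nu(N=\infty)=\nu(z\in\av,\,N=\infty)$. To see this, observe that a neighbor $e\sim o$ receives one grain each time $o$ topples, hence infinitely many grains along the sequence; if $e$ toppled only finitely often it would eventually be and remain unstable with no further toppling, contradicting exhaustiveness, so $e$ topples infinitely often, and iterating along nearest‑neighbor paths gives the claim.

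For the lower bound I would use the restricted avalanche $\av_{B(M)}$, defined (as with the restricted topplings in the proof of Lemma \ref{lem:ptwise}) as the set of vertices that topple when $\eta+\mathbf 1_o$ is stabilized allowing topplings only inside $B(M)$, with closed boundary. This is a function of $\eta|_{B(M)}$, so $\{z\in\av_{B(M)}\}$ (for $M\ge|z|$) is a cylinder event; the family $\av_{B(M)}$ is non‑decreasing in $M$ with $\av_{B(M)}\uparrow\av$, by the monotonicity of restricted stabilizations and the fact that $z$'s first toppling in the full stabilization is produced by finitely many topplings confined to some $B(M)$; and $\av_{B(M)}$ coincides with the analogous restricted avalanche in $G_L$ whenever $B(M)\subseteq V(L)$, so $\{z\in\av_{B(M)}\}\subseteq\{z\in\av\}$ under $\nu_L$. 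Applying the weak convergence $\nu_L\to\nu$ to the cylinder event $\{z\in\av_{B(M)}\}$ gives $\liminf_{L\to\infty}\nu_L(z\in\av)\ge\lim_{L\to\infty}\nu_L(z\in\av_{B(M)})=\nu(z\in\av_{B(M)})$, and letting $M\to\infty$ yields $\liminf_{L\to\infty}\nu_L(z\in\av)\ge\nu(z\in\av)$.

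For the upper bound I would fix $k\ge 1$ and split $\nu_L(z\in\av)=\nu_L(z\in\av,\,N\le k)+\nu_L(z\in\av,\,N\ge k+1)$. Since on $\{N=j\}$ we have $\av=\bigcup_{i=1}^{j}\wave_{L,i}$, and $\{z\in\wave_i\}$ is a continuity set for the weak convergence in Theorem \ref{thm:last-k-limit}(iii) (the limiting waves being a.s. finite), parts (ii)--(iii) of that theorem give $\nu_L(z\in\av,\,N=j)\to\nu(z\in\av,\,N=j)$ for each fixed $j$, hence $\nu_L(z\in\av,\,N\le k)\to\nu(z\in\av,\,N\le k)$. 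The remaining term is at most $\nu_L(N\ge k+1)$, which tends to $b_{k+1}$ by Theorem \ref{thm:last-k-waves}(i). Thus $\limsup_{L\to\infty}\nu_L(z\in\av)\le\nu(z\in\av,\,N\le k)+b_{k+1}$. Letting $k\to\infty$, using $\nu(N=j)=b_j-b_{j+1}$ from Theorem \ref{thm:last-k-waves}(v) (so that $b_{k+1}\downarrow\nu(N=\infty)$) together with the spreading identity $\nu(N=\infty)=\nu(z\in\av,\,N=\infty)$, gives $\limsup_{L\to\infty}\nu_L(z\in\av)\le\nu(z\in\av,\,N<\infty)+\nu(z\in\av,\,N=\infty)=\nu(z\in\av)$. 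Combining the two bounds finishes the proof; the points requiring the most care will be the monotone limit $\av_{B(M)}\uparrow\av$ and the fact that the last‑$k$‑wave convergence of Theorem \ref{thm:last-k-limit} applies to the event $\{z\in\wave_i\}$.
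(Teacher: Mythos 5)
Your proof is correct, but it takes a different route from the paper's for part of the argument. The paper's proof rests on the stronger deterministic observation that there is a \emph{finite} $k = k(z)$ such that $N > k$ forces $z \in \av$, both on $\Z^2$ and in $V(L)$ for $L$ large; this makes $\{z\in\av\}$ exactly the disjoint union $\{N\ge k+1\}\cup\{N\le k,\, z\in\av\}$, and both pieces converge by Theorems \ref{thm:last-k-waves}(v) and \ref{thm:last-k-limit}(iii), so the two one-sided inequalities drop out of a single identity. You use only the qualitative spreading fact $N=\infty\Rightarrow z\in\av$ (which you justify directly by nearest-neighbor propagation), and consequently must treat $\liminf$ and $\limsup$ separately. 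Your upper bound mirrors the paper's computation — decomposing over $N\le k$ versus $N\ge k+1$ and invoking Theorem \ref{thm:last-k-limit}(ii)--(iii) — but with an additional $k\to\infty$ limit in place of the paper's finite $k(z)$, together with $b_{k+1}=\nu(N\ge k+1)\downarrow\nu(N=\infty)=\nu(z\in\av,\,N=\infty)$. Your lower bound, via the monotone approximation $\av_{B(M)}\uparrow\av$ by restricted stabilizations, is a genuinely different ingredient not present in the paper, and it requires no input from Section \ref{sec:last-k-waves}. The trade-off is clear: the paper's argument is shorter once one grants the quantitative spreading implication (stated but not proved in detail), whereas yours is somewhat longer but relies only on a self-evident limiting version of that fact and otherwise on steps that are either elementary or explicitly cited.
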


\begin{proof}
For a sufficiently large number $k = k(z)$, we have the deterministic 
implication 
\eqnst
{ N > k \quad \Rightarrow \quad z \in \av, }
both in $\Z^2$, and in $V_L$ for $L$ sufficiently large. 
With such $k$, we have
\eqnsplst
{ \nu ( z \in \av )
  &= \nu ( N \ge k+1 ) + \sum_{\ell = 1}^k \nu ( N = \ell,\, z \in \av ) \\
  &= \lim_{L \to \infty} \nu_L ( N \ge k+1 ) + 
     \sum_{\ell = 1}^k \lim_{L \to \infty} \nu_L ( N = \ell,\, z \in \av ) 
  = \lim_{L \to \infty} \nu_L ( z \in \av ). }
In the second equality, we applied Theorem \ref{thm:last-k-waves}(v) to the
first term. In the second term, a.s.~finiteness of the last $\ell$ waves 
allows us to approximate $\{ N = \ell,\, z \in \av \}$ by a cylinder event,
and the equality follows.
\end{proof}

\begin{proof}[Proof of Theorem \ref{thm:2d-upper}] 
(i) The bound follows immediately from the Proposition \ref{prop:out-rad-tight}.

(ii) Since on the event $\{ N \le k \}$ we have $R \le \max \{ R_1, \dots, R_k \}$,
this also follows Proposition \ref{prop:out-rad-tight}.
\end{proof}

We now prove Theorem \ref{thm:exp-wave-infinite}. The idea of the argument is that, if $f(x) := \E_\nu n(o,x)$ were finite, then by invariance of $\nu$ under $a_o$, $f$ would have to be a bounded harmonic function, hence constant. This is in contradiction with the structure of the avalanche. We first give two short lemmas on which this argument will be based.

  \begin{lem}
    Assuming $\nu(N < \infty) = 1$, we have $\nu(R < \infty) = 1$.
    \label{lem:finiteav}
  \end{lem}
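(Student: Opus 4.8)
The plan is to deduce this immediately from the tail bounds on the last $k$ waves established earlier in this section. The point is that on the event $\{ N = k \}$ the avalanche cluster is the union $\av = \wave_1 \cup \dots \cup \wave_k$ of the $k$ waves (viewed as a set of vertices), so there $R = \max_{1 \le i \le k} R(\wave_i)$; hence finiteness of $R$ on $\{N = k\}$ reduces to finiteness of each of the $k$ waves.

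First I would invoke Theorem \ref{thm:2d-upper}(ii): for every fixed $k \ge 1$ and every $r \ge 1$,
\[
  \nu \big( R \ge r,\, N \le k \big) \le C_k \, r^{-\alpha_k}.
\]
(Equivalently, one may use the a.s.\ finiteness of all $k$ waves on $\{N = k\}$ from Theorem \ref{thm:last-k-waves}(v).) Letting $r \to \infty$ gives $\nu(R = \infty,\, N \le k) = 0$ for each $k$, and then letting $k \to \infty$ gives $\nu(R = \infty,\, N < \infty) = 0$ by monotone convergence. Combining this with the hypothesis $\nu(N < \infty) = 1$ yields
\[
  \nu(R = \infty) = \nu(R = \infty,\, N < \infty) + \nu(R = \infty,\, N = \infty) = 0,
\]
that is, $\nu(R < \infty) = 1$.

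There is essentially no further obstacle here: all the real work is already contained in Propositions \ref{prop:in-rad-tight} and \ref{prop:out-rad-tight} and Theorem \ref{thm:last-k-waves}(v), which is why the proof is short. The only thing to keep track of is the bookkeeping between $\{N = k\}$, $\{N \le k\}$ and the (a priori possible) case $\{N = \infty\}$; this last case is exactly where the hypothesis $\nu(N < \infty) = 1$ enters, and without it the conclusion can fail, consistent with the fact that $\nu(R < \infty) = 1$ is open in two dimensions.
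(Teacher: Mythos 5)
Your proof is correct and is exactly the argument the paper has in mind: the paper's proof is the single sentence ``This follows easily from Theorem \ref{thm:2d-upper},'' and your expansion via $\nu(R \ge r,\, N \le k) \le C_k r^{-\alpha_k}$, letting $r \to \infty$ and then $k \to \infty$, is the intended unpacking of that sentence. The bookkeeping you flag at the end — that the hypothesis $\nu(N<\infty)=1$ is precisely what kills the $\{N=\infty\}$ contribution — is the right thing to be careful about.
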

  \begin{proof}
    This follows easily from Theorem \ref{thm:2d-upper}.
  \end{proof}

Let $a_o$ denote the operation on stable sandpiles on $\Z^2$ which maps $\eta$ to $(\eta + 1_o)^\circ$ if a finite stabilization is possible (i.e., if $ S  < \infty$). Then the preceding lemma implies if $\nu(N < \infty) = 1$, there exists a set $\Omega$ with $\nu(\Omega) = 1$ such that $a_o$ is defined on $\Omega$. Given such an $\Omega$, the next lemma shows that, similarly to $\nu_L$,  the infinite-volume measure $\nu$ is invariant under $a_o$.

\begin{lem}
 Assuming $\nu(N < \infty) = 1$,  $\nu$ is invariant under $a_o$. That is, for any $\nu$-integrable function $f$,
\[\int f(a_o \eta)\, \nu(\mathrm{d} \eta) = \int f(\eta)\,\nu(\mathrm{d} \eta)\ . \]
\end{lem}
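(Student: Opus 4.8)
The plan is to reduce the claim to bounded cylinder functions and then exploit the $a_{o,L}$-invariance of $\nu_L$ together with the weak convergence $\nu_L \to \nu$, using $\nu(R < \infty) = 1$ (Lemma~\ref{lem:finiteav}) to confine the avalanche to a fixed ball so that the discontinuous operator $a_o$ becomes locally computable.

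First I would introduce, for each $M \ge 1$, the event
\[ G_M := \big\{ \eta : \text{in the stabilization of } (\eta + \mathbf{1}_o)\vert_{B(M)} \text{ in } G_{B(M)} \text{, no vertex of } B(M) \text{ adjacent to } \Z^2\setminus B(M) \text{ topples} \big\}. \]
This is a cylinder event, as it depends only on $\eta\vert_{B(M)}$. The key deterministic facts are: (a) on $G_M$, the toppling sequence producing the $G_{B(M)}$-stabilization of $(\eta+\mathbf{1}_o)\vert_{B(M)}$ loses no grain to the sink and leaves everything outside $B(M)$ untouched, so by the Abelian property it is simultaneously the stabilization of $\eta+\mathbf{1}_o$ in $\Z^2$ and in $G_L$ for every $L$ with $B(M) \subset V(L)$; in particular $R(\eta) \le M$, and $a_o\eta$ and $a_{o,L}\eta$ agree on $B(M)$; (b) conversely, if $R(\eta) < \infty$ then $\eta \in G_M$ for every $M$ large enough that no vertex of $\av$ is adjacent to $\Z^2\setminus B(M)$. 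Hence $\{R<\infty\} = \bigcup_M G_M$, and since $\nu(R<\infty)=1$ by Lemma~\ref{lem:finiteav}, we get $\nu(G_M) \uparrow 1$.

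Next, fix a bounded cylinder function $f$ depending only on $B(\ell)$, with $\|f\|_\infty = \Lambda$, and fix $\eps>0$; choose $M>\ell$ with $\nu(G_M)>1-\eps$. By fact (a), $h(\eta) := \mathbf{1}_{G_M}(\eta)\,f(a_o\eta)$ is a bounded cylinder function of $\eta\vert_{B(M)}$, and for every $L$ with $B(M)\subset V(L)$ one also has $h(\eta) = \mathbf{1}_{G_M}(\eta)\,f(a_{o,L}\eta)$. Using the $a_{o,L}$-invariance of $\nu_L$ (Section~\ref{ssec:graphs}), $\int f(a_{o,L}\eta)\,d\nu_L = \int f\,d\nu_L$, so $\big|\int h\,d\nu_L - \int f\,d\nu_L\big| = \big|\int \mathbf{1}_{G_M^c}(\eta)f(a_{o,L}\eta)\,d\nu_L\big| \le \Lambda\,\nu_L(G_M^c)$. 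Letting $L\to\infty$ and invoking weak convergence $\nu_L\to\nu$ for the cylinder functions $h$, $f$ and $\mathbf{1}_{G_M}$ gives
\[ \Big| \int h\,d\nu - \int f\,d\nu \Big| \le \Lambda\,\nu(G_M^c) \le \Lambda\eps. \]
On the other hand $\big|\int h\,d\nu - \int f(a_o\eta)\,d\nu\big| = \big|\int \mathbf{1}_{G_M^c}(\eta)f(a_o\eta)\,d\nu\big| \le \Lambda\eps$ (here $a_o$ is $\nu$-a.s.\ defined, again by Lemma~\ref{lem:finiteav}). Combining, $\big|\int f(a_o\eta)\,d\nu - \int f\,d\nu\big| \le 2\Lambda\eps$, and letting $\eps\to 0$ proves the identity for every bounded cylinder $f$.

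Finally, taking $f=\mathbf{1}_A$ for cylinder events $A$ shows $(a_o)_*\nu$ and $\nu$ agree on a generating algebra, hence $(a_o)_*\nu = \nu$ as Borel probability measures; this gives $\int f(a_o\eta)\,\nu(d\eta) = \int f\,d\nu$ for every $\nu$-integrable $f$. (Measurability of $a_o$ is not an issue, since $a_o$ is locally constant, hence continuous, on each cylinder set $G_M$, and these exhaust the full-measure set $\{R<\infty\}$.) I expect the only real work to be bookkeeping in fact (a): verifying carefully that confining the avalanche inside $B(M)$ makes the finite-volume operator $a_{o,L}$ and the infinite-volume operator $a_o$ literally coincide on $B(\ell)$, and that $G_M$ is genuinely a cylinder event.
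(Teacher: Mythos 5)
Your proof is correct: the localization events $G_M$, the identification of $a_o$ with $a_{o,L}$ on $G_M$ via the $G_{B(M)}$-stabilization, and the $L\to\infty$ passage via weak convergence of cylinder functions all work as you describe, and the extension from bounded cylinder $f$ to $\nu$-integrable $f$ via $(a_o)_*\nu = \nu$ is routine. This reconstructs the standard truncation argument that the paper delegates to \cite[Prop.~3.14]{JR08}, which (as the authors remark) only uses almost-sure finiteness of avalanches and hence applies verbatim in $d=2$.
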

\begin{proof}
The argument of \cite[Prop. 3.14]{JR08} carries over exactly. There a similar statement is proved in the case $d \geq 3$, but the argument requires only almost sure finiteness of avalanches.
\end{proof}

\begin{proof}[Proof of Theorem \ref{thm:exp-wave-infinite}]
  If $\nu(N = \infty) > 0$, there is nothing to prove. We thus 
assume that $\nu(N < \infty) = 1$. We first note that, under this assumption, the infinite-volume addition operators are well-defined, since $n(o,x) \le n(o,o)$.

The invariance statement above makes possible a version of the argument underlying Dhar's formula (Lemma \ref{lem:Dhar}). Assume for the sake of contradiction that $\E_\nu N < \infty$. In particular, $0 \leq \mathbb{E}n(o,x)  \leq \E n(o,o) < \infty$ for all $x \in \Z^2$.  Since $N < \infty$, Lemma \ref{lem:finiteav} above shows we can (almost surely) write
\[(\eta + \mathbf{1}_o)^\circ = \eta + \mathbf{1}_o - \sum_{z \in \Z^2} n(o,z) \Delta(z, \cdot)\ , \]
where the sum above has finitely many nonzero terms and $\Delta$ is the graph Laplacian on $\Z^2$. In particular, taking expectations and using the invariance above (which implies $\mathbb{E}_\nu \eta(x) = \mathbb{E}_\nu (\eta + \mathbf{1}_o)^\circ(x)$ for all $x$) gives
\begin{equation}
\label{eq:harmonic}
  \sum_{z \in \Z^2} \E_\nu n(o,z) \Delta(z, x) 
  = \mathbf{1}_o(x)\quad \text{for all } x \in \Z^2\ . 
\end{equation}
In other words, \eqref{eq:harmonic} says that $f(x) = \E_\nu n(o,x)$ is harmonic away from $o$ and has Laplacian $1$ at $o$. Since $f$ is bounded, recurrence of random walk
implies that $f$ is constant. Since $n(o,x) \le n(o,o)$ for all $x$, we have 
$\nu ( \text{$n(o,x) = n(o,o)$ for all $x \in \Z^2$} ) = 1$. However, if all vertices topple, the avalanche is infinite, a contradiction.
\end{proof}

%
%

\section{High-dimensional radius bounds}
\label{sec:highd_rad}
In this section we prove the bounds for the radius of the avalanche for $d\geq 5.$ 
We prove Theorem \ref{thm:rad-gen} in Section \ref{ssec:rad-past-lb}.
We use the results of 
Section \ref{ssec:rad-past-lb} in Sections \ref{sec:lowerbd} and \ref{sec:upperbd} to prove the lower and upper bounds on the radius.

\subsection{Radius bounds on transitive unimodular graphs}
\label{ssec:rad-past-lb}

%
%
%



See \cite[Chapter 8]{LPbook} for background on unimodularity and mass transport.

\begin{proof}[Proof of Theorem \ref{thm:rad-gen}]
(i) We denote $A_x(a,b) = D_x(b) \setminus D_x(a)$.
Consider the following mass transport. When $\diam(\past_x) > r$, 
let $x$ send unit mass distributed equally among all vertices 
$y \in \past_x \cap A_x(r,2r)$. 
Let us write $\sentm(x)$ and $\getm(x)$, respectively, for the amount
sent and received by $x$, respectively. Then 
$\E [ \sentm(o) ] = \mu \big( \diam(\past_o) > r \big)$.
On the other hand, using Jensen's inequality, we have
\eqnsplst
{ \E [ \getm(o) ]
  &= \sum_{x \in A_o(r,2r)} \mu \big( o \in \past_x \big)
    \E \bigg[ \frac{1}{| \past_x \cap A_x(r,2r) |} \,\bigg|\,
       o \in \past_x \bigg] \\
  &\ge \sum_{x \in A_o(r,2r)} \frac{\mu \big( o \in \past_x \big)^2}
    {\E \bigg[ \big| \past_x \cap A_x(r,2r) \big| \, 
    \mathbf{1}_{o \in \past_x} \bigg]}. }
Since $\past_x \cap A_x(r,2r) \subset \frT_o \cap D_o(4r)$, the 
statement follows.

(ii) When $\diam (\frC_x) > 4r$, let $x$ send unit mass distributed
equally among vertices $y \in \frC_x \cap A_x(r,4r)$.
Then $\E [ \sentm(o) ] = \mu \big( \diam(\frC_o) > 4r \big)$.
On the other hand,
\eqnsplst
{ \E [ \getm(o) ]
  &= \sum_{x \in A_o(r,4r)} \mu \big( o \in \frC_x \big)
    \E \bigg[ \frac{\mathbf{1}_{\diam(\frC_x;x) > 4r}}{| \frC_x \cap A_x(r,4r) |} 
    \,\bigg|\, o \in \frC_x \bigg]. }
and the statement follows.


(iii) Although $\wsfo$ is not automorphism invariant, essentially the same 
mass transport can be used as in (ii), due to the fact that the rooted
random network $(\frT_o,o)$ is unimodular, in the sense of \cite{BS01,AL07}.
Let $T \subset V$ be a finite tree, and $x, y \in T$. Define the function
\eqnst
{ f(T,x,y)
  = \begin{cases}
    \mathbf{1}_{y \in T} \, | T \cap A_x(r,4r) |^{-1} & 
      \text{when $\diam(T;x) > 4r$;} \\
    0  & \text{otherwise.}
    \end{cases} }
Let $x$ send the following mass to $y$:
\eqnst
{ F(x,y) 
  = \sum_{T \ni x, y} \wsf_x ( \frT_x = T ) \, f(T,x,y). }
Let $\gamma \in \Gamma$. It is clear that $f(\gamma T, \gamma x, \gamma y) = f(T,x,y)$,
and shifting Wilson's algorithm by $\gamma$ shows that 
$\wsf_x ( \frT_x = T ) = \wsf_{\gamma x} ( \frT_{\gamma x} = \gamma T)$. 
Therefore, $F$ is invariant under the diagonal action of $\Gamma$, and hence
\eqnspl{e:wsfo-m-tr}
{ &\wsfo ( \diam(\frT_o) > 4r )
  = \sum_{x \in V} F(o,x)
  = \sum_{x \in V} F(x,o) \\
  &\qquad = \sum_{x \in A_o(r,4r)} \, 
    \sum_{\substack{T \ni x, o \\ \diam(T;x) > 4r}}
    \frac{\wsf_x ( \frT_x = T )}{| T \cap A_x(r,4r) |}. }
This yields the statement.
\end{proof}

\begin{rem}
Part (iii) of Theorem \ref{thm:rad-gen} can also be deduced from part (ii)
via the following limiting procedure, the details of which we omit.
(This construction was our initial approach to the radius upper bound.) 
Let $\omega \subset V$ be independent site percolation on $V$ with density 
$0 < p \ll 1$. Conditional on $\omega$, let $\wsf_\omega$ denote the measure
on spanning forests of $(V,E)$ where each vertex in $\omega$ is ``wired to 
infinity", in analogy with $\wsfo$. The measure obtained by averaging 
$\wsf_\omega$ over $\omega$ is automorphism invariant. When $x \in \omega$,
let $x$ send or not send mass according to the same rule as in part (ii), 
otherwise let $x$ send no mass. Now let $p \downarrow 0$. Conditional on 
$o \in \omega$ we have $\wsf_\omega \Rightarrow \wsfo$, and the mass 
transport in part (iii) can be recovered in this limit.
\end{rem}

\subsection{Radius lower bound when $d\geq 5$}
\label{sec:lowerbd}

We prove the lower bound using the result of the previous section.

\begin{proof}[Proof of Theorem \ref{thm:radius}(iv), lower bound]
We begin with some terminology. As in \cite{LMS}, let the `past of a vertex $x$' in a spanning tree $T$ of $G_L$, be the union of the connected components 
of $T \setminus \{x\}$, which do not contain $s$. 
We denote this object by $\past_x(T)$. 
Using the characterization of last waves from Section \ref{ssec:interm} and the bijection for intermediate configurations from Lemma \ref{lem:varphi'} we have the following observation:
\begin{align}
\label{eq:1}
 \big| \big\{ \eta \in \R_L : R(\eta) > r \big\} \big| 
 \geq \big| \big\{ (T_o,T_s) \in \ST_{L,o} : R(T_o) > r,\, 
    \text{$e \notin T_o$ for some $e \sim o$} \big\} \big|.
\end{align}
Now, consider the map $\Psi : \ST_L \to \ST_{L,o}$, 
which modifies a spanning tree $T$ by deleting the unique 
edge $\{ o, e \}$ such that $e$ is in the future of $o$ 
and $\{ o, e \}$ belongs to $T$. 
The following properties of the map $\Psi$ are immediate.

\begin{lem}
Let $U \subset \ST_L$. 
Then $|\Psi(U)| \leq |U| \leq 2d \, |\Psi(U)|.$
\end{lem}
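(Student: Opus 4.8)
The plan is to read off both inequalities from an analysis of the fibres of $\Psi$. The bound $|\Psi(U)| \le |U|$ is immediate, since $\Psi$ maps $U$ onto $\Psi(U)$. For the reverse bound it suffices to show that $\Psi : \ST_L \to \ST_{L,o}$ is at most $2d$-to-one, for then
\[ |U| = \sum_{F \in \Psi(U)} \big| U \cap \Psi^{-1}(F) \big|
       \le \sum_{F \in \Psi(U)} \big| \Psi^{-1}(F) \big|
       \le 2d \, |\Psi(U)|. \]
First I would check that $\Psi$ is well defined: viewing $T \in \ST_L$ as rooted at $s$, the vertex $o$ (which satisfies $o \ne s$ and $o \in V(L)$) has a unique neighbour on the path in $T$ from $o$ to $s$, so there is a unique edge of $T$ of the form $\{o,e\}$ with $e$ in the future of $o$; deleting it splits $T$ into two subtrees, one containing $o$ and not $s$ and the other containing $s$, whence $\Psi(T) \in \ST_{L,o}$.

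Next I would describe the fibres explicitly. Fix $F = (T_o,T_s) \in \ST_{L,o}$ with $o \in V(T_o)$ and $s \in V(T_s)$; the claim is
\[ \Psi^{-1}(F) = \big\{ (T_o \cup T_s) \cup \{o,e\} \,:\, e \sim o,\ e \in V(T_s) \big\}. \]
The inclusion $\subseteq$ is clear: if $\Psi(T) = F$, then $T$ is obtained from $T_o \cup T_s$ by restoring the future edge $\{o,e\}$ of $o$ that $\Psi$ deleted, and its second endpoint $e$ lies in the $s$-component, i.e.\ $e \in V(T_s)$ and $e \sim o$. For $\supseteq$, take any $e \sim o$ with $e \in V(T_s)$: then $(T_o \cup T_s) \cup \{o,e\}$ is a spanning tree of $G_L$, as adding the single edge $\{o,e\}$ joins the vertex classes $V(T_o)$ and $V(T_s)$ without creating a cycle; and in this tree the path from $o$ to $s$ begins with $\{o,e\}$ and then runs inside $T_s$, so $\{o,e\}$ is precisely the future edge of $o$, giving $\Psi\big((T_o \cup T_s) \cup \{o,e\}\big) = F$.

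Since distinct choices of $e$ yield distinct trees, $|\Psi^{-1}(F)|$ equals the number of neighbours of $o$ lying in $V(T_s)$, which is at most $\deg_{G_L}(o) = 2d$: for $L \ge 1$ all $2d$ lattice-neighbours of $o$ belong to $V(L)$, so $o$ has exactly $2d$ incident edges in $G_L$. This gives $|\Psi^{-1}(F)| \le 2d$ and hence the lemma. I do not foresee a genuine obstacle here; the only points that need a moment's care are confirming that restoring an edge $\{o,e\}$ with $e$ on the $s$-side reproduces exactly the future edge of $o$ — so that $\Psi$ truly inverts the construction on each fibre — and recording that $o$ has exactly $2d$ incident edges in $G_L$, so that the multiplicity bound is $2d$ rather than something larger.
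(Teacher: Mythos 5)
Your proof is correct and is essentially the only natural argument here; the paper simply states the lemma as ``immediate'' without giving a proof, and your fibre analysis is exactly the underlying reasoning. The key points you make — that restoring an edge $\{o,e\}$ with $e$ on the $s$-side of $(T_o,T_s)$ forces $\{o,e\}$ to be the first edge on the path from $o$ to $s$ (since removing it disconnects $o$ from $s$ and the path is simple), so $\Psi$ genuinely inverts the construction on each fibre; and that $\deg_{G_L}(o)=2d$ since all $2d$ lattice neighbours of the origin lie in $V(L)$ for $L\ge 1$ — are precisely the details that make the ``immediate'' claim rigorous. No gaps.
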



We use the lemma with $U = \{ T \in \ST_L : R(\past_o(T)) > r \}$, for which we have 
\begin{equation*}
 \Psi(U) \subset \big\{ (T_o,T_s) \in \ST_{L,o} : R(T_o) > r,\, 
    \text{$e \notin T_o$ for some $e \sim o$} \big\}.
\end{equation*}
Therefore equation \eqref{eq:1} gives that 
\begin{equation*}
 \dfrac{\big| \big\{ \eta \in \R_L : R(\eta) > r \big\} \big|}{|\R_L|}
 \geq \frac{|\Psi(U)|}{|\R_L|}
 \geq \frac{1}{2d} \, \frac{|U|}{|\R_L|}
 \geq \frac{1}{2d} \, \dfrac{\big| \big\{ T \in \ST_L : R(\past_o(T)) > r \big\} \big|}{ | \ST_L | }.
\end{equation*}
This equation holds for any fixed $r$, and all $L$. Using the observation that for fixed $r$ the events at both ends are cylinder events, and taking the limit as $L\nearrow \infty$, we have,
\begin{equation}\label{past}
  \nu \big( R > r \big)
  \geq \frac{1}{2d} \, \wsf \big( R(\past_o(\frT)) > r \big).
\end{equation}
The proof is completed by applying Theorem \ref{thm:rad-gen}(i).
Using the fact that in dimensions $d \ge 5$ there is probability at least 
$c$ that two independent simple random walks starting at $x$ do not 
intersect, we deduce that $\wsf ( x \in \past_o) \ge c |x|^{2-d}$. 
On the other hand, Wilson's algorithm gives
\eqnsplst
{ \E \big[ | \frT_o \cap B_o(4r) | \, \mathbf{1}_{o \in \past_x} \big]
  &\le \sum_{y \in B_o(4r)} \sum_{v \in \Z^d} 
    \big[ G(o,v) \, G(y,v) \, G(v,x) + G(o,x) \, G(x,v) \, G(y,v) \big] \\
  &\le C \, r^{6-d}. }
Therefore, Theorem \ref{thm:rad-gen}(i) implies $\wsf ( R(\past_o) > r ) \ge c r^{-2}$. 
\end{proof}

\subsection{Radius upper bound when $d\geq 5$}
\label{sec:upperbd}

In this section we prove the upper bound in Theorem \ref{thm:radius}(iv).
Taking $d$ to be $\ell_\infty$ distance, Theorem \ref{thm:rad-gen}(iii) yields
\eqnspl{e:limQ_o}
{ &\wsf_o( \diam (\frT_o) > 4r) \\
  &\qquad = \sum_{x \in \Z^d : r < \| x \|_\infty \le 4r}
    \wsf_x ( o \in \frT_x ) \, \E_{\wsf_x} \bigg[ \frac{\mathbf{1}_{\diam(\frT_x;x) > 4r}}{| \frT_x \cap V_x(4r) 
    \setminus V_x(r) |} \,\bigg|\, x \in \frT_o \bigg] \\
  &\qquad \le C \, r^{2-d} \, \sum_{x \in \Z^d : r < \| x \|_\infty \le 4r}
    \E_{\wsf_x} \bigg[ \frac{\mathbf{1}_{\diam(\frT_x;x) > 4r}}{| \locball_x |} \,\bigg|\, x \in \frT_o \bigg], }
where we wrote $\locball_x = \tree_x \cap  V_x(4r) \setminus V_x(r)$, and 
used $\wsf_x ( o \in \frT_x ) \le G(o,x) \le C \, r^{2-d}$.
We show that for $\delta > 0$ there exists $C_1 = C_1(\delta)$ such that 
the expectation in the right hand side 
is bounded above by $C_1 \, (\log r)^{3 + \delta} \, r^{-4}$. 
This implies the required upper bound on the tail of the diameter.
We are going to use the following theorem of \cite{BJ15} on the lower tail of the volume of $\wsf$ components. Given $D \subset \Z^d$, write $\wsf_{D^c}$ for the wired spanning 
forest measure on the contracted graph $\Z^d / D^c$.

\begin{thm}\label{thm:BJ12}\cite{BJ15}
Let $x, y \in \Z^d$ be such that $\| y - x \|_\infty = 4r$. Let $D \subset \Z^d$ be such $y \in \partial D$, and $V_x(4r) \subset D$. Let $x \leftrightarrow y$ denote the event that in $\mathbf{WSF}_{D^c}$ the path from $x$ to $D^c$ reaches $D^c$ via an edge incident to $y$. There exist constants $C, c > 0$ independent of $D$ and $r$, such that for all $\lambda > 0$ we have
\begin{equation*}
  \mathbf{WSF}_{D^c} \Big( \vert \tree_x \cap V_x(2r) \setminus V_x(r) \vert 
     < \lambda r^4 \,\Big|\, x \leftrightarrow y \Big)
  \le C \exp ( - c \lambda^{-1/3} ). 
\end{equation*}
\end{thm}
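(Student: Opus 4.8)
This statement is quoted from \cite{BJ15}; here is the route I would take to prove it. The plan is to realise $\tree_x$ under $\wsf_{D^c}$ through Wilson's algorithm rooted at the contracted vertex $D^c$, generating first the \emph{future path} $\gamma$ of $x$ (the loop-erased walk from $x$ to $D^c$), and then the ``bushes'' attached to $\gamma$ by the remaining walks. Conditioning on $\{x\leftrightarrow y\}$ forces $\gamma$ to exit $D$ through the edge at $y$; since $V_x(4r)\subseteq D$ and $\|y-x\|_\infty=4r$, the path $\gamma$ must cross all of $V_x(4r)$, and in particular traverses the annulus $V_x(2r)\setminus V_x(r)$. Thus $|\tree_x\cap V_x(2r)\setminus V_x(r)|$ is bounded below by the number of bush vertices that land in the annulus, and it suffices to produce enough of those.

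The first ingredient is a regularity statement for $\gamma$. I would show that, conditionally on $\{x\leftrightarrow y\}$, there is a ``good'' event $\cG$ on which the trace of $\gamma$ inside $V_x(2r)\setminus V_x(r)$ is quantitatively spread out --- e.g.\ for every dyadic scale $1\le \ell\le r$ it meets a positive fraction of an $(r/\ell)$-sized family of disjoint $\ell$-balls across the annulus and carries the expected capacity at each scale --- and on which $\cG^c$ has probability much smaller than $Ce^{-c\lambda^{-1/3}}$. Because $D$ may be far larger than $V_x(4r)$, the conditioning on exiting at the nearby point $y$ can distort the law of $\gamma$; controlling this is done by a Harnack/Green's-function comparison showing the distortion is only polynomial in $r$, after which $\cG$ is established from standard high-dimensional moment and hitting estimates for simple and loop-erased walk (using Theorem \ref{thm:green_asymp} and the finite/infinite LERW comparison of Lemma \ref{lem:seg_indep}).

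Next, conditionally on $\gamma\in\cG$, I would fix a scale $\ell$, select $k=k(\ell)$ well-separated pieces of $\gamma$ inside the annulus, and for each piece grow the portion of $\tree_x$ confined to the $(\ell/10)$-neighbourhood of that piece, writing $Z_i$ for the number of its vertices. Running the relevant stages of Wilson's algorithm one neighbourhood at a time, and abandoning any walk that leaves its designated neighbourhood, allows one to stochastically dominate the $Z_i$ from below by independent random variables; a first- and second-moment computation with Green's functions shows each has mean of the correct order and satisfies $\prob(Z_i\ge c\,\E Z_i)\ge c$. Then $|\tree_x\cap V_x(2r)\setminus V_x(r)|\ge \sum_{i=1}^k Z_i$, a Chernoff bound gives $\prob(\sum_i Z_i<\tfrac12\sum_i\E Z_i\mid\gamma)\le e^{-ck}$, and choosing $\ell$ (hence $k$) so that $\tfrac12\sum_i\E Z_i\ge \lambda r^4$ produces the bound $Ce^{-c\lambda^{-1/3}}$. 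The exponent $1/3$ should emerge from optimising $\ell$: it balances the number of genuinely independent pieces one can extract against the typical volume of a single piece's bush and the $r^4$ normalisation, including the loss incurred by confining each bush to make the pieces independent.

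The step I expect to be the main obstacle is exactly this independence reduction together with uniformity in $D$: the WSF enjoys no exact independence, so one must order Wilson's algorithm carefully and bound the small but nonzero influence both of the abandoned walks and of the already-placed path $\gamma$ on the individual bush volumes, with all estimates uniform over admissible $D$ --- which is where the Harnack-type comparisons for the conditioning $\{x\leftrightarrow y\}$ do the heavy lifting. By contrast, the regularity event $\cG$ and the moment bounds for $Z_i$ are comparatively routine high-dimensional walk computations once this framework is in place.
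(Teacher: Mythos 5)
This theorem is not proved in the paper; it is quoted from Barlow--J\'arai \cite{BJ15} and used as a black box in the proof of Theorem~\ref{thm:radius}(iv), so there is no in-paper proof to compare your proposal against.

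Your sketch is nonetheless essentially the right argument, and the numerology producing the exponent is exactly correct. In $d\ge 5$ the WSF component is four-dimensional: the future path $\gamma$ carries $\asymp\ell^2$ vertices in an $\ell$-cube, and each such vertex has a past of expected volume $\asymp\ell^2$ within that cube, so a confined bush at scale $\ell$ has $\E Z_i\asymp\ell^4$ provided $\gamma$ really does spend a diffusive amount of time, of order $\ell^2$, there. The crossing of the annulus forced by $\{x\leftrightarrow y\}$ supplies $k\asymp r/\ell$ disjoint anchor neighbourhoods, and balancing $\sum_i\E Z_i\asymp r\ell^3$ against $\lambda r^4$ forces $\ell\asymp\lambda^{1/3}r$, hence $k\asymp\lambda^{-1/3}$ and, after the second-moment step and binomial concentration over the $k$ trials, the bound $e^{-c\lambda^{-1/3}}$. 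One point worth tightening: the role of the regularity event $\cG$ is narrower than your multi-scale formulation suggests. The anchors already exist deterministically on $\{x\leftrightarrow y\}$, since $\gamma$ must cross every width-$\ell$ shell of the annulus; what $\cG$ must guarantee is only that $\gamma$ carries length and capacity of order $\ell^2$ in a positive fraction of the $\ell$-cubes it traverses, so that $\E Z_i$ is genuinely of order $\ell^4$ (a straight-line, non-diffusive trace would give only $\ell^3$, and the same optimisation would then yield a worse exponent). That single-scale regularity event has failure probability $\lesssim e^{-ck}$, the same order as the target, and can be absorbed into the constants. You correctly identify uniformity in $D$ under the conditioning $\{x\leftrightarrow y\}$, handled via the Domain Markov Property (Lemma~\ref{lem:dmp}) and Harnack comparisons, together with the decoupling by confinement, as where the technical work in \cite{BJ15} lies.
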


We use Theorem \ref{thm:BJ12} to establish the following regularity estimate.
Fix a positive constant $\delta > 0$. Let us call $\tree_x$ \emph{bad}, if $\diam(\tree_x) > 4r$, but $|\tree_x \cap V_x(2r) \setminus V_x(r) | < (\log r)^{-3-\delta} \, r^4$. 
We show the following lemma.

\begin{lem}
\label{lem:bad}
Let $x \in V_o(4r) \setminus V_o(r)$. We have
\begin{equation*}
 \wsf_x ( \text{$\tree_x$ is bad} )
 \le C \exp ( - c (\log r)^{1 + \delta/3} ). 
\end{equation*}
\end{lem}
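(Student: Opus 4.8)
The plan is to combine the lower‑tail volume estimate of Theorem~\ref{thm:BJ12} with a union bound over the vertex through which the component of $x$ first leaves $V_x(4r)$; the key point is that the bound in Theorem~\ref{thm:BJ12} decays faster than every power of $r$, so the polynomial loss incurred by the union bound is harmless. Set $\lambda = (\log r)^{-3-\delta}$, so that $C\exp(-c\lambda^{-1/3}) = C\exp\!\big(-c(\log r)^{1+\delta/3}\big)$, which is exactly the target bound up to constants. Since $\wsf_x$‑a.s.\ the component $\tree_x$ is finite (because $\wsfo(|\tree_o|<\infty)=1$ for $d\ge 3$), $\diam(\tree_x)$ is a.s.\ finite, and on $\{\diam(\tree_x;x)>4r\}$ the unique path in $\tree_x$ from $x$ to any vertex $w\in\tree_x$ with $\|w-x\|_\infty>4r$ must first exit $V_x(4r)$ at some vertex $y$ of its exterior boundary, with the portion of the path from $x$ to $y$ contained in $V_x(4r)\cup\{y\}$. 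Writing $B_y$ for the event that $y\in\tree_x$ and the $x$–$y$ path in $\tree_x$ lies in $V_x(4r)\cup\{y\}$, we get
\[
\{\tree_x \text{ is bad}\} \ \subset\ \bigcup_{y}\Big(B_y\cap\big\{\,|\tree_x\cap V_x(2r)\setminus V_x(r)|<\lambda r^4\,\big\}\Big),
\]
the union being over the at most $Cr^{d-1}$ vertices $y$ at $\ell^\infty$‑distance $4r+1$ from $x$.

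First I would fix such a $y$ and bound $\wsf_x\big(B_y,\ |\tree_x\cap V_x(2r)\setminus V_x(r)|<\lambda r^4\big)$. Running Wilson's algorithm for $\wsf_x$ (i.e.\ rooted at $\{x\}$ together with infinity) starting from $y$, the event $B_y$ becomes: the loop‑erased walk from $y$ hits $x$ before escaping, and its trace $\gamma$ — which is precisely the $x$–$y$ path in $\tree_x$ — stays in $V_x(4r)\cup\{y\}$; hence $\wsf_x(B_y)\le\wsf_x(y\in\tree_x)=G(x,y)/G(x,x)\le C r^{2-d}$ since $\|x-y\|$ is of order $r$. Conditioning on $B_y$ and on $\gamma$, the remaining configuration of $\tree_x$ inside $V_x(4r)$ is, by the domain Markov property of the wired spanning forest, governed by a spanning‑forest measure in $V_x(4r)$ with prescribed boundary data, and the portion of $\tree_x$ in the annulus $V_x(2r)\setminus V_x(r)$ can only be enlarged by vertices whose paths leave $V_x(4r)$ and return; so by a stochastic‑domination argument it dominates the corresponding quantity under $\wsf_{D^c}$ conditioned on $\{x\leftrightarrow y\}$ for a suitable domain $D\supset V_x(4r)$ with $y\in\partial D$. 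Theorem~\ref{thm:BJ12} with this $\lambda$ then gives
\[
\wsf_x\big(B_y,\ |\tree_x\cap V_x(2r)\setminus V_x(r)|<\lambda r^4\big)\ \le\ C\exp(-c\lambda^{-1/3})\,\wsf_x(B_y)\ \le\ C\exp\!\big(-c(\log r)^{1+\delta/3}\big)\,r^{2-d}.
\]
Summing over the at most $Cr^{d-1}$ choices of $y$ yields $\wsf_x(\tree_x\text{ is bad})\le C\,r\,\exp\!\big(-c(\log r)^{1+\delta/3}\big)$.

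Finally, since $(\log r)^{1+\delta/3}$ grows faster than $\log r$, one has $r\exp\!\big(-c(\log r)^{1+\delta/3}\big)\le \exp\!\big(-\tfrac{c}{2}(\log r)^{1+\delta/3}\big)$ for all $r$ larger than some $r_0=r_0(c,\delta)$, while for $r\le r_0$ the probability is trivially at most $C\exp\!\big(-c(\log r)^{1+\delta/3}\big)$ after enlarging $C$; this gives the lemma. The step I expect to be the main obstacle is the middle one: making precise the comparison between the law of $\tree_x$ under $\wsf_x$ conditioned on $B_y$ and the object controlled by Theorem~\ref{thm:BJ12} under $\wsf_{D^c}$ conditioned on $\{x\leftrightarrow y\}$ — that is, correctly matching up the conditionings and the domain $D$, and verifying that possible re‑entries of $\tree_x$ into $V_x(4r)$ from outside can only help the volume lower bound.
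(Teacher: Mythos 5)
Your overall plan (union bound over exit points $y$, apply Theorem~\ref{thm:BJ12} with $\lambda = (\log r)^{-3-\delta}$, absorb the polynomial factor into the stretched-exponential) is the right skeleton and matches the paper's proof. However, there is a genuine gap at exactly the step you flag as the main obstacle. You want to compare the conditional law of $\tree_x$ under $\wsf_x$ given $B_y$ (and even given the full path $\gamma$) to the conditional law under $\wsf_{D^c}$ given $\{x \leftrightarrow y\}$, via a stochastic domination argument. This is not carried out, and I do not see how it would go through as sketched: conditioning $\wsf_x$ on a specific path $\gamma$ and comparing the resulting annulus volume to the average over paths that $\wsf_{D^c}(\cdot \mid x \leftrightarrow y)$ represents is not a monotonicity statement that the domain Markov property alone supplies; the ``re-entries only help'' heuristic would have to be made rigorous against a precisely specified reference measure, which is exactly the missing work.

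The paper replaces this by a short exact identity, removing the difficulty entirely. First, you do not need the path-localization in $B_y$: if $\diam(\tree_x;x) > 4r$, then the path in $\tree_x$ from $x$ to the far vertex passes through some $y$ with $\|y-x\|_\infty = 4r$, and $y \in \tree_x$; so it suffices to union bound over $\{y \in \tree_x\}$ for such $y$. Second, by reversibility of LERW (\cite[Corollary 11.2.2]{Lawlim}), the loop-erased walk from $y$ to $x$ has the same law (as an unordered path) as the loop-erased walk from $x$ to $y$; running Wilson's algorithm for $\wsf_x$ with first walk from $y$, versus for $\wsf_y$ with first walk from $x$, therefore yields
\[
\wsf_x(\frT_x = T \mid y \in \frT_x) = \wsf_y(\frT_x = T \mid x \in \frT_y)
\]
for every finite tree $T$. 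Now $\wsf_y$ is precisely $\wsf_{D^c}$ for the choice $D = \Z^d \setminus \{y\}$, and $\{x \in \frT_y\}$ is precisely $\{x \leftrightarrow y\}$ in the notation of Theorem~\ref{thm:BJ12}, so the theorem applies directly to the right-hand side; no domination is needed. After that, everything else you wrote --- the bound $\wsf_x(y \in \tree_x) \le C r^{2-d}$, the $O(r^{d-1})$ choices of $y$, and the absorption of the leftover factor of $r$ into the stretched-exponential --- is correct and completes the proof.
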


\begin{proof}
If $\diam(\tree_x) > 4r$, then there exists $y$ with $\| y - x \|_\infty = 4r$ such that 
$y \in \tree_x$. Therefore,
\begin{equation*}
 \wsf_x ( \text{$\tree_x$ is bad} )
 \le \sum_{y : \| y - x \|_\infty = 4r} \wsf_x ( y \in \tree_x ) \, 
     \wsf_x ( \text{$\tree_x$ is bad} \,|\, y \in \tree_x ). 
\end{equation*}
The term $\wsf_x ( y \in \tree_x ) \le C \, r^{2-d}$, and there are $O(r^{d-1})$ terms. 
On the other hand, we have $\wsf_x ( \frT_x = T \,|\, y \in \frT_x ) = 
\wsf_y ( \frT_x = T \,|\, x \in \frT_y )$. This follows from the fact that 
LERW from $y$ to $x$ has the same distribution as LERW from $x$ to $y$, and 
hence we can use these walks in the first step of Wilson's algorithm on the two
sides. See \cite[Corollary 11.2.2]{Lawlim} for "reversibility" of LERW.
Hence Theorem \ref{thm:BJ12} with $D = \Z^d \setminus \{ y \}$ implies that 
\begin{equation*}
 \wsf_x ( \text{$\tree_x$ is bad} \,|\, y \in \tree_x )
 = \wsf_y ( \text{$\tree_x$ is bad} \,|\, x \in \tree_y )
 \le C \, \exp ( - c \, (\log r)^{1 + \delta/3} ).
\end{equation*}
The statement of the lemma follows.
\end{proof}
%

\begin{lem}
\label{lem:inv-moment}
We have
\[ \mathbb{E}_{\wsf_x} \Bigg[\frac{\mathbf{1}_{\diam(\tree_x;x) > 4r}}{\vert\locball_x\vert} \,\Bigg|\, o \in \tree_x \Bigg]
   \le C \frac{(\log r)^{3 + \delta}}{r^4}. \]
\end{lem}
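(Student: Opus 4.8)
The plan is to split the conditional expectation according to whether the cluster $\tree_x$ is \emph{bad} in the sense of Lemma~\ref{lem:bad}, i.e.\ whether $\diam(\tree_x;x) > 4r$ holds while $|\tree_x \cap V_x(2r) \setminus V_x(r)| < (\log r)^{-3-\delta} r^4$. Throughout we may assume $r < \|x\|_\infty \le 4r$, the range relevant in \eqref{e:limQ_o}. Note first that on $\{ \diam(\tree_x;x) > 4r \}$ the path in $\tree_x$ from $x$ to a vertex at $\ell^\infty$-distance more than $4r$ from $x$ must pass through the annulus $V_x(4r) \setminus V_x(r)$ (neighbours differ by at most $1$ in $\ell^\infty$-norm), so $\locball_x \neq \varnothing$ and $1/|\locball_x| \le 1$ there.

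On the event $\{ \diam(\tree_x;x) > 4r \} \setminus \{ \tree_x \text{ bad} \}$ we have, since $V_x(2r) \subset V_x(4r)$,
\[ |\locball_x| = \big| \tree_x \cap V_x(4r) \setminus V_x(r) \big|
   \ge \big| \tree_x \cap V_x(2r) \setminus V_x(r) \big|
   \ge (\log r)^{-3-\delta} r^4, \]
so $\mathbf{1}_{\diam(\tree_x;x)>4r}/|\locball_x| \le (\log r)^{3+\delta} r^{-4}$ there, contributing at most $(\log r)^{3+\delta} r^{-4}$ to the conditional expectation. On $\{ \tree_x \text{ bad} \}$ we just use $1/|\locball_x| \le 1$, so this part contributes at most $\wsf_x ( \tree_x \text{ bad} \,|\, o \in \tree_x )$. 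Thus
\[ \E_{\wsf_x} \bigg[ \frac{\mathbf{1}_{\diam(\tree_x;x) > 4r}}{|\locball_x|} \,\bigg|\, o \in \tree_x \bigg]
   \le \frac{(\log r)^{3+\delta}}{r^4} + \wsf_x ( \tree_x \text{ bad} \,|\, o \in \tree_x ). \]

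It remains to bound the last conditional probability. Writing it as $\wsf_x ( \tree_x \text{ bad},\, o \in \tree_x ) / \wsf_x ( o \in \tree_x )$ and bounding the numerator by $\wsf_x ( \tree_x \text{ bad} )$, Lemma~\ref{lem:bad} gives numerator $\le C \exp ( -c (\log r)^{1+\delta/3} )$. For the denominator we invoke the two-sided estimate $\wsf_x ( o \in \tree_x ) \asymp G(o,x) \asymp r^{2-d}$: the upper bound $\wsf_x ( o \in \tree_x ) \le G(o,x) \le C r^{2-d}$ is already used in \eqref{e:limQ_o}, and the matching lower bound $\wsf_x ( o \in \tree_x ) \ge c\, r^{2-d}$ holds for $d \ge 5$ by the same argument as for $\wsf ( x \in \past_o )$ in the proof of the lower bound in Theorem~\ref{thm:radius}(iv), namely the positive probability that two independent simple random walks started at $x$ avoid one another. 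Hence
\[ \wsf_x ( \tree_x \text{ bad} \,|\, o \in \tree_x )
   \le C\, r^{d-2} \exp \big( -c (\log r)^{1+\delta/3} \big). \]
Since $(\log r)^{1+\delta/3}/\log r = (\log r)^{\delta/3} \to \infty$, the stretched exponential beats every power of $r$, so the right-hand side is $\le (\log r)^{3+\delta} r^{-4}$ for all sufficiently large $r$ (the range needed downstream). Combining with the display above yields the lemma.

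The computations are routine once Lemma~\ref{lem:bad} is available; the only point needing care is the passage from the unconditional estimate of Lemma~\ref{lem:bad} to its version conditioned on $\{ o \in \tree_x \}$, which costs the factor $\wsf_x ( o \in \tree_x )^{-1} \asymp r^{d-2}$ — polynomial in $r$, and therefore harmlessly absorbed by the stretched-exponential decay. All of the substantive work (the lower-tail estimate on the annulus volume, via Theorem~\ref{thm:BJ12}) has already been carried out in establishing Lemma~\ref{lem:bad}, so I do not anticipate a real obstacle here.
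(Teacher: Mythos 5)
Your proof is correct and follows essentially the same route as the paper: split according to whether $\tree_x$ is bad, use $|\locball_x|\ge 1$ together with Lemma~\ref{lem:bad} and the lower bound $\wsf_x(o\in\tree_x)\ge c\,r^{2-d}$ to control the bad contribution, and use the volume lower bound from the definition of ``not bad'' on the complement. The only small stylistic remark is that the lower bound $\wsf_x(o\in\tree_x)\ge c\,r^{2-d}$ needs no non-intersection argument: running Wilson's algorithm for $\wsf_x$ starting with a walk from $o$ gives $\wsf_x(o\in\tree_x)=\prob_o(\xi_x<\infty)=G(o,x)/G(x,x)\asymp r^{2-d}$ directly for any $d\ge 3$.
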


\begin{proof}
When $\tree_x$ is bad, we use that $\vert \locball_x \vert \geq 1$, and hence 
due to Lemma \ref{lem:bad} we have
\begin{equation*}
\begin{split}
  \mathbb{E}_{\wsf_x} \Bigg[\frac{\mathbf{1}_{\diam(\tree_x;x) > 4r}\,   
    \mathbf{1}_{\text{$\tree_x$ is bad}}}{\vert\locball_x\vert} \,\Bigg|\, 
    o \in \tree_x \Bigg]
  &\le \wsf_x ( \text{$\tree_x$ is bad},\, o \in \tree_x ) \, C \, r^{d-2} \\
  &\le \wsf_x ( \text{$\tree_x$ is bad}) \, C \, r^{d-2} \\
  &\le C \exp ( -c (\log r)^{1 + \delta/3} ) r^{d-2} 
  = o(r^{-4}).  
\end{split}
\end{equation*}
Therefore it is enough to consider the contribution when $\tree_x$ is not bad. When this
occurs, we have $\vert \locball_x \vert \ge (\log r)^{-3 - \delta} r^4$, and hence
\begin{equation*}
\begin{split}
  \mathbb{E}_{\wsf_x} \Bigg[\frac{\mathbf{1}_{\diam(\tree_x;x) > 4r}\, 
     \mathbf{1}_{\text{$\tree_x$ is not bad}}}{\vert\locball_x\vert} \,\Bigg|\, 
     o \in \tree_x \Bigg]
  \le \frac{(\log r)^{3 + \delta}}{r^4}.
\end{split}
\end{equation*}
This proves the claim.
\end{proof}

We can now complete the proof of the upper bound in Theorem \ref{thm:radius}(iv).
Lemma \ref{lem:inv-moment} implies
that the right hand side of \eqref{e:limQ_o} is at most 
\eqnst
{ C \, r^d \, r^{2-d} \, (\log r)^{3 + \delta} \, r^{-4} 
  = C \, (\log r)^{3 + \delta} \, r^{-2}. }
This completes the proof.

\section{Bounds on the size}
\label{sec:size}


We now prove Theorem \ref{thm:size}.
Sections \ref{ssec:low-d-upper}, \ref{ssec:size-lbd-low} and \ref{ssec:4D-moments} consider low dimensions,
and Sections \ref{ssec:size-gen-lb}, \ref{ssec:size-lbd-highD} and \ref{ssec:size-ubd-highD}
the case $d \ge 5$.

\subsection{Upper bounds on the size when $d = 3, 4$}
\label{ssec:low-d-upper}

\begin{proof}[Proof of Theorem \ref{thm:size}(ii)--(iii), upper bounds.]
The claimed upper bounds on $\nu(|\av| \geq t)$ follow immediately from Theorem \ref{thm:radius}, 
via the trivial estimate $\nu(|\av| \geq t) \leq \nu(R > c(d)\,t^{1/d})$.
In order to obtain an upper bound on the tail of $S$, 
fix some $k\geq 1$. Recalling that $N$ denotes the number of waves, we have
\begin{align}
  \nu( S > t )
  = \nu( S > t,\, N > k ) + \nu( S > t,\, N \leq k ).
\end{align}
Recalling that $d \ge 3$, we can upper bound the first term in the expression above by 
\eqnst
{ \nu( N > k )
  \le \E_{\nu} N \, k^{-1}
  \le C \, k^{-1}. } 
Next, writing $S^j$ for the size of the $j$-th wave, if $S > t$ and $N \le k$, then we
have $S^j > t/k$ for some $1 \le j \le N$. 
Hence, 
\begin{align}\notag \label{size-low-d-ubd}
 \nu( S > t)
 &\leq C \, k^{-1} + \nu( S^j > t/k, \text{ for some } j \leq N) \\ \notag
 &\leq C \, k^{-1} + C \, \nu ( R > c(d) (t/k)^{1/d} ). 
\end{align}
Due to Theorem \ref{thm:radius}(ii)--(iii), we get the bounds:
\eqnst
{ \nu ( S > t )
  \le \begin{cases}
      C \, k^{-1} + C \, t^{-1/18} \, k^{1/18} & \text{when $d = 3$;} \\
      C \, k^{-1} + C \, t^{-1/16} \, k^{1/16} & \text{when $d = 4$.} 
      \end{cases} }
Optimizing the choice of $k$ yields:
\eqnst
{ \nu ( S > t )
  \le \begin{cases}
      C \, t^{-1/19} & \text{when $d = 3$;} \\
      C \, t^{-1/17} & \text{when $d = 4$.} 
      \end{cases} }
\end{proof}


\subsection{Lower bounds on the size when $2 \le d \leq 4$}
\label{ssec:size-lbd-low}
 
We fix $z = r \, e_1$, where $r = t^{1/d}$, and write $u = \| z \|/10$. 
Recall that in the course of the proof of Lemma \ref{lem:reverse_walk_z} in Section \ref{sec:toppling}, we showed that for $L \ge 100 \, \| z \|$, and $e$ a neighbour of the origin, and with $\pi = \looper S_e[0,\sigma_L]$, we have
\eqn{e:FzL}
{ \prob \left( \pi \cap V_z(u) = \varnothing,\, \xi^{S_z}_o < \xi^{S^z}_\pi \right)
  \ge \begin{cases}
      c \, \prob(\Gamma_{z,L}) \, \log |z| & \text{when $d = 2$;} \\
      (2d)^{-1} \prob(\Gamma_{z,L})     & \text{when $d = 3, 4$.} 
      \end{cases} }
Let us write $F_{z,L}$ for the event in the left hand side of \eqref{e:FzL}.
Our goal will be to show that conditional on the event $F_{z,L}$ (when $z$ is in the last wave), a large number of vertices in $V_z(u)$ are also in the last wave, with probability bounded away from $0$. We will use a second moment argument to prove this, that is based on Proposition \ref{prop:moments-Y} below. 

Let $u' = (1-\eps) \, u$, where we are going to choose $0 < \eps < 1/4$ later.
Consider the following partial cycle popping in $G_L$, defined in three stages; see \cite{W96} and \cite{LPbook} for background on cycle popping. In the first stage, reveal the LERW $\pi$ started at $e$ and ending at $s$. In the second stage, reveal a LERW started at $z$, ending on hitting $\pi \cup \{ o \}$. In the third stage, pop all cycles that are entirely contained in $V_z(u)$ that can be popped. Condition on the event $F_{z,L}$, that is measurable with respect to the result of the first two stages. Let $\pi' = \looper S_z[0, \xi_o]$, and let $\pi'(u')$ be the portion of $\pi'$ from $z$ to the first exit from $V_z(u')$. Let 
\eqnsplst
{ I(x)
  = \{ \text{stage three reveals a path from $x$ 
    to $\pi' \cap V_z(u)$} \}, \quad x \in V_z(u/2). }
For each $x \in V_z(u/2)$ for which $I(x)$ occurs, let $p(x) \in \pi' \cap V_z(u)$ be the point where the revealed path first meets $\pi' \cap V_z(u)$. For technical reasons (that are only required for our argument when $d = 4$), we also define:
\eqnsplst
{ J(x)
  = \left\{ \xi^{S_{p(x)}}_{\pi'(u')} < \sigma^{S_{p(x)}}_{V_z(u)} \right\}. }
Let 
\eqnst
{ Y
  = Y_{u,\eps}
  = \sum_{x \in V_z(u/2)} \mathbf{1}_{I(x)} \, \mathbf{1}_{J(x)}. }
The following proposition states bounds on the first and second moments of $Y$.

\begin{prop}
\label{prop:moments-Y} \ \\
There exist $0 < \eps < 1/4$, $c_1 > 0$ and $C$ such that the following hold. \\
(i) When $d = 2$, we have
\eqnst
{ \E ( Y_{u, \eps} \,|\, F_{z,L} )
  \ge c_1 \, u^2 \qquad \text{ and } \qquad
  \E ( Y_{u, \eps}^2 \,|,\ F_{z,L} ) 
  \le C \, u^4. }
(ii) When $d = 3$, we have 
\eqnst
{ \E ( Y_{u, \eps} \,|\, F_{z,L} ) 
  \ge c_1 \, u^3 \qquad \text{ and } \qquad
  \E ( Y_{u, \eps}^2 \,|\, F_{z,L} ) 
  \le C \, u^6. }
(iii) When $d = 4$, we have
\eqnst
{ \E ( Y_{u,\eps} \,|\, F_{z,L} ) 
  \ge c_1 \, u^4 \, (\log u)^{-1} \qquad \text{ and } \qquad
  \E ( Y_{u,\eps}^2 \,|,\, F_{z,L} )
  \le C \, u^8 \, (\log u)^{-2}. }
\end{prop}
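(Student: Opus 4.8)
The plan is to prove Proposition~\ref{prop:moments-Y} by a direct first- and second-moment computation; the Paley--Zygmund argument that follows then gives that $Y_{u,\eps}$ is of order its mean with probability bounded away from $0$ on $F_{z,L}$. The starting point is to unwind the three-stage cycle popping via Wilson's algorithm. Conditionally on $F_{z,L}$ and on the revealed paths $\pi,\pi'$, we have $\pi\cap V_z(u)=\es$, so the third stage produces a random spanning forest of $V_z(u)$ rooted on $\partial V_z(u)\cup(\pi'\cap V_z(u))$; hence for $x\in V_z(u/2)$ the event $I(x)$ is exactly the event that a simple random walk from $x$ hits $\pi'\cap V_z(u)$ before leaving $V_z(u)$, with $p(x)$ its first hitting point there, and $J(x)$ asks that an independent walk from $p(x)$ reach the backbone $\pi'(u')$ before exiting $V_z(u)$. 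Both moments thereby reduce to random-walk hitting estimates relative to $\pi'(u')$, a loop-erased path from $z$ to $\partial V_z(u')$ passing through $z\in V_z(u/2)$.

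For the first moment, $\E(Y_{u,\eps}\mid F_{z,L})=\sum_{x\in V_z(u/2)}\prob(I(x)\cap J(x)\mid F_{z,L})$, and I would condition on $(\pi,\pi')$ and bound the probability that a walk from $x$ hits $\pi'(u')$ before exiting $V_z(u)$ (on this event $p(x)\in\pi'(u')$ and $J(x)$ holds for free). When $d=2$ this probability is $\ge c$ by the Beurling estimate (Lemma~\ref{lem:Beurling}) applied to the path $\pi'(u')$ joining $z$ to $\partial V_z(u')$, so summing over the $\asymp u^2$ vertices of $V_z(u/2)$ gives $\E(Y_{u,\eps}\mid F_{z,L})\ge c_1u^2$. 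When $d=3,4$ the hitting probability is no longer a constant, and I would instead apply Paley--Zygmund to the number $N_x$ of sites of $\pi'(u')$ visited before $\sigma_{V_z(u)}$: the Green-function asymptotics (Theorem~\ref{thm:green_asymp}) give $\E[N_x\mid\pi']\gtrsim u^{2-d}|\pi'(u')|$ and $\E[N_x^2\mid\pi']\lesssim\E[N_x\mid\pi']\,\sup_{v\in\pi'(u')}\sum_{v'\in\pi'(u')}G_{V_z(u)}(v,v')$, so that $\prob(N_x\ge1\mid\pi')\gtrsim u^{2-d}|\pi'(u')|\big/\sup_v\sum_{v'}G_{V_z(u)}(v,v')$; feeding in the growth and occupation-density estimates for loop-erased walk on scale $u$ makes the right-hand side $\ge c$ in $d=3$ and $\ge c(\log u)^{-1}$ in $d=4$. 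Summing over $V_z(u/2)$ yields the stated first-moment bounds, the logarithm in $d=4$ being the expected critical-dimension correction.

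For the second moment, $\E(Y_{u,\eps}^2\mid F_{z,L})=\sum_{x,x'}\prob(I(x)\cap J(x)\cap I(x')\cap J(x')\mid F_{z,L})$, and I would condition on $(\pi,\pi')$, reveal $x$'s forest component $\mathcal B_x$ in $V_z(u)$ first, and split according to whether $x'$'s component first reaches $\pi'\cap V_z(u)$ directly or first attaches to $\mathcal B_x$. In the direct case, since $J(x')$ forces $p(x')$ to be hitting-accessible to the fixed backbone $\pi'(u')$, the conditional probability is comparable to $\prob(I(x')\cap J(x')\mid F_{z,L})$, so this part sums to $\lesssim(\E(Y_{u,\eps}\mid F_{z,L}))^2$, which is the target in every dimension. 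In the collision case a walk from $x'$ must reach $\mathcal B_x$ before $\sigma_{V_z(u)}$, costing $\sum_{v\in\mathcal B_x}\prob_{x'}(\xi_{\{v\}}<\sigma_{V_z(u)})$ (at most $\sum_v|x'-v|^{2-d}$ when $d\ge3$, and bounded by a Beurling estimate using $\mathcal B_x\subset V_z(u)$ when $d=2$); taking expectations and using standard volume bounds for uniform-spanning-tree bushes, this contribution is of strictly lower order in $u$ and is negligible. This is the role of the technical event $J(x)$: it keeps every counted vertex attached to the single backbone $\pi'(u')$, so the direct term compares cleanly with the first moment rather than being inflated (as could happen in $d=4$) by far re-entry excursions of $\pi'$ into $V_z(u)$.

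The main obstacle is the $d=3,4$ first-moment lower bound, i.e.\ showing that a walk from a typical $x\in V_z(u/2)$ reaches the thin loop-erased backbone $\pi'(u')$ with probability $\ge c$ (resp.\ $\ge c(\log u)^{-1}$). This requires matching upper and lower estimates for the length and local occupation density of a loop-erased walk of diameter $u$ --- in effect a two-sided bound on the ``capacity'' of $\pi'(u')$ of the right order --- for which I would invoke the growth and intersection estimates for loop-erased walk cited in the paper (Masson, Lawler, Shiraishi, Barlow--J\'arai), the $d=4$ logarithmic corrections accounting for the $(\log u)^{-1}$ loss in part (iii). A secondary point is that $\pi'$ is the loop-erasure of a walk conditioned through $F_{z,L}$ to hit $o$ before $\pi$; one checks this conditioning does not distort $\pi'$ on scales up to $u$ using the domain Markov property (Lemma~\ref{lem:dmp}) together with the separation lemma (Lemma~\ref{lem:sep-lem}) from Section~\ref{ssec:steer}.
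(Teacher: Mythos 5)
Your overall strategy---reduce $\E(Y\mid F_{z,L})$ and $\E(Y^2\mid F_{z,L})$ to random-walk hitting estimates relative to the backbone $\pi'(u')$ via the cycle-popping description, then Paley--Zygmund---is the same as the paper's. Three things need comment.

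First, for the $d=2$ first-moment lower bound you invoke the Beurling estimate to claim $\prob_x(\xi_{\pi''(u')}<\sigma_{V_z(u)})\ge c$ uniformly over $x\in V_z(u/2)$. Lemma~\ref{lem:Beurling} gives an escape \emph{upper} bound $C(|x|/n)^{1/2}$ with an unspecified absolute constant $C$; for $\|x-z\|$ a fixed fraction of $u$ this need not be $<1$, so it does not by itself yield a constant hitting probability. The paper's argument is topological: with probability bounded away from $0$ the walk from $x$ completes a loop around $z$ inside $V_z(u')$ before exiting, and any such loop must meet the path $\pi''(u')$ from $z$ to $\partial V_z(u')$. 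You can repair your version by restricting the sum to $x$ with $\|x-z\|\le\delta u$ for $\delta$ small, but as written there is a gap.

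Second, for $d=3,4$ you propose a Paley--Zygmund argument directly on the occupation count $N_x$ of $\pi'(u')$. This is a genuinely different route from the paper's, which applies the Lyons--Peres--Schramm lemma (\cite[Lemma~1.2]{LPS03}) to pass from ``hit the loop-erasure'' to ``the two random walks intersect'', and then invokes Lawler's random-walk intersection estimates. The advantage of the LPS route is that it requires no quantitative control on the length or occupation density of the LERW: your argument needs matching upper and lower bounds on $|\pi''(u')|$ and on $\sup_v\sum_{v'}G_{V_z(u)}(v,v')$ over $v,v'\in\pi''(u')$, which in $d=3$ (non-integer growth exponent) is a nontrivial input you would have to supply. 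The paper also still needs to control the boundary contribution---hence the $\eps$ and the event $J(x)$---but this is done with a pure random-walk estimate (Lemma~\ref{lem:no_int_bdy_4d}), not with LERW geometry.

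Third, and most importantly, the $d=4$ second-moment upper bound $\E(Y^2\mid F_{z,L})\le Cu^8(\log u)^{-2}$ is where the real work lies, and your sketch does not establish it. The trivial bound $Y\le|V_z(u/2)|$ gives $u^8$ without the $(\log u)^{-2}$ saving, so one must show that the two ``hitting events'' each cost a genuine factor of $(\log u)^{-1}$. Your direct/collision dichotomy is the same decomposition the paper uses (Cases (I) and (II)), but the assertion that the ``direct'' term is $\lesssim(\E Y)^2$ is not justified: conditioning on the first walk's forest component $\mathcal B_x$ changes the law of the subsequent walk from $x'$ (it must avoid $\mathcal B_x$ until hitting $\pi'$), and more fundamentally the two events $I(x)\cap J(x)$ and $I(x')\cap J(x')$ are both functions of the same random $\pi'$, whose length fluctuations are precisely of order $\log u$ in $d=4$, so positive correlation through $\pi'$ could inflate the second moment. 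The paper resolves this with a multi-scale dyadic decomposition: one sums over dyadic boxes containing the two intersection points, uses the hitting and intersection estimate \eqref{e:cube-intersect} at each scale, and then tallies a case analysis (I-1)--(I-6), (II-1)--(II-4) over the relative orders of the scales. That bookkeeping, not the dichotomy, is what produces the $(\log u)^{-2}$, and it is missing from your proposal.
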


\begin{proof}[Proof of Theorem \ref{thm:size}(i)--(iii); lower bounds; assuming Proposition \ref{prop:moments-Y}] \ \\
Since on the event $F_{z,L}$, we have $\pi' \subset \frT_{L,o}$, we have
$|\frT_{L,o}| \ge Y$. Hence for any $t' > 0$ we have
\eqnspl{e:t'-bound}
{ \nu_L ( |\av| \ge t' )
  &\ge (2d)^{-1} \frac{\mu_{L,o} ( z \in \frT_{L,o},\, e \not\in \frT_{L,o},\, 
     |\frT_{L,o}| \ge t' )}{\mu_{L,o} ( e \not\in \frT_{L,o} )} \\
  &\ge (2d)^{-1} \, \frac{\prob ( F_{z,L} )}{\prob_e ( \sigma_L < \xi_o )} \, 
     \prob ( Y \ge t' \,|\, F_{z,L} ). }
Let us choose $t' = (1/2) \, c_1 \, u^2$ in $d = 2$; $t' = (1/2) \, c_1 \, u^3$ in $d = 3$, and $t' = (1/2) \, c_1 \, u^4 \, (\log u)^{-1}$ in $d = 4$. The Paley-Zygmund inequality implies that 
\eqn{e:PZ}
{ \prob (Y \ge t' \,|\, F_{z,L} )
  \ge \frac{1}{4} \frac{c_1^2}{C}. }
Letting $L \to \infty$, we obtain the required lower bounds from \eqref{e:t'-bound}, \eqref{e:PZ}, \eqref{e:FzL}, Lemma \ref{lem:Gamma_z,L}, and the dimension-dependent estimates in Sections \ref{sssec:2D}--\ref{sssec:4D}. 
\end{proof}

\begin{proof}[Proof of Proposition \ref{prop:moments-Y}(i),(ii)] \ \\
The upper bounds are immediate from $Y_{u,\eps} \le |V_z(u/2)|$.

For the lower bounds, using the strong Markov property of $S_x$ at time $\xi_{\pi'}$, 
when $S_x$ is at the point $p(x)$, we have
\eqnsplst
{ \prob ( I(x) \cap J(x) \,|\, F_{z,L} )
  &= \prob_x \left( \xi_{\pi'} \le \xi_{\pi'(u')} < \sigma_{V_z(u)} \right) 
  = \prob_x \left( \xi_{\pi'(u')} < \sigma_{V_z(u)} \right). }
Let $\pi'' := \looper S_z[0,\sigma_{V_z(u)}]$, and let $\pi''(u')$ be the portion of
$\pi''$ up to its first exit from $V_z(u')$. Due to Lemma \ref{lem:seg_indep}, there exists $c_2 = c_2(\eps)$, such that the distribution of $\pi'(u')$ is bounded below by $c_2$ times the distribution of $\pi''(u')$. This implies 
\eqnspl{e:hit-pi''}
{ \prob_x \left( \xi_{\pi'(u')} < \sigma_{V_z(u)} \right)
  &\ge c_2 \prob_x \left( \xi_{\pi''(u')} < \sigma_{V_z(u)} \right). }
We lower bound the probability in the right hand side of \eqref{e:hit-pi''} separately in $d = 2, 3$.  

When $d = 2$, we have
\eqnst
{ \prob_x \left( \xi_{\pi''(u')} < \sigma_{V_z(u)} \right)
  \ge \prob_x \left( \text{$S_x$ completes a loop around $z$ before exiting 
     $V_z(u')$} \right)
  \ge c }
with some constant $c > 0$. This follows from the invariance principle;
see for example \cite[Exercise 3.4]{Lawlim}. Summing over $x \in V_z(u/2)$
yields the required lower bound.
  
When $d = 3$, we have
\eqnspl{e:intersect-diff}
{ \prob_x \left( \xi_{\pi''(u')} < \sigma_{V_z(u)} \right)
  &\ge \prob_x \left( \xi_{\pi''} < \sigma_{V_z(u)} \right)
     - \prob_x \left( \sigma_{V_z(u')} \le \xi_{\pi'' \setminus \pi''(u')} 
     < \sigma_{V_z(u)} \right). }
A result of Lyons, Peres and Schramm \cite[Lemma 1.2]{LPS03} states that for two independent copies of the same transient Markov chain, the probability for one path to
intersect the loop-erasure of the other is at least a universal constant $c_3 > 0$
times the probability that the Markov chain paths themselves intersect. Applying this to the random walks $S_z[0,\sigma_{V_z(u)}]$ and $S_x[0,\sigma_{V_z(u)}]$, we have
\eqn{e:intersect-lps-3d}
{ \prob_x \left( \xi_{\pi''} < \sigma_{V_z(u)} \right)
  \ge c_3 \, \prob \left( S_x[0,\sigma_{V_z(u)}] \cap S_z[0,\sigma_{V_z(u)}] 
    \not= \varnothing \right). } 
The right hand side in \eqref{e:intersect-lps-3d} is bounded below by a constant 
$c_3 > 0$, independent of $u$. This can be seen by arguments due to Lawler; by adapting the proof of \cite[Theorem 3.3.2]{Lawler}.

It remains to bound the negative term in \eqref{e:intersect-diff}. For this 
we write
\eqn{e:near-bdry}
{ \prob_x \left( \sigma_{V_z(u')} \le \xi_{\pi'' \setminus \pi''(u')} 
    < \sigma_{V_z(u)} \right)
  \le \prob \left( S_x[\sigma_{V_z(u')},\sigma_{V_z(u)}] \cap 
    S_z[\sigma_{V_z(u')},\sigma_{V_z(u)}] \not= \varnothing \right). }
Consider independent Brownian motions, starting at $x/u$ and $z/u$. Since the paths are continuous, and with probability $1$ they exit the cube $(z/u) + [-1,1]^3$ at different points, the invariance principle implies that the probability in the right hand side of \eqref{e:near-bdry} goes to $0$ uniformly in $u \ge (1/\eps)$, as $\eps \to 0$. 
Therefore, we can fix $\eps > 0$ such that the right hand side of \eqref{e:near-bdry}
is at most $c_3/2$, uniformly in $u$. With such a choice of $\eps$, the first 
moment is bounded below by $c \, u^3$.
\end{proof}

\subsection{Proof of the moment bounds in $d = 4$}
\label{ssec:4D-moments}

\subsubsection{Proof of the first moment bounds}
We begin with the first moment lower bound -- that is, the lower bound in Proposition \ref{prop:moments-Y}(iii). The line of reasoning leading to
\eqref{e:intersect-diff} and \eqref{e:intersect-lps-3d} above holds also for 
$d=4$, and we take these as our starting point. We next establish an 
appropriate analogue of the constant lower bound given above for 
\eqref{e:intersect-diff}. We restrict to $x \notin V_z(u/4)$ for simplicity.
\begin{lem}
  \label{lem:usual_int_boxes}
  There is a $c > 0$ such that, uniformly in $z$ and $x \in V_z(u/ 2) \setminus V_z(u/4)$,
\[\prob\left(S_x[0, \sigma_{V_z(u)}] \cap S_z[0, \sigma_{V_z(u)}] \neq \varnothing \right) \geq c / \log u\ . \]
\end{lem}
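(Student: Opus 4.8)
The plan is a second-moment estimate on the size of the intersection of the two ranges. Write $\sigma=\sigma_{V_z(u)}$ (with the relevant starting point understood) and set
$N=\#\{\,w\in V_z(u):w\in S_z[0,\sigma]\cap S_x[0,\sigma]\,\}$,
so that the left side of the asserted inequality equals $\prob(N\ge1)\ge(\E N)^2/\E[N^2]$ by Cauchy--Schwarz. Since $S_z$ and $S_x$ are independent, the first-entrance decomposition of the Green function gives $\E N=\sum_{w}G_{V_z(u)}(z,w)\,G_{V_z(u)}(x,w)/G_{V_z(u)}(w,w)^2$ and $\E[N^2]=\sum_{w,w'}\prob(w,w'\in S_z[0,\sigma])\,\prob(w,w'\in S_x[0,\sigma])$. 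Throughout one uses the elementary facts $1\le G_{V_z(u)}(w,w)\le G_{\mathbb{Z}^4}(o,o)<\infty$ and $G_{V_z(u)}(a,b)\le G_{\mathbb{Z}^4}(a,b)\le C(1+|a-b|)^{-2}$.

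First I would bound $\E N$ from below by a universal constant. By the diagonal bound it suffices to bound $\sum_w G_{V_z(u)}(z,w)G_{V_z(u)}(x,w)$ from below. Fix a sub-box $\mathcal{R}\subset V_z(u/4)$ of cardinality $\asymp u^4$ lying on the side of $z$ opposite to $x$, so that $|z-w|\asymp u$ and $|x-w|\asymp u$ for all $w\in\mathcal{R}$. Using \eqref{eq:greenmono}, Theorem~\ref{thm:green_asymp}(ii), and the standard interior estimate $G_{V_z(u)}(a,b)\ge c\,u^{-2}$ valid for $a,b\in V_z(u/2)$, both Green functions are $\gtrsim u^{-2}$ on $\mathcal{R}$, whence $\E N\gtrsim|\mathcal{R}|\cdot u^{-2}\cdot u^{-2}=c$. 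This is the step that uses $d=4$: in $d=3$ the same computation yields the constant lower bound invoked there, and in $d\ge5$ it degenerates.

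The substantive step is $\E[N^2]\le C\log u$. A first-visit decomposition (split according to whether $w$ or $w'$ is reached first) gives $\prob(w,w'\in S_z[0,\sigma])\le\big(G_{V_z(u)}(z,w)+G_{V_z(u)}(z,w')\big)G_{V_z(u)}(w,w')$, and the same with $x$ in place of $z$. Substituting, expanding the product, and using the symmetry $w\leftrightarrow w'$ reduces the task to bounding $\sum_{w,w'}G_{V_z(u)}(z,w)G_{V_z(u)}(x,w)G_{V_z(u)}(w,w')^2$ and $\sum_{w,w'}G_{V_z(u)}(z,w)G_{V_z(u)}(x,w')G_{V_z(u)}(w,w')^2$. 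For the first sum, factor out $\sum_{w'}G_{V_z(u)}(w,w')^2\le C\sum_{|v|\le Cu}(1+|v|)^{-4}\le C'\log u$ (the critical four-dimensional convolution estimate), and then use the elementary bound $\sum_{w\in V_z(u)}(1+|z-w|)^{-2}(1+|x-w|)^{-2}=O(1)$, which holds because $|z-x|\asymp u$ (split into the region near $z$, the region near $x$, and the complement, on which both factors are $\asymp u^{-2}$). For the second sum the key input is the convolution bound $\sum_{w'}G_{V_z(u)}(x,w')G_{V_z(u)}(w,w')^2\le C(\log u)(1+|x-w|)^{-2}$ --- one logarithm produced by convolving $G^2$ with $G$ in four dimensions, with the $(1+|x-w|)^{-2}$ spatial decay retained --- after which $\sum_w G_{V_z(u)}(z,w)(1+|x-w|)^{-2}=O(1)$ by the same region split. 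Combining gives $\E[N^2]\le C\log u$, hence $\prob(N\ge1)\ge(\E N)^2/\E[N^2]\ge c/\log u$ for all large $u$; the case of bounded $u$ is immediate since then $\prob(\text{intersect})\ge\prob_x(\xi_z<\sigma)\gtrsim u^{2-d}$ is bounded below.

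The main obstacle is the second-moment bound, specifically keeping the logarithmic powers under control: the naive estimate $\sum_{w'}G_{V_z(u)}(x,w')G_{V_z(u)}(w,w')^2\le C\log u$ without spatial decay would only give $\E[N^2]\le Cu^2\log u$, far too weak, so one genuinely needs the refined convolution bound retaining the $(1+|x-w|)^{-2}$ factor. The remaining ingredients --- the interior Green-function comparisons and the $O(1)$ region-split sums --- are routine, but they rely on the separation $|x-z|\asymp u$, which is exactly what the hypothesis $x\in V_z(u/2)\setminus V_z(u/4)$ supplies.
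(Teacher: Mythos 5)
Your proof is correct and takes essentially the same route as the paper: a second-moment (Paley--Zygmund) argument on the size of the intersection of the two ranges. The paper works with the pair-count $J_x=\sum_{k,\ell}\mathbf{1}_{S_x(k)=S_z(\ell)}$ rather than your distinct-point count $N$, but this only changes the Green-function expressions by factors of $G_{V_z(u)}(w,w)\in[1,G_{\Z^4}(o,o)]$ and is otherwise identical; you also spell out the refined convolution bound $\sum_{w'}G(x,w')G(w,w')^2\lesssim(\log u)(1+|x-w|)^{-2}$ needed for the cross term, which the paper leaves implicit with ``we discuss in detail only the first term.''
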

\begin{proof}
  Fix such an $x$, and consider the number of intersections
\begin{equation}
\label{eq:total_int_no}
J_x:= \sum_{k=0}^{\sigma_{V_z(u)}} \sum_{\ell=0}^{\sigma_{V_z(u)}}\mathbf{1}_{S_x(k) = S_z(\ell)}\ .  \end{equation}
Taking expectations and using Theorem \ref{thm:green_asymp} gives a  $c>0$ such that 
$\E J_x  \geq c$.

On the other hand, $\E J_x^2$ is of order at most $\log u$. By a computation similar to
\cite[Theorem 3.3.2; lower bounds]{Lawler}, we have
\begin{align*}
  \E J_x^2 
&\leq 2 \sum_{y_1, \, y_2 \in V_z(u)}\left[ G(x,y_1)G(z,y_1)G(y_1,y_2)^2 + G(x, y_1) G(z, y_2) G(y_1, y_2)^2 \right]\ .
\end{align*}
Each term above gives a contribution of order $\log u$; we discuss in detail only the first term. By summing first over $y_2$ and using Theorem \ref{thm:green_asymp}(ii) (taking the $n \rightarrow \infty$ limit in this theorem), we get an upper bound of order
\[\log u \sum_{y_1 \in V_z(u)} G(x,y_1) G(z, y_1)\ . \]
For each $y_1$, either $|x-y_1|$ or $|z-y_1|$ is at least $u/8$ since $|x - z|$ is at least~$u/4$. Thus either $G(x,y_1)$ or $G(z,y_1)$ is at most~$Cu^{-2}$.
Summing the other factor over $y_1$ gives a factor of order $u^2$, giving the required upper bound.

Using the second moment method and noting that $S_x$ and $S_z$ intersect if $J_x > 0$ completes the proof.
\end{proof}
It remains to control the negative term of \eqref{e:intersect-diff}. This will be accomplished using the following lemma:
\begin{lem}
\label{lem:no_int_bdy_4d}
  There exists a constant $C_3 > 0$ such that, uniformly in $0 < \varepsilon < 1/2$ and $u$ large (how large depends on $\varepsilon$), and uniformly in $y \in V_z(u) \setminus V_z(u')$,
  \[\prob\left(S_y[0, \sigma_{V_z(u)}] \cap S_z[0, \sigma_{V_z(u)}] \neq \varnothing\right) \leq C_3 \varepsilon / \log u\ . \]
\end{lem}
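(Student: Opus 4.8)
The plan is to stratify the intersection event by the distance of the first intersection point from $\partial V_z(u)$, gaining in each stratum a factor $\varepsilon$ (because $S_y$ starts within $\varepsilon u$ of $\partial V_z(u)$) together with the borderline four-dimensional ``intersection gain'' $1/\log u$. For dyadic $r\ge 1$ put $D_r=\{w\in V_z(u):r\le\dist(w,\Zd\setminus V_z(u))<2r\}$ and let $J_r$ be the number of pairs $(j,k)$ with $j\le\sigma^{S_y}_{V_z(u)}$, $k\le\sigma^{S_z}_{V_z(u)}$ and $S_y(j)=S_z(k)\in D_r$. Every intersection point of $S_y[0,\sigma_{V_z(u)}]$ and $S_z[0,\sigma_{V_z(u)}]$ lies in some $D_r$, so $\prob\big(S_y[0,\sigma_{V_z(u)}]\cap S_z[0,\sigma_{V_z(u)}]\ne\varnothing\big)\le\sum_r\prob(J_r>0)$, and for each $r$ I would combine (a) a first-moment bound on $\E J_r$ carrying the boundary gain, with (b) a four-dimensional second-moment lower bound $\E[J_r\mid J_r>0]\ge c\log r$; together these give $\prob(J_r>0)\le C\E J_r/\max(1,\log r)$.

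For (a), a gambler's-ruin estimate (optional stopping of $x\mapsto|x-z|^{2-d}=|x-z|^{-2}$ in an annulus, cf.\ Theorem \ref{thm:green_asymp}(ii)) shows that $S_y$ reaches distance $s$ from $\partial V_z(u)$ before $\sigma_{V_z(u)}$ with probability $\le C\varepsilon u/s$ for $s\ge\varepsilon u$; tracking the occupation time of $D_r$ this yields $\sum_{w\in D_r}G_{V_z(u)}(y,w)=\E_y\big[\#\{j<\sigma_{V_z(u)}:S_y(j)\in D_r\}\big]\le C\min\{r^2,\ \varepsilon u\,r\}$. On the other hand, for $r\le u/2$ every $w\in D_r$ satisfies $|w-z|\ge u/2$, so $G_{V_z(u)}(z,w)\le G(z,w)\le Cu^{2-d}=Cu^{-2}$ by \eqref{eq:greenmono} and Theorem \ref{thm:green_asymp}(ii). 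Hence $\E J_r=\sum_{w\in D_r}G_{V_z(u)}(z,w)G_{V_z(u)}(y,w)\le C\min\{r^2/u^2,\ \varepsilon r/u\}$ for $r\le u/2$.

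For (b), reveal $S_y[0,\sigma_{V_z(u)}]$ and let $p$ be the first point of $S_z$ lying on its range; on $\{J_r>0\}$ with this $p\in D_r$ (so $\dist(p,\Zd\setminus V_z(u))\ge r$), the strong Markov property makes the continuations of $S_z$ past $p$ and of $S_y$ past the matching time independent simple random walks from $p$, each killed on leaving $V_z(u)$. Their expected number of common visits inside $V_p(r/8)\subset V_z(u)$ is at least $\sum_{w\in V_p(r/8)}G_{V_p(r/8)}(p,w)^2\ge c\sum_{1\le\rho\le r/8}\rho^{d-1}\,\rho^{2(2-d)}=c\sum_{1\le\rho\le r/8}\rho^{\,3-d}\ge c'\log r$, using $d=4$. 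So $\E[J_r\mid\mathcal F]\ge 1+c'\log r$ on $\{J_r>0\}$, and $\prob(J_r>0)\le C\E J_r/\max(1,\log r)$. The remaining ``bulk'' contribution, from intersection points $w$ with $\dist(w,\Zd\setminus V_z(u))>u/2$, is treated separately: such points lie in $V_z(u/2)$, so $S_y$ must reach $V_z(u/2)$, which by the gambler's-ruin estimate has probability $\le C\varepsilon$; conditionally on the first entrance point $y'$ one has $\E\big[\#\{\text{intersections of }S_{y'}[0,\sigma_{V_z(u)}]\text{ and }S_z[0,\sigma_{V_z(u)}]\}\big]\le C$ (a routine first-moment bound via Theorem \ref{thm:green_asymp}(ii), as in Lemma \ref{lem:usual_int_boxes}), while (b) applied with $r\asymp u$ supplies the gain $1/\log u$; altogether the bulk contributes $\le C\varepsilon/\log u$.

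It remains to sum over $r\le u/2$. For $r\le\varepsilon u$ we get $\prob(J_r>0)\le Cr^2/\big(u^2\max(1,\log r)\big)$, whose dyadic sum is dominated by the top scale $r\asymp\varepsilon u$ and is therefore $\le C\varepsilon^2/\log(\varepsilon u)\le C'\varepsilon/\log u$, provided $u$ is large enough (in terms of $\varepsilon$) that $\log(\varepsilon u)\ge\tfrac12\log u$, and using $\varepsilon^2\le\varepsilon$. For $\varepsilon u\le r\le u/2$ we get $\prob(J_r>0)\le C\varepsilon r/(u\log r)$, and since $\log r\asymp\log u$ over this range the (again geometric) dyadic sum is $\le C'\varepsilon/\log u$. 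Adding the bulk term gives the desired bound $\prob\big(S_y[0,\sigma_{V_z(u)}]\cap S_z[0,\sigma_{V_z(u)}]\ne\varnothing\big)\le C_3\,\varepsilon/\log u$. The main obstacle is input (b): it is the four-dimensional second-moment estimate that produces the essential $1/\log u$, and it genuinely must be run scale by scale rather than once at scale $u$, because the first intersection point can fall very close to $\partial V_z(u)$; the shallow strata $D_r$, $r\ll u$, would contribute a spurious $\log\log u$ were it not for the way the $\max(1,\log r)$ in the denominator is offset by the $r^2/u^2$ decay of $\E J_r$ there.
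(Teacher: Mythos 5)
Your high-level strategy — stratifying by the distance of the first intersection point from $\partial V_z(u)$ and trading a smaller local $\log$ gain at shallow strata against the boundary decay of the first moment there — is a genuine alternative to the paper's approach. The paper instead enlarges the box to $V_z(u'')$ with $u'' = (1+\varepsilon)u$ (so that even a boundary-adjacent first intersection has $\varepsilon u > u^{1/2}$ room to develop $\log u$ further intersections), proves $\E J'_x \le C\varepsilon$ for the enlarged count (Lemma~\ref{lem:exp_epsilon}), and then runs the second-moment argument once rather than scale by scale. Your first-moment bounds $\E J_r \le C\min\{r^2/u^2,\, \varepsilon r/u\}$ and the dyadic summation are sound, and the bookkeeping at the end does produce $C\varepsilon/\log u$. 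So the decomposition is a legitimate route.

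However, step (b) contains a real error, not just a gap to fill. You write that after revealing $S_y[0,\sigma_{V_z(u)}]$ and locating the first point $p$ of $S_z$ on its range, ``the strong Markov property makes the continuations of $S_z$ past $p$ \emph{and of $S_y$ past the matching time} independent simple random walks from $p$,'' and you then compute $\sum_{w\in V_p(r/8)} G_{V_p(r/8)}(p,w)^2 \ge c\log r$. This is the expected number of intersections of \emph{two fresh} walks from $p$, but only one continuation is fresh here: you have already revealed all of $S_y$, so $S_y$'s behaviour past the matching time is determined, not fresh. There is no stopping-time pair $(j_0,k_0)$ determining the first intersection whose associated $\sigma$-field leaves \emph{both} continuations fresh; that is a well-known obstruction. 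The correct version of the conditional lower bound must use only one fresh continuation and extract the $\log$ from the \emph{revealed} walk's Green's-function sum near $p$, i.e.\ one needs a high-probability lower bound on $\sum_i G(p, S(i))$ along the revealed trajectory. This is precisely what the paper accomplishes via \cite[Lemma~10.1.2]{Lawlim}, quoted after Lemma~\ref{lem:J_large}; your argument lacks this ingredient and cannot conclude as written. A secondary, easily repaired issue: the further intersections you count lie in $V_p(r/8)$, which is not contained in $D_r$ (since $p$ can be near the edge of the shell), so they need not count towards $J_r$; widening the shell or replacing $J_r$ by a count over a slightly fattened shell $\tilde D_r$ with $\E\tilde J_r \asymp \E J_r$ fixes that. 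But the Markov-property error is the substantive one and must be repaired along the lines of the paper's Lemma~\ref{lem:J_large} before the stratified argument can go through.
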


\begin{proof}[Proof of Proposition \ref{prop:moments-Y}(iii), lower bound;
assuming Lemma \ref{lem:no_int_bdy_4d}]
Note that
\begin{align}
\label{e:sup-over-y}
  \prob_x\left(\sigma_{V_z(u')} \le \xi_{\pi''} < \sigma_{V_z(u)}\right) 
  &\leq \prob \left(S_x[\sigma_{V_z(u')}, \sigma_{V_z(u)}] \cap S_z[0, \sigma_{V_z(u)}] 
     \neq \varnothing\right) \\\nonumber
  &\leq \sup_{y \in \partial V_z(u')} \prob \left(S_y[0, \sigma_{V_z(u)}] 
    \cap S_z[0, \sigma_{V_z(u)}] \neq \varnothing\right)\ .
\end{align}
The above, combined with Lemma \ref{lem:no_int_bdy_4d}, allows the choice of an appropriately small $\varepsilon$ to give a uniform lower bound of $c u^4 / \log u$ for the right-hand side of \eqref{e:intersect-diff}, completing the proof of the first moment of Prop. \ref{prop:moments-Y}(iii). 
\end{proof}

We turn to the proof of Lemma \ref{lem:no_int_bdy_4d}, 
which is an adaptation of the proof of
\cite[Theorem 3.3.2; $d = 4$ upper bound]{Lawler}. 
Let $u'' = (1+\varepsilon) u$. To avoid complications introduced by intersections near the boundary, consider the extended number of intersections
\[ J_x'
   := \sum_{k=0}^{\sigma_{V_x(u'')}} \sum_{\ell=0}^{\sigma_{V_z(u'')}}
      \mathbf{1}_{S_x(k) = S_z(\ell)}\ , \quad x \in V_z(u)\ . \]

\begin{lem}
\label{lem:exp_epsilon}
There is $C$ such that, uniformly in $\varepsilon < 1/2$ and $z$, and in $x \in V_z(u) \setminus V_z(u')$, 
we have
\[ \E J_x' \leq C \, \varepsilon\ . \]
\end{lem}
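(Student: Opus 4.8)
The plan is to compute $\E J_x'$ directly by summing the Green function over the two extended balls. Writing out the expectation,
\[
\E J_x' = \sum_{y \in V_x(u'') \cap V_z(u'')} G_{V_x(u'')}(x,y)\, G_{V_z(u'')}(z,y) \le \sum_{y \in V_z(u'')} G(x,y)\, G(z,y),
\]
where I use the monotonicity \eqref{eq:greenmono} to pass to the full-space Green function $G$, together with the bound $G_A \le G$. (Since we only want an upper bound, the restriction to the intersection of the two balls and the exact stopping domains are irrelevant.) Now split the sum over $y$ according to the distance from $x$. Since $x \in V_z(u) \setminus V_z(u')$, the annular shell $V_z(u'') \setminus V_z(u')$ has inner radius $u' = (1-\eps)u$ and outer radius $u'' = (1+\eps)u$, so it is a set of ``thickness'' of order $\eps u$, hence of cardinality $O((\eps u)\, u^{d-1}) = O(\eps u^4)$ when $d = 4$.

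The key step is the following case analysis on $y \in V_z(u'')$. Since $\dist(x, V_z(u')^c)$ could be as small as zero (as $x$ itself may lie essentially on $\partial V_z(u')$), I cannot bound $G(x,y)$ uniformly; instead, for any $y$, at least one of $|x-y|$ or $|z-y|$ is comparable to $u$ unless $y$ lies within distance $O(\eps u)$ of the shell $\partial V_z(u')$. More precisely: if $|z-y| \le u'/2 = (1-\eps)u/2$, then since $|x-z| \ge u' $ we get $|x-y| \ge u'/2$, so $G(x,y) \le C u^{-2}$ in $d=4$; summing $G(z,y)$ over such $y$ gives a factor of order $\sum_{y} G(z,y)$ over a ball of radius $u$, which is $O(u^2)$. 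Hence the contribution of this region is $O(u^{-2} \cdot u^2) = O(1)$ — but this is too crude, so in fact one must be more careful and observe that for $y$ with $|z - y|$ comparable to $u$, we gain: $G(z,y) \le Cu^{-2}$ and $\sum_y G(x,y)$ over the ball is again $O(u^2)$, giving $O(1)$ again. This still loses the factor $\eps$. The resolution is that for \emph{most} $y$ both $|x-y|$ and $|z-y|$ are of order $u$, so $G(x,y)G(z,y) \le Cu^{-4}$, and multiplying by the volume $O(u^4)$ gives $O(1)$, \emph{not} $O(\eps)$. The gain of $\eps$ therefore must come from the geometry of where $x$ sits: because $x$ is within distance $2\eps u$ of $\partial V_z(u')$, and we are in the regime where $S_z$ starts at the center $z$ and must travel distance $\sim u'$ before it can be near $x$. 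So the right decomposition is to bound $G(z,y)$ crudely by $Cu^{-2}$ on the bulk (valid once $|z-y| \ge u'/2$) — wait, that fails near $z$.

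Let me restate the decisive estimate: write $\E J_x' \le \sum_y G(x,y) G(z,y)$ and split into (a) $y$ with $|y - z| \le u'/2$, (b) $y$ with $|y-z| > u'/2$. In region (a), $|x - y| \ge |x-z| - |z-y| \ge u' - u'/2 = u'/2 \ge cu$, so $G(x,y) \le Cu^{-2}$, and $\sum_{|y-z|\le u'/2} G(z,y) \le C u^2$ — giving $O(1)$, again no $\eps$. So a one-sided split cannot work, and the $\eps$ genuinely comes from a \emph{third} region analysis: the true statement needed is that the random walk from $z$ must reach distance $\approx u'$ from $z$ (probability-$1$ up to leaving the ball, but the \emph{number of steps spent} near $x$, i.e. the Green-function mass $G_{V_z(u'')}(z, \cdot)$ restricted to $V_x(2\eps u)$, is small). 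Concretely one uses: $\sum_{y \in V_x(\eps u)} G(z,y) \le C \sup_{w \in V_x(\eps u)} G(z,w) \cdot |V_x(\eps u)| \le C u^{-2} \cdot (\eps u)^4 = C\eps^4 u^2$ — too small — so this over-counts the wrong set. I expect the \textbf{main obstacle} to be bookkeeping the correct region decomposition so that exactly one factor of $\eps$ (and no more, no less) is extracted; this is precisely the delicate point in Lawler's $d=4$ argument (\cite[Theorem 3.3.2]{Lawler}), and the cleanest route is to follow his computation verbatim, replacing the balls $V_x(u''), V_z(u'')$ by their intersection and using $|x - z| \ge u'$ together with the $d=4$ Green function asymptotics $G(a,b) \asymp |a-b|^{-2}$ from Theorem \ref{thm:green_asymp}(ii), tracking the shell volume $O(\eps u^4)$ as the source of the linear-in-$\eps$ gain. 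Once $\E J_x' \le C\eps$ is in hand, Lemma \ref{lem:no_int_bdy_4d} follows from the second moment method combined with the $\E (J_x')^2 = O(\log u)$ bound (computed exactly as in Lemma \ref{lem:usual_int_boxes}), via $\prob(J_x' > 0) \le (\E J_x')^{?}$ — here one uses the sharper inequality $\prob(S_y \text{ hits } S_z) \le \E J_x' / \inf(\text{conditional intersection count})$, i.e. a Paley–Zygmund-type bound giving the extra $1/\log u$, which is the content of the passage from $\E J_x'$ to the intersection probability in the $d = 4$ case of \cite[Theorem 3.3.2]{Lawler}.
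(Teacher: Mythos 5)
There is a genuine gap in the very first step. By invoking \eqref{eq:greenmono} to pass to the whole-space Green function, $\E J_x' \le \sum_y G(x,y)\,G(z,y)$, you discard exactly the mechanism that produces the factor $\eps$, and the rest of your proposal correctly establishes that free Green functions only yield $O(1)$. The missing idea is that since $x \in V_z(u)\setminus V_z(u')$, the point $x$ sits within distance $2\eps u$ of $\partial V_z(u'')$, so the Green function \emph{killed on exiting $V_z(u'')$} satisfies a gambler's-ruin improvement: with $r=\|x-y\|$,
\[
G_{V_z(u'')}(x,y) \le \prob_x\!\left(\sigma_{V_x(r/2)} < \sigma_{V_z(u'')}\right)\sup_{a\in V_x(r/2)}G(a,y)
\le C\,\min\Big\{\frac{\eps u}{r},\,1\Big\}\,r^{-2},
\]
which is \eqref{eq:gbdshell}. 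The factor $\eps$ lives entirely in this killed-Green-function estimate and not, as you speculate at the end, in any shell-volume count of size $\eps u^4$: one still sums $y$ over the full ball $V_x(u/10)$ (and its complement), and the gain comes from the near-boundary starting point $x$. With the estimate in hand the computation is short: for $y\in V_x(u/10)$ one has $\|z-y\|\ge u/2$, so $G(z,y)\lesssim u^{-2}$ and $u^{-2}\sum_{r\le u/10} r\,\min\{\eps u/r,1\}\lesssim\eps$; for $y$ outside $V_x(u/10)$, $\min\{\eps u/r,1\}r^{-2}\le \eps u/r^3\lesssim\eps u^{-2}$ while $\sum_{y\in V_z(u'')}G(z,y)\lesssim u^2$, giving $C\eps$ again.

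A secondary remark: the closing paragraph conflates a second moment bound $\E(J_x')^2 = O(\log u)$ plus Paley--Zygmund (which would give a \emph{lower} bound on $\prob(J_x'>0)$) with the actual mechanism for Lemma~\ref{lem:no_int_bdy_4d}. What the paper uses is a lower bound on the conditional first moment, $\E[J_x'\mid J_x>0]\gtrsim\log u$ (Lemma~\ref{lem:J_large}), together with $\prob(J_x>0)\le\E J_x'/\E[J_x'\mid J_x>0]$; your last displayed inequality is the right spirit, but the preceding mention of a second moment bound is a red herring.
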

\begin{proof}
We have
\begin{align}
  \E J'_x 
  &\leq \sum_{y \in V_z(u'')} G(z,y) \, G_{V_z(u'')}(x,y) \nonumber\\
  &= \sum_{y \in V_x(u/10)} G(z,y) \, G_{V_z(u'')}(x,y) 
    + \sum_{y \in V_z(u'') \setminus V_x(u/10)} G(z,y) \, G_{V_z(u'')}(x,y)\ . \label{eq:onlynearx}
\end{align}
Let $y \in V_z(u'')$, and write $\|x-y\| = r$. 
A gambler's ruin estimate yields
  \begin{align}
    G_{V_z(u'')}(x,y) 
    &\leq \prob_x \left(\sigma_{V_x(r/2)} < \sigma_{V_z(u'')} \right) 
       \sup_{a \in V_x(r/2)} G(a,y) 
    \leq C \min\left\{\frac{\varepsilon u }{r},\, 1\right\} r^{-2}. \label{eq:gbdshell}
  \end{align}
Consider the first term of \eqref{eq:onlynearx}. 
Using $\| z - y \| \ge u/2$ and \eqref{eq:gbdshell}, this term is at most
\begin{align*}
  \frac{C}{u^2}\left[\sum_{r=1}^{\varepsilon u} \left(r^3 \cdot \frac{1}{r^2}\right) + \sum_{r = \varepsilon u}^{u/10} \frac{\varepsilon u}{r} \cdot r^3 \cdot \frac{1}{r^2} \right] \leq C \varepsilon\ ,
\end{align*}
The second term, using \eqref{eq:gbdshell} again, 
is bounded by $C \, (\eps u / u^3) \sum_{y \in V_z(u'')} G(y,z) \le C \eps$. 
\end{proof}

Recall the definition of $J_x$ from \eqref{eq:total_int_no}. 
We show that, conditional on $\{J_x > 0\}$, the expectation of $J_x'$ is at least 
$c \log u$. This gives the desired upper bound for $\prob(J_x > 0)$. 
\begin{lem}
  \label{lem:J_large}
We can find $r > 0$ such that, uniformly in $0 < \varepsilon < 1/2$, $u$ such that $\varepsilon u > u^{1/2}$, and $x \in \partial V_z(u')$ such that $x$ is at 
least distance $u/10$ from all but one face of $V_z(u)$, we have
\begin{equation}
\label{eq:K_close_intersect}
\E[J_x' \mid J_x > 0] \geq r \log u\ . \end{equation}
\end{lem}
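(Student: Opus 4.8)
The plan is to mimic the classical argument bounding $\E[J_x' \mid J_x > 0]$ from below (as in \cite[Theorem 3.3.2, $d=4$ upper bound]{Lawler}), exploiting the strong Markov property at the (ordered) first intersection point of the two walks $S_x$ and $S_z$. First I would condition on the event $\{ J_x > 0 \}$ and expose the location $w \in V_z(u)$ where the two walk traces first meet (say, the first point visited by $S_x$ that also lies on $S_z[0,\sigma_{V_z(u)}]$, together with the two times $k_0,\ell_0$ at which this happens). Given this data, the future increments $S_x[k_0, \sigma_{V_x(u'')}]$ and $S_z[\ell_0, \sigma_{V_z(u'')}]$ are two independent simple random walks started at the common point $w$ and run until they exit the relevant cubes. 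Since $w \in V_z(u)$ and both cubes $V_x(u'')$, $V_z(u'')$ contain the ball of radius $\sim \eps u \ge u^{1/2}$ around $w$ (using that $x \in \partial V_z(u')$, $u' = (1-\eps)u$, $u'' = (1+\eps)u$, and $\varepsilon u > u^{1/2}$), both walks survive inside a common ball $V_w(m)$ with $m \ge c\,u^{1/2}$. Every further self-coincidence of these two independent walks inside $V_w(m)$ before they leave contributes to $J_x'$, so it suffices to lower-bound the expected number of intersections of two independent SRWs started at the same point and killed on exiting a ball of radius $m$.

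The key estimate is then the standard fact that for two independent simple random walks in $\Z^4$ started at the same site and killed on exiting $V_w(m)$, the expected number of intersection points is of order $\log m$. Concretely,
\[
  \E_w \Bigl[ \sum_{k=0}^{\sigma_{V_w(m)}} \sum_{\ell=0}^{\sigma_{V_w(m)}}
     \mathbf{1}_{S_w(k) = \tilde S_w(\ell)} \Bigr]
  = \sum_{y \in V_w(m)} G_{V_w(m)}(w,y)^2
  \ge c \sum_{y \in V_w(m)} \bigl( |w-y|^{-2} - m^{-2} \bigr)^2
  \ge c' \log m,
\]
where the Green function asymptotics come from Theorem \ref{thm:green_asymp}(ii) and the sum over dyadic shells $|w - y| \sim 2^j$, $1 \le 2^j \le m/2$, each contributing a bounded amount. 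Since $m \ge c u^{1/2}$, this is $\ge c'' \log u$. Combining with the strong Markov property at the first intersection point, $\E[J_x' \mid J_x > 0] \ge c'' \log u$, which is \eqref{eq:K_close_intersect} with $r = c''$.

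A few technical points need care. First, one must make sure the first-intersection decomposition is done in a way that is measurable and that the post-intersection pieces are genuinely independent fresh walks; the cleanest way is to order intersections along $S_x$ (first $k$ with $S_x(k) \in S_z[0,\sigma_{V_z(u)}]$) and then condition on the pair $(S_x[0,k_0], S_z[0,\ell_0])$, after which $S_x[k_0,\cdot]$ and $S_z[\ell_0,\cdot]$ are conditionally independent SRWs from $w$. Second, one should only count intersections of $S_x$ and $S_z$ occurring strictly after their first meeting and while both are still inside $V_w(m) \subset V_x(u'') \cap V_z(u'')$, so that the contribution is a genuine lower bound for $J_x'$ (not $J_x'$ minus the already-counted first intersection; losing a bounded number of intersections does not affect the $\log u$ order). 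Third, the geometric hypothesis that $x$ is far (distance $\ge u/10$) from all but one face of $V_z(u)$, together with $x \in \partial V_z(u')$, is exactly what guarantees that a ball of radius $c\eps u$ around any $w \in V_z(u)$ lies inside both enlarged cubes $V_x(u'')$ and $V_z(u'')$; I would spell this out with the triangle inequality. The main obstacle is not any single estimate but rather setting up the first-intersection conditioning carefully enough that the independence and the containment $V_w(m) \subset V_x(u'') \cap V_z(u'')$ are both valid simultaneously; once that is in place, the $\log u$ lower bound on intersection counts in a ball of radius $u^{1/2}$ is routine from the Green function asymptotics.
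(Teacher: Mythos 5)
Your core calculation---that the expected number of intersections of two independent SRWs started at a common point $w$ and killed on exiting $V_w(m)$ is $\sum_{y}G_{V_w(m)}(w,y)^2 \gtrsim \log m$---is correct, and if the decomposition it rests on were valid it would give a cleaner route than the paper's, which instead treats $S_z$ as a \emph{fixed} conditioned path, obtains a \emph{random} Green-function sum $\sum_i G(w,S_z(\ell_2+i))$, and controls it via the concentration estimate \cite[Lemma 10.1.2]{Lawlim} combined with an a~priori power-law lower bound on $\prob(J_x>0)$. The gap in your version is the conditioning. You set $k_0=\inf\{k: S_x(k)\in S_z[0,\sigma_{V_z(u)}]\}$, $w=S_x(k_0)$, $\ell_0=\inf\{\ell: S_z(\ell)=w\}$, and assert that given $(S_x[0,k_0],S_z[0,\ell_0])$ the two future pieces $S_x[k_0,\cdot]$, $S_z[\ell_0,\cdot]$ are independent fresh walks from $w$. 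This is false for the $S_z$-piece: the event ``$k_0$ is the first hitting time of the range $S_z[0,\sigma_{V_z(u)}]$'' requires $S_x[0,k_0)\cap S_z[0,\sigma_{V_z(u)}]=\varnothing$, and since generically $\ell_0 < \sigma_{V_z(u)}^{S_z}$, this is a nontrivial avoidance constraint on $S_z[\ell_0,\sigma_{V_z(u)}]$---which must avoid the path $S_x[0,k_0)$, whose endpoint is a neighbour of $w$. So $S_z[\ell_0,\cdot]$ is a conditioned walk, not a fresh one, and $\sum_y G^2$ is not its expected intersection count with the fresh $S_x[k_0,\cdot]$. The paper sidesteps this by conditioning on \emph{all} of $S_z[0,\sigma_{V_z(u)}]$ plus $S_x[0,\ell_1]$; then only $S_x[\ell_1,\cdot]$ is fresh, and the residual randomness in $S_z$ is what the cited concentration lemma is there to handle. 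To salvage your approach you would have to argue that the avoidance constraint does not destroy the $\log u$ lower bound for the Green-function functional of $S_z[\ell_0,\cdot]$; that is not automatic and is precisely the claim the concentration route bypasses.

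A secondary issue: the geometric claim that $V_w(c\varepsilon u)\subset V_x(u'')$ for \emph{all} $w\in V_z(u)$ is false. If $w$ is near the face of $V_z(u)$ opposite $x$, then $\|w-x\|_\infty\approx(2-\varepsilon)u > (1+\varepsilon)u = u''$ for $\varepsilon<1/2$, so $w\notin V_x(u'')$ at all. The hypothesis $\varepsilon u>u^{1/2}$ does give $V_w(u^{1/2})\subset V_z(u'')$ for all $w\in V_z(u)$, which is the containment the paper's argument actually uses; keeping $w$ inside $V_x(u'')$ with margin requires either restricting to first intersections not too far from $x$, or rephrasing the bound so only $V_z(u'')$ enters.
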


\begin{proof}
We follow a similar argument to the proofs of \cite[Proposition 10.1.1]{Lawlim} and \cite[Theorem 3.3.2; $d = 4$ upper bound]{Lawler}. On $\{J_x > 0\}$, there is a lexicographically first intersection in $V_z(u)$. Specifically, we can define $\ell_1:= \inf\{j: S_x(j) \in S_z[0, \sigma_{V_z(u)}] \cap V_z(u)\}$ and $\ell_2:= \inf\{j: S_z(j) = S_x(\ell_1)\}$ and note that each $\ell_i$ is smaller than $\sigma_{V_z(u)}$. Using the strong Markov property of $S_x$ at time $\ell_1$, conditionally on $S_z[0, \sigma_{V_z(u)}]$ and $S_x[0, \ell_1]$  
the expected value of $J_x'$ is bounded below by
\begin{align*} 
\E\left[J_x' \mid S_z[0, \sigma_{V_z(u)}], S_x[0, \ell_1]\right] 
&\ge \sum_{i=\ell_2}^{\sigma^{S_z}_{V_z(u'')}} G_{V_z(u'')}(S_z(\ell_2), S_z(i) ) \\
&\geq c \sum_{i=0}^{\sigma_{V_z(\sqrt{u})}} G(z, S_z(i))
\stackrel{\mathrm{d}}{=} c \sum_{i=0}^{\sigma_{V(\sqrt{u})}} G(o, S_o(i)) \ .
 \end{align*}
Here we have used the fact that $G_{V_z(u'')}(a, S_z(\ell_2)) \geq c G(a, S_z(\ell_2))$ for $a \in V_{S_z(\ell_2)}(u^{1/2})$ along with translation invariance, and $\stackrel{\mathrm{d}}{=}$ denotes equality in distribution.

The conclusion of Lemma \ref{lem:J_large} follows immediately from the above, using the following proposition (along with an a priori power law lower bound for $\prob(J_x > 0)$):
\end{proof}

\begin{prop}[{\cite[Lemma 10.1.2]{Lawlim}}]
  For every $\alpha > 0$, there exist $c, r$ such that for all $n$ sufficiently large,
\[\prob\left(\sum_{j=0}^{\sigma_{n} - 1} G(o, S_o(j)) \leq r \log n \right) \leq c n^{-\alpha}\ . \]
\end{prop}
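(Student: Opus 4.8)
The relevant case is $d = 4$ (for $d \ge 5$ one has $\sum_{y} G(o,y)^2 < \infty$ and a stronger statement is immediate, and the $\log n$ normalization is specific to $d=4$), so I will assume $d = 4$ and use throughout the Green function bounds $c_1 |x|^{-2} \le G(o,x) \le c_2 |x|^{-2}$ for $x \ne o$, from Theorem \ref{thm:green_asymp}(ii). The plan is to decompose the path dyadically. Let $m = m(n)$ be the largest integer with $2^{m-3} \le n/8$, so $m = \log_2 n - O(1)$. For $1 \le k \le m$ set $\rho_k = \inf\{ j : \| S_o(j) \| \ge 2^k \}$ (well-defined with $\rho_k < \sigma_n$ and $\|S_o(\rho_k)\| = 2^k$, since the walk must exit $V(n)$), let $A_k = \{ 2^k \le \|x\| < 2^{k+1} \}$, let $N_k = \#\{ j < \sigma_n : S_o(j) \in A_k \}$, and let $\tau_k$ be the number of steps the walk spends in the $\ell^\infty$-ball $B_k := V_{S_o(\rho_k)}(2^{k-2})$ immediately after time $\rho_k$. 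Since $\|S_o(\rho_k)\| = 2^k \le n/8$ we have $B_k \subset V(n/4) \subset V(n)$, so these $\tau_k$ steps all occur before $\sigma_n$ and all lie in $A_{k-1}\cup A_k$; hence $\tau_k \le N_{k-1}+N_k$. Using $|x| \le 2\|x\|$ and rearranging $\sum_k 2^{-2k}(N_{k-1}+N_k) \le 5\sum_k 2^{-2k}N_k$, I get the deterministic chain
\[
  \sum_{j=0}^{\sigma_n-1} G(o,S_o(j)) \;\ge\; c_3 \sum_{k=0}^{m} 2^{-2k} N_k \;\ge\; c_4 \sum_{k=1}^{m} 2^{-2k} \tau_k .
\]
So it suffices to show $\sum_{k=1}^m 2^{-2k}\tau_k \ge r' \log n$ with probability $\ge 1 - cn^{-\alpha}$.

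The key input is a \emph{uniform, exponentially strong} lower-tail bound for each $\tau_k$. Each coordinate of $S_o$ is a martingale with increments in $\{-1,0,1\}$, so by the Azuma–Hoeffding maximal inequality, for any $t \ge 1$,
\[
  \prob\big( \tau_k \le t \,\big|\, \mathcal{F}_{\rho_k} \big)
  = \prob_{S_o(\rho_k)}\!\Big( \max_{j \le t}\|S(j)-S(0)\| \ge 2^{k-2}\Big)
  \le 8\exp\!\big(-4^{k-2}/(2t)\big),
\]
uniformly in $k$ and in the starting point. Taking $t = \varepsilon 4^k$ gives $\prob(\tau_k \le \varepsilon 4^k \mid \mathcal{F}_{\rho_k}) \le 8e^{-1/(32\varepsilon)} =: q(\varepsilon)$, with $q(\varepsilon)\to 0$ as $\varepsilon\downarrow 0$. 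Put $W_k = \mathbf{1}\{\tau_k > \varepsilon 4^k\}$. Because the walk inside $B_k$ never reaches $\ell^\infty$-distance $2^{k+1}$, one has $\rho_k+\tau_k \le \rho_{k+1}$, so for an increasing index set $I = \{k_1<\dots<k_j\}$ each $W_{k_i}$ is $\mathcal{F}_{\rho_{k_j}}$-measurable, and iterating the conditional bound yields $\prob\big(\bigcap_{k\in I}\{W_k=0\}\big) \le q(\varepsilon)^{|I|}$. A union bound over the $\binom{m}{\lceil m/2\rceil}$ subsets of size $\lceil m/2\rceil$ then gives
\[
  \prob\Big( \#\{ 1 \le k \le m : W_k = 0\} \ge m/2 \Big) \le \big(2\sqrt{q(\varepsilon)}\big)^{m}.
\]
Given $\alpha$, I then choose $\varepsilon = \varepsilon(\alpha)>0$ so small that $2\sqrt{q(\varepsilon)} \le 2^{-(\alpha+1)}$; the right-hand side is then at most $2^{-(\alpha+1)m} \le c\,n^{-\alpha}$ for $n$ large. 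On the complementary event at least $m/2 \ge c_5\log n$ of the $W_k$ equal $1$, whence $\sum_{k} 2^{-2k}\tau_k \ge (\varepsilon/4)(m/2) \ge c_6\varepsilon\log n$, and so $\sum_{j<\sigma_n} G(o,S_o(j)) \ge c_4 c_6\varepsilon\log n =: r\log n$, completing the argument with this $r$ and $c$.

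The main obstacle, and the reason the argument is structured as above, is that getting the tail down to \emph{arbitrary} $n^{-\alpha}$ (rather than a fixed polynomial rate) forces more than a Paley–Zygmund-type constant lower bound on each scale's contribution: with only a fixed probability $p$ of a ``good'' scale, an Azuma/stochastic-domination count of good scales over the $\sim\log_2 n$ octaves produces at best $n^{-O(1)}$. What rescues arbitrary $\alpha$ is that the left tail of $\tau_k/4^k$ is exponentially small in $1/\varepsilon$ (the walk simply cannot exit a ball of radius $R$ in time $o(R^2)$), so shrinking the threshold $r=\varepsilon$ makes $q(\varepsilon)$ super-polynomially small and thereby boosts the exponent without bound; the price is that $r$ depends on $\alpha$, which the statement permits. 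The remaining work is routine bookkeeping: restricting to scales $k\le m$ that stay well inside $V(n)$, checking $B_k\subset A_{k-1}\cup A_k$ and $\rho_k+\tau_k\le\rho_{k+1}$ (needed for the measurability in the iterated conditioning), and the constant-chasing in the Green function comparison.
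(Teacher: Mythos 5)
The paper states this as a citation to \cite[Lemma~10.1.2]{Lawlim} and does not reproduce a proof, so there is no in-paper argument to compare against line by line. Your self-contained proof is correct. The engine of the argument — that a simple random walk cannot exit a ball of radius $R$ in $o(R^2)$ steps except with probability of order $e^{-c/\eps}$ when the time cutoff is $\eps R^2$, so shrinking the threshold drives the per-scale failure probability below any prescribed constant — is exactly the right mechanism for turning a $\log n$-many-scales decomposition into an $n^{-\alpha}$ bound with $r$ depending on $\alpha$, and your bookkeeping (nested time windows $[\rho_k,\rho_k+\tau_k]\subset[\rho_k,\rho_{k+1}]$, measurability of $W_{k_i}$ with respect to $\mathcal{F}_{\rho_{k_j}}$ for $i<j$, union bound over $\binom{m}{\lceil m/2\rceil}\le 2^m$ subsets giving $(2\sqrt{q(\eps)})^m$) is sound. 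The deterministic comparison $\sum_j G(o,S_o(j))\ge c\sum_k 2^{-2k}\tau_k$ and the check $B_k\subset A_{k-1}\cup A_k$ are also correct. This is very much the flavor of Lawler's approach; whether it matches the book's proof step for step I cannot confirm, but the structure (dyadic scales, strong Markov at entry times, exponential lower-tail for exit times, Chernoff-style count of good scales) is the standard one.

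A few small points that deserve tidying but do not affect validity: (1) Your definition of $m$ (``largest integer with $2^{m-3}\le n/8$'') gives $2^m\le n$, which is not quite enough to guarantee $B_k\subset V(n)$ at the top scale — you want something like the largest $m$ with $(5/4)2^m\le n$, still $m=\log_2 n - O(1)$. (2) $2^{-2k}\cdot \eps 4^k = \eps$, so the contribution per good scale is $\ge\eps$, not $\eps/4$; your $\eps/4$ is a harmless under-count. (3) The parenthetical about $d\ge 5$ is slightly off: there $\sum_{j<\sigma_n}G(o,S_o(j))$ converges a.s.\ to a finite random variable as $n\to\infty$, so the stated inequality with $r\log n$ on the right would actually be \emph{false} for any fixed $r>0$, not ``stronger''; the correct statement in $d\ge 5$ drops the $\log n$. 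This is purely tangential since the proposition is used by the paper only in the $d=4$ moment computations.
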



\begin{proof}[Proof of Lemma \ref{lem:no_int_bdy_4d}]
Comparing Lemma \ref{lem:J_large}, and Lemma \ref{lem:exp_epsilon}, the claim nearly follows, except that $y \in \partial V_z(u')$ in \eqref{e:sup-over-y} may be closer than distance $u/10$ to more than one face of $V_z(u)$. However, for such $y$ we can replace 
$V_z(u)$ by a larger box $V' \supset V_z(u)$, whose diameter is still of order $u$, in such a way that $y$ is at distance $\eps u$ from the boundary of $V'$, and $y$ is bounded away from the corners of $V'$. Since in $V'$ the intersection probability is larger than in $V_z(u)$, the claim follows.
\end{proof}


The above completes the proof of the lower bound in 
Proposition \ref{prop:moments-Y}(iii).

\medbreak

\subsubsection{Proof of the second moment bounds}

Here we prove the second moment bound in Proposition \ref{prop:moments-Y} (iii).  We first introduce some notation.
We will work with dyadic cubes $\prod_{i=1}^4 [ a_i 2^k, (a_i + 1) 2^k ) \cap \Z^4$, 
where $a_1, a_2, a_3, a_4 \in \Z$, and $k \ge 1$. We say that such 
a cube is of scale $k$. If $Q$ is dyadic cube, we denote by 
$Q'$, respectively, $Q''$, the cubes that are concentric with 
$Q$ and have twice, respectively, four times, the side-length.
Given $v \in \Z^4$, we denote by $Q(v;k)$ the unique dyadic cube 
of scale $k$ containing $v$.

The following inequality recasts the statement of Lemma \ref{lem:no_int_bdy_4d}.
Let $Q$ be a dyadic cube of scale $k$, $p \in Q$, $q \in Q''$, such that 
$\dist_\infty(q,\partial Q'') = \eps 2^k$. There is a constant
$C$ such that 
\eqn{e:cube-intersect}
{ \prob \left( S_q[0,\sigma_{Q''}] \cap S_p[0,\infty) \not= \es \right)
  \le \frac{C \, \eps}{k}. }
%
Fix $x, y \in B_z(u/2)$ and assume that $I(x) \cap J(x) \cap I(y) \cap J(y)$ occurs.
We distinguish the following two cases:
\begin{itemize}
\item[(I)] the paths from $x$ and $y$ to $\pi'$ do not meet;
\item[(II)] the paths from $x$ and $y$ to $\pi'$ meet at a vertex 
$q(x,y) \not\in \pi'$.
\end{itemize}
In Case (I), using that the events $J(x)$ and $J(y)$ occur, let $v, w \in \pi'(u')$, respectively, be the points where $S_{p(x)}$ and $S_{p(y)}$, respectively, first hit $\pi'(u')$. (Note that $v$ and $w$ may coincide.) It follows that 
\eqnsplst
{ \prob ( \mathrm{Case \ (I)} )
  &\le \prob \left( \exists v,w \in \pi'(u') : 
      \xi^{S_x}_v < \sigma^{S_x}_{V_z(u)},\,
      \xi^{S_y}_w < \sigma^{S_y}_{V_z(u)} \right). }
Due to Lemma \ref{lem:seg_indep}, there exists $C_1 = C_1(\eps)$, such that the distribution of $\pi'(u')$ is bounded above by $C_1$ times the distribution of
$\pi''(u')$. Therefore, we have
\eqnsplst
{ \prob ( \mathrm{Case \ (I)} )
  &\le C_1 \, \prob \left( \exists v,w \in \pi''(u') : 
      \xi^{S_x}_v < \sigma^{S_x}_{V_z(u)},\,
      \xi^{S_y}_w < \sigma^{S_y}_{V_z(u)} \right). }
In Case (II), we let $v = q(x,y)$, and noting $p(x) = p(y)$, let $w$ be the point
where $S_{p(x)}$ first hits $\pi'(u')$. Again bounding above by $\pi''(u')$, it follows that 
\eqnsplst
{ \prob ( \mathrm{Case \ (II)} )
  &\le C_1 \, \prob \left( \exists w \in \pi''(u'),\, v \in V_z(u) :  
      \xi^{S_x}_v < \xi^{S_x}_w < \sigma^{S_x}_{V_z(u)},\,
      \xi^{S_y}_v < \sigma^{S_y}_{V_z(u)} \right). } 
We bound the probabilities of Cases (I) and (II) separately. 
The idea of the bound is \emph{not to} sum over $v$ and $w$, but rather, sum over the choice of suitable dyadic cubes that $v$ and $w$ fall into, and use the bound \eqref{e:cube-intersect} for the probability of random walk intersections. Throughout, we write $K$ for the integer such that $2^{K-1} < u \le 2^K$. 


\medbreak

\textbf{Case (I).} 
We may assume without loss of generality that the walk $S_z$ generating $\pi''$ hits $v$ before $w$, as the other case follows by a symmetric argument. For convenience, we assume that $\| z - v \|$, $\| v - w \|$, $\| x - v \|$, $\| y - w \|$ are all at least $32$. At the end of the proof we comment on how to handle the remaining configurations of points.
We define the following dyadic scales and cubes:
\eqnsplst
{ k_v
  &:= \max \left\{ k \ge 1 : 2^{k+4} \le  
     \min \{ \| v - z \|_\infty,\, 
     \| v - w \|_\infty,\, \| v - x \|_\infty \} \right\} \\
  k_w
  &:= \max \left\{ k \ge 1 : 2^{k+4} \le 
     \min \{ \| w - v \|_\infty,\, 
     \| w - y \|_\infty \} \right\} \\
  k_{vw}
  &:= \max \left\{ k \ge 1 : 2^{k+4} \le 
     \| v - w \|_\infty \right\} \\
  Q(v) 
  &:= Q(v;k_v) \qquad\qquad
  Q(w) 
  = Q(w;k_w). } 
We also let 
\eqnspl{e:kx-ky-kz}
{ k_z
  &:= \max \left\{ k \ge 1 : 2^{k+4} \le
      \| z - v \|_\infty \right\} \qquad
  k_x
  := \max \left\{ k \ge 1 : 2^{k+4} \le 
      \| x - v \|_\infty \right\} \\
  k_y
  &:= \max \left\{ k \ge 1 : 2^{k+4} \le 
     \| y - w \|_\infty \right\}. }
A sketch of the argument is as follows: the walks $S_z$ and $S_x$ both have to 
hit $Q'(v)$, and then they intersect at a point of $Q(v)$. Following 
the intersection, the walk $S_z$ has to hit $Q'(w)$, and so does the 
walk $S_y$. These two walks then intersect at a point of $Q(w)$. 
Breaking up the paths into pieces, the various hitting and intersection 
events will give us the estimate:
\eqn{e:main-prob-bound}
{ C \, \frac{\left( 2^{k_v} \right)^2}{\left( 2^{k_z} \right)^2} \,
  \frac{\left( 2^{k_v} \right)^2}{\left( 2^{k_x} \right)^2} \,
  \frac{1}{\log 2^{k_v}} \,
  \frac{\left( 2^{k_w} \right)^2}{\| w - v \|_\infty^2} \,
  \frac{\left( 2^{k_w} \right)^2}{\left( 2^{k_y} \right)^2} \,
  \frac{1}{\log 2^{k_w}}. }
We need to sum this estimate over the choices of $x$ and $y$, and the 
choices of the boxes $Q(v)$ and $Q(w)$. In the summation we will need
to distinguish a number of sub-cases according to the relative 
sizes of the scales $k_v, k_w, k_z, k_x, k_y$. 

We first establish the bound in \eqref{e:main-prob-bound}. This is
provided by the following lemma.

\begin{lem}
\label{lem:main-prob-bound}
\textbf{(Probability bound for Case (I))}
Let $R_1$ and $R_2$ be dyadic boxes of scales $k_1$ and $k_2$,
and let $x, y \in B_z(u/2)$ be points such that:\\
(i) $R_1''$ and $R_2''$ are disjoint;\\
(ii) $\dist(z,R_1''), \dist(x,R_1'') \ge 2^{k_1}$;\\
(iii) $\dist(y,R_2'') \ge 2^{k_2}$.\\
Define $k_x', k_y', k_z'$ by the formulas \eqref{e:kx-ky-kz} where
$Q(v)$ and $Q(w)$ are replaced by $R_1$ and $R_2$, respectively.
Then 
\eqnspl{e:estimate-Ia}
{ &\prob \left[ \text{$\exists v \in R_1$,
    $\exists w \in R_2$ s.t.~Case (I)} \right] 
  \le C \, \frac{\left( 2^{k_1} \right)^2}{\left( 2^{k_z'} \right)^2} \,
  \frac{\left( 2^{k_1} \right)^2}{\left( 2^{k_x'} \right)^2} \,
  \frac{1}{k_1} \,
  \frac{\left( 2^{k_2} \right)^2}{\dist_\infty(R_1,R_2)^2} \,
  \frac{\left( 2^{k_2} \right)^2}{\left( 2^{k_y'} \right)^2} \,
  \frac{1}{k_2}. }
\end{lem}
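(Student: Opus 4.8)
\textbf{Proof plan for Lemma \ref{lem:main-prob-bound}.}
The strategy is to decompose the trajectories of the three walks $S_z$, $S_x$, $S_y$ into segments tied to the dyadic cubes $R_1, R_2$ and their enlargements $R_1', R_1'', R_2', R_2''$, and to estimate the probability of each segment doing what it must. First I would observe that for Case (I) to occur with some $v \in R_1$ and $w \in R_2$, the following must happen (after reordering so that $S_z$ visits a neighbourhood of $v$ before a neighbourhood of $w$, which is the configuration assumed in the surrounding text): (a) $S_z$ must travel from $z$ into $R_1'$; (b) $S_x$ must travel from $x$ into $R_1'$; (c) the two walks must intersect somewhere inside $R_1$ (more precisely inside $R_1''$, which is where the ``first intersection in a ball of scale $k_1$'' lemma \eqref{e:cube-intersect} applies); (d) after leaving $R_1''$, the walk $S_z$ must reach $R_2'$; (e) $S_y$ must travel from $y$ into $R_2'$; (f) these two must intersect inside $R_2$. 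I would make this precise with the strong Markov property at the successive hitting times of $\partial R_1'$, $\partial R_1''$, $\partial R_2'$, and the first-intersection times, so that the six events become (conditionally) independent in the appropriate order.

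Next I would bound each factor. The hitting probabilities (a), (b), (d), (e) are of ``gambler's ruin / Green's function'' type: starting from a point at $\ell^\infty$-distance $\asymp 2^{k_z'}$ (resp.\ $2^{k_x'}$, $\dist_\infty(R_1,R_2)$, $2^{k_y'}$) from a box of scale $k_1$ (resp.\ $k_1$, $k_2$, $k_2$), the probability of ever hitting it is $\asymp (2^{k_1}/2^{k_z'})^2$, etc., using the $d=4$ Green function asymptotics from Theorem \ref{thm:green_asymp}(ii) together with the last-exit / hitting decomposition and a standard covering of $\partial R_i'$ by $O(1)$ balls of scale $k_i$, plus the Harnack principle to remove the dependence on the exact entry point. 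The intersection factors (c) and (f) are exactly of the form controlled by \eqref{e:cube-intersect} (equivalently Lemma \ref{lem:no_int_bdy_4d}): once both walks are forced into $R_i'$ they intersect inside $R_i$ with probability $\le C/k_i$, the crucial $1/\log$ gain in four dimensions. After (c) the walk $S_z$ re-emerges from $R_1''$ at some boundary point; by Harnack I can take a supremum over this entry point, and then (d) is again a hitting estimate at distance $\dist_\infty(R_1,R_2) \asymp 2^{k_2'}$-type from $R_2'$ --- here one must be slightly careful that $\dist_\infty(R_1'',R_2')$ is comparable to $\dist_\infty(R_1,R_2)$, which follows from hypothesis (i) that $R_1''$ and $R_2''$ are disjoint. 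Multiplying the six factors produces exactly the right-hand side of \eqref{e:estimate-Ia}.

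The reductions in hypotheses (ii)--(iii) (that $z$ and $x$ are at distance $\ge 2^{k_1}$ from $R_1''$, and $y$ at distance $\ge 2^{k_2}$ from $R_2''$) are what make the hitting-probability estimates of the stated order valid; they are guaranteed in the application by the definitions of the scales $k_v, k_w$ in the main text, which are chosen maximal subject to $2^{k+4}$ not exceeding the relevant distances, so that the enlarged cubes stay a definite fraction of those distances away from $z, x, y, w$ (resp.\ $v, y$). I would also need to treat the ``small-distance'' degenerate configurations --- when one of $\|z-v\|$, $\|v-w\|$, $\|x-v\|$, $\|y-w\|$ is below the absolute constant ($32$ in the text) --- but in those cases the corresponding scale $k_i$ is $O(1)$, the enlarged cubes have bounded size, and the estimate \eqref{e:estimate-Ia} holds trivially (the suppressed factors are all $\asymp 1$) after adjusting $C$; this is exactly the ``comment at the end of the proof'' alluded to in the surrounding text. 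The main obstacle, and the step requiring the most care, is the bookkeeping in step (d): one must correctly identify which scale governs the $S_z$-excursion from $R_1''$ to $R_2'$ and verify that after taking the Harnack supremum over the exit point of $R_1''$ the resulting bound is $\le C (2^{k_2})^2 / \dist_\infty(R_1,R_2)^2$ and not something larger, i.e.\ that no logarithmic or polynomial loss is incurred in stitching the two intersection events together through the intermediate walk $S_z$. Everything else is a routine assembly of standard four-dimensional random walk estimates.
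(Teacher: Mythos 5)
Your decomposition into six hitting/intersection events is the right skeleton and matches the paper's in spirit, but you have not engaged with the one point the paper explicitly flags as requiring care: after $S_z$ first enters $R_1'$, it may leave $R_1''$ and return several times before its path intersects that of $S_x$. If you simply condition at the first hitting time of $\partial R_1'$ and apply the intersection estimate \eqref{e:cube-intersect} once, you are only bounding the probability of an intersection on the first excursion into $R_1''$, not on the whole event ``$\exists v\in R_1$ such that Case (I) occurs.'' The paper handles this by introducing the crossing counters $\ell_1$ (excursions from $\partial R_1''$ back to $R_1'$) and $\ell_2$ (similarly for $R_2$), decomposing the event over $\ell_1,\ell_2\ge 0$, showing via the strong Markov property that each extra return costs a fixed factor $c_1<1$ (so event (iii) has conditional probability $\le c_1^{\ell_1}$), applying \eqref{e:cube-intersect} to the $\ell_1$-th excursion piece $S_z[T_{\ell_1},\sigma_{\ell_1,R_1''}]$ against the whole of $S_x[\xi_{R_1},\infty)$, and finally summing the geometric series in $\ell_1,\ell_2$. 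Your phrase ``strong Markov property at the successive hitting times ... and the first-intersection times'' is too vague to substitute for this; conditioning on a first-intersection time does not by itself put the two walks in the configuration required by \eqref{e:cube-intersect}, and you never state the geometric summation that controls repeated visits.

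You also misidentify the main obstacle. You single out the bookkeeping of ``which scale governs the $S_z$-excursion from $R_1''$ to $R_2'$'' and whether the Harnack supremum costs anything. In fact that step is a routine $d=4$ hitting estimate of order $(2^{k_2}/\dist_\infty(R_1,R_2))^2$, exactly as you compute, and hypothesis (i) (disjointness of $R_1''$ and $R_2''$) already guarantees the comparability of $\dist_\infty(R_1'',R_2')$ with $\dist_\infty(R_1,R_2)$. The genuine delicacy is the excursion-counting just described. Your remarks about hypotheses (ii)--(iii) and the small-distance degenerate configurations are correct and match the paper.
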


\begin{proof}
We need to be careful about the event when the walk $S_z$ first 
hits $R_1'$, leaves $R_1''$ and returns, before intersecting the path of $S_x$.
The following definitions take care of this possibility by
introducing the variables $\ell_1$ and $\ell_2$ that count 
crossings from $\partial R_1''$ to $\partial R_1'$ and
from $\partial R_2''$ to $\partial R_2'$, respectively.
The definitions are somewhat tedious to write down; however,
estimating the resulting probabilities is then straightforward
using the strong Markov property. Given $\ell_1, \ell_2 \ge 0$, let 
\eqnsplst
{ T_{\ell_1}
  &= \inf \{ n \ge \xi^{S_z}_{R_1'} : \text{$S_z[\xi_{R_1'},n]$
    has made at least $\ell_1$ crossings from $\partial R_1''$ to $R_1'$} \} \\
  \sigma_{\ell_1,R_1''}
  &= \inf \{ n \ge T_{\ell_1} : S_z(n) \not\in R_1'' \} \\
  \xi_{\ell_1,R_2'}
  &= \inf \{ n \ge \sigma_{\ell_1,R_1''} : S_z \in R_2' \} \\
  T_{\ell_1,\ell_2}
  &= \inf \{ n \ge \xi_{\ell_1,R_2'} : \text{$S_z[\xi_{\ell_1,R_2'},n]$ 
    has made at least $\ell_2$ crossings from $\partial R_2''$ to $R_2'$} \} \\
  \sigma_{\ell_1,\ell_2,R_2''}
  &= \inf \{ n \ge T_{\ell_1,\ell_2} : S_z(n) \not\in R_2'' \}. }
On the event in the left hand side of \eqref{e:estimate-Ia}, 
the following events occur for some integers $\ell_1, \ell_2 \ge 0$:
\begin{align*}
&\text{(i)} & &\xi^{S_z}_{R_1'} < \infty & &&
&\text{(v)} & &\xi^{S^z}_{\ell_1,R_2'} < \infty \\
&\text{(ii)} & &\xi^{S_x}_{R_1} < \infty & &&
&\text{(vi)} & &\xi^{S_y}_{R_2} < \infty \\
&\text{(iii)} & &T_{\ell_1} < \infty & &&
&\text{(vii)} & &T_{\ell_1,\ell_2} < \infty \\
&\text{(iv)} & &S_z[T_{\ell_1},\sigma_{\ell_1,R_1''}] 
  \cap S_x[\xi_{R_1},\infty) \not= \es & &&
&\text{(viii)} & &S_z[T_{\ell_1,\ell_2},\sigma_{\ell_1,\ell_2,R_2''}] \cap 
  S_y[\xi_{R_2},\infty) \not= \es 
\end{align*}
We bound the probability that (i)--(viii) occur, 
with each estimate conditional on the previous ones.
The probability of (i)--(ii) is bounded by 
$C \, (2^{k_1}/2^{k_z'})^2 \, (2^{k_1}/2^{k_x'})^2$, 
since $d = 4$. Using the strong Markov property of $S_z$ at times 
$\xi^{R_1'}, T_1, \dots, T_{\ell_1-1}$, we have that (iii) occurs 
with conditional probability $\le c_1^{\ell_1}$ with some $0 < c_1 < 1$.
Since $S_z(T_{\ell_1}) \in \partial R_1'$ and $S_x(\xi_{R_1}) \in \partial R_1''$
are at distance of order $2^{k_1}$ from each other, the conditional 
probability of (iv) is bounded by $C/(\log 2^{k_1}) = C' / k_1$.
The probability of (v)-(vi) is bounded by 
$C \, (2^{k_2}/\dist_\infty(R_1,R_2))^2 \, (2^{k_2}/2^{k_y'})^2$.
The probability of (vii) is bounded by $c_1^{\ell_2}$.
Finally, the probability of (viii) is bounded by $C/k_2$, 
again due to \eqref{e:cube-intersect}. 
Multiplying the bounds and summing over $1 \le \ell_1, \ell_2 < \infty$ 
yields the lemma.
\end{proof}

We continue with the bound for Case (I). We break up Case (I) into the following sub-cases (that partially overlap, but together cover all possibilities):
\begin{align*}
&\text{(I-1)} & & \text{$k_v < k_z, k_x$ and $k_w < k_y$;} & &&
&\text{(I-4)} & & \text{$k_v = k_z \le k_x$ and $k_w = k_y$;} \\
&\text{(I-2)} & & \text{$k_v < k_z, k_x$ and $k_w = k_y$;} & &&
&\text{(I-5)} & & \text{$k_v = k_x \le k_z$ and $k_w < k_y$;} \\
&\text{(I-3)} & & \text{$k_v = k_z \le k_x$ and $k_w < k_y$;} & &&
&\text{(I-6)} & & \text{$k_v = k_x \le k_z$ and $k_w = k_y$.} 
\end{align*}
Fixing the scales $k_v, k_w, k_z, k_x, k_y$, we bound the number 
of choices of $x, y$ and the dyadic boxes containing $v$ and $w$
in each case separately, and apply Lemma \ref{lem:main-prob-bound}.
Then we sum over the scales allowed in each sub-case. A depiction of case (I-1) may be found in Figure \ref{fig:fig_case1}.

\medbreak

\emph{Sub-case (I-1).}
The number of choices for $Q(v)$ is of order $2^{4 k_z} / 2^{4 k_v}$. The 
number of choices for $x$ is of order $2^{4 k_x}$. Given $Q(v)$,
the number of choices for $Q(w)$ is $O(1)$ (note that 
$k_w = k_v$), and the number of
choices for $y$ is of order $2^{4 k_y}$. Mutiplying these
bound together, and applying Lemma \ref{lem:main-prob-bound}
to Sub-case (I-1), we get the estimate:
\eqnst
{ \frac{2^{4 k_z}}{2^{4 k_v}} \, 2^{4 k_x} \, 2^{4 k_y}
  \frac{2^{2 k_v}}{2^{2 k_z}} \, \frac{2^{2 k_v}}{2^{2 k_x}} \,
  \frac{1}{k_v} \, \frac{2^{2 k_v}}{2^{2 k_v}} \,
  \frac{2^{2 k_v}}{2^{2 k_y}} \, \frac{1}{k_v}
  = 2^{2 k_z} \, 2^{2 k_x} \, 2^{2 k_y} \, \frac{2^{2 k_v}}{k_v^2}. }
Summing this bound for fixed $k_v$ over $k_x, k_y, k_z$ such that 
$k_v < k_x, k_y, k_z \le K$, and then over $1 \le k_v \le K$, we get 
\eqnst
{ \sum_{k_v = 1}^K \left( 2^{2K} \right)^3 \frac{2^{2 k_v}}{k_v^2}
  \le C \frac{\left( 2^{K} \right)^8}{K^2}
  = C \frac{u^8}{(\log u)^2}. } 


\begin{figure}
\setlength{\unitlength}{1.2cm}
\begin{minipage}[c]{0.4\textwidth}
\begin{picture}(5,5)(-2,-2)
  \linethickness{0.3mm}
  
  \put(-2,-2){\line(1,0){5}}
  \put(3,-2){\line(0,1){5}}
  \put(3,3){\line(-1,0){5}}
  \put(-2,3){\line(0,-1){5}}

  \linethickness{0.1mm}
  \setlength{\fboxsep}{0pt}
  \put(0.5,0.5){\colorbox[gray]{0}{\makebox(0.1,0.1){}}}
  \put(0.4,0.2){\large{$z$}}
  \put(1.95,0.5){\circle*{0.1}}
  \put(2.0,0.55){\large{$v$}}
  \put(1.95,-0.25){\circle*{0.1}}
  \put(2.0,-0.55){\large{$w$}}
  \put(1.95,1.8){\colorbox[gray]{0}{\makebox(0.1,0.1){}}}
  \put(1.95,2.1){\large{$x$}}
  \put(0.6,-1.4){\colorbox[gray]{0}{\makebox(0.1,0.1){}}}
  \put(0.5,-1.7){\large{$y$}}
  \linethickness{0.2mm}
  \put(0.55,0.55){\line(140,-5){1.4}}
  \multiput(2.00,1.85)(-0.0025,-0.0675){20}{\line(-5,-135){0.00125}}
  \put(1.95,0.5){\line(0,-1){0.75}}
  \put(1.95,-0.25){\line(105,-20){1.05}}
  \multiput(0.65,-1.35)(0.05,0.0423){26}{\line(130,110){0.025}} 
  \linethickness{0.1mm} 
  \put(1.7,0.25){\framebox(0.5,0.5){}}
  \put(1.6,0.15){\dashbox{0.05}(0.7,0.7){}}
  \put(1.5,0.05){\framebox(0.9,0.9){}}
\end{picture}
\end{minipage}\hfill
\begin{minipage}[c]{0.6\textwidth}
\caption{Depiction of sub-case (I-1). The walk $S_z$ is depicted 
as a solid line; walks from $x$ and $y$ are dashed. $S_x$ and $S_z$ 
intersect at $v$, which is surrounded by boxes $Q(v)$, $Q'(v)$, and 
$Q''(v)$. Walks $S_z$ and $S_y$ intersect at $w$ (boxes not shown). 
Note that in this case, the factor limiting the size of $Q(v)$ is 
the proximity of the vertex $w$. After intersection, $S_z$ terminates 
upon intersecting the sink $s$ (i.e., the boundary of the large 
square).}\label{fig:fig_case1}
\end{minipage}
\end{figure}

\medbreak

\emph{Sub-case (I-2).}
The number of choices for $Q(v)$ is of order $2^{4 k_z} / 2^{4 k_v}$. The 
number of choices for $x$ is of order $2^{4 k_x}$. Given $Q(v)$,
the number of choices for $Q(w)$ is of order $2^{4 k_v}/2^{4 k_w}$, 
and the number of choices for $y$ is of order $2^{4 k_w}$. 
Lemma \ref{lem:main-prob-bound} now gives:
\eqnst
{ \frac{2^{4 k_z}}{2^{4 k_v}} \, 2^{4 k_x} \, \frac{2^{4 k_v}}{2^{4 k_w}} \,
  {2^{4 k_w}} \, 
  \frac{2^{2 k_v}}{2^{2 k_z}} \, \frac{2^{2 k_v}}{2^{2 k_x}} \,
  \frac{1}{k_v} \, \frac{2^{2 k_w}}{2^{2 k_v}} \, \frac{1}{k_w}
  = 2^{2 k_z} \, 2^{2 k_x} \, \frac{2^{2 k_v}}{k_v} \frac{2^{2 k_w}}{k_w}. }
We sum over $k_v < k_x, k_z \le K$ for fixed $1 \le k_w \le k_v$, then over
$1 \le k_w \le k_v \le K$. This yields $C u^8 / (\log u)^2$
and completes the estimate in Sub-case (I-2).

\medbreak

The other four sub-cases are handled similarly, and we only state the 
number of choices, the probability estimate, and the range of summations
over the scales.

\medbreak

\noindent
\begin{tabular}{|c|c|c|c|}
\hline
 & \rule{0pt}{2.7ex}\rule[-1ex]{0pt}{0pt} Choices & Probability & Summed over \\ \hline
I-3    \tst \bst & $2^{4 k_x} \, 2^{4 k_y}$ & 
$\dfrac{2^{2 k_v}}{2^{2 k_x}} \, \dfrac{1}{k_v} \,
    \dfrac{2^{2 k_w}}{2^{2 k_y}} \, \dfrac{1}{k_w}$ &
\parbox{7.85cm}{$k_x$ such that $k_x \ge k_v$; $k_y$ such that $k_y > k_w$; 
and $k_v, k_w$ such that $1 \le k_v \le k_w \le K$} \\  \hline

I-4    \tst \bst  & $2^{4 k_x} \, \dfrac{2^{4 k_{vw}}}{2^{4 k_w}} \, 2^{4 k_w}$ &
$\dfrac{2^{2 k_v}}{2^{2 k_x}} \, \dfrac{1}{k_v} \,
    \dfrac{2^{2 k_w}}{2^{2 k_{vw}}} \, \dfrac{1}{k_w}$ &
\parbox{7.85cm}{$k_x$ such that $k_x \ge k_v$; $k_{vw}$ such that $k_{vw} \ge k_w$;
and $k_v, k_w$ such that $1 \le k_v, k_w \le K$} \\ \hline

I-5   \tst \bst   & $\dfrac{2^{4 k_z}}{2^{4 k_v}} \, 2^{4 k_v} \, 2^{4 k_y}$ &
$\dfrac{2^{2 k_v}}{2^{2 k_z}} \, \dfrac{1}{k_v} \,
    \, \dfrac{2^{2 k_w}}{2^{2 k_y}} \, \dfrac{1}{k_w}$ &
\parbox{7.85cm}{$k_z$ such that $k_z \ge k_v$; $k_y$ such that $k_y > k_w$; 
and $k_v,k_w$ such that $1 \le k_v \le k_w \le K$} \\ \hline

I-6     \tst \bst  & $\dfrac{2^{4 k_z}}{2^{4 k_v}} \, 2^{4 k_v} \, \dfrac{2^{4 k_{vw}}}{2^{4 k_w}} \, 2^{4 k_w}$ & 
$\dfrac{2^{2 k_v}}{2^{2 k_z}} \, \dfrac{1}{k_v} \,
  \dfrac{2^{2 k_w}}{2^{2 k_{vw}}} \, \dfrac{1}{k_w}$ &
\parbox{7.85cm}{$k_z$ such that $k_z \ge k_v$; $k_{vw}$ such that $k_{vw} \ge k_w$; 
and $k_v, k_w$ such that $1 \le k_v, k_w \le K$} \\ \hline
\end{tabular}

\medbreak

\medbreak

\textbf{Case (II).} 
We will use notation similar to Case (I), but with somewhat different meaning.
Let 
\eqnsplst
{ k_v
  &:= \max \left\{ k \ge 1 : 2^{k+4} \le
     \min \{ \| v - x \|_\infty,\, 
     \| v - y \|_\infty,\, \| v - w \|_\infty \} \right\} \\
  k_w
  &:= \max \left\{ k \ge 1 : 2^{k+4} \le 
     \min \{ \| w - v \|_\infty,\, 
     \| w - z \|_\infty \} \right\} \\
  k_{vw}
  &:= \max \left\{ k \ge 1 : 2^{k+4} \le 
     \| v - w \|_\infty \right\} \\
  Q(v)
  &:= Q(v;k_v) \qquad\qquad 
  Q(w)
  = Q(w;k_w). } 
We also let 
\eqnspl{e:kx-ky-kz-II}
{ k_z
  &:= \max \left\{ k \ge 1 : 2^{k+4} \le
      \| z - w \|_\infty \right\} \\
  k_x
  &:= \max \left\{ k \ge 1 : 2^{k+4} \le 
      \| x - v \|_\infty \right\} \\
  k_y
  &:= \max \left\{ k \ge 1 : 2^{k+4} \le 
     \| y - v \|_\infty \right\}. }
The following lemma provides the probability bound in Case (II), and is proved
similarly to Lemma \ref{lem:main-prob-bound}.

\begin{lem}
\label{lem:main-prob-bound-II}
\textbf{(Probability bound for Case (II))}
Let $R_1$ and $R_2$ be dyadic boxes of scales $k_1$ and $k_2$,
and let $x, y \in B_z(u/2)$ be points such that:\\
(i) $R_1''$ and $R_2''$ are disjoint;\\
(ii) $\dist(x,R_1''), \dist(y,R_1'') \ge 2^{k_1}$;\\
(iii) $\dist(z,R_2'') \ge 2^{k_2}$.\\
Define $k_x', k_y', k_z'$ by the formulas \eqref{e:kx-ky-kz-II} where
$Q(v)$ and $Q(w)$ are replaced by $R_1$ and $R_2$, respectively.
Then 
\eqnspl{e:estimate-II}
{ &\prob \left[ \text{$\exists v \in R_1$,
    $\exists w \in R_2$ s.t.~Case (II)} \right] 
  \le C \, \frac{\left( 2^{k_1} \right)^2}{\left( 2^{k_x'} \right)^2} \,
  \frac{\left( 2^{k_1} \right)^2}{\left( 2^{k_y'} \right)^2} \,
  \frac{1}{k_1} \,
  \frac{\left( 2^{k_2} \right)^2}{\dist_\infty(R_1,R_2)^2} \,
  \frac{\left( 2^{k_2} \right)^2}{\left( 2^{k_z'} \right)^2} \,
  \frac{1}{k_2}. }
\end{lem}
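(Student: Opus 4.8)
The plan is to mimic the proof of Lemma~\ref{lem:main-prob-bound} with the roles of the three walks permuted: in Case~(II) the walk $S_x$ plays the "central" role held by $S_z$ in Case~(I). Reading the reduction carried out just before the lemma, on the event $\{\exists v\in R_1,\ \exists w\in R_2\ \text{s.t.\ Case~(II)}\}$ the walk $S_x$ visits a point $v\in R_1$, then visits a point $w\in R_2$ before leaving $V_z(u)$, where $w\in\pi''(u')$ (passing from $\pi'(u')$ to $\pi''(u')$ via Lemma~\ref{lem:seg_indep} at the cost of a constant $C_1=C_1(\eps)$, exactly as in the paragraph preceding the lemma); meanwhile $S_y$ visits $v$. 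Since $\pi''(u')$ is contained in the range of $S_z[0,\sigma_{V_z(u)}]$, this says that $S_x$ meets $S_y$ inside $R_1$, that $S_x$ meets $S_z$ inside $R_2$, and that $S_x$ travels from $R_1$ to $R_2$ in between. First I would record this reduction, so that it suffices to bound the probability of a walk event involving $S_x$, $S_y$, $S_z$ and the two boxes.

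Next I would set up the nested stopping times exactly as in the proof of Lemma~\ref{lem:main-prob-bound}, but now along the trajectory of $S_x$ rather than $S_z$: variables $T_{\ell_1}$, $\sigma_{\ell_1,R_1''}$, $\xi_{\ell_1,R_2'}$, $T_{\ell_1,\ell_2}$, $\sigma_{\ell_1,\ell_2,R_2''}$ recording, respectively, the $\ell_1$-th crossing of $S_x$ from $\partial R_1''$ back to $R_1'$, the subsequent exit of $S_x$ from $R_1''$, the subsequent entry of $S_x$ to $R_2'$, the $\ell_2$-th crossing of $S_x$ from $\partial R_2''$ back to $R_2'$, and the subsequent exit of $S_x$ from $R_2''$. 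This reduces the event to a union over $\ell_1,\ell_2\ge 0$ of eight nested sub-events, parallel to (i)--(viii) in Lemma~\ref{lem:main-prob-bound}: $S_x$ hits $R_1'$; $S_y$ hits $R_1$; $T_{\ell_1}<\infty$; the excursion $S_x[T_{\ell_1},\sigma_{\ell_1,R_1''}]$ intersects $S_y$ after the latter has entered $R_1$; $S_x$ reaches $R_2'$; $S_z$ hits $R_2$; $T_{\ell_1,\ell_2}<\infty$; and the excursion $S_x[T_{\ell_1,\ell_2},\sigma_{\ell_1,\ell_2,R_2''}]$ intersects $S_z$ after the latter has entered $R_2$.

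Then I would bound each sub-event conditionally on the previous ones, using $d=4$. The pair "$S_x$ hits $R_1'$, $S_y$ hits $R_1$" has probability $\le C\,(2^{k_1}/2^{k_x'})^2\,(2^{k_1}/2^{k_y'})^2$, by the Green-function/hitting estimate (Theorem~\ref{thm:green_asymp}(ii), with $n\to\infty$) together with condition~(ii) of the lemma and the definitions \eqref{e:kx-ky-kz-II}. The crossing-count events $\{T_{\ell_1}<\infty\}$ and $\{T_{\ell_1,\ell_2}<\infty\}$ have conditional probabilities $\le c_1^{\ell_1}$ and $\le c_1^{\ell_2}$ for some $0<c_1<1$, by the strong Markov property and a gambler's-ruin estimate between $\partial R_1''$ and $\partial R_1'$ (resp.\ $\partial R_2''$ and $\partial R_2'$). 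The intersection inside $R_1$ has conditional probability $\le C/k_1$ by \eqref{e:cube-intersect}, since when $S_x$ re-enters $R_1'$ it is at a distance of order $2^{k_1}$ from the point where $S_y$ entered $R_1$, so the geometry fits the hypotheses of \eqref{e:cube-intersect} with $\eps$ of order one. The pair "$S_x$ reaches $R_2'$, $S_z$ hits $R_2$" has probability $\le C\,(2^{k_2}/\dist_\infty(R_1,R_2))^2\,(2^{k_2}/2^{k_z'})^2$ (here $S_x$ reaches $R_2$ starting from $R_1''$, giving the $\dist_\infty(R_1,R_2)$ factor, while $S_z$ reaches $R_2$ from $z$ using condition~(iii)). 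Finally the intersection inside $R_2$ has conditional probability $\le C/k_2$, again by \eqref{e:cube-intersect}. Multiplying these estimates and summing the two geometric series over $\ell_1,\ell_2\ge 1$ yields \eqref{e:estimate-II}. A minor point worth noting is that $w\in\pi''(u')$ lies in the range of $S_z$, so no comparison of a loop-erasure to the full walk is needed in this direction (in contrast to \eqref{e:intersect-lps-3d}).

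As in Lemma~\ref{lem:main-prob-bound}, the main obstacle is purely the bookkeeping: ensuring that the possibility of $S_x$ wandering out of $R_1''$ (or $R_2''$) and returning before producing the required intersection is absorbed into the geometric factors $c_1^{\ell_1}$, $c_1^{\ell_2}$ rather than spoiling the scale-dependent estimates, and checking at each step that the two walks being intersected enter the relevant dyadic cube at points separated by a distance comparable to its side-length, so that \eqref{e:cube-intersect} applies with a constant $\eps$. These verifications are entirely parallel to the ones already carried out for Case~(I), with $S_x$ substituted for $S_z$ and the hypotheses on the source vertices relabelled according to (ii)--(iii).
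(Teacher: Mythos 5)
Your proposal is correct and is exactly the argument the paper intends: the paper offers no separate proof of Lemma~\ref{lem:main-prob-bound-II}, saying only that it ``is proved similarly to Lemma~\ref{lem:main-prob-bound},'' and your write-up supplies precisely that proof, substituting $S_x$ for $S_z$ as the ``central'' walk, relabelling the hitting factors according to~\eqref{e:kx-ky-kz-II}, and re-using the crossing-count decomposition and the intersection estimate~\eqref{e:cube-intersect} at each box. The observation that in Case~(II) only $w$ (not $v$) needs to lie on the loop-erasure $\pi''(u')$ is correct and is consistent with the reduction the paper performs just before the lemma.
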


We now list the sub-cases to be considered. These are: 
\begin{align*}
&\text{(II-1)} & & \text{$k_v < k_x, k_y$ and $k_w < k_z$;} & &&
&\text{(II-3)} & & \text{$k_v = k_x \le k_y$ and $k_w < k_z$;} \\
&\text{(II-2)} & & \text{$k_v < k_x, k_y$ and $k_w = k_z$;} & &&
&\text{(II-4)} & & \text{$k_v = k_x \le k_y$ and $k_w = k_z$.} 
\end{align*}
Interchanging the roles of $x$ and $y$ in (II-3) and (II-4) yields 
the remaining configurations not covered.

\medbreak

\noindent
\begin{tabular}{|c|c|c|c|}
\hline
 &\rule{0pt}{2.7ex}\rule[-1ex]{0pt}{0pt} Choices & Probability & Summed over \\ \hline
 
II-1 & \tst \bst$\dfrac{2^{4 k_z}}{2^{4 k_v}} \, 2^{4 k_x} \, 2^{4 k_y}$ &
$\dfrac{2^{2 k_v}}{2^{2 k_x}} \, \dfrac{2^{2 k_v}}{2^{2 k_y}} \,
    \dfrac{1}{k_v} \, \dfrac{2^{2 k_v}}{2^{2 k_z}} \, \dfrac{1}{k_v}$ &
\parbox{7cm}{$k_x, k_y, k_z > k_v$ for fixed $k_v$; \\ 
and then over $1 \le k_v \le K$} \\ \hline

II-2 \tst \bst& $2^{4 k_x} \, 2^{4 k_y}$ &
$\dfrac{2^{2 k_v}}{2^{2 k_x}} \, \dfrac{2^{2 k_v}}{2^{2 k_y}} \,
    \dfrac{1}{k_v} \, \dfrac{2^{2 k_w}}{2^{2 k_v}} \, \dfrac{1}{k_w}$ &
\parbox{7cm}{$k_x, k_y > k_v$ for fixed $k_v, k_w$; \\ 
and then over $1 \le k_w \le k_v \le K$} \\ \hline

II-3\tst \bst & $\dfrac{2^{4 k_z}}{2^{4 k_w}} \, \dfrac{2^{4 k_w}}{2^{4 k_v}} \,
    2^{4 k_v} \, 2^{4 k_y}$ &
$\dfrac{2^{2 k_v}}{2^{2 k_y}} \, \dfrac{1}{k_v} \, 
    \dfrac{2^{2 k_w}}{2^{2 k_z}} \, \dfrac{1}{k_w}$ &
\parbox{7cm}{$k_y > k_v$ and $k_z > k_w$ for fixed $k_v, k_w$; \\
and then over $1 \le k_v \le k_w \le K$} \\ \hline

II-4 \tst\bst & $\dfrac{2^{4 k_{vw}}}{2^{4 k_v}} \, 2^{4 k_v} \, 2^{4 k_y}$ &
$\dfrac{2^{2 k_v}}{2^{2 k_y}} \, \dfrac{1}{k_v} \, 
    \dfrac{2^{2 k_w}}{2^{2 k_{vw}}} \, \dfrac{1}{k_w}$ &
\parbox{7cm}{$k_y > k_v$ and $k_{vw} > k_w$ for fixed $k_v, k_w$; \\
and then over $1 \le k_v,\, k_w \le K$} \\ \hline

\end{tabular}

\medbreak

This completes the analysis of Case (II).

\medbreak

It remains to comment on configurations where one of the $\ell_\infty$ distances 
is $< 32$. In these cases, we can replace the box $Q(v)$ and/or $Q(w)$ by the point $v$ and/or $w$ itself, and omit the random variables $T_{\ell_1}$, etc. 
The combinatorial bounds, as well as the probability 
bounds still hold with $k_v = 1$ and/or $k_w = 1$, and this completes the proof of the upper bound in Proposition \ref{prop:moments-Y} (iii).

\subsection{The size of the past in invariant forests}
\label{ssec:size-gen-lb}

\begin{proof}[Proof of Theorem \ref{thm:size-gen}]
Recall that $A_x(a,b) = D_x(b) \setminus D_x(a)$.
When $| \past_x \cap D_x(2r) | > t$, let $x$ send mass $1$ distributed equally among the vertices $z \in \past_x \cap A_x(r,(3/2)r)$. 
Then $\mathbb{E} [ \sentm(o) ] \le \mu \big( |\past_o| > t \big)$. On the other hand, using Jensen's inequality, we have
\eqnspl{e:ratio2}
{ \mathbb{E}[\getm(o)]
  &\ge \sum_{x \in A_o(r,(3/2)r)} 
    \mathbb{E}\Bigg[ 
    \frac{\mathbf{1}_{\{ o \in \past_x \}} 
    \mathbf{1}_{|\past_x \cap D_x(2r)| > t}}{|\past_x \cap A_x(r,(3/2)r)|} \Bigg]  
  \ge \sum_{x \in A_o(r,(3/2)r)} 
    \mathbb{E}\Bigg[ 
    \frac{\mathbf{1}_{\{ o \in \past_x \}} 
    \mathbf{1}_{|\tfrT_o(r/2)| > t}}{|\frT_o(4r)|} \Bigg] \\
  &\ge \sum_{x \in A_o(r,(3/2)r)} \frac{\mu \big( o \in \past_x,\, 
   |\tfrT_o(r/2)| > t \big)}
   {\mathbb{E} \Big[ |\frT_o(4r)| \,\Big|\, 
    o \in \past_x,\, |\tfrT_o(r/2)| > t \Big]} \\
   &\ge \sum_{x \in A_o(r,(3/2)r)} \frac{\mu \big( o \in \past_x,\, 
   |\tfrT_o(r/2)| > t \big)^2}
    {\mathbb{E} \Big[ |\frT_o(4r)| \, \mathbf{1}_{\{ o \in \past_x \}} \Big]}. }
This completes the proof.
\end{proof}

%

\subsection{Lower bounds: $d\geq 5$}
\label{ssec:size-lbd-highD}


We now apply Theorem \ref{thm:size-gen} to the measure $\wsf$.

\begin{proof}[Proof of Theorem \ref{thm:size}(iv); lower bound]
Let $t = \delta \, r^4$, where $\delta > 0$ will be chosen in course of the 
proof. Wilson's algorithm yields:
\eqnspl{e:cond-moment-bnd}
{ \mathbb{E} \left[ |\frT_o(4r)| \mathbf{1}_{\{ o \in \past_x \}} \right] 
  &\le \sum_{y \in B_o(4r)} \sum_{u \in \Z^d}
    \big[ \wsf ( o \in \past_u,\, u \in \past_x,\, y \in \past_u ) \\
  &\qquad + \wsf ( o \in \past_x,\, x \in \past_u,\, y \in \past_u ) \big] \\
  &\le \sum_{y \in B_o(4r)} \sum_{u \in \Z^d}
    \big[ G(o,u) \, G(u,x) \, G(y,u) 
    + G(o,x) \, G(x,u) \, G(y,u) \big] \\
  &\le C r^{6-d}. }
Write $\future_x = \{ y \in \Z^d : x \in \past_y \}$. 
Using Cauchy-Schwarz we have
\eqnspl{e:CauchySchwarz}
{ &\sum_{x \in A_o(r,(3/2)r)} \wsf( o \in \past_x,\, |\tfrT_o(r/2)| > t )^2 \\
  &\qquad \ge c r^{-d} \, \left( \sum_{x \in A_o(r,(3/2)r)} 
     \wsf( o \in \past_x,\, |\tfrT_o(r/2)| > t ) \right)^2 \\
  &\qquad = c \, r^{-d} \, \mathbb{E} \left[ | \future_o 
    \cap A_o(r,(3/2)r) | \, \mathbf{1}_{|\tfrT_0(r/2)| > t} \right]^2. }
We have $\mathbb{E} \big[ |\tfrT_o(r/2)| \big] \ge c \, r^4$, due to a result
of Pemantle \cite[Lemma 3.1]{pemantle}, and using Wilson's algorithm, we have
\eqnst
{ \mathbb{E} \big[ |\tfrT_o(r/2)|^2 \big] 
  \le \sum_{u, y, w \in B_o(r/2)} G(o,w) \, G(w,u) \, G(w,y)
  \le C \, r^8. } 
This yields $\wsf ( |\tfrT_o(r/2)| > t ) \ge c_1 > 0$ for some $c_1 > 0$ and 
sufficiently small $\delta$. Fix such $\delta$. We get a lower bound on 
$| \mathrm{future}_o \cap A_o(r,(3/2)r) |$ by considering the number of loop-free
points of the random walk $S_o$ generating the path from $o$ to $\infty$.
A result of Lawler says that with probability $1$, the fraction of loop-free points
is asymptotically a positive constant in $d \ge 5$; see \cite[Section 7.7]{Lawler}.
Therefore, we can find $\eps > 0$ small enough, such that 
$\wsf ( |\mathrm{future}_o \cap A_o(r,(3/2)r)| \ge \eps r^2 ) \ge 1 - c_1/2$.
With these choices of $\delta$ and $\eps$, the right hand side 
of \eqref{e:CauchySchwarz} is at least 
$r^{-d} \, ((c_1/2) \, \eps \, r^2)^2 = c_2 \, r^{4-d}$. 
Substituting this and \eqref{e:cond-moment-bnd} into the bound given by
Theorem \ref{thm:size-gen} we obtain
$\wsf \big( |\past_o| \ge t \big) \ge c \, r^{-2} = c' t^{-1/2}$.
As in Section \ref{sec:lowerbd}, we have
\begin{equation*}
  \nu(\vert \av \vert>t)
  \geq \frac{1}{2d} \, \wsf ( \vert \text{past}_o \vert > t),
\end{equation*}
and the claim of the theorem follows.
\end{proof}

\subsection{Upper bounds: $d\geq 5$}
\label{ssec:size-ubd-highD}

Unlike the case of the radius, an upper bound on the size of waves does not yield 
directly an upper bound on the size of the avalanche. However, we can still get
a power law upper bound, that we prove in this section.

\begin{proof}[Proof of theorem \ref{thm:size}(iv); upper bound]
Fix some $k\geq 1$. Recalling that $N$ denotes the number of waves, we have
\begin{align}
  \nu( S > t )
  = \nu( S > t,\, N > k ) + \nu( S > t,\, N \leq k ).
\end{align}
From Lemma \ref{lem:ptwise} we have $N \leq R$, and 
we can upper bound the first term in the expression above by $\nu( R > k).$ Next, 
writing $S^j$ for the size of the $j$-th wave, if $S > t$ and $N \le k$, then we
have $S^j > t/k$ for some $1 \le j \le N$. 
Hence, 
\begin{align}\notag \label{size}
 \nu( S > t)
 &\leq \nu( R > k) + \nu( S^j > t/k, \text{ for some } j \leq N) \\ \notag
 &\leq C \, \wsf_o( \diam(\T_o) > k ) + C \, \wsf_o( |\T_o| > t/k )\\ 
 &\leq k^{-2 + o(1)} + C \, \wsf_o( |\T_o| > t/k ),
\end{align}
where we used Theorem \ref{thm:radius}(iv) for the first term.
We now control the second term on the right in equation \eqref{size}.
\begin{lem}\label{volcompo}
$\wsf_o(\vert \T_o\vert > t) \leq t^{-1/2+o(1)}.$
\end{lem}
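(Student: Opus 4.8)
\textbf{Plan for the proof of Lemma \ref{volcompo}.} The goal is to show $\wsfo(|\frT_o| > t) \le t^{-1/2 + o(1)}$, i.e.\ a polylogarithmic sharpening of the lower bound $\wsf(|\past_o| > t) \ge c\, t^{-1/2}$ proved in Section \ref{ssec:size-lbd-highD}. The plan is to run a mass transport argument on the unimodular rooted network $(\frT_o,o)$ under $\wsfo$, in the same spirit as the radius upper bound in Section \ref{sec:upperbd}, but now transporting volume rather than diameter. Since $\wsfo(|\frT_o| < \infty) = 1$ for $d \ge 5$, for each $x \in \frT_o$ with $|\frT_o| > t$ we let $x$ send unit mass spread equally over the vertices $y \in \frT_x$ with (say) $t/2 < d(y,o) \le$ something comparable (more precisely, over a shell of $\frT_x$ chosen so that $o$ lies in the ``far'' part). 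Following the computation in Theorem \ref{thm:rad-gen}(iii), $\E_{\wsfo}[\mathbf{sent}(o)] = \wsfo(|\frT_o| > t)$, while $\E_{\wsfo}[\mathbf{get}(o)]$ can be written, after the LERW-reversibility identity $\wsf_x(\frT_x = T \mid y \in \frT_x) = \wsf_y(\frT_x = T \mid x \in \frT_y)$ (as in Lemma \ref{lem:bad}), as a sum over $x$ of $\wsf_x(o \in \frT_x)\, \E_{\wsf_x}[\mathbf{1}_{\{|\frT_x| > t\}} / |\text{shell of } \frT_x| \mid o \in \frT_x]$.

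The key step is then to control the conditional expectation $\E_{\wsf_x}[\mathbf{1}_{\{|\frT_x| > t\}}/|\text{shell}| \mid o \in \frT_x]$, for which I would invoke Theorem \ref{thm:BJ12} (the \cite{BJ15} lower-tail estimate) exactly as in the proof of Lemma \ref{lem:bad} and Lemma \ref{lem:inv-moment}: conditional on $y \in \frT_x$ with $y$ on the outer boundary of a box of scale $\sim t^{1/4}$, the probability that $|\frT_x \cap V_x(2\rho)\setminus V_x(\rho)| < \lambda \rho^4$ (with $\rho \sim t^{1/4}$) is $\le C \exp(-c\lambda^{-1/3})$. Choosing $\lambda = (\log t)^{-3-\delta}$ shows the shell is ``large'' ($\ge (\log t)^{-3-\delta} t$) except on an event of probability $\exp(-c(\log t)^{1+\delta/3})$, on which we use the trivial bound $|\text{shell}| \ge 1$ together with $|\frT_x| \le$ poly$(t)$ (which follows from the already-proved radius upper bound $\diam(\frT_o) \le t^{O(1)}$). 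Hence the conditional expectation is $O((\log t)^{3+\delta}\, t^{-1})$. The number of relevant $x$ is $O(t)$ (they lie in a shell at distance $\sim t^{1/4}$ from $o$, using that $\{|\frT_x| > t\}$ forces $\diam(\frT_x) \gtrsim t^{1/4}$ by the companion \emph{lower} bound on $\diam$ in terms of volume, or more simply by the crude deterministic bound $|\frT_x| \le C\,\diam(\frT_x)^d$), and $\wsf_x(o \in \frT_x) \le G(o,x) \le C\, d(o,x)^{2-d}$. Summing, $\E_{\wsfo}[\mathbf{get}(o)] \le C\, t \cdot t^{(2-d)/d} \cdot (\log t)^{3+\delta} t^{-1}$; one checks the powers of $t$ combine to give $t^{-1/2}$ (the radius scale $t^{1/4}$ interacts with $G(o,x) \le (t^{1/4})^{2-d}$ and the volume count $(t^{1/4})^{d}$ to yield $t^{1/4\cdot(2-d)+1/4\cdot d - 1} = t^{-1/2}$), up to the harmless factor $(\log t)^{3+\delta}$.

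Equating $\E[\mathbf{sent}(o)] = \E[\mathbf{get}(o)]$ gives $\wsfo(|\frT_o| > t) \le C\,(\log t)^{3+\delta}\, t^{-1/2}$, which is $t^{-1/2+o(1)}$; since $\delta > 0$ is arbitrary this proves the lemma. Finally, by the stochastic comparison between $\wsfo$ and $\wsf$ (see \cite[Lemma 3.2]{LMS}) one also gets $\wsf(|\past_o| > t) \le t^{-1/2+o(1)}$, matching the lower bound of Section \ref{ssec:size-lbd-highD}.

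\textbf{Main obstacle.} The delicate point is the volume-count step: to get the sharp exponent $1/2$ one must argue that, on the event $\{|\frT_o| > t\}$, the ``typical'' diameter of $\frT_o$ is of order $t^{1/4}$ (not larger), so that the mass-transport shell can be placed at radius $\sim t^{1/4}$ and the count of contributing source vertices $x$ is $\approx t^{1/4\cdot d} / t^{1/4\cdot d} \cdot (\text{shell volume})$ rather than something bigger. This requires combining the upper bound $\diam(\frT_o) \le t^{O(1)}$ (Theorem \ref{thm:radius}(iv)) with a matching \emph{lower} bound $\diam(\frT_o) \gtrsim |\frT_o|^{1/4}$ in probability, which itself should follow from the second-moment control $\E[|\frT_o(\rho)|^2] \le C\rho^8$ used in Section \ref{ssec:size-lbd-highD} together with Theorem \ref{thm:BJ12}; making this bookkeeping precise while keeping all logarithmic factors under control, and handling the overlap between the ``box scale'' $2^k \sim t^{1/4}$ and the region where $o$ sits, is where the real work lies. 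Everything else is a direct transcription of the arguments already developed in Sections \ref{sec:upperbd} and \ref{ssec:size-lbd-highD}.
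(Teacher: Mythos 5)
Your plan is a genuinely different (and substantially more involved) route than the paper's, and as sketched it has a real gap.

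The paper's proof is a two-line union bound: split on whether $\diam(\frT_o)>t^{1/4}$ or not. On the large-diameter event, apply the already-proved radius upper bound from Theorem \ref{thm:radius}(iv), giving $(t^{1/4})^{-2+o(1)} = t^{-1/2+o(1)}$. On the complementary event one has $\frT_o \subset B(t^{1/4})$, so $\{|\frT_o|>t\}\subset\{|B(t^{1/4})\cap\frT_o|>t\}$; Markov's inequality together with $\E_{\wsfo}|B(r)\cap\frT_o| \le \sum_{|x|\le r}C|x|^{2-d} \le Cr^2$ (Wilson's algorithm) gives $\le C t^{-1/2}$. No new mass transport, no \cite{BJ15} lower-tail estimate, no reversibility identity is needed at this stage --- all the heavy machinery was already spent on the radius bound.

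Your proposal instead reruns a fresh mass-transport argument \`a la Theorem \ref{thm:rad-gen}(iii), tailored to volume, and tries to control the conditional inverse-shell-volume using Theorem \ref{thm:BJ12}. The concrete flaw is the claim that $\{|\frT_x|>t\}$ forces $\diam(\frT_x)\gtrsim t^{1/4}$. The deterministic inequality you cite, $|\frT_x|\le C\,\diam(\frT_x)^d$, only gives $\diam(\frT_x)\gtrsim t^{1/d}$; for $d\ge 5$ this is $t^{1/d}<t^{1/4}$, so it does not localize the contributing vertices $x$ to a shell at distance $\sim t^{1/4}$ from $o$, which is exactly the input your volume count relies on. Upgrading this to the probabilistic statement ``typically $\diam(\frT_o)\asymp|\frT_o|^{1/4}$ on $\{|\frT_o|>t\}$'' is a nontrivial new estimate, and you acknowledge this yourself in the final paragraph. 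So the ``real work'' you identify is not bookkeeping but an additional two-sided relation between diameter and volume that the argument as proposed does not establish. The paper sidesteps it entirely by conditioning on $\diam(\frT_o)$ first, which is exactly what the union-bound structure accomplishes: on the event you care about, either the diameter is already large (handled by Theorem \ref{thm:radius}(iv)) or the tree sits inside a known box (handled by Markov). I would rewrite your proof to use this decomposition instead.
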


\begin{proof}
We note that 
an application of Theorem \ref{thm:radius}(iv) yields
\begin{align}\label{eq wsfvolo}\notag
 \wsf_o(\vert \frT_o \vert > t)
 &\leq \wsf_o(\text{diam}(\frT_o) > t^{1/4} )
   + \wsf_o(\vert B(t^{1/4}) \cap \frT_o \vert > t) \\
 &\leq (t^{1/4})^{-2 + o(1)} 
   + \wsf_o(\vert B(t^{1/4}) \cap \frT_o \vert > t).
\end{align}
Due to Wilson's algorithm, we have $\wsfo ( x \in \frT_o ) \le C \, |x|^{2-d}$, and hence
we have
\begin{align*}
  \wsf_o( \vert B(t^{1/4}) \cap \frT_o \vert > t)
  \leq \frac{\E_{\wsfo} \vert B(t^{1/4}) \cap \frT_o \vert}{t} 
  \leq \frac{C \, (t^{1/4})^2}{t}
  = C \, t^{-1/2}.
\end{align*}
\end{proof}

Using the above lemma and optimizing the number of waves $k$ in (\ref{size}), we get 
$\nu( S > t) \leq t^{-2/5+o(1)}$,
thereby completing the proof of the upper bound in Theorem \ref{thm:size}(iv).
\end{proof}

\vskip 0.3in 

\acknow \  The authors thank Russ Lyons for helpful discussions. 
J.H. and A.A.J. also thank the organizers of the 2014 Bath Summer School, 
where some of this work was initiated.

J.H. thanks Michael Damron for postdoctoral advising and encouragement.

S.B. thanks Russ Lyons for support, encouragement and advice.

\bigskip
\noindent
\end{document}